\numberwithin{equation}{section}
\numberwithin{table}{section} 
\numberwithin{figure}{section}
\newtheorem{theorem}{Theorem}[section]
\newtheorem{subtheorem}{Theorem}[theorem]
\newtheorem{lemma}[theorem]{Lemma}
\newtheorem{conjecture}[theorem]{Conjecture}
\newtheorem{fact}[theorem]{Fact}
\newtheorem{remark}[theorem]{Remark}
\newtheorem{definition}[theorem]{Definition}
\newtheorem{setting}[theorem]{Setting}
\theoremstyle{remark}
\newtheorem{claim}[subtheorem]{Claim}
\newtheorem{subremark}[subtheorem]{Remark}
\newcommand{\By}[2]{\overset{\mbox{\tiny{#1}}}{#2}}
\newcommand{\ByRef}[2]{   \By{\eqref{#1}}{#2} }
\newcommand{\eqBy}[1]{    \By{#1}{=} }
\newcommand{\leBy}[1]{    \By{#1}{\le} }
\newcommand{\geByRef}[1]{ \ByRef{#1}{\ge} }
\newcommand{\BUG}[1]{#1}
\let\sm\setminus
\let\subset\subseteq 
\let\epsilon\varepsilon
\let\sharp\#
\renewcommand{\leq}{\leqslant}
\renewcommand{\le}{\leqslant}
\renewcommand{\geq}{\geqslant}
\renewcommand{\ge}{\geqslant}
\let\oldmarginpar\marginpar
\renewcommand\marginpar[1]{\-\oldmarginpar[\raggedleft\footnotesize #1]%
{\raggedright\footnotesize #1}}
\newcommand{\HIDDENPROOF}[1]{
%
}
\newcommand{\HIDDENTEXT}[1]{
}
\title{The Approximate Loebl--Koml\'os--S\'os
Conjecture IV:\\
Embedding techniques and the proof of the main result} 
\author{Jan
Hladk\'y
\thanks{\emph{Corresponding author.} Institute of Mathematics, Academy of Science of the Czech Republic. \v Zitn\'a 25, 110 00, Praha, Czech Republic. The Institute of Mathematics of the Academy of Sciences of the Czech Republic is supported by RVO:67985840. Email:
\texttt{honzahladky@gmail.com}.
The research leading to these results has received funding from the People Programme (Marie Curie Actions) of the European Union's Seventh Framework Programme (FP7/2007-2013) under REA grant agreement umber 628974. Much of the work was done while supported by an EPSRC postdoctoral fellowship while affiliated with DIMAP and Mathematics Institute, University of
Warwick.}
\quad 
J\'anos Koml\'os\thanks{Department of Mathematics, Rutgers University, 110 Frelinghuysen Rd., Piscataway, NJ~08854-8019, USA} 
\quad 
Diana Piguet\thanks{Institute of Computer Science, Czech Academy of Sciences, Pod Vod\'arenskou v\v e\v z\'i 2, 182~07 Prague, Czech Republic. With institutional support RVO:67985807. Supported by the Marie Curie fellowship FIST, DFG grant TA 309/2-1,  Czech Ministry of Education project 1M0545, EPSRC award EP/D063191/1, and EPSRC Additional Sponsorship EP/J501414/1.
	The research leading to these results has received funding from the European Union Seventh
	Framework Programme (FP7/2007-2013) under grant agreement no. PIEF-GA-2009-253925.
    The work leading to this invention was supported by the European Regional Development Fund (ERDF), project ``NTIS -- New Technologies for Information Society'', European Centre of Excellence, CZ.1.05/1.1.00/02.0090.}
    \\ 
    Mikl\'os Simonovits\thanks{R\'enyi
    Institute, Budapest, Hungary. Supported by OTKA~78439, OTKA~101536, ERC-AdG.~321104} 
\quad 
Maya Stein\thanks{Department of Mathematical Engineering,
University of Chile, Santiago, Chile.  Supported by Fondecyt Iniciacion grant 11090141, Fondecyt Regular grant 1140766 and CMM Basal.}
\quad 
Endre Szemer\'edi\thanks{R\'enyi
	Institute, Budapest, Hungary. Supported by OTKA~104483 and ERC-AdG.~321104}}
\def\semiregular{regularized }
\def\semiregulars{regularized}
\def\kknnaagg{hub }
\def\kknnaaggss{hubs }
\def\kknnaaggNOSPACE{hub}
\def\kknnaaggssNOSPACE{hubs}
\def\KknnaaggssNOSPACE{Kubs}
\newcommand{\PARAMETERPASSING}[2]{{\mathrm{#1}\ref{#2}}}
\newcommand{\PARAMETERPASSINGR}[3]{{\mathrm{#1}\ref{#2}-\ref{#3}}}
\newcommand{\EXTERNALPARAMETERPASSING}[3]{{\mathrm{#1}.\mathrm{#2}\ref{#3}}}
\def\NN{\mathbb{N}}
\newcommand{\M}{\mathcal M}\newcommand{\V}{\mathcal V}
\newcommand{\eps}{\epsilon}
\def\probability{\mathbf{P}}
\def\mindeg{\mathrm{mindeg}}
\def\maxdeg{\mathrm{maxdeg}}
\def\density{\mathrm{d}}
\def\neighbour{\mathrm{N}}
\def\Vodd{V_\mathrm{odd}}
\def\Veven{V_\mathrm{even}}
\def\dist{\mathrm{dist}}
\def\children{\mathrm{Ch}}
\def\parent{\mathrm{Par}}
\def\shadow{\mathrm{shadow}}
\newcommand{\treeclass}[1]{\mathbf{trees}({#1})}
\def\seed{\mathrm{Seed}}
\newcommand{\LKSgraphs}[3]{\mathbf{LKS}({#1},{#2},{#3})}
\newcommand{\LKSmingraphs}[3]{\mathbf{LKSmin}({#1},{#2},{#3})}
\newcommand{\LKSsmallgraphs}[3]{\mathbf{LKSsmall}({#1},{#2},{#3})}
\newcommand{\smallvertices}[3]{\mathbb{S}_{{#1},{#2}}({#3})}
\newcommand{\largevertices}[3]{\mathbb{L}_{{#1},{#2}}({#3})}
\newcommand{\JUSTIFY}[1]{\mbox{\tiny{(#1)}}\quad}
\def\Gcapt{G_\nabla}
\def\GD{G_{\mathcal{D}}}
\def\Gblack{G_{\mathrm{reg}}}
\def\Gexp{G_{\mathrm{exp}}}
\def\BGblack{\mathbf{G}_{\mathrm{reg}}}
\def\smallatoms{\mathbb{E}}
\def\clusters{\mathbf{V}}
\def\class{\nabla}
\def\HugeVertices{\mathbb{H}}
\def\DenseSpots{\mathcal{D}}
\def\YA{\mathbb{YA}}
\def\YB{\mathbb{YB}}
\def\WantiC{V_{\leadsto\HugeVertices}}
\def\ghost{\mathrm{ghost}}
\def\Vgood{V_\mathrm{good}}
\def\exceptVertSplit{\bar V}
\def\exceptSemSplit{\bar \V}
\def\exceptClustSplit{\bar \clusters}
\def\gP{\mathbb{J}}
\def\gPatoms{\mathbb{J}_{\smallatoms}}
\def\shrubA{\mathcal S_{A}}
\def\shrubB{\mathcal S_{B}}
\def\Duplicate{\mathsf{Duplicate}}
\def\XA{\mathbb{XA}}
\def\XB{\mathbb{XB}}
\def\XC{\mathbb{XC}}
\def\BL{\mathbf L}
\def\Mgood{\M_{\mathrm{good}}}
\def\NAtom{{\mathcal N_{\smallatoms}}}
\newcommand{\colouringp}[1]{\mathbb{A}_{#1}}
\newcommand{\colouringpI}[1]{^{\restriction{#1}}}
\def\colouringpartition{(\colouringp{0},\colouringp{1},\colouringp{2})}
\newcommand{\proporce}[1]{\mathfrak{p}_{#1}}
\def\shadowsplit{\mathbb{F}}
\def\largeintoatoms{V_{\leadsto\smallatoms}}
\def\clustersintoatoms{\clusters_{\leadsto\smallatoms}}
\def\clustersize{\mathfrak{c}}
\def\LargeTen{\mathcal L^*}
\def\epsilonD{\pi}
\def\alphaD{\widehat{\alpha}}
\def\AXA{\mathbb{A}}
\def\aXa{\mathfrak{q}}
\def\VXV{\mathbb{B}}
\renewcommand{\today}{}
\date{}
\begin{document}
\pagenumbering{roman}
\maketitle
\begin{abstract}
This is the last paper of a series of four papers in which we prove the following relaxation of the
Loebl--Koml\'os--S\'os Conjecture: For every~$\alpha>0$
there exists a number~$k_0$ such that for every~$k>k_0$
 every $n$-vertex graph~$G$ with at least~$(\frac12+\alpha)n$ vertices
of degree at least~$(1+\alpha)k$ contains each tree $T$ of order~$k$ as a
subgraph. 

In the first two papers of this series, we
decomposed the host graph~$G$, and found a suitable combinatorial structure inside the decomposition. In the third paper, we refined this structure, and proved that any graph satisfying the conditions of the above approximate version of the Loebl--Koml\'os--S\'os Conjecture contains one of ten specific configurations.
In this paper we embed the tree $T$ in each of the ten configurations.
\end{abstract}

\bigskip\noindent
{\bf Mathematics Subject Classification: } 05C35 (primary), 05C05 (secondary).\\
{\bf Keywords: }extremal graph theory; Loebl--Koml\'os--S\'os Conjecture; tree; regularity lemma; sparse graph; graph decomposition.

\newpage

\rhead{\today}

\newpage

\tableofcontents
\newpage
\pagenumbering{arabic}
\setcounter{page}{1}

\section{Introduction}\label{sec:Intro3}
This paper concludes a series of four papers
in which we provide an approximate solution of the Loebl--Koml\'os--S\'os Conjecture. The conjecture reads as follows.
 
\begin{conjecture}[Loebl--Koml\'os--S\'os Conjecture 1995~\cite{EFLS95}]\label{conj:LKS}
Suppose that $G$ is an $n$-vertex graph with at least $n/2$ vertices of degree more than $k-2$. Then $G$ contains each tree of order $k$.
\end{conjecture}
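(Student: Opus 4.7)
The conjecture as stated is the full Loebl--Koml\'os--S\'os Conjecture, which is considerably stronger than the approximate version proved in this series. My plan would be to use the approximate result of this series as a black box and close the gap by a stability-plus-extremal-case argument, handling small $k$ separately.

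First I would reduce to the case of large $k$. For every fixed $k$ the statement is a finite problem about graphs of bounded order (one may assume $n\le 2k$ after pruning vertices of degree at most $k-2$), and in principle could be verified by a computer search or by a direct inductive argument on $k$; so fix a threshold $k_0$ and treat only $k\ge k_0$. Next, for each $\alpha>0$ the approximate theorem of this series gives the embedding whenever at least $(\frac12+\alpha)n$ vertices of $G$ have degree at least $(1+\alpha)k$, provided $k\ge k_0(\alpha)$. The gap between this hypothesis and the conjectured one has two components: a density gap (the threshold is $(1+\alpha)k$ rather than $k-1$) and a count gap (the number of large-degree vertices is $(\frac12+\alpha)n$ rather than $n/2$). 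My strategy is to argue that if neither gap can be closed by the methods of the approximate proof, then $G$ must be near-extremal, and to then classify and handle the extremal configurations directly.

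The core step is a stability analysis. I would re-examine the ten configurations identified in Paper III of the series and check, for each, whether the embedding algorithm of this Paper IV in fact succeeds under the weaker hypotheses $\deg\ge k-1$ and $|\{v:\deg(v)\ge k-1\}|\ge n/2$, provided $G$ is $\epsilon$-far from every extremal configuration. The hope is that most of the ten configurations yield enough slack: the embedding proceeds by cutting $T$ into a bounded-degree skeleton and shrubs, placing the skeleton into a \kknnaagg{}-and-matching structure, and then absorbing shrubs into regular pairs; in each use of a regular pair there is typically multiplicative slack, which can be traded for the factor $(1+\alpha)$ in the degree condition whenever the skeleton is small and the shrubs are not too unbalanced. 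The \emph{extremal} cases where no slack is available correspond to known tight examples for LKS (disjoint unions of cliques $K_{k}$ sharing a matching, and complete bipartite graphs $K_{k/2-1,n-k/2+1}$ and their relatives). These I would dispatch by an ad hoc, tree-specific argument: classify $T$ as either a ``caterpillar-like'' tree, a ``spider-like'' tree, or a tree with a substantial subtree of order about $k/2$, and for each class exhibit a direct embedding into each extremal host.

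The main obstacle I expect is precisely the extremal analysis, because the LKS conjecture has a rich family of extremal graphs (not only the two classical ones, but also perturbations obtained by adjusting a small matching), and the embedding for a general $k$-vertex tree into such a host can fail if $T$ has a particular two-colour-class imbalance. One must show a matching-type condition: in any extremal $G$ one can find a $k$-vertex rooted subtree-structure respecting the bipartition of $T$, which reduces to a Hall-type argument on the bipartite structure of $G$. A secondary obstacle is the reduction to large $k$: the approximate theorem gives no information for $k\le k_0(\alpha)$, and so one needs either an independent proof for small $k$ (which is where most of the previously known partial results of LKS live, e.g.\ Zhao's exact result for large $n$) or an inductive framework that lowers $k$ at the cost of lowering $n$. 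I would combine Zhao's exact theorem for $n\ge n_0(k)$ with an induction on $k$ using vertex deletion to cover the remaining $n<n_0(k)$ regime, thereby completing the proof.
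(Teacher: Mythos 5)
The statement you were given is Conjecture~\ref{conj:LKS} of the paper: it is stated as a \emph{conjecture}, and the paper does not prove it. What the series establishes is only the approximate relaxation, Theorem~\ref{thm:main}, in which both the degree threshold and the number of large-degree vertices carry a multiplicative slack $\alpha$. There is therefore no ``paper's own proof'' to compare against, and your proposal must be judged as a research plan for what is (in the generality stated) an open problem, not as a reconstruction of an existing argument.

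As such a plan it contains several genuine gaps. First, the reduction to large $k$ fails as described: for fixed $k$ the conjecture is not a finite problem, since $n$ is unbounded, and one cannot ``prune'' vertices of degree at most $k-2$ to force $n\le 2k$ --- deleting them lowers the degrees of the surviving vertices and may destroy the hypothesis. Second, the stability step is not a matter of re-running the embedding of this paper with $\alpha=0$: the slack (renamed $\eta$ in Setting~\ref{commonsetting}) sits at the very top of the constant hierarchy~\eqref{eq:KONST} and is consumed throughout the first three papers in producing the sparse decomposition, the rough structure, and the ten configurations themselves. With $\alpha=0$ one does not even obtain the configurations, let alone the embedding; an exact proof would require a new structural dichotomy (``either $G$ is far from extremal and a strengthened configuration exists, or $G$ is close to extremal''), and that dichotomy is precisely the missing content. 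Third, the extremal analysis is far from a routine Hall-type argument: the family of tight and near-tight hosts is large, and the known exact results (Zhao's theorem for the Loebl case $k\approx n/2$ with $n$ large, and the dense exact results for $k$ linear in $n$) leave the sparse regime $k=o(n)$ entirely open, so combining Zhao's theorem with an induction on $k$ does not cover the remaining cases. The overall strategy --- approximate result plus stability plus extremal analysis --- is indeed the standard route to exact theorems of this type, but each of your three steps is itself a substantial open problem, and the proposal does not supply the ideas needed to carry any of them out.
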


We discuss the history and state of the art in detail  in the first paper~\cite{cite:LKS-cut0} of our series. Our main result, which we prove in the present paper, is the 
approximate solution of the Loebl--Koml\'os--S\'os Conjecture, and reads as follows.

\begin{theorem}[Main result]\label{thm:main}
For every $\alpha>0$ there exists  a number $k_0$ such that for any
$k>k_0$ we have the following. Each $n$-vertex graph $G$ with at least
$(\frac12+\alpha)n$ vertices of degree at least $(1+\alpha)k$ contains each tree $T$ of
order $k$.
\end{theorem}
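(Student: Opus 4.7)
The plan is to leverage the structural result from paper~III of the series, which guarantees that under the hypotheses of Theorem~\ref{thm:main} the host graph $G$ (after passing through the decomposition of papers~I and~II) contains one of ten prescribed configurations. Hence the proof of Theorem~\ref{thm:main} reduces to establishing an embedding lemma for each of these ten configurations: assuming the configuration is present, one must embed an arbitrary tree $T$ of order $k$ into~$G$.

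On the tree side, I would decompose $T$ into manageable pieces. A standard tool is a Koml\'os-type cut decomposition: choose a cut-set $W_T \subset V(T)$ of size $O(1/\alpha)$ whose removal leaves components (``shrubs'') of size at most $\alpha k$, partitioned according to the two colour classes of~$T$. The shrubs are then embedded one by one, with the cut-vertices routed through designated seed vertices that control the interface between successive shrubs. This lets one embed the bulk of~$T$ using the regularity method, while reserving the few crucial cut-vertices of~$T$ to be placed at specifically chosen high-degree locations of~$G$.

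The embedding strategy then differs configuration by configuration. The simplest configurations are a dense regularized pair of clusters with sufficient size and minimum degree, where one embeds the two colour classes of~$T$ alternately via a greedy procedure supported by the standard regularity embedding lemmas. More involved configurations invoke an auxiliary matching structure between clusters, a small set of hubs (very high-degree vertices) through which to route certain branches of~$T$, or an expander subgraph $\Gexp$ into which entire subtrees can be planted. In each case one needs a tailored protocol for distributing shrubs across the available parts while respecting volume constraints; typically this can be phrased as a fractional/LP relaxation that assigns a mass of shrubs to each part, followed by a rounding step that is realized by a greedy regularity-based embedding.

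The main obstacle, and the heart of the paper, is handling the hybrid configurations that mix several regimes: combinations where some shrubs must be embedded inside a hub-neighbourhood while others go into a regular cluster and yet others into an expander, with only weak control on the interfaces between these regions. In such cases one must place the cut-vertices of~$T$ strategically so that, once they are fixed, each remaining shrub sees enough uncommitted space in its designated zone; verifying that the required load balancing is always feasible and robust to the small degree losses incurred by the regularization is the technical core of the argument. Once all ten embedding lemmas are proved, Theorem~\ref{thm:main} follows by a short case analysis on which of the ten configurations the structural theorem of paper~III delivers.
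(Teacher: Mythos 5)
Your proposal describes essentially the same strategy as the paper: reduce to the ten configurations from the third paper of the series, cut $T$ into a fine partition with a bounded set of cut-vertices and small shrubs, and then prove a dedicated embedding lemma for each configuration, routing cut-vertices through special vertices ($\HugeVertices$ or designated sets in $\colouringp{0}$) and placing shrubs via regularity, the avoiding set $\smallatoms$, or the expander $\Gexp$. The one point where your sketch diverges from the paper is in the load-balancing step: you propose an LP relaxation plus rounding, whereas the paper handles the balancing probabilistically, via the stochastic process $\Duplicate$ (Section~\ref{ssec:Duplicate}) and a Chernoff-type concentration bound (Lemma~\ref{lem:randomduplicate}), which is then invoked through Lemma~\ref{lem:randomshrubembedding}; both are plausible ways to realize the same idea of reserving roughly equal future space while committing shrubs greedily, so this is a difference of technique rather than a gap.
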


In the first paper~\cite{cite:LKS-cut0} of the series we exposed a decomposition technique (\emph{the sparse decomposition}), and in the second paper~\cite{cite:LKS-cut1}, we found a  rough combinatorial structure in the host graph $G$.  In~\cite{cite:LKS-cut2}, the third paper of the series, we refined this structure, and obtained one of ten possible~\emph{configurations}, at least one of which appears in any graph satisfying the hypotheses of Theorem~\ref{thm:main}. These configurations will be reintroduced in Section~\ref{FromEarlierPapersOfTheSeries}. All the configurations are built up from basic elements which are inherited from the sparse decomposition.
 
In the present paper, we will embed the tree $T$ in the host graph $G$ using the preprocessing from~\cite{cite:LKS-cut2}. Let us give a short outline of this procedure.
First, we cut the tree into smaller subtrees, connected by few vertices. This will be done in Section~\ref{ssec:cut}. 

We then develop techniques to embed the smaller subtrees  in different building blocks of the configurations.
Then, for each of the configurations, we show how to combine the embedding techniques for smaller trees to embed the whole tree $T$. All of this will be done in Section~\ref{sec:embed}. We mention that Section~\ref{ssec:embeddingOverview} contains a 5-page overview of the embedding procedures, with all the relevant ideas.

Finally, in Section~\ref{sec:proof}, we prove Theorem~\ref{thm:main}, with the help of the main results from the earlier papers~\cite{cite:LKS-cut0, cite:LKS-cut1, cite:LKS-cut2}.

\section{Notation and preliminaries}\label{sec:not3}

\subsection{General notation}
The set $\{1,2,\ldots, n\}$ of the first $n$ positive integers is
denoted by \index{mathsymbols}{*@$[n]$}$[n]$. We frequently employ indexing by many indices. We write
superscript indices in parentheses (such as $a^{(3)}$), as
opposed to notation of powers (such as $a^3$).
We sometimes use subscript to refer to
parameters appearing in a fact/lemma/theorem. For example
$\alpha_\PARAMETERPASSING{T}{thm:main}$ refers to the parameter $\alpha$ from Theorem~\ref{thm:main}.
We omit rounding symbols when this does not lead to confusion.

We use lower case Greek letters
to denote small positive constants. The exception is the letter~$\phi$ which is
reserved for embedding of a tree $T$ in a graph $G$, $\phi:V(T)\rightarrow
V(G)$. The capital Greek letters are used for large constants. 

We write \index{mathsymbols}{*VG@$V(G)$}$V(G)$ and \index{mathsymbols}{*EG@$E(G)$}$E(G)$ for the vertex set and edge set of a graph $G$, respectively. Further, \index{mathsymbols}{*VG@$v(G)$}$v(G)=|V(G)|$ is the order of $G$, and \index{mathsymbols}{*EG@$e(G)$}$e(G)=|E(G)|$ is its number of edges. If $X,Y\subset V(G)$ are two not necessarily disjoint sets of vertices we write \index{mathsymbols}{*EX@$e(X)$}$e(X)$ for the number of edges induced by $X$, and \index{mathsymbols}{*EXY@$e(X,Y)$}$e(X,Y)$ for the number of ordered pairs $(x,y)\in X\times Y$ for which $xy\in E(G)$. In particular, note that $2e(X)=e(X,X)$.

\index{mathsymbols}{*DEG@$\deg$}\index{mathsymbols}{*MINDEG@$\mindeg$}\index{mathsymbols}{*MAXDEG@$\maxdeg$}
For a graph $G$, a vertex $v\in V(G)$ and a set $U\subset V(G)$, we write
$\deg(v)$ and $\deg(v,U)$ for the degree of $v$, and for the number of
neighbours of $v$ in $U$, respectively. We write $\mindeg(G)$ for the minimum
degree of $G$, $\mindeg(U):=\min\{\deg(u)\::\: u\in U\}$, and
$\mindeg(V_1,V_2)=\min\{\deg(u,V_2)\::\:u\in V_1\}$ for two sets $V_1,V_2\subset
V(G)$. Similar notation is used for the maximum degree, denoted by $\maxdeg(G)$.
The neighbourhood of a vertex $v$ is denoted by
\index{mathsymbols}{*N@$\neighbour(v)$}$\neighbour(v)$. We set $\neighbour(U):=\bigcup_{u\in
U}\neighbour(u)$. The symbol $-$ is used
for two graph operations: if $U\subset V(G)$ is a vertex
set then $G-U$ is the subgraph of $G$ induced by the set
$V(G)\setminus U$. If $H\subset G$ is a subgraph of $G$ then the graph
$G-H$ is defined on the vertex set $V(G)$ and corresponds
to deletion of edges of $H$ from $G$.

\subsection{Regular pairs}

In this section we introduce the notion of regular pairs which is central for
Szemer\'edi's regularity lemma. We also list some simple properties of regular pairs that will be useful in our embedding process.

Given a graph $H$ and two disjoint
sets $U,W\subset V(H)$ the
\index{general}{density}\index{mathsymbols}{*D@$\density(U,W)$}\emph{density of the pair $(U,W)$} is defined as
$$\density(U,W):=\frac{e(U,W)}{|U||W|}\;.$$
Similarly, for a bipartite graph $G$ with colour
classes $U$, $W$ we talk about its \index{general}{bipartite
density}\emph{bipartite density}\index{mathsymbols}{*D@$\density(G)$} $\density(G)=\frac{e(G)}{|U||W|}$.
For a given $\varepsilon>0$, a pair $(U,W)$ of disjoint
sets $U,W\subset V(H)$ 
is called an \index{general}{regular pair}\emph{$\epsilon$-regular
pair} if $|\density(U,W)-\density(U',W')|<\epsilon$ for every
$U'\subset U$, $W'\subset W$ with $|U'|\ge \epsilon |U|$, $|W'|\ge
\epsilon |W|$. If the pair $(U,W)$ is not $\epsilon$-regular,
then we call it \index{general}{irregular}\emph{$\epsilon$-irregular}. A stronger notion than
regularity is that of super-regularity which we recall now. A pair $(A,B)$ is
\index{general}{super-regular pair}\emph{$(\epsilon,\gamma)$-super-regular} if it is
$\epsilon$-regular,  $\mindeg(A,B)\ge\gamma |B|$, and
$\mindeg(B,A)\ge \gamma |A|$. Note that then $(A,B)$ has bipartite density at least $\gamma$.

The following two well-known properties of
regular pairs will be useful.
\begin{fact}\label{fact:BigSubpairsInRegularPairs}
Suppose that $(U,W)$ is an $\varepsilon$-regular pair of density
$d$. Let $U'\subset U$ and $W'\subset W$ be sets of vertices with $|U'|\ge
\alpha|U|$ and $|W'|\ge \alpha|W|$, where $\alpha>\epsilon$.
Then the pair $(U',W')$ is a $2\varepsilon/\alpha$-regular pair of density at least
$d-\varepsilon$.
\end{fact}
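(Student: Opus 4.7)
The plan is to verify the two assertions of the fact separately, both by direct reduction to the $\varepsilon$-regularity of the original pair $(U,W)$.

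For the density bound, observe that since $\alpha > \varepsilon$ we have $|U'| \ge \alpha|U| > \varepsilon|U|$ and analogously $|W'| > \varepsilon|W|$. Hence $U'\subset U$ and $W'\subset W$ themselves qualify as witnesses in the definition of $\varepsilon$-regularity of $(U,W)$, giving $|\density(U',W') - d| < \varepsilon$, and in particular $\density(U',W') > d - \varepsilon$.

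For the regularity of $(U',W')$, take arbitrary subsets $U'' \subset U'$ and $W'' \subset W'$ with $|U''| \ge (2\varepsilon/\alpha)|U'|$ and $|W''| \ge (2\varepsilon/\alpha)|W'|$. The key observation is that these sizes are large \emph{relative to $U$ and $W$}: indeed
$$|U''| \ge \frac{2\varepsilon}{\alpha}\,|U'| \ge \frac{2\varepsilon}{\alpha}\cdot \alpha|U| = 2\varepsilon|U| \ge \varepsilon|U|,$$
and similarly $|W''| \ge \varepsilon|W|$. Applying $\varepsilon$-regularity of $(U,W)$ to the pair $(U'',W'')$ as well as to $(U',W')$ yields $|\density(U'',W'') - d| < \varepsilon$ and $|\density(U',W') - d| < \varepsilon$, so by the triangle inequality $|\density(U'',W'') - \density(U',W')| < 2\varepsilon \le 2\varepsilon/\alpha$ (using $\alpha \le 1$, which is forced by $|U'| \ge \alpha|U|$). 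This is exactly the $(2\varepsilon/\alpha)$-regularity condition for $(U',W')$.

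There is really no main obstacle here; this is a bookkeeping argument. The only thing to be careful about is that the size threshold in the definition is applied relative to the correct ground set — the factor of $2/\alpha$ in the regularity parameter arises precisely so that the subsets $U'',W''$ of $U',W'$ are automatically large enough to be permissible witnesses against the original pair $(U,W)$.
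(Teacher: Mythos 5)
Your proof is correct. The paper states Fact~\ref{fact:BigSubpairsInRegularPairs} as a well-known property of regular pairs and gives no proof, so there is nothing to compare against, but your argument is the standard one: the density bound follows because $U',W'$ are themselves admissible witnesses against $(U,W)$, and the regularity of $(U',W')$ follows because any $(2\varepsilon/\alpha)$-large subsets of $U',W'$ are automatically $\varepsilon$-large in $U,W$, with the factor $2$ absorbing the triangle-inequality loss.
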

\begin{fact}\label{fact:manyTypicalVertices}
	Suppose that $(U,W)$ is an $\varepsilon$-regular pair of density
	$d$. Then all but at most $\epsilon|U|$ vertices $v\in U$ satisfy
	$\deg(v,W)\ge (d-\epsilon)|W|$.
\end{fact}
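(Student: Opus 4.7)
The plan is to prove the contrapositive-style statement by a direct application of the regularity definition. I would let $B := \{v\in U\::\:\deg(v,W) < (d-\epsilon)|W|\}$ denote the set of "atypical" vertices, and aim to show $|B| \leq \epsilon|U|$.

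Suppose for contradiction that $|B| > \epsilon|U|$. Then $B$ is a valid witness set on the $U$-side for the regularity condition, since $|B| \geq \epsilon|U|$, and taking $W' := W$ on the other side trivially satisfies $|W'| \geq \epsilon|W|$. I would now compute the density of the pair $(B,W)$ directly from the degree bound: since every $v\in B$ contributes fewer than $(d-\epsilon)|W|$ edges to $W$, we have
\[
e(B,W) = \sum_{v\in B}\deg(v,W) < |B|\cdot (d-\epsilon)|W|,
\]
so $\density(B,W) < d-\epsilon$. On the other hand, $\density(U,W) = d$ by assumption, so $|\density(B,W) - \density(U,W)| > \epsilon$, contradicting the $\epsilon$-regularity of the pair $(U,W)$.

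There is no real obstacle here; this is a one-line consequence of the definition. The only mild care needed is to make sure the inequality on $|B|$ is strict in the "bad" direction so that the regularity condition applies (which requires $|B|\ge \epsilon|U|$, not strict), and to note that the conclusion holds with a non-strict $\ge$ in $\deg(v,W)\ge (d-\epsilon)|W|$, which matches taking complements of the strict defining inequality of $B$.
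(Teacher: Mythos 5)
Your proof is correct and is the standard argument for this fact. The paper itself states Fact~\ref{fact:manyTypicalVertices} without proof, labelling it a ``well-known property'' of regular pairs, so there is no paper proof to compare against; your argument (take the set $B$ of low-degree vertices, note that if $|B|>\epsilon|U|$ then $(B,W)$ is a legitimate witness pair whose density drops below $d-\epsilon$, contradicting $\epsilon$-regularity) is exactly the one a reader is expected to supply.
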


\subsection{LKS graphs}\index{mathsymbols}{*LKSgraphs@$\LKSgraphs{n}{k}{\eta}$}
\label{subsection:LKSgraphs}

Write
\index{mathsymbols}{*LKSgraphs@$\LKSgraphs{n}{k}{\eta}$}$\LKSgraphs{n}{k}{\alpha}$
for the class of all $n$-vertex graphs with at least
$(\frac12+\alpha)n$ vertices of degrees at least
$(1+\alpha)k$. Write \index{mathsymbols}{*Trees@$\treeclass{m}$}$\treeclass{m}$ for the class of all trees on $m$ vertices. With this notation, Conjecture~\ref{conj:LKS} states that every graph in $\LKSgraphs{n}{k}{0}$ contains every tree from $\treeclass{k+1}$.

Define \index{mathsymbols}{*LKSmingraphs@$\LKSmingraphs{n}{k}{\eta}$}
 $\LKSmingraphs{n}{k}{\eta}$ as the set 
of all graphs $G\in \LKSgraphs{n}{k}{\eta}$ that are  edge-minimal in $\LKSgraphs{n}{k}{\eta}$. Write \index{mathsymbols}{*S@$\smallvertices{\eta}{k}{G}$}$\smallvertices{\eta}{k}{G}$ for the set of all vertices of $G$ that have degree less than $(1+\eta) k$, and set \index{mathsymbols}{*L@$\largevertices{\eta}{k}{G}$}$\largevertices{\eta}{k}{G}:=V(G)\setminus \smallvertices{\eta}{k}{G}$.

\begin{definition}\label{def:LKSsmall}
Let \index{mathsymbols}{*LKSsmallgraphs@$\LKSsmallgraphs{n}{k}{\eta}$}$\LKSsmallgraphs{n}{k}{\eta}$ be the class of those graphs $G\in\LKSgraphs{n}{k}{\eta}$ for which we have the following three properties:
\begin{enumerate}
   \item All the neighbours of every vertex $v\in V(G)$ with $\deg(v)>\lceil(1+2\eta)k\rceil$ have degrees at most $\lceil(1+2\eta)k\rceil$.\label{def:LKSsmallA}
   \item All the neighbours of every vertex of $\smallvertices{\eta}{k}{G}$
    have degree exactly $\lceil(1+\eta)k\rceil$. \label{def:LKSsmallB}
   \item We have $e(G)\le kn$.\label{def:LKSsmallC}
\end{enumerate}
\end{definition}

\section{Trees}\label{ssec:cut}
In this section we will show how to partition any given tree into small subtrees, connected by only a few vertices; this is what we call an {\it $\ell$-fine partition}. This notion is essential for our proof of
Theorem~\ref{thm:main},  as we can embed these small subtrees one at a time.

Similar but simpler tree-cutting procedures were used earlier for  the dense case of the
Loebl--Koml\'os--S\'os Conjecture~\cite{AKS95,HlaPig:LKSdenseExact,PS07+,Z07+}. There, the small trees were embedded in regular pairs of a
regularity lemma decomposition of the host graph $G$. Since here, we use the sparse decomposition instead,  we had to take more care when cutting the tree. (In particular, features~\eqref{2seeds},~\eqref{short},~\eqref{ellfine:separatedinternalshrubs} of Definition~\ref{ellfine} are needed for the more complex setting here.)

\smallskip
If $T$ is a tree and $r\in V(T)$, then the pair $(T,r)$ is a \index{general}{rooted
tree}\emph{rooted tree} with root $r$. We write 
\index{mathsymbols}{*Veven@$\Veven(T,r)$}\index{mathsymbols}{*Vodd@$\Vodd(T,r)$}$\Vodd(T,r)\subset V(T)$ for the set of
vertices of $T$ of odd distance from $r$. $\Veven(T,r)$ is defined analogously.
Note that $r\in\Veven(T,r)$. The distance between two vertices $v_1$ and $v_2$ in a tree is denoted by
\index{mathsymbols}{*DIST@$\dist(v_1,v_2)$}$\dist(v_1,v_2)$.

We start with a simple well-known fact about the number of leaves in a tree. For completeness we include a proof.
\begin{fact}\label{fact:treeshavemanyleaves3vertices}
Let $T$ be a tree with $\ell$ vertices of degree at
least three. Then $T$ has at least $\ell+2$ leaves.
\end{fact}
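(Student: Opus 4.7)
The plan is to use a standard double-counting of degrees in the tree, based on the fact that a tree on $n$ vertices has exactly $n-1$ edges. Let $n:=v(T)$, let $L$ denote the number of leaves of $T$, let $\ell$ denote the number of vertices of degree at least $3$, and let $m$ denote the number of vertices of degree exactly $2$. Since every non-leaf, non-degree-$2$ vertex has degree at least $3$, we have the partition $n=L+m+\ell$.

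Next I would apply the handshake lemma. Every vertex contributes its degree to the sum $\sum_{v\in V(T)}\deg(v)=2e(T)=2(n-1)$. Estimating each leaf's contribution by $1$, each degree-$2$ vertex's contribution by $2$, and each high-degree vertex's contribution by at least $3$, I get
\begin{equation*}
2n-2 \;=\; \sum_{v\in V(T)}\deg(v) \;\ge\; L + 2m + 3\ell.
\end{equation*}

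Now I would substitute $m = n - L - \ell$ from the partition identity, which yields
\begin{equation*}
2n-2 \;\ge\; L + 2(n-L-\ell) + 3\ell \;=\; 2n - L + \ell,
\end{equation*}
and rearranging gives $L\ge \ell+2$, as required.

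There is no real obstacle here: the only thing to be a little careful about is the trivial case $\ell=0$, where the claim reduces to the well-known fact that every tree with at least one edge has at least two leaves (and for a single-vertex tree the statement $L\ge 2$ fails, so implicitly $T$ is assumed to have $n\ge 2$; the above calculation already handles $n\ge 2$ uniformly).
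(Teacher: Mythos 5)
Your proof is correct and follows essentially the same route as the paper: both partition the vertices by degree (leaves, degree two, degree at least three) and apply the handshake lemma together with $e(T)=v(T)-1$ to derive the bound. The only difference is cosmetic notation and that you spell out the algebraic rearrangement in slightly more detail.
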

\begin{proof}
Let $D_1$ be the set of leaves, $D_2$ the set of vertices
of degree two and $D_3$ be the set of vertices of degree
of at least three. Then
$$2(|D_1|+|D_2|+|D_3|)-2=2v(T)-2=2e(T)=\sum_{v\in
V(T)}\deg(v)\ge |D_1|+2|D_2|+3|D_3|\;,$$
and the statement follows.
\end{proof}

Let $T$ be a tree rooted at $r$, inducing the partial order $\preceq$\index{mathsymbols}{*@$\preceq$} on $V(T)$ (with $r$ as the minimal element).
 If $a\preceq b$ and $ab\in E(T)$ then we say that $b$ is a \index{general}{child}{\em child of} $a$ and $a$ is the \index{general}{parent}{\em parent of} $b$.
\index{mathsymbols}{*Ch@$\children(v)$}$\children(a)$ denotes the set of children of $a$,
and the parent of a vertex $b\not=r$ is denoted
\index{mathsymbols}{*Par@$\parent(v)$}$\parent(b)$. For a set $U\subset V(T)$ write
\index{mathsymbols}{*Par@$\parent(U)$}$\parent(U):=\{\parent(u):u\in U\setminus\{r\}\}\setminus U$ and \index{mathsymbols}{*Ch@$\children(U)$}$\children(U):=\bigcup_{u\in
U}\children(u)\setminus U$.

The next simple fact has already appeared in~\cite{Z07+,HlaPig:LKSdenseExact} (and most likely in some more classic texts as well). Nevertheless, for completeness we give a proof here.
\begin{fact}\label{fact:treeshavemanyleaves}
Let $T$ be a tree with colour-classes $X$ and $Y$, and
$v(T)\ge 2$. Then the set $X$ contains at least
$|X|-|Y|+1$ leaves of $T$. 
\end{fact}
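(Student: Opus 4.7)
The plan is to reduce everything to a root-counting argument. Since $v(T)\ge 2$ and $T$ is connected, both colour-classes are non-empty; in particular, I would pick any vertex $r\in Y$ and root $T$ at $r$. Under this root, every vertex of $X$ lies at odd distance from $r$, so every $x\in X$ has a parent (which must lie in $Y$), and all children of $x$ lie in $Y$.

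Next I would split $X$ into the set $L_X$ of leaves of $T$ that belong to $X$, and the set $I_X:=X\setminus L_X$ of non-leaf vertices in $X$. A non-leaf vertex $x\in I_X$ has degree at least $2$, but it already has a parent, so it has at least one child. The key observation is that in a tree, distinct vertices cannot share a child, so the map sending each $x\in I_X$ to an arbitrarily chosen child yields an injection from $I_X$ into $Y\setminus\{r\}$. This immediately gives
$$|I_X|\le |Y|-1,$$
and consequently $|L_X|=|X|-|I_X|\ge |X|-|Y|+1$, which is exactly the bound claimed.

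The only thing to check is that the statement is vacuous or meaningful in the right regime: if $|X|\le |Y|-1$ the bound is non-positive and trivially holds, while for $|X|\ge |Y|$ the argument above applies with the choice $r\in Y$. I do not see any real obstacle here; the main substance is just the injection $I_X\hookrightarrow Y\setminus\{r\}$, which is the standard "root in the smaller/desired class and count children" trick that already appeared in the references cited before the statement.
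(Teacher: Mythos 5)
Your proof is correct and is essentially the same argument the paper gives: root at a vertex $r\in Y$, map each internal vertex of $X$ to a child (these children are distinct and lie in $Y\setminus\{r\}$), deduce $|I_X|\le|Y|-1$, and conclude.
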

\begin{proof} Root $T$
at an arbitrary vertex $r\in Y$.
Let $I$ be the set of internal vertices of $T$ that belong to $X$. Each $v\in I$ has at least one immediate successor in the tree order induced by $r$.
These successors are distinct for distinct $v\in I$ and all lie in $Y\setminus\{r\}$. Thus
$|I|\le |Y|-1$. The claim follows.
\end{proof}

We say that a
tree $T'\subset T$ is \index{general}{induced tree}{\em induced} by
a vertex $x\in V(T)$ if $V(T')$ is the up-closure of $x$ in $V(T)$, i.e., $V(T')=\{v\in V(T)\: :\: x \preceq
v\}$. We then write \index{mathsymbols}{*T@$T(r,\uparrow x)$}$T'=T(r,\uparrow x)$, or $T'=T(\uparrow x)$, if the root
is obvious from the context and call $T'$ an \index{general}{end subtree}{\em end
subtree}. Subtrees of $T$ that are not end subtrees are called  \index{general}{internal
subtree}{\em internal subtrees}.

Let $T$ be a tree
rooted at $r$ and let $T'\subset T$ be a subtree with $r\not \in V(T')$. The
\index{general}{seed}{\em seed of
$T'$} is the $\preceq$-maximal  vertex $x\in V(T)\setminus V(T')$ for which
$x\preceq v$ for all $v\in V(T')$. We write \index{mathsymbols}{*Seed@$\seed$} $\seed(T')=x$. A~\emph{fruit}\index{general}{fruit} in a rooted tree $(T,r)$ is any vertex $u\in V(T)$ whose distance from $r$ is even and at least four.

\smallskip

We can now state the most important definition of this section, that of a fine partition of a tree. The idea behind this definition is that it will be easier to embed the tree if we do it piecewise. So we partition the tree~$T$ into small subtrees ($\shrubA\cup\shrubB$ in~\eqref{decompose} below) of bounded size (see~\eqref{small}), and a few cut-vertices (sets $W_A$ and $W_B$ in~\eqref{decompose} below). These cut-vertices lie between the subtrees. The partition of the cut-vertices into $W_A$ and $W_B$ is inherited from the bipartition of $T$ (see~\eqref{parity}). The partition $\shrubA$ and $\shrubB$ is given by the position (in $W_A$ or in $W_B$) of the cut-vertex (i.e., seed) of the small subtree (see~\eqref{nice} and~\eqref{cut:precede}).

It is of crucial importance that there are not too many seeds (cf.~\eqref{few}), as they will have to be embedded in special sets. Namely, the set that will accommodate $W_A$ needs to be well connected both to the set reserved for $W_B$, and to the area of the graph considered for embedding the subtrees from $\shrubA$. Another intuitively desirable property is \eqref{Bend}, as the internal subtrees will be more difficult to embed than the end subtrees. This is because they are adjacent to two seeds from $W_A\cup W_B$ and after embedding (a part) of the internal subtree, we need to come back to the sets reserved for $W_A\cup W_B$ to embed the second seed.

\begin{figure}[ht]
	\centering 
	\includegraphics{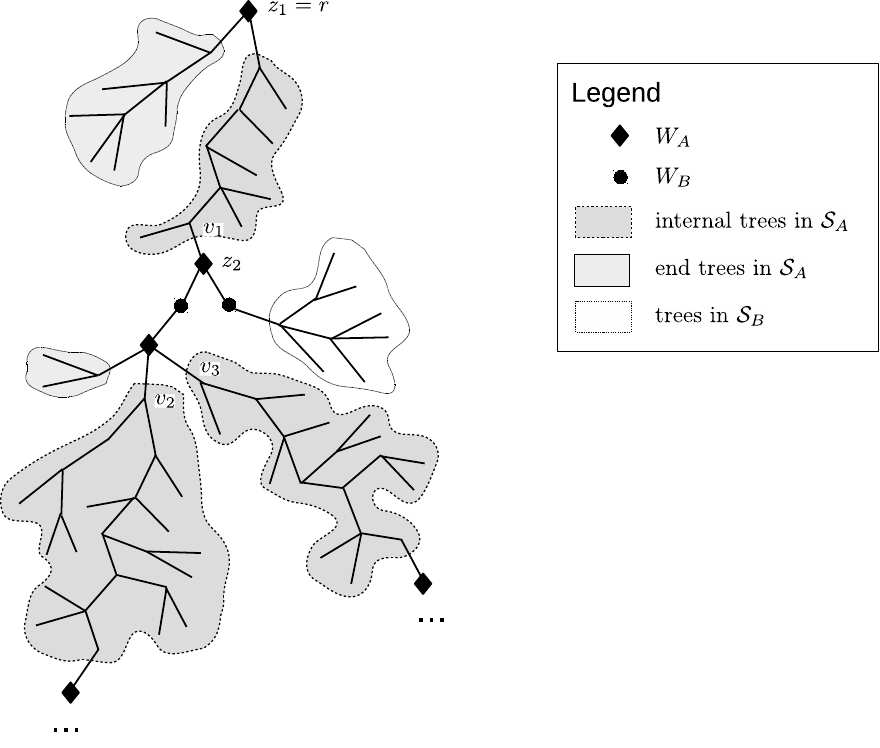}
	\caption{A part of an $\ell$-fine partition of a tree. Some of the properties from Definition~\ref{ellfine} are illustrated. The parities obey \eqref{parity} and \eqref{nice}. The distance between $z_1$ and $z_2$ is at least 6 as required in \eqref{short}. The distance between $v_1$ and $v_2$ is more than 2 as required in~\eqref{ellfine:separatedinternalshrubs} (since the corresponding subtrees precede one another). On the other hand,  \eqref{ellfine:separatedinternalshrubs} does not require the distance between $v_2$ and $v_3$ to be more than 2.}
	\label{fig:treedef}
\end{figure}

\begin{definition}[\bf $\ell$-fine partition]\label{ellfine}
Let $T\in \treeclass{k}$ be a tree rooted at~$r$. An
\index{general}{fine partition}{\em $\ell$-fine partition of~$T$} is a
quadruple $(W_A,W_B, \shrubA,
\shrubB)$, where $W_A,W_B\subseteq V(T)$ and $\shrubA$ and $\shrubB$ are families of subtrees of~$T$ such that
\begin{enumerate}[(a)]
\item  the three sets $W_A$, $W_B$ and $\{V(T^*)\}_{T^*\in\shrubA\cup
\shrubB}$ partition $V(T)$ (in particular, the trees in $T^*\in\shrubA\cup
\shrubB$ are pairwise vertex disjoint),\label{decompose}
\item $r\in W_A\cup W_B$,\label{root}
\item $\max\{|W_A|,|W_B|\}\leq 336k/{\ell}$,\label{few}
\item for $w_1,w_2\in W_A\cup W_B$ the distance $\dist(w_1,w_2)$ is odd if and only if one of them lies in $W_A$ and the other one in $W_B$,\label{parity}
\item $v(T^*)\leq \ell$ for every tree $T^*\in \shrubA\cup
\shrubB$,\label{small}
\item $V(T^*)\cap \neighbour(W_B)=\emptyset$ for every $T^*\in
\shrubA$ and $V(T^*)\cap \neighbour(W_A)=\emptyset$ for every $T^*\in
\shrubB$,\label{nice}
\item each tree of $\shrubA\cup\shrubB$ has its seeds in
$W_A\cup W_B$,\label{cut:precede}
\item  $|\neighbour(V(T^*))\cap (W_A\cup W_B)|\le 2$ for each $T^*\in\shrubA\cup \shrubB$,\label{2seeds}
\item if $\neighbour(V(T^*))\cap (W_A\cup W_B)$ contains two distinct vertices $z_1$ and $z_2$ for some  $T^*\in\shrubA\cup
\shrubB$, then $\dist_T(z_1,z_2)\ge 6$,\label{short}
\item if $T_1,T_2\in\shrubA\cup\shrubB$ are two internal subtrees of $T$ such that $v_1\in T_1$ precedes $v_2\in T_2$ 
then $\dist_T(v_1,v_2)>2$,\label{ellfine:separatedinternalshrubs}
\item $\shrubB$ does not contain any internal tree of $T$, and\label{Bend}
\item $\sum_{\substack{T^*\in
\shrubA,\ T^*\textrm{end subtree of $T$}}}v(T^*)\ge\sum_{T^*\in \shrubB}v(T^*)\;\mbox{.}$
\label{Bsmall}
\end{enumerate}
\end{definition}
An example is given in Figure~\ref{fig:treedef}.

\begin{remark}\label{rem:fruits}
Suppose that $(W_A,W_B, \shrubA,\shrubB)$ is an $\ell$-fine partition of a tree $(T,r)$, and suppose that $T^*\in\shrubA\cup\shrubB$ is such that
$|V(T^*)\cap \neighbour(W_A\cup W_B)|=2$. Let us root $T^*$ at the neighbour $r_1$ of its seed, and
let $r_2$ be the other vertex of $V(T^*)\cap \neighbour(W_A\cup W_B)$. Then
\eqref{parity},~\eqref{nice}, and~\eqref{short} imply that $r_2$ is a fruit in
$(T^*,r_1)$.
\end{remark}

The following is the main lemma of this section.

\begin{lemma}\label{lem:TreePartition}
Let $T\in \treeclass{k}$ be a tree rooted at $r$ and let $\ell\in
[k]$. Then $T$ has an $\ell$-fine partition.
\end{lemma}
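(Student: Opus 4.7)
The plan is to build the partition in stages, each enforcing more of the properties of Definition~\ref{ellfine}.

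First, I would obtain a \emph{coarse cut set} $W^{(0)}$. Root $T$ at $r$ and traverse in post-order, maintaining at each vertex $v$ a tally of those descendants of $v$ not yet allocated to an output subtree. Whenever the tally at $v$ (together with $v$ itself) would exceed $\ell/c$ for a suitable constant $c$, declare $v$ a cut vertex, release the currently tallied descendants as small output subtrees of size $\le\ell$, and reset the tally. A standard greedy argument then gives $|W^{(0)}|=O(k/\ell)$ and all components of $T-W^{(0)}$ of size $\le\ell$. With a modest tweak to the traversal rule one can moreover arrange that whenever a subtree becomes internal during the process, its upper seed sits at even distance from $r$, which will yield~\eqref{Bend} for free.

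Second, I would enforce parity and separation. Split $W^{(0)}$ into $W_A\cup W_B$ by the parity of distance from $r$, with $r\in W_A$. For every pair $w_1,w_2$ in the same class whose distance in $T$ is odd, insert one intermediate vertex of the $w_1$--$w_2$ path into the opposite class; this enforces~\eqref{parity} at a constant-factor cost in $|W|$. Next, for each small subtree carrying two attachment points at distance $<6$, and for each pair of internal subtrees at distance $\le 2$, add $O(1)$ further cut vertices (in the appropriate parity class, using the previous trick) to split the offender. Each original small subtree triggers $O(1)$ additional cuts, so $|W|$ remains $O(k/\ell)$. Assign each small subtree $T^*$ to $\shrubA$ or $\shrubB$ according to whether $\seed(T^*)\in W_A$ or $W_B$; properties \eqref{decompose}--\eqref{cut:precede} and \eqref{2seeds}--\eqref{ellfine:separatedinternalshrubs} then hold immediately.

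Finally, I would verify~\eqref{Bend} and~\eqref{Bsmall}. By the tweak made in the first stage every internal subtree has its seed in $W_A$, giving~\eqref{Bend}. For~\eqref{Bsmall}, contract each small subtree of $T$ to a single vertex: the end subtrees correspond to the leaves of the resulting tree on $W_A\cup W_B$ plus the subtree-vertices, and by Fact~\ref{fact:treeshavemanyleaves3vertices} these leaves dominate the higher-degree vertices in number. A weighted version of this count, together with the freedom to promote a few end subtrees from $\shrubB$ into $\shrubA$ via local seed shifts (each costing at most one additional cut vertex), delivers~\eqref{Bsmall}.

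The main obstacle is the clash between the parity rigidity of~\eqref{parity} and the asymmetric condition~\eqref{Bend}: since the parity of a seed is forced by its distance from $r$, the slack to place all internal-subtree seeds in $W_A$ must be engineered already during the coarse cutting, before the parity split is even made. Once that is handled, the remaining properties are essentially independent of each other, each inflating $|W|$ by a constant factor, which together with the starting bound $|W^{(0)}|=O(k/\ell)$ accounts for the explicit constant $336$ in~\eqref{few}.
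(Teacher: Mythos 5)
Your early stages --- greedy coarse cut, parity split of $W$ (which in fact gives~\eqref{parity} automatically, since two vertices with the same distance parity from $r$ lie at even distance from each other in a tree, so the intermediate-vertex insertion you describe is a no-op), and $O(1)$ local repairs for~\eqref{2seeds}, \eqref{ellfine:separatedinternalshrubs}, \eqref{nice}, \eqref{short} --- match the paper. The genuine gap is in~\eqref{Bend} and~\eqref{Bsmall}: you hardcode $r\in W_A$ and try to steer all internal-subtree seeds to even distance already during the coarse cut, and then argue~\eqref{Bsmall} by a leaf count plus ``a few'' local seed shifts. Neither part is sound. Condition~\eqref{Bsmall} is a weight inequality, not a count, and Fact~\ref{fact:treeshavemanyleaves3vertices} bounds the number of leaves of a contracted tree, not the total order of the corresponding end shrubs. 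Concretely, let $T$ be a broom: a path of odd length $m\le\ell$ from $r$ to a vertex $c$ carrying $k-m-1\gg\ell$ pendant leaves. Then $c$ must be a cut vertex, the leaves become end shrubs with seed $c$ at odd distance, $\sum_{T^*\in\shrubB}v(T^*)=\Theta(k)$, yet $\shrubA$ contains no end shrub at all (its one member is the internal path segment). No bounded number of seed shifts repairs this: moving the cut to the parent of $c$ recreates a component of order exceeding $\ell$, and cutting off each leaf separately costs $\Theta(k)$ extra cut vertices.

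What you flag as the ``main obstacle'' is exactly where the paper diverges. It does not commit in advance to which bipartition class of $T$ is labelled $A$; after building a cut set $W_5$ satisfying everything except~\eqref{Bend},\eqref{Bsmall}, it chooses $A$ to be the side carrying at least half the total end-subtree weight --- a free swap, since~\eqref{root} only requires $r\in W_A\cup W_B$, not $r\in W_A$ --- and only then adds, for each internal component $T^*$ with seed in $B$, the at most two vertices of $T^*$ adjacent to $W_5$. This forces every surviving internal subtree to have its seed in $A$ and can only create new end shrubs on the $A$ side, so both~\eqref{Bend} and~\eqref{Bsmall} follow from the earlier weight inequality. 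In the broom, the swap alone puts $c\in A$ and the problem vanishes. This post-hoc choice of label, rather than a parity-aware coarse cut, is the load-bearing idea your proposal is missing.
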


\begin{proof}
First we shall use an inductive construction to get candidates for $W_A$, $W_B$,
$\shrubA$ and $\shrubB$, which we shall modify later on, so that they satisfy
all the  conditions required by Definition~\ref{ellfine}.

Set $T_0:=T$. Now, inductively for $i\ge 1$ choose a $\preceq$-maximal vertex
$x_i\in V(T_{i-1})$ with the property that $v(T_{i-1}(\uparrow x_i))>\ell$. We set $T_i:=T_{i-1}-(V(T_{i-1}(\uparrow x_i))\setminus
\{x_i\})$. If, say at step $i=i_\textrm{end}$, no such
$x_{i}$ exists, then $v(T_{i-1})\le \ell$. In that case,
set $x_i:=r$, set $W_1:=\{x_i\}_{i=1}^{i_\textrm{end}}$ and terminate. 
The fact that $v(T_{i-1}- V(T_i))\geq \ell$ for each $i<i_\textrm{end}$ implies
that
\begin{equation}\label{X}
|W_1|-1=i_\textrm{end}-1\le k/\ell\;.
\end{equation}

 Let $\mathcal{C}$  be the set of all
components of the forest $T-W_1$.
Observe that by the choice of the $x_i$ each
$T^*\in\mathcal C$ has order at most $\ell$. 

Let $A$ and $B$ be the colour classes of $T$ such that $r\in A$. Now, choosing
$W_A$ as $W_1\cap A$ and $W_B$ as $W_1\cap B$ and dividing $\mathcal C$
adequately into sets $\shrubA$ and $\shrubB$ would yield a quadruple that
satisfies conditions~\eqref{decompose}, \eqref{root}, \eqref{few},
\eqref{parity}, \eqref{small} and~\eqref{cut:precede}. To ensure the remaining properties,  we shall refine our tree partition by adding
more vertices to $W_1$, thus making the trees in $\shrubA\cup\shrubB$ smaller. In doing so, we have to be careful not to end up
violating~\eqref{few}. We shall enlarge the set of cut vertices in several
steps, accomplishing sequentially, in this order, also properties~\eqref{2seeds}, \eqref{ellfine:separatedinternalshrubs},
\eqref{nice},
\eqref{short}, and in the last step at the same time~\eqref{Bend}
and~\eqref{Bsmall}. It would be easy to check that during these steps none of the
previously established properties is lost, so we will not explicitly check them, except for~\eqref{few}.

For condition~\eqref{2seeds}, first define $T'$ as the subtree of $T$ that contains all vertices of $W_1$ and all vertices that lie on paths in $T$ which have both endvertices in $W_1$. Now, if a subtree $T^*\in\mathcal C$ does not already satisfy~\eqref{2seeds} for $W_1$, then $V(T^*)\cap V(T')$ must contain some  vertices of degree at least three. We will add the set $Y(T^*)$ of all these vertices to $W_1$. Formally, let $Y$ be the union of the sets $Y(T^*)$ over all $T^*\in\mathcal C$, and set $W_2:=W_1\cup Y$. Then the components of $T-W_2$ satisfy~\eqref{2seeds}.

Let us bound the size of the set $W_2$. For each $T^*\in\mathcal C$, note that by Fact~\ref{fact:treeshavemanyleaves3vertices} for $T^*\cap T'$, we know that $|Y(T^*)|$ is at most the number of leaves of $T^*\cap T'$ (minus two). On the other hand, each leaf of $T^*\cap T'$ has a child in $W_1$ (in $T$). As these children are distinct for different trees $T^*\in\mathcal C$, we find that $ |Y|\leq |W_1|$ and thus
\begin{equation}\label{Y}
 |W_2|\leq 2|W_1|\;.
\end{equation}

Next, for condition~\eqref{ellfine:separatedinternalshrubs}, observe that by setting $W_3:=W_2
\cup \parent_T(W_2)$ the components of $T-W_3$ fulfill~\eqref{ellfine:separatedinternalshrubs}. We have 
\begin{equation}\label{eq:YY}
|W_3|\le 2|W_2|\overset{\eqref{Y}}\le 4|W_1|\;.
\end{equation} 

In order to ensure condition~\eqref{nice}, let $R^*$ be the set of the roots ($\preceq$-minimal vertices) of those components $T^*$ of  $T-W_3$ that contain neighbours of both colour classes of $T$. Setting $W_4:=W_3\cup R^*$ we see that~\eqref{nice} is satisfied for $W_4$. Furthermore, as for each vertex in $R^*$ there is a distinct member of $W_3$ above it in the order on $T$, we obtain that
\begin{equation}\label{W3}
|W_4|\leq 2|W_3|\overset{\eqref{eq:YY}}\leq 8|W_1|.
\end{equation}

Next, we shall aim for a stronger version of property~\eqref{short}, namely,
\begin{enumerate}
\item[(\ref{short}')] if $\neighbour(V(T^*))\cap (W_A\cup
W_B)=\{z_1,z_2\}$ with $z_1\neq z_2$ for some  $T^* \in\shrubA\cup
\shrubB$, then $\dist_T(z_1,z_2)\ge 8$.\label{shortBetter}
\end{enumerate}
The reason for requiring this strengthening is that later we might introduce additional cut vertices which would
``shorten $T^*$ by two''.

Consider a component $T^*$ of $ T-W_4$ which is an
internal tree of $T$. If $W_4$ contains two distinct neighbours $z_1$ and $z_2$ of $T^*$ such that
$\dist_{T}(z_1,z_2)< 8$, then we call $T^*$ \emph{short}. Observe that there are at most
$|W_4|$ short trees, because each of these trees has a unique vertex from $W_4$
above it. Let $Z(T^*)\subset V(T^*)$ be the
vertices on the path from $z_1$ to $z_2$ (excluding the end vertices). Then
$|Z(T^*)|\le 7$. Letting $Z$ be the union of the sets $Z(T^*)$ over all short trees in
$T-W_4$, and set $W_5:=W_4\cup Z$, we obtain that
\begin{equation}\label{W4}
|W_5|\leq |W_4|+7|W_4|\overset{\eqref{W3}}\leq 64|W_1|\overset{\eqref{X}}\leq 64 k/\ell+1.
\end{equation}
We still need to ensure~\eqref{Bend} and~\eqref{Bsmall}. To this end, consider 
the set $\mathcal C'$ of all components of $T-W_5$. Set $\mathcal C'_A:=\{T^*\in \mathcal C':\seed(T^*)\in A\}$ and set $\mathcal C'_B:=\mathcal C'\setminus \mathcal C'_A$. We
assume that 
\begin{equation}\label{eq:AssumeEndTrees}
\sum_{T^*\in \mathcal C'_A\ : \ T^*\textrm{ end tree of $T$}} v(T^*)\ge \sum_{T^*\in \mathcal C'_B\ : \ T^*\textrm{ end tree of $T$}}
v(T^*)\;,
\end{equation}
 as otherwise we can simply swap $A$ and $B$.
Now, for each $T^*\in \mathcal C'_B$ that is not an end subtree of $T$, set $X(T^*):=V(T^*)\cap\neighbour_{T}(W_5)$. Let $X$ be the union of all such sets $X(T^*)$. Observe that 
\begin{equation}\label{XW5}
|X|\le 2|W_5\cap B|\le 2|W_5|.
\end{equation} 
For $W:=W_5 \cup X$,  all internal trees of $T-W$ have their seeds in $A$. This will guarantee~\eqref{Bend}, and, together with~\eqref{eq:AssumeEndTrees}, also~\eqref{Bsmall}.

 Finally, set $W_A:=W\cap A$ and $W_B:=W\cap B$, and
let $\shrubA$ and $\shrubB$ be the sets of those components of $T-W$ that have
their seeds in $W_A$ and $W_B$, respectively. By construction,
$(W_A,W_B,\shrubA,\shrubB)$ has all the properties of an $\ell$-fine partition. In particular, for~\eqref{few}, we find with~\eqref{W4} and~\eqref{XW5} that $|W|\leq |W_5|+2|W_5\cap B|\leq 336 k/\ell$.
\end{proof}

For an $\ell$-fine partition $(W_A,W_B,\shrubA,\shrubB)$ of a rooted tree $(T,r)$, the
trees $T^*\in \shrubA\cup\shrubB$ are called \index{general}{shrub}{\em shrubs}. An
\emph{end shrub} is a shrub which is an end subtree. An \emph{internal shrub} is a shrub which is an internal subtree.  A \emph{\kknnaaggNOSPACE}\index{general}{\kknnaaggNOSPACE} is a component of the forest $T[W_A\cup W_B]$. Suppose that $T^*\in \shrubA$ is an internal shrub, and $r^*$ is its $\preceq_r$-minimal vertex. Then $T^*-r^*$ contains a unique component with a vertex from $\neighbour_T(W_A)$. We call this component \index{general}{subshrub}\index{general}{principal subshrub}\emph{principal subshrub}, and the other components \index{general}{peripheral subshrub}\emph{peripheral subshrubs}.

\begin{remark}\label{rem:internalVSend}
\begin{enumerate}[(i)]
\item 
In our proof of Theorem~\ref{thm:main}, we shall apply
Lemma~\ref{lem:TreePartition} to a tree $T_\PARAMETERPASSING{T}{thm:main}\in\treeclass{k}$. The number
$\ell_\PARAMETERPASSING{L}{lem:TreePartition}$ will  be linear in $k$, and
thus~\eqref{few} of Definition~\ref{ellfine} tells us that the size of the sets $W_A$ and
$W_B$ is bounded by an absolute constant (depending on $\alpha_\PARAMETERPASSING{T}{thm:main}$ only).
\item\label{it:fewinternaltrees}
Each internal tree in $\shrubA$ of an $\ell$-fine partition has a unique vertex from $W_A$ above it. Thus with $\ell_\PARAMETERPASSING{L}{lem:TreePartition}$ as above also the number of internal trees in $\shrubA$ is bounded by an absolute constant. This need not be the case for the number of end trees. For instance, if $(T_\PARAMETERPASSING{T}{thm:main},r)$ is a star with $k-1$ leaves and rooted at its centre $r$ then $W_A=\{r\}$ while the $k-1$ leaves of $T_\PARAMETERPASSING{T}{thm:main}$ form the end shrubs in $\shrubA$.
\end{enumerate} 
\end{remark}

\begin{definition}[\bf ordered skeleton]\index{general}{ordered
skeleton} We say that the sequence $\big(X_0,X_1,\ldots, X_m\big)$
is an \emph{ordered skeleton} of the $\ell$-fine partition
$(W_A,W_B,\shrubA,\shrubB)$ of a rooted tree $(T,r)$ if 
\begin{itemize}
 \item $X_0$ is a \kknnaagg and contains~$r$, and all other $X_i$ are either \kknnaaggss or shrubs, 
 \item $V(\bigcup_{i\le m}X_i)=V(T)$, and
 \item for each $i=1,\ldots,m$, the subgraph formed by
  $X_0\cup X_1\cup\ldots\cup X_i$  is connected in $T$.
  \end{itemize}
\end{definition}
Directly from Definition~\ref{ellfine} we get:
\begin{lemma}\label{lem:orderedskeleton}
Any $\ell$-fine
partition of any rooted tree has  an ordered skeleton. 
\end{lemma}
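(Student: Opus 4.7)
The plan is to form a natural auxiliary ``quotient'' graph $H$ on the pieces of the $\ell$-fine partition and then read off the ordered skeleton from a search of $H$. More precisely, let $\mathcal{P}$ be the family consisting of all hubs (the components of $T[W_A\cup W_B]$) together with all shrubs $T^*\in \shrubA\cup \shrubB$. By property~\eqref{decompose} of Definition~\ref{ellfine}, $\mathcal{P}$ partitions $V(T)$. Define $H$ to be the graph with vertex set $\mathcal{P}$ in which two pieces $P,P'\in\mathcal{P}$ are adjacent iff there is an edge of $T$ with one endpoint in $P$ and one endpoint in $P'$.

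First I would observe that $H$ is connected. Indeed, given any two pieces $P,P'\in\mathcal{P}$, pick vertices $u\in P$, $u'\in P'$ and consider the (unique) $u$-$u'$ path in $T$; the sequence of pieces containing its consecutive vertices traces a walk from $P$ to $P'$ in $H$. Let $X_0$ be the unique piece in $\mathcal{P}$ containing the root $r$; by~\eqref{root}, $X_0$ is a hub.

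Next, I would take any spanning tree of $H$ rooted at $X_0$ and let $X_0,X_1,\dots,X_m$ be any enumeration of its vertices that is consistent with the tree order (for instance, a BFS or DFS order). By construction, for every $i\ge 1$ the piece $X_i$ is $H$-adjacent to some $X_j$ with $j<i$, which means that there is an edge of $T$ joining $X_i$ to $X_0\cup\cdots\cup X_{i-1}$; hence $X_0\cup\cdots\cup X_i$ is connected in $T$. The union covers $V(T)$ because $\mathcal{P}$ does. Finally, each $X_i$ with $i\ge 1$ is, by definition of $\mathcal{P}$, either a hub or a shrub, as required.

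There is essentially no obstacle here: the content is just that a partition of the vertex set of a connected graph into connected pieces yields a connected quotient graph, and any search ordering of a connected graph gives the required ``prefix-connected'' enumeration. The only small point to be careful about is to start the enumeration at the hub containing~$r$, which is guaranteed to be a hub by~\eqref{root}.
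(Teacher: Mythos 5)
Your proof is correct and is just a careful write‑out of the argument the paper calls ``direct from Definition~\ref{ellfine}'': the hubs and shrubs partition $V(T)$ (the hubs being the components of $T[W_A\cup W_B]$), the quotient graph $H$ on these pieces is connected because $T$ is, and a BFS/DFS order of $H$ started at the hub containing $r$ (a hub by~\eqref{root}) gives the prefix‑connected enumeration required. Nothing further to add.
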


\section{Necessary facts and notation from~\cite{cite:LKS-cut0,cite:LKS-cut1,cite:LKS-cut2}}\label{sec:fromearlier}

\subsection{Sparse decomposition}
We now shift our focus from preprocessing the tree to the host graph. This is where we build on results from the earlier papers in the series.
We first recall the notion of dense spots and related concepts introduced in~\cite{cite:LKS-cut0},~\cite{cite:LKS-cut1}, and~\cite{cite:LKS-cut2}.

\begin{definition}[\bf \index{general}{dense spot}$(m,\gamma)$-dense spot,
\index{general}{nowhere-dense}$(m,\gamma)$-nowhere-dense]\label{def:densespot}
Suppose that $m\in\NN$ and $\gamma>0$.
An \emph{$(m,\gamma)$-dense spot} in a graph $G$ is a non-empty bipartite sub\-graph  $D=(U,W;F)$ of  $G$ with
$\density(D)>\gamma$ and $\mindeg (D)>m$. We call $G$
\emph{$(m,\gamma)$-nowhere-dense} if it does not contain any $(m,\gamma)$-dense spot.
\end{definition}

\begin{definition}[\bf $(m,\gamma)$-dense
cover]\index{general}{dense cover}
Suppose that $m\in\NN$ and $\gamma>0$.
An \emph{$(m,\gamma)$-dense cover} of a
graph $G$ is a family $\DenseSpots$ of edge-disjoint
$(m,\gamma)$-dense
spots such that $E(G)=\bigcup_{D\in\DenseSpots}E(D)$.
\end{definition}

The proofs of the following facts can be found in~\cite{cite:LKS-cut1}.

\begin{fact}\label{fact:sizedensespot}
Let $(U,W;F)$ be a $(\gamma k,\gamma)$-dense spot in a
graph $G$ of maximum degree at most $\Omega k$. Then
$\max\{|U|,|W|\}\le \frac{\Omega}{\gamma}k.$
\end{fact}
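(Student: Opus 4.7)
The plan is a short double-counting argument combining the density of the dense spot with the global maximum degree bound on $G$. First I would note that by the definition of density and the fact that $D=(U,W;F)$ has $\density(D)>\gamma$, we have the edge count lower bound
\[
|F| \;=\; \density(D)\cdot|U||W| \;>\; \gamma|U||W|.
\]
On the other hand, every vertex of $U$ has degree in $D$ at most its degree in $G$, which is at most $\Omega k$. Summing over $U$ gives the edge count upper bound
\[
|F| \;=\; \sum_{u\in U}\deg_D(u) \;\le\; |U|\cdot\Omega k,
\]
and, analogously, $|F|\le |W|\cdot\Omega k$.

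Combining the lower bound on $|F|$ with the upper bound $|F|\le |U|\cdot\Omega k$ and cancelling $|U|>0$ (nonzero since $D$ is non-empty and has positive minimum degree) yields $|W|<\Omega k/\gamma$. Symmetrically, combining with $|F|\le |W|\cdot\Omega k$ yields $|U|<\Omega k/\gamma$. Thus $\max\{|U|,|W|\}\le \frac{\Omega}{\gamma}k$, as desired. The minimum degree assumption on $D$ is not actually needed here; density alone, together with the maximum degree of $G$, suffices. There is no real obstacle — this is essentially a one-line computation whose only subtlety is remembering that degrees in the dense spot are bounded by degrees in the ambient graph $G$.
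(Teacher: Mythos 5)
Your double-counting argument is correct and is the standard proof: bound $|F|$ from below by $\gamma|U||W|$ using the density, bound it from above by $|U|\Omega k$ (or $|W|\Omega k$) using the maximum degree of $G$, and cancel. The paper's proof (deferred to the second paper of the series) uses essentially this same one-line computation, and your observation that the minimum-degree hypothesis is not needed here is also accurate.
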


\begin{fact}\label{fact:boundedlymanyspots}
Let $H$ be a graph of maximum degree at most $\Omega k$, let $v\in V(H)$, and let $\DenseSpots$ be a family of edge-disjoint $(\gamma k,\gamma)$-dense spots. Then fewer than $\frac{\Omega}{\gamma}$ dense spots from $\DenseSpots$ contain $v$.
\end{fact}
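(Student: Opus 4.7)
The plan is to use a simple double-counting of edges at $v$, leveraging the two key features in play: the edge-disjointness of the dense spots in $\DenseSpots$, and the minimum-degree condition built into the definition of an $(m,\gamma)$-dense spot.

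First, I would let $s$ denote the number of dense spots in $\DenseSpots$ that contain $v$, and list them as $D_1,\dots,D_s$ with $D_i=(U_i,W_i;F_i)$ and $v\in U_i\cup W_i$. The definition of an $(\gamma k,\gamma)$-dense spot (Definition~\ref{def:densespot}) guarantees $\mindeg(D_i)>\gamma k$, so in particular the number of $F_i$-edges incident to $v$ is strictly greater than $\gamma k$ for each $i$.

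Next, because the family $\DenseSpots$ is edge-disjoint, the edge sets $F_1,\dots,F_s$ are pairwise disjoint, so the edges of $H$ incident to $v$ that are covered by $D_1\cup\cdots\cup D_s$ number at least $\sum_{i=1}^{s}\deg_{D_i}(v)>s\cdot\gamma k$. On the other hand, every such edge is an edge of $H$ at $v$, and by the maximum-degree hypothesis there are at most $\Omega k$ of them. Combining these two bounds gives $s\cdot\gamma k<\Omega k$, hence $s<\Omega/\gamma$, as desired.

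There is no real obstacle here: the whole argument is just ``edge-disjoint dense spots at $v$ each spend more than $\gamma k$ of the degree budget of $v$, which is bounded by $\Omega k$.'' The only subtlety worth double-checking is the strict inequality, which comes for free from the strict inequality $\mindeg(D_i)>\gamma k$ in the definition of a dense spot.
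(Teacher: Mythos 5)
Your argument is correct and is the standard (and essentially only) proof: edge-disjointness of the spots plus the strict minimum-degree bound $\mindeg(D_i)>\gamma k$ force each spot containing $v$ to spend more than $\gamma k$ of $v$'s degree budget, which is at most $\Omega k$, giving $s<\Omega/\gamma$. This matches the proof given in the earlier paper of the series to which this fact is attributed.
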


%

%

In the following definition,
note that a subset of a $(\Lambda,\epsilon,\gamma,k)$-avoiding set is also $(\Lambda,\epsilon,\gamma,k)$-avoiding.

\begin{definition}[\bf
\index{general}{avoiding (set)}$(\Lambda,\epsilon,\gamma,k)$-avoiding set]\label{def:avoiding}
Suppose that $k\in\NN$, $\epsilon,\gamma>0$ and $\Lambda>0$. Suppose that
$G$ is a graph and $\DenseSpots$ is a family of dense spots in $G$. A set
$\smallatoms\subset \bigcup_{D\in\DenseSpots} V(D)$ is \emph{$(\Lambda,\epsilon,\gamma,k)$-avoiding} with
respect to $\DenseSpots$ if for every $U\subset V(G)$ with $|U|\le \Lambda k$ the following holds for all but at most $\epsilon k$ vertices $v\in\smallatoms$. There is a dense spot $D\in\DenseSpots$ with $|U\cap V(D)|\le \gamma^2 k$ that contains $v$.
\end{definition}

In the next two definitions, we expose the most important tool in the proof of our main result (Theorem~\ref{thm:main}): the \emph{sparse decomposition}. It generalises the notion of equitable partition from Szemer\'edi's regularity lemma. This is explained in~\cite[Section~\ref{p0.ssec:sparsedecompofdensegraphs}]{cite:LKS-cut0}. The first step to this end is defining the bounded decomposition.

\begin{definition}[\index{general}{bounded decomposition}{\bf
$(k,\Lambda,\gamma,\epsilon,\nu,\rho)$-bounded decomposition}]\label{bclassdef}
Suppose that $k\in\NN$ and $\epsilon,\gamma,\nu,\rho>0$ and $\Lambda>0$. 
Let $\mathcal V=\{V_1, V_2,\ldots, V_s\}$ be a partition of the vertex set of a graph $G$. We say that $( \clusters,\DenseSpots, \Gblack, \Gexp,
\smallatoms )$ is a {\em $(k,\Lambda,\gamma,\epsilon,\nu,\rho)$-bounded
decomposition} of $G$ with respect to $\mathcal V$ if the following properties
are satisfied:
\begin{enumerate}
\item\label{defBC:nowheredense}
$\Gexp$ is  a $(\gamma k,\gamma)$-nowhere-dense subgraph of $G$ with $\mindeg(\Gexp)>\rho k$.
\item\label{defBC:clusters} The elements of $\clusters$ are pairwise disjoint subsets of 
$ V(G)$.
\item\label{defBC:RL} $\Gblack$ is a subgraph of $G-\Gexp$ on the vertex set $\bigcup \clusters$. For each edge
 $xy\in E(\Gblack)$ there are distinct $C_x\ni x$ and $C_y\ni y$ from $\clusters$,
and  $G[C_x,C_y]=\Gblack[C_x,C_y]$. Furthermore, 
$G[C_x,C_y]$ forms an $\epsilon$-regular pair of  density at least $\gamma^2$.
\item We have $\nu k\le |C|=|C'|\le \epsilon k$ for all
$C,C'\in\clusters$.\label{Csize}
\item\label{defBC:densepairs}  $\DenseSpots$ is a family of edge-disjoint $(\gamma
k,\gamma)$-dense spots  in $G-\Gexp$.  For
each $D=(U,W;F)\in\DenseSpots$ all the edges of $G[U,W]$ are covered
by $\DenseSpots$ (but not necessarily by $D$).
\item\label{defBC:dveapul} If  $\Gblack$
contains at least one edge between $C_1,C_2\in\clusters$, then there exists a dense
spot $D=(U,W;F)\in\DenseSpots$ such that $C_1\subset U$ and $C_2\subset
W$.
\item\label{defBC:prepartition}
For
all $C\in\clusters$ there is $V\in\mathcal V$ so that either $C\subseteq V\cap V(\Gexp)$ or $C\subseteq V\setminus V(\Gexp)$.
For
all $C\in\clusters$ and $D=(U,W; F)\in\DenseSpots$ we have $C\cap U,C\cap W\in\{\emptyset, C\}$.
\item\label{defBC:avoiding}
$\smallatoms$ is a $(\Lambda,\epsilon,\gamma,k)$-avoiding subset  of
$V(G)\setminus \bigcup \clusters$ with respect to dense spots $\DenseSpots$.
\end{enumerate}

\smallskip
We say that the bounded decomposition $(\clusters,\DenseSpots, \Gblack, \Gexp,
\smallatoms )$ {\em respects the avoiding threshold~$b$}\index{general}{avoiding threshold} if for each $C\in \clusters$ we either have $\maxdeg_G(C,\smallatoms)\le b$, or $\mindeg_G(C,\smallatoms)> b$.
\end{definition}

The members of $\clusters$ are called \index{general}{cluster}{\it clusters}. Define the
\index{general}{cluster graph}{\it cluster graph} \index{mathsymbols}{*Gblack@$\BGblack$}  $\BGblack$ as the graph
on the vertex set $\clusters$ that has an edge $C_1C_2$
for each pair $(C_1,C_2)$ which has density at least $\gamma^2$ in the graph
$\Gblack$. 
%
%

\begin{definition}[\bf \index{general}{sparse
decomposition}$(k,\Omega^{**},\Omega^*,\Lambda,\gamma,\epsilon,\nu,\rho)$-sparse decomposition]\label{sparseclassdef}
Suppose that $k\in\NN$ and $\epsilon,\gamma,\nu,\rho>0$ and $\Lambda,\Omega^*,\Omega^{**}>0$. 
Let $\mathcal V=\{V_1, V_2,\ldots, V_s\}$ be a partition of the vertex set of a graph $G$. We say that 
$\class=(\HugeVertices, \clusters,\DenseSpots, \Gblack, \Gexp, \smallatoms )$
is a 
{\em $(k,\Omega^{**},\Omega^*,\Lambda,\gamma,\epsilon,\nu,\rho)$-sparse decomposition} of $G$
with respect to $V_1, V_2,\ldots, V_s$ if the following holds.
\begin{enumerate}
\item\label{def:classgap} $\HugeVertices\subset V(G)$,
$\mindeg_G(\HugeVertices)\ge\Omega^{**}k$,
$\maxdeg_H(V(G)\setminus \HugeVertices)\le\Omega^{*}k$, where $H$ is spanned by the edges of $\bigcup\DenseSpots$, $\Gexp$, and
edges incident with $\HugeVertices$,
\item \label{def:spaclahastobeboucla} $( \clusters,\DenseSpots, \Gblack,\Gexp,\smallatoms)$ is a 
$(k,\Lambda,\gamma,\epsilon,\nu,\rho)$-bounded decomposition of
$G-\HugeVertices$ with respect to $V_1\setminus \HugeVertices, V_2\setminus \HugeVertices,\ldots, V_s\setminus \HugeVertices$.
\end{enumerate}
\end{definition}

If the parameters do not matter, we call $\class$ simply a {\em sparse
decomposition}, and similarly we speak about a {\em bounded decomposition}.
We define the graph \index{mathsymbols}{*GD@$\GD$}$\GD$ as the union (both edge-wise, and vertex-wise) of all dense spots~$\DenseSpots$.

\begin{fact}[{\cite[Fact~\ref{p0.fact:clustersSeenByAvertex}]{cite:LKS-cut0}}]\label{fact:clustersSeenByAvertex}
Let  $\class=(\HugeVertices, \clusters,\DenseSpots, \Gblack, \Gexp,\smallatoms )$ be a 
$(k,\Omega^{**},\Omega^*,\Lambda,\gamma,\epsilon,\nu,\rho)$-sparse
decomposition of a graph  $G$. Let $x\in V(G)\setminus \HugeVertices$. Assume that $\clusters\not=\emptyset$, and let $\clustersize$ be the size of each of the members of~$\clusters$. Then there are fewer than
$$\frac{2(\Omega^*)^2k}{\gamma^2 \clustersize}\le\frac{2(\Omega^*)^2}{\gamma^2\nu}$$ clusters
$C\in\clusters$ with $\deg_{\GD}(x,C)>0$.
\end{fact}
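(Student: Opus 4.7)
The strategy is to bound the number of dense spots containing $x$ and then, for each such dense spot, bound the number of clusters on the side of the dense spot opposite to $x$. The rigidity provided by condition~\eqref{defBC:prepartition} of Definition~\ref{bclassdef} (``each cluster sits entirely inside a single side of every dense spot it meets'') is what makes this counting clean.

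First, since $x\in V(G)\setminus\HugeVertices$, condition~\eqref{def:classgap} of Definition~\ref{sparseclassdef} gives $\deg_H(x)\le \Omega^*k$, where $H$ contains in particular all edges from $\bigcup\DenseSpots$. Applying Fact~\ref{fact:boundedlymanyspots} to the subgraph of $H$ formed by the dense spots of $\DenseSpots$ (whose maximum degree at $x$ is at most $\Omega^*k$), I conclude that $x$ lies in fewer than $\Omega^*/\gamma$ dense spots of $\DenseSpots$. Since each edge incident to $x$ in $\GD$ lies in some dense spot of $\DenseSpots$ that contains $x$, every cluster counted by the statement must intersect at least one of these fewer than $\Omega^*/\gamma$ dense spots.

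Next, fix such a dense spot $D=(U,W;F)\in\DenseSpots$ containing $x$, say with $x\in U$. Any vertex $y\in V(D)$ satisfies $y\in V(G)\setminus\HugeVertices$ (the bounded decomposition is applied to $G-\HugeVertices$), so $\deg_G(y)\le\Omega^*k$ by condition~\eqref{def:classgap}. Thus $D$ sits inside a graph of maximum degree at most $\Omega^*k$, and Fact~\ref{fact:sizedensespot} (with $\Omega:=\Omega^*$) yields $|W|\le \frac{\Omega^*}{\gamma}k$. By condition~\eqref{defBC:prepartition}, each cluster $C\in\clusters$ satisfies $C\cap W\in\{\emptyset,C\}$, so the clusters entirely contained in~$W$ partition a subset of~$W$; since $|C|=\clustersize$, there are at most $|W|/\clustersize\le\frac{\Omega^* k}{\gamma\clustersize}$ such clusters. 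Any cluster $C$ that receives an edge from $x$ inside $D$ must satisfy $C\subseteq W$, because a neighbour of $x\in U$ in $D$ lies in $W$, and that neighbour's cluster is forced into $W$ by~\eqref{defBC:prepartition}.

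Multiplying the two bounds, the total number of clusters $C\in\clusters$ with $\deg_{\GD}(x,C)>0$ is strictly less than
\[
\frac{\Omega^*}{\gamma}\cdot \frac{\Omega^* k}{\gamma\clustersize}\;=\;\frac{(\Omega^*)^2 k}{\gamma^2\clustersize}\;\le\;\frac{2(\Omega^*)^2 k}{\gamma^2\clustersize}\;,
\]
and the second inequality in the statement follows from $\clustersize\ge\nu k$ via condition~\eqref{Csize} of Definition~\ref{bclassdef}. The only real point to verify carefully is that the ambient max-degree hypothesis of Fact~\ref{fact:sizedensespot} is satisfied when applied to dense spots of $\DenseSpots$; this is precisely what condition~\eqref{def:classgap} buys us, and is the one step where non-membership of $x$ (and of the dense-spot vertices) in $\HugeVertices$ is used essentially.
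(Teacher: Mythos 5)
Your argument is correct, and it is what I believe the paper's own (unshown, cited from the first paper of the series) proof does: count the dense spots of $\DenseSpots$ through $x$ via Fact~\ref{fact:boundedlymanyspots}, bound the size of the side opposite $x$ via Fact~\ref{fact:sizedensespot}, and use property~\eqref{defBC:prepartition} to convert that size bound into a count of clusters. You even get a slightly sharper constant than stated.

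There is one imprecision worth fixing. You justify the application of Fact~\ref{fact:sizedensespot} by asserting ``$\deg_G(y)\le\Omega^*k$ for all $y\in V(D)$'', citing~\eqref{def:classgap}. But~\eqref{def:classgap} bounds $\maxdeg_H$ on $V(G)\setminus\HugeVertices$, not $\maxdeg_G$: a non-huge vertex may have many edges in $G$ that are not captured by $\bigcup\DenseSpots$, $\Gexp$, or $\HugeVertices$, so $\deg_G(y)$ can exceed $\Omega^*k$. The correct fix is to apply Fact~\ref{fact:sizedensespot} inside the graph $\GD$ (equivalently, $H$ restricted to $V(G)\setminus\HugeVertices$), which does contain $D$ and does have maximum degree at most $\Omega^*k$, since all of its vertices of positive degree are non-huge and $\deg_{\GD}\le\deg_H$. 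Once this is corrected the argument goes through verbatim. The same remark applies to your invocation of Fact~\ref{fact:boundedlymanyspots}, where the ambient graph should again be $\GD$ rather than merely ``the dense spots at $x$''; the fact requires a global maximum-degree bound, which $\GD$ supplies.
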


 \begin{definition}[\bf \index{general}{captured edges}captured edges]\label{capturededgesdef}
In the situation of Definition~\ref{sparseclassdef}, we refer to the edges in
$ E(\Gblack)\cup E(\Gexp)\cup
E_G(\HugeVertices,V(G))\cup E_{\GD}(\smallatoms,\smallatoms\cup \bigcup \clusters)$
as \index{general}{captured edges}{\em captured} by the sparse decomposition. 
Denote by
\index{mathsymbols}{*Gclass@$\Gcapt$}$\Gcapt$ the spanning subgraph of $G$ whose edges are the captured edges of the sparse decomposition.
Likewise, the captured edges of a bounded decomposition
$(\clusters,\DenseSpots, \Gblack,\Gexp,\smallatoms )$ of a graph $G$ are those
in $E(\Gblack)\cup E(\Gexp)\cup E_{\GD}(\smallatoms,\smallatoms\cup\bigcup\clusters)$.
\end{definition}

The last definition we need is the notion of a \semiregular matching.

\begin{definition}[\bf $(\epsilon,d,\ell)$-\semiregular matching]\label{def:semiregular}
Suppose that $\ell\in\NN$ and  $d,\epsilon>0$.
 A collection
$\mathcal N$ of pairs $(A,B)$ with $A,B\subset V(H)$ is called an
\index{general}{regularized@\semiregular matching}\emph{$(\epsilon,d,\ell)$-\semiregular matching} of a
graph $H$ if \begin{enumerate}[(i)] \item $|A|=|B|\ge \ell$ for each
$(A,B)\in\mathcal N$, \item $(A,B)$ induces in $H$ an $\epsilon$-regular pair of
density at least $d$, for each $(A,B)\in\mathcal N$, and \item all involved sets
$A$ and $B$ are  pairwise disjoint.
\end{enumerate}

Sometimes, when the parameters do not matter we simply write  \emph{\semiregular matching}.

We say that a \semiregular matching $\mathcal N$
\index{general}{absorb}\emph{absorbes} a \semiregular matching $\mathcal M$ if for every $(S,T)\in\mathcal M$ there exists $(X,Y)\in\mathcal N$ such that $S\subset X$
and $T\subset Y$. In the same way, we say that a family of dense spots $\mathcal D$
\index{general}{absorb}\emph{absorbes} a \semiregular matching $\mathcal M$ if for every $(S,T)\in\mathcal M$ there exists $(U,W;F)\in\mathcal D$ such that $S\subset U$
and $T\subset W$.
\end{definition}

\begin{fact}[{\cite[Fact~\ref{p1.fact:boundMatchingClusters}]{cite:LKS-cut1}}]\label{fact:boundMatchingClusters}
Suppose that $\mathcal M$ is an
$(\epsilon,d,\ell)$-\semiregular
matching in a graph $H$. Then $|C|\le
\frac{\maxdeg{(H)}}{d}$ for each $C\in
\mathcal V(\mathcal M)$.
\end{fact}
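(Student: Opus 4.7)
The plan is to unwind the definition of an $(\epsilon,d,\ell)$-regularized matching and exploit the density lower bound together with the global maximum-degree bound. Fix an arbitrary pair $(A,B)\in\mathcal M$; by Definition~\ref{def:semiregular}, the pair $(A,B)$ induces in $H$ a bipartite graph of density at least $d$, which translates into the edge count lower bound $e_H(A,B)\ge d|A||B|$.

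Next, I would apply a simple averaging argument: since the total number of edges from $A$ into $B$ is at least $d|A||B|$, there must exist a vertex $v\in A$ whose degree into $B$ satisfies $\deg_H(v,B)\ge d|B|$. In particular, $\maxdeg(H)\ge \deg_H(v)\ge \deg_H(v,B)\ge d|B|$, so rearranging gives $|B|\le \maxdeg(H)/d$. By the symmetric argument with the roles of $A$ and $B$ swapped (or simply using the condition $|A|=|B|$ built into the definition of a regularized matching), the same bound holds for $|A|$. Since the pair $(A,B)\in\mathcal M$ was arbitrary and every cluster $C\in\mathcal V(\mathcal M)$ arises as one side of such a pair, we obtain $|C|\le \maxdeg(H)/d$ for every $C\in\mathcal V(\mathcal M)$, which is the desired conclusion.

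There is no real obstacle here: the argument is a one-line averaging step once the definitions are unpacked, and it does not use the regularity condition (only the density lower bound), nor the threshold $\ell$, nor the disjointness of the sets. The only thing one must be slightly careful about is ensuring that the vertex $v$ exhibited by averaging indeed has maximum degree at most $\maxdeg(H)$ in the ambient graph $H$ (and not merely in the bipartite subgraph $H[A,B]$), but this is automatic since $\deg_H(v,B)\le \deg_H(v)\le \maxdeg(H)$.
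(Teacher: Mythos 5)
Your proof is correct and is, up to cosmetic phrasing, the only natural argument for this fact: the density lower bound $e_H(A,B)\ge d|A||B|$ gives, by averaging over $A$, a vertex $v\in A$ with $\deg_H(v,B)\ge d|B|$, whence $\maxdeg(H)\ge d|B|$ and $|A|=|B|\le\maxdeg(H)/d$. The paper only cites this fact from the second paper of the series rather than reproving it here, but the cited proof proceeds the same way; you are also right that neither the $\epsilon$-regularity, the threshold $\ell$, nor the disjointness of the pairs plays any role.
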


\subsection{Shadows }

We recall the notion of a shadow given in~\cite{cite:LKS-cut2}. Given a graph $H$, a set
$U\subset V(H)$, and a number $\ell$ we define inductively
\index{mathsymbols}{*SHADOW@$\shadow(U,\ell)$}\index{general}{shadow}
\begin{align*}
& \shadow^{(0)}_H(U,\ell):=U\text{, and }\\
& \shadow^{(i)}_H(U,\ell):=\{ v\in V(H)\::\:
\deg_H(v,\shadow^{(i-1)}_H(U,\ell))>\ell \} \text{ for }i\geq 1.
\end{align*}
\index{general}{exponent of shadow}We call the index $i$ the \emph{exponent} of the shadow. We abbreviate $\shadow^{(1)}_H(U,\ell)$ as $\shadow_H(U,\ell)$. Further, the
graph $H$ is omitted from the subscript if it is clear from the context.
Note that the shadow of a set $U$ might intersect $U$.

The proofs of the following facts can be found in~\cite{cite:LKS-cut2}. 

\begin{fact}\label{fact:shadowbound}
Suppose $H$ is a graph with $\maxdeg(H)\le \Omega k$. Then for each
$\alpha>0, i\in\{0,1,\ldots\}$, and each set $U\subset V(H)$, we have
$$|\shadow^{(i)}(U,\alpha k)|\le \left(\frac{\Omega}{\alpha}\right)^i|U|\;.$$
\end{fact}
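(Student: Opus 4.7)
The plan is a short induction on the exponent $i$, with double-counting of edges between two consecutive shadow layers at the inductive step. The statement for $i=0$ is immediate from $\shadow^{(0)}(U,\alpha k)=U$, so the base case requires nothing.

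For the inductive step, I assume $|\shadow^{(i-1)}(U,\alpha k)|\le(\Omega/\alpha)^{i-1}|U|$ and want to bound $|\shadow^{(i)}(U,\alpha k)|$. By the definition of the shadow, every vertex $v\in\shadow^{(i)}(U,\alpha k)$ satisfies $\deg_H(v,\shadow^{(i-1)}(U,\alpha k))>\alpha k$. Counting edges between $\shadow^{(i)}(U,\alpha k)$ and $\shadow^{(i-1)}(U,\alpha k)$ from the first side therefore yields a lower bound of $\alpha k\cdot|\shadow^{(i)}(U,\alpha k)|$, while counting from the second side and using $\maxdeg(H)\le\Omega k$ gives the trivial upper bound $\Omega k\cdot|\shadow^{(i-1)}(U,\alpha k)|$. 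Comparing the two bounds gives
$$|\shadow^{(i)}(U,\alpha k)|\le\frac{\Omega}{\alpha}\,|\shadow^{(i-1)}(U,\alpha k)|\le\left(\frac{\Omega}{\alpha}\right)^{i}|U|,$$
which is the desired inequality.

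There is no real obstacle here: the only small subtlety is that an edge between the two shadow layers might have both endpoints in the smaller layer (when $\shadow^{(i)}$ intersects $\shadow^{(i-1)}$, which the paper explicitly allows), but this is harmless because such an edge is still counted at most $\Omega k$ times from the $\shadow^{(i-1)}$ side, so the crude upper bound $\Omega k\cdot|\shadow^{(i-1)}(U,\alpha k)|$ remains valid. Hence the simple two-line induction above proves the fact.
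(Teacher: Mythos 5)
Your proof is correct, and it is the standard (essentially the only) argument for this fact: induct on $i$ and double-count the edges between consecutive shadow layers, using the degree lower bound $>\alpha k$ on one side and $\maxdeg(H)\le\Omega k$ on the other. The paper itself defers the proof to the third paper of the series, where the same counting argument is used; your remark about the shadows possibly overlapping is correct and harmless for the reason you give.
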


\begin{fact}\label{fact:shadowboundEXPANDER}
Let $\alpha,\gamma,Q>0$ be three numbers such that $1\le Q\le
\frac{\alpha}{16\gamma}$. Suppose that $H$ is a $(\gamma k,\gamma)$-nowhere-dense graph, and let $U\subset V(H)$ with $|U|\le Qk$. Then we have $$|\shadow(U,\alpha
k)|\le \frac{16Q^2\gamma}{\alpha}k.$$
\end{fact}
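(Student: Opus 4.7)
The plan is a proof by contradiction. Write $S:=\shadow_H(U,\alpha k)$ and suppose $|S|>16Q^2\gamma k/\alpha$. I will extract a $(\gamma k,\gamma)$-dense spot in $H$, contradicting the nowhere-density hypothesis. By the definition of the shadow, the edges from $S$ to $U$ in $H$ number more than $\alpha k|S|$, so if $S$ and $U$ were disjoint, the pair $(S,U)$ would form a bipartite subgraph of $H$ of density at least $\alpha k/|U|\geq \alpha/Q\geq 16\gamma$. Iteratively discarding any vertex with at most $\gamma k$ neighbours on the other side deletes at most $\gamma k$ edges per step, and the total loss $\gamma k(|S|+|U|)$ is dwarfed by the initial edge count because $Q\gamma\leq \alpha/16$; the limit graph would then be a $(\gamma k,\gamma)$-dense spot. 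Thus the only genuine issue is the overlap $S\cap U$, since Definition~\ref{def:densespot} insists on disjoint sides.

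To handle the overlap, I would split $S=S_1\cup S_2$ with $S_1:=S\setminus U$ and $S_2:=S\cap U$, and bound $|S_1|$ and $|S_2|$ separately by a constant multiple of $Q\gamma k/\alpha$. For $|S_1|$, the pair $(S_1,U)$ already has disjoint sides and carries more than $\alpha k|S_1|$ edges, and the cleaning argument of the previous paragraph applies verbatim: once $|S_1|$ exceeds a modest multiple of $Q\gamma k/\alpha$ the surviving bipartite graph has edge count strictly larger than $\gamma|S_1||U|\leq \gamma Qk|S_1|$ and mindegree exceeding $\gamma k$ on both sides, a $(\gamma k,\gamma)$-dense spot. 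For $|S_2|$ I would distinguish two cases according to whether a typical $v\in S_2$ sends most of its $U$-neighbours into $U\setminus S_2$ or back into $S_2$. In the first case the bipartite pair $(S_2',U\setminus S_2)$, where $S_2'\subseteq S_2$ satisfies $|S_2'|\geq |S_2|/2$, is a disjoint candidate with more than $\alpha k|S_2|/4$ edges, and the same cleaning bounds $|S_2|$ by $O(Q\gamma k/\alpha)$.

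The subtle case is the second one, where at least $|S_2|/2$ vertices $v\in S_2$ satisfy $\deg(v,S_2)>\alpha k/2$. Then $H[S_2]$ has $\geq |S_2|\alpha k/8$ edges, and I would first iteratively remove vertices of $H[S_2]$ of degree $<\alpha k/8$ (losing fewer than $|S_2|\alpha k/8$ edges in total, so the residue $H''$ is non-empty with $\mindeg(H'')\geq \alpha k/8$) and then take a maximum cut of $H''$. Each vertex of $H''$ has at least half its degree across the cut, giving a bipartite subgraph of $H$ with both-side mindegree $\geq \alpha k/16\geq \gamma k$ (using $\alpha\geq 16\gamma$); its density is at least $\alpha/(8|V(H'')|/k)\geq \alpha/(8Q)\geq 2\gamma$ by the hypothesis $Q\leq \alpha/(16\gamma)$, once again a $(\gamma k,\gamma)$-dense spot. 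Combining the three bounds yields $|S|=|S_1|+|S_2|=O(Q\gamma k/\alpha)\leq 16Q^2\gamma k/\alpha$ (using $Q\geq 1$), the required contradiction. The main obstacle is precisely this last sub-case: unlike the other arguments, it starts from no pre-existing bipartite structure and must manufacture one from a dense non-bipartite region living entirely inside $U$, which is exactly what the max-cut step accomplishes.
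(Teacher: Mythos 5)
The paper itself does not contain a proof of Fact~\ref{fact:shadowboundEXPANDER}; it only cites~\cite{cite:LKS-cut2} for it, so there is nothing in this document to compare against directly. On its own terms, your plan is sound: arguing by contradiction, handling $S\setminus U$ via the disjoint bipartite pair $(S\setminus U, U)$, splitting $S\cap U$ according to whether the $U$-degree falls mainly in $U\setminus S_2$ or in $S_2$, and using a max-cut of $H[S_2]$ in the last subcase to manufacture a bipartite dense spot out of a non-bipartite region. That max-cut idea is exactly the right move for the only case where no pre-existing disjoint pair is available.

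Two steps need more care than the sketch gives. First, in the cleaning steps you assert that the surviving pair ``would then be a $(\gamma k,\gamma)$-dense spot,'' but one must actually check density after cleaning. The right observation is that the denominator can only shrink, so the post-cleaning density is at least $e'/(|A||B|)$; for $(S_1,U)$ this gives at least $\frac{(\alpha-\gamma)k}{|U|}-\frac{\gamma k}{|S_1|}\ge\frac{15\alpha}{16Q}-\frac{\alpha}{8Q^2}\ge\frac{13\alpha}{16Q}\ge 13\gamma$, using $|S_1|>8Q^2\gamma k/\alpha$, $\gamma\le\alpha/(16Q)$ and $Q\ge1$, and similarly for $(S_2',U\setminus S_2)$ (where, incidentally, the loss can be as much as $5/8$ of the initial edges when $|S_2|$ sits just above the threshold, so ``dwarfed'' overstates it, though the density bound still survives). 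Second, and more seriously, the max-cut subcase is exactly tight: the four halvings (split of $\deg(v,U)$ between $S_2$ and $U\setminus S_2$, the pigeonhole to $|S_2|/2$, the cleaning threshold $\alpha k/8$, and the halving from the cut) exhaust the hypothesis $\alpha\ge 16Q\gamma$, giving cut-min-degree $\ge\alpha k/16\ge\gamma k$, not $>\gamma k$; at $Q=1$, $\alpha=16\gamma$ this is not a $(\gamma k,\gamma)$-dense spot under Definition~\ref{def:densespot}. The fix is trivial --- clean $H[S_2]$ by removing vertices of degree \emph{at most} $\alpha k/8$ so the residue has strictly larger degree --- but as written the last subcase does not quite close. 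With these two repairs, the bookkeeping $|S|\le|S_1|+|S_2|=O(Q\gamma k/\alpha)\le 16Q^2\gamma k/\alpha$ (using $Q\ge1$) goes through as you indicated.
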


\section{Configurations}\label{FromEarlierPapersOfTheSeries}

\subsection{Common settings}
Recall the definitions of $\smallvertices{\eta}{k}{G}$ and $\largevertices{\eta}{k}{G}$ given in Section~\ref{subsection:LKSgraphs}.
We repeat some common settings that already appeared in~\cite{cite:LKS-cut2} and are outputs of~\cite[Lemma~\ref{p1.prop:LKSstruct}]{cite:LKS-cut1}. The reader can find explanations in~\cite[Section~\ref{p1.ssec:motivation}]{cite:LKS-cut1} why the set $\XA$ (defined again in~\eqref{eq:XAXBXCMik}) has excellent properties for accommodating cut-vertices of $T_\PARAMETERPASSING{T}{thm:main}$, and the set $\XB$ has ``half-that-excellent properties'' for accommodating cut-vertices. In particular, the formula defining $\XB$ suggests that we cannot make use of the set $\smallvertices{\eta}{k}{G}\setminus
(V(\Gexp)\cup\smallatoms\cup V(\mathcal M_A\cup\mathcal M_B)$ for the purpose of embedding shrubs neighbouring the  cut-vertices embedded in $\XB$. In~\cite[Setting~\ref{p2.commonsetting}]{cite:LKS-cut2} we gave some  motivation behind the definition of the sets $V_+, L_\#, \Vgood, \YA, \YB, \WantiC, \gPatoms, \gP,\gP_1,  \gP_2, \gP_3 $, and $ \mathcal F$ in Setting~\ref{commonsetting}, below. 

\begin{setting}\label{commonsetting}
We assume that the constants $\Lambda,\Omega^*,\Omega^{**},k_0$ and $\alphaD,\gamma,\epsilon,\epsilon',\eta,\pi,\rho, \tau, d$ satisfy
\begin{align}
\label{eq:KONST}
\begin{split}
 \eta\gg\frac1{\Omega^*}\gg \frac1{\Omega^{**}}\gg\rho\gg\gamma\gg
d \ge\frac1{\Lambda}\ge\epsilon\ge
\pi\ge  \alphaD
\ge \epsilon'\ge
\nu\gg \tau \gg \frac{1}{k_0}>0\;,
\end{split}
\end{align}
and that $k\ge k_0$.
By writing $c>a_1\gg
a_2\gg \ldots \gg a_\ell>0$ we mean that there exist suitable non-decreasing functions
$f_i:(0,c)^i\rightarrow (0,c)$ ($i=1,\ldots,\ell-1$) such that for each $i\in
[\ell-1]$ we have $a_{i+1}<f_{i}(a_1,\ldots,a_i)$. A suitable choice of these functions in~\eqref{eq:KONST} is explicitly given in Section~\ref{sec:proof}.

\medskip

Suppose that $G\in\LKSsmallgraphs{n}{k}{\eta}$ is given together with its
$(k,\Omega^{**},\Omega^*,\Lambda,\gamma,\epsilon',\nu,\rho)$-sparse
decomposition \index{mathsymbols}{*@$\class$} $$\class=(\HugeVertices, \clusters,\DenseSpots, \Gblack,
\Gexp,\smallatoms )\;, $$
with respect to the partition
$\{\smallvertices{\eta}{k}{G},\largevertices{\eta}{k}{G}\}$, and with respect to the avoiding threshold $\frac{\rho k}{100\Omega^*}$. We write 
\begin{equation}
 \index{mathsymbols}{*VA@$\largeintoatoms$}\index{mathsymbols}{*VA@$\clustersintoatoms$}
\largeintoatoms:=\shadow_{\Gcapt-\HugeVertices}(\smallatoms,\frac{\rho k}{100\Omega^*})\quad\mbox{and}\quad\clustersintoatoms:=\{C\in\clusters\::\:C\subset \largeintoatoms\}\;.
\label{eq:deflargeintoatoms}
\end{equation}
The graph \index{mathsymbols}{*Gblack@$\BGblack$}$\BGblack$ is the corresponding cluster graph. Let $\clustersize$
\index{mathsymbols}{*C@$\clustersize$}
be the size of an arbitrary cluster in $\clusters$.\footnote{The number $\clustersize$ is not defined when $\clusters=\emptyset$. However in that case $\clustersize$ is never actually used.}
 Let \index{mathsymbols}{*G@$\Gcapt$}$\Gcapt$ be the spanning subgraph of $G$ formed by the edges captured by the sparse decomposition~$\class$. There are two $(\epsilon,d,\pi \clustersize)$-\semiregular matchings $\mathcal M_A$
and $\mathcal M_B$ in $\GD$, with the following
properties. Following~\cite[\eqref{p1.def:XAXBXC}]{cite:LKS-cut1} we write
\begin{align}
\begin{split}\label{eq:XAXBXCMik}
\XA&:=\largevertices{\eta}{k}{G}\setminus V(\mathcal M_B)\;,\\
\XB&:=\left\{v\in V(\mathcal M_B)\cap \largevertices{\eta}{k}{G}\::\:\widehat{\deg}(v)<(1+\eta)\frac
k2\right\}\;,\\
\XC&:=\largevertices{\eta}{k}{G}\setminus(\XA\cup\XB)\;,
\end{split}
\end{align}
where 
$\widehat{\deg}(v)$ on the second
line is defined by
\begin{equation*}
\widehat{\deg}(v):=\deg_G\big(v,\smallvertices{\eta}{k}{G}\setminus
(V(\Gexp)\cup\smallatoms\cup V(\mathcal M_A\cup\mathcal M_B)\big)\;.
\end{equation*}
Then we have
\begin{enumerate}[(1)]
\item\label{commonsetting1}
$V(\mathcal M_A)\cap V(\mathcal
M_B)=\emptyset$,
\item\label{commonsetting1apul}
$V_1(\mathcal \M_B)\subset S^0$, where
\begin{equation}\label{eq:defS0}
S^0:=\smallvertices{\eta}{k}{G}\setminus
(V(\Gexp)\cup\smallatoms)\;,
\end{equation}
\item\label{commonsetting2} for each $(X,Y)\in \M_A\cup\M_B$, there is a dense
spot $(U,W; F)\in \DenseSpots$ with $X\subset U$ and $Y\subset W$, and further,
either $X\subset \smallvertices{\eta}{k}{G}$ or $X\subset
\largevertices{\eta}{k}{G}$, and $Y\subset \smallvertices{\eta}{k}{G}$ or
$Y\subset \largevertices{\eta}{k}{G}$,
\item\label{commonsetting3}
for each $X_1\in\V_1(\M_A\cup\M_B)$ there exists a cluster $C_1\in \clusters$ such that $X_1\subset C_1$, and for each $X_2\in\V_2(\M_A\cup\M_B)$ there exists $C_2\in \clusters\cup\{\largevertices{\eta}{k}{G}\cap \smallatoms\}$ such that $X_2\subset C_2$,
\item\label{commonsettingMgood} each pair of the \semiregular matching
$\Mgood:=\{(X_1,X_2)\in\M_A\::\: X_1\cup X_2\subset \XA\}$ corresponds to an
edge in $\BGblack$,
\item\label{commonsettingXAS0}$e_{\Gcapt}\big(\XA,S^0\setminus V(\mathcal
M_A)\big)\le \gamma kn$,
\item\label{commonsetting4} $e_{\Gblack}(V(G)\setminus V(\M_A\cup \M_B))\le
\gamma^2kn$,
\item\label{commonsettingNicDoNAtom} for the \semiregular matching \index{mathsymbols}{*Natom@$\NAtom$}
$\NAtom:=\{(X,Y)\in\M_A\cup\M_B\::\: (X\cup Y)\cap\smallatoms\not=\emptyset\}$ we have $e_{\Gblack}\big(V(G)\setminus
V(\M_A\cup \M_B),V(\NAtom)\big)\le \gamma^2 kn$,
\item \label{commonsetting:numbercaptured}$|E(G)\setminus E(\Gcapt)|\le 2\rho
kn$, 
\item\label{commonsetting:DenseCaptured}$|E(\GD)\setminus (E(\Gblack)\cup
E_G[\smallatoms,\smallatoms\cup\bigcup\clusters])|\le \frac 54\gamma kn$.
\end{enumerate}

We write
\begin{align}
\label{eq:defV+}\index{mathsymbols}{*V_+@$V_+$}
V_+&:=V(G)\setminus 
(S^0\setminus V(\mathcal M_A\cup\mathcal
M_B)) \\  \label{defV+eq}
& = \largevertices{\eta}{k}{G}\cup
V(\Gexp)\cup\smallatoms\cup V(\mathcal M_A\cup\mathcal
M_B)\;,\\
\label{eq:defLsharp}\index{mathsymbols}{*L@$L_\#$}
L_\#&:=\largevertices{\eta}{k}{G}\setminus\largevertices{\frac9{10}\eta}{k}{\Gcapt}\;\mbox{,
and}\\ 
\label{eq:defVgood}\index{mathsymbols}{*Vgood@$\Vgood$} 
\Vgood&:=V_+\setminus (\HugeVertices\cup L_\#)\;,\\
\label{eq:defYA}
\YA&:=
 \shadow_{\Gcapt}\left(V_+\setminus L_\#, (1+\frac\eta{10})k\right) \setminus \shadow_{G-\Gcapt}\left(V(G),\frac\eta{100} k\right)\;,
 \index{mathsymbols}{*YA@$\YA$}\\
\label{eq:defYB} 
\YB&:=
 \shadow_{\Gcapt}\left(V_+\setminus L_\#, (1+\frac\eta{10})\frac k2\right) \setminus
 \shadow_{G-\Gcapt}\left(V(G),\frac\eta{100} k\right)\;,\index{mathsymbols}{*YB@$\YB$}\\
\WantiC&:=(\XA\cup\XB)\cap \shadow_G\left(\HugeVertices, \frac{\eta}{100}
k \right) \index{mathsymbols}{*V@$\WantiC$}\label{eq:defWantiC}\;,
\\ \nonumber
\gPatoms&:=\shadow_{\Gblack}(V(\NAtom),\gamma k)\setminus V(\M_A\cup\M_B)\;,
\index{mathsymbols}{*JE@$\gPatoms$}\\
\nonumber
\gP_1&:=\shadow_{\Gblack}(V(G)\setminus V(\M_A\cup\M_B),\gamma k)\setminus V(\M_A\cup\M_B)\;,
\index{mathsymbols}{*J1@$\gP_1$}
\\ \nonumber
\gP&:=(\XA\setminus \YA)\cup((\XA\cup \XB)\setminus \YB)\cup\WantiC
\cup
L_\sharp \cup \gP_1\\
\nonumber
&~~~~~\cup\shadow_{\GD\cup\Gcapt}(\WantiC\cup
L_\sharp\cup\gPatoms\cup\gP_1,\frac{\eta^2 k}{10^5})\;,
\index{mathsymbols}{*J@$\gP$}
\\
\nonumber
\gP_2&:=\XA\cap \shadow_{\Gcapt}(S^0\setminus
V(\M_A),\sqrt\gamma k)\;,
\index{mathsymbols}{*J2@$\gP_2$}\\
\nonumber
\gP_3&:=\XA\cap\shadow_{\Gcapt}(\XA, \eta^3k/10^3)\;,
\index{mathsymbols}{*J3@$\gP_3$}\\
\label{def:Fcover}
 \mathcal F&:=\{C\in \V(\M_A):C\subset \XA\}\cup\V_1(\M_B)\;.\index{mathsymbols}{*F@$\mathcal F$}
\end{align}
\end{setting}


For the embedding procedure to run smoothly, the vertex set is split into several classes the sizes of which have given ratios.
It will be important that most vertices have their degrees split according to these ratios. Lemma~\ref{lem:randomSplit} allows us to do so. 
The motivation behind Lemma~\ref{lem:randomSplit} and Definition~\ref{def:proportionalsplitting}, below,  is explained more in details at the beginning of~\cite[Section~\ref{p2.ssec:RandomSplittins}]{cite:LKS-cut2}.

\begin{lemma}\label{lem:randomSplit} For each $p\in\NN$ and $a>0$ there exists $k_0>0$ such that
for each $k>k_0$ we have the following.

Suppose $G$ is a graph of order $n\ge k_0$ and $\maxdeg(G)\le\Omega^*k$ with its 
$(k,\Lambda,\gamma,\epsilon,k^{-0.05},\rho)$-bounded
decomposition $( \clusters,\DenseSpots, \Gblack, \Gexp, \smallatoms )$. As
usual, we write $\Gcapt$ for the subgraph captured by $( \clusters,\DenseSpots,
\Gblack, \Gexp, \smallatoms )$, and $\GD$ for the spanning subgraph of $G$
consisting of the edges in $\DenseSpots$. Let $\M$ be an
$(\epsilon,d,k^{0.95})$-\semiregular matching in $G$, and $\VXV_1,\ldots, \VXV_p$
be subsets of $V(G)$. Suppose that $\Omega^*\ge 1$ and $\Omega^*/\gamma<k^{0.1}$.

Suppose that $\aXa_1,\ldots,\aXa_p\in\{0\}\cup[a,1]$ are reals with $\sum \aXa_i\le 1$.
Then there exists a partition $\AXA_1\cup \ldots\cup \AXA_p=V(G)$, and sets $\bar
V\subset V(G)$, $\bar\V\subset \V(\M)$, and $\bar\clusters\subset\clusters$ with the following properties.
\begin{enumerate}[(1)]
  \item\label{It:H1} $|\bar V|\le \exp(-k^{0.1})n$,
  $|\bigcup\bar\V|\le \exp(-k^{0.1})n$,
  $|\bigcup\bar\clusters|<\exp(-k^{0.1})n$.
  \item\label{It:H2} For each $i\in [p]$ and each $C\in
  \clusters\setminus\bar\clusters$ we have $|C\cap \AXA_i|\ge
  \aXa_i|\AXA_i|-k^{0.9}$.
  \item\label{It:H3} For each $i\in [p]$ and each $C\in \V(\M)\setminus\bar\V$
  we have $|C\cap \AXA_i|\ge \aXa_i|\AXA_i|-k^{0.9}$.
  \item\label{It:H4} For each $i\in [p]$, $D=(U,W; F)\in\DenseSpots$
   and 
   $\mindeg_D (U\setminus \bar V,W\cap \AXA_i)\ge \aXa_i\gamma k-k^{0.9}$.
  \item\label{It:HproportionalSizes} For each $i,j\in[p]$ we have $|\AXA_i\cap
  \VXV_j|\ge \aXa_i|\VXV_j|-n^{0.9}$.
  \item\label{It:H5} For each $i\in [p]$ each $J\subset[p]$ and each $v\in V(G)\setminus \bar V$ we have $$\deg_H(v,\AXA_i\cap \VXV_J)\ge \aXa_i\deg_H(v,\VXV_J)-2^{-p}k^{0.9}\;,$$ for each of the graphs $H\in\{G,\Gcapt,\Gexp,\GD,\Gcapt\cup\GD\}$,
  where 
$\VXV_J:=\big(\bigcap_{j\in J}\VXV_j\big)\sm \big(\bigcup_{j\in [p]\setminus J}
\VXV_j\big)$.
  \item\label{It:H6} For each $i,i',j,j'\in [p]$ ($j\neq j'$), we have
\begin{align*}
e_H(\AXA_i\cap \VXV_{j},\AXA_{i'}\cap \VXV_{j'})&\ge
  \aXa_i\aXa_{i'} e_H(\VXV_j,\VXV_{j'})-k^{0.6}n^{0.6}\;,\\
e_H(\AXA_i\cap \VXV_{j},\AXA_{i'}\cap \VXV_{j})&\ge
  \aXa_i\aXa_{i'} e(H[\VXV_j])-k^{0.6}n^{0.6}\qquad\mbox{if $i\neq i'$, and}\\
  e(H[\AXA_i\cap \VXV_{j}])&\ge
  \aXa_i^2 e(H[\VXV_j])-k^{0.6}n^{0.6}\;,
\end{align*}  
for each of the graphs
  $H\in\{G,\Gcapt,\Gexp,\GD,\Gcapt\cup\GD\}$.
  \item\label{It:H7} For each $i\in[p]$ if $\aXa_i=0$ then
  $\AXA_i=\emptyset$.
\end{enumerate}
\end{lemma}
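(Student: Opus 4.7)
The plan is a direct probabilistic construction. I assign each vertex of $V(G)$ independently to class $\AXA_i$ with probability $\aXa_i$ for $i\in[p]$, holding back the remaining probability $1-\sum_i \aXa_i\ge 0$ in a ``slack'' class whose members will eventually be dumped into $\AXA_1$; since $\AXA_i$ is never touched when $\aXa_i=0$, this immediately gives \eqref{It:H7}, and since the final merger only enlarges $\AXA_1$, all the lower-bound conditions \eqref{It:H2}--\eqref{It:H6} are preserved by it, so it suffices to establish them for the initial random partition.

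First I would verify the vertex-by-vertex concentration statements \eqref{It:H2}, \eqref{It:H3}, \eqref{It:H4}, \eqref{It:HproportionalSizes}, and \eqref{It:H5}. In each case the quantity is a sum of independent $\aXa_i$-Bernoulli indicators of the form $\mathbf{1}[u\in\AXA_i]$: over $|C|\ge \nu k = k^{0.95}$ summands for \eqref{It:H2}, over the $\ge k^{0.95}$ vertices of each \semiregular pair for \eqref{It:H3}, over $\deg_D(v,W)\ge \gamma k$ summands for \eqref{It:H4}, over $|\VXV_j|$ summands for \eqref{It:HproportionalSizes}, and over $\deg_H(v,\VXV_J)$ summands for \eqref{It:H5}. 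A standard Chernoff bound yields failure probability $\exp(-k^{\Theta(1)})$ for each of these events (the $2^{-p}$ factor in \eqref{It:H5} exactly absorbs the union bound over the $2^p$ choices of $J\subseteq[p]$, and trivial regimes such as $\deg_H(v,\VXV_J)\le 2^{-p}k^{0.9}$ or $|\VXV_j|\le n^{0.9}$ are handled separately). I then set $\bar V$, $\bar \V$, and $\bar\clusters$ to be exactly those vertices, regularized pairs, and clusters at which some of the corresponding per-vertex, per-pair, or per-cluster inequalities fails; Markov's inequality applied to the total expected number of failures, combined with Fact~\ref{fact:boundMatchingClusters} and Fact~\ref{fact:sizedensespot} to control the number of objects, gives the size bounds in \eqref{It:H1}.

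The main obstacle is \eqref{It:H6}, since the indicators $\mathbf{1}[u\in\AXA_i\cap\VXV_j,\, v\in\AXA_{i'}\cap\VXV_{j'}]$ of different edges sharing a vertex are correlated, so a naive Chernoff argument is unavailable. To handle this I apply McDiarmid's bounded-differences inequality to the random vector of vertex assignments: changing the class of a single vertex $v$ alters any of the three edge counts in \eqref{It:H6} by at most $\deg_H(v)\le\Omega^*k$, so the probability of deviating by more than $k^{0.6}n^{0.6}$ is at most $2\exp\bigl(-k^{1.2}n^{1.2}/(2n(\Omega^*k)^2)\bigr)$. In the complementary regime $e_H(\VXV_j,\VXV_{j'})< k^{0.6}n^{0.6}$ (or its within-set analogue) the inequality in \eqref{It:H6} is vacuously satisfied. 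A final union bound over the $O(p^4)$ choices of $(i,i',j,j')$ and the five choices of $H\in\{G,\Gcapt,\Gexp,\GD,\Gcapt\cup\GD\}$, together with the vertex-based estimates above, shows that the random partition satisfies all of \eqref{It:H1}--\eqref{It:H7} simultaneously with positive probability, yielding the desired deterministic partition.
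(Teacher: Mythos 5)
The statement is only stated in this paper; its proof lives in the earlier companion paper~\cite{cite:LKS-cut2}, so I can only check your argument on its own merits. Your plan — independent random assignment, Chernoff for the per-vertex/per-cluster items, Markov to control the exceptional sets, and a bounded-differences inequality for the edge-count items — is the natural one, and everything through items \eqref{It:H1}--\eqref{It:H5} and \eqref{It:HproportionalSizes} is fine (modulo one small correction: the leftover ``slack'' mass must be merged into some $\AXA_i$ with $\aXa_i>0$, not automatically into $\AXA_1$, otherwise \eqref{It:H7} can fail when $\aXa_1=0$; and your claim that the $2^{-p}$ factor in \eqref{It:H5} ``absorbs the union bound over $J$'' is not quite the right explanation — the union bound is absorbed by the exponential tail, and $2^{-p}$ is there so that the error terms sum nicely when the $\VXV_J$ are aggregated — but the estimate itself is correct).

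The genuine gap is in item \eqref{It:H6}. Your McDiarmid step bounds the failure probability by $2\exp\!\bigl(-\Theta\bigl(k^{1.2}n^{1.2}/\bigl(n(\Omega^*k)^2\bigr)\bigr)\bigr) = 2\exp\!\bigl(-\Theta\bigl(n^{0.2}/((\Omega^*)^2 k^{0.8})\bigr)\bigr)$. This exponent tends to $-\infty$ only when $n\gg(\Omega^*)^{10}k^{4}$. The hypotheses of the lemma give no such lower bound on $n$: they allow $n$ to be comparable to $k$ (the degree bound only forces $n>\Omega^*k$). For, say, $n\le k^{4}$ the bound degenerates to something $\ge 1$, and the subsequent union bound over the $\Theta(p^4)$ choices of $(i,i',j,j')$ and the five graphs $H$ yields nothing. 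Moreover this is not merely a slack in the inequality: taking $H$ to be a complete bipartite graph on $\Theta(\Omega^*k)+\Theta(\Omega^*k)$ vertices padded with isolated vertices, $\VXV_j$ and $\VXV_{j'}$ the two sides, and $\aXa_i,\aXa_{i'}$ constants with $\aXa_i+\aXa_{i'}<1$, the quantity $e_H(\AXA_i\cap\VXV_j,\AXA_{i'}\cap\VXV_{j'}) = |\AXA_i\cap\VXV_j|\cdot|\AXA_{i'}\cap\VXV_{j'}|$ has standard deviation of order $n^{1/2}\Omega^*k$ in the independent model (first-order fluctuation in the class sizes), which dominates the permitted error $k^{0.6}n^{0.6}$ once $n=O(k^{4})$. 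So the event in \eqref{It:H6} fails with probability bounded away from $0$ under pure independent assignment, and no concentration inequality can save this. To repair the argument one must kill this first-order fluctuation — for instance by choosing the partition uniformly among partitions whose intersections with each atom $\VXV_J$ (and with each cluster and regularized-matching block) have exactly the prescribed proportional sizes — and then control only the second-order fluctuations; the remaining items \eqref{It:H2}--\eqref{It:H5} are compatible with such a conditioning, but establishing \eqref{It:H6} still requires a sharper variance computation than the raw $\sum_v\deg_H(v)^2$ bound you used.
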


\begin{definition}[\bf Proportional
splitting]\label{def:proportionalsplitting}\index{general}{proportional splitting}
Let $\proporce{0},\proporce{1},\proporce{2}>0$ be three positive reals with
$\sum_i\proporce{i}\le 1$. Under Setting~\ref{commonsetting}, suppose that
$\colouringpartition$ is a partition of $V(G)\setminus\HugeVertices$ which satisfies assertions of Lemma~\ref{lem:randomSplit} with parameter
$p_\PARAMETERPASSING{L}{lem:randomSplit}:=10$ for graph
$G^*_\PARAMETERPASSING{L}{lem:randomSplit}:=(\Gcapt-\HugeVertices)\cup\GD$ (here,
by the union, we mean union of the edges), bounded decomposition $( \clusters,\DenseSpots,
\Gblack, \Gexp, \smallatoms )$, matching
$\M_\PARAMETERPASSING{L}{lem:randomSplit}:=\M_A\cup\M_B$, sets $\VXV_1:=\Vgood,
\VXV_2:=\XA\setminus(\HugeVertices\cup \gP)$, $\VXV_3:=\XB\setminus \gP$,
$\VXV_4:=V(\Gexp)$, $\VXV_5:=\smallatoms$, $\VXV_6:=\largeintoatoms$, $\VXV_7:=\gPatoms$, $\VXV_8:=\largevertices{\eta}{k}{G}$, $\VXV_9:=L_\sharp$, $\VXV_{10}:=\WantiC$ and reals
$\aXa_1:=\proporce{0},\aXa_2:=\proporce{1}$, $\aXa_3:=\proporce{2}$,
$\aXa_4:=\ldots\aXa_{10}=0$. Note that by
Lemma~\ref{lem:randomSplit}\eqref{It:H7} we have that $\colouringpartition$ is
a partition of $V(G)\setminus \HugeVertices$. We call $\colouringpartition$
\emph{proportional $(\proporce{0}:\proporce{1}:\proporce{2})$ splitting}.

We refer to properties of the proportional
$(\proporce{0}:\proporce{1}:\proporce{2})$  splitting $\colouringpartition$
using the numbering of Lemma~\ref{lem:randomSplit}; for example,
``Definition~\ref{def:proportionalsplitting}\eqref{It:HproportionalSizes}''
tells us among other things that $|(\XA\setminus\gP)\cap
\colouringp{0}|\ge\proporce{0}|\XA\setminus(\gP\cup\HugeVertices)|-n^{0.9}$.
\end{definition}
\begin{setting}\label{settingsplitting}
Under Setting~\ref{commonsetting}, suppose that we are given a
proportional 
\index{mathsymbols}{*Pa@$\proporce{i}$}
$(\proporce{0}:\proporce{1}:\proporce{2})$
splitting \index{mathsymbols}{*A@$\colouringp{i}$}
$\colouringpartition$ of $V(G)\setminus\HugeVertices$. We assume
that 
\begin{equation}\label{eq:proporcevelke}
\proporce{0},\proporce{1},\proporce{2}\ge\frac{\eta}{100}\;.
\end{equation}
Let\index{mathsymbols}{*V@$\exceptVertSplit$}\index{mathsymbols}{*V@$\exceptSemSplit$}\index{mathsymbols}{*V@$\exceptClustSplit$}
$\exceptVertSplit,\exceptSemSplit,\exceptClustSplit$ be the exceptional sets as in Definition~\ref{def:proportionalsplitting}\eqref{It:H1}.

We write \index{mathsymbols}{*F@$\shadowsplit$}
\begin{equation}\label{def:shadowsplit}
\shadowsplit:=\shadow_{\GD}\left(\bigcup 
\exceptSemSplit\cup\bigcup \exceptSemSplit^*\cup \bigcup \exceptClustSplit,\frac{\eta^2k}{10^{10}}\right)\;,
\end{equation} where \index{mathsymbols}{*V@$\exceptSemSplit^*$}$\exceptSemSplit^*$ are the partners of $\exceptSemSplit$ in $\M_A\cup\M_B$.

We have 
\begin{equation}\label{eq:boundShadowsplit}
|\shadowsplit|\le \epsilon n\;.
\end{equation}

For an arbitrary
set $U\subset V(G)$ and for $i\in\{0,1,2\}$ we write \index{mathsymbols}{*U@$U\colouringpI{i}$}$U\colouringpI{i}$ for
the set $U\cap\colouringp{i}$. 

For each $(X,Y)\in\M_A\cup\M_B$ such that $X,Y\notin\exceptSemSplit$
we write $(X,Y)\colouringpI{i}$ for an arbitrary fixed pair $(X'\subset
X,Y'\subset Y)$ with the property that
$|X'|=|Y'|=\min\{|X\colouringpI{i}|,|Y\colouringpI{i}|\}$. We extend this notion
of restriction to an arbitrary \semiregular matching $\mathcal N\subset
\M_A\cup\M_B$ as follows. We set\index{mathsymbols}{*N@$\mathcal N\colouringpI{i}$}
$$\mathcal
N\colouringpI{i}:=\big\{(X,Y)\colouringpI{i}\::\:(X,Y)\in\mathcal
N \text{ with } X,Y\notin\exceptSemSplit\big\}\;.$$
\end{setting}
In~\cite{cite:LKS-cut2} it was shown that the above setting yields the following property.
\begin{lemma}[{\cite[Lemma~\ref{p2.lem:RestrictionSemiregularMatching}]{cite:LKS-cut2}}]\label{lem:RestrictionSemiregularMatching}
Assume Setting~\ref{settingsplitting}. Then for each $i\in\{0,1,2\}$, and for
each $\mathcal N\subset\M_A\cup\M_B$ we have that $\mathcal N\colouringpI{i}$ is
a $(\frac{400\epsilon}{\eta},\frac
d2,\frac{\eta\pi}{200}\clustersize)$-\semiregular matching satisfying
\begin{equation}\label{eq:restrictedmatchlarge}
|V(\mathcal N\colouringpI{i})|\ge
\proporce{i}|V(\mathcal N)|-2k^{-0.05}n\;.
\end{equation}
Moreover for all $v\not\in \shadowsplit$ and for all $i=0,1,2$ we have 
$\deg_{\GD}(v, V(\mathcal N)\colouringpI{i}\setminus V(\mathcal
N\colouringpI{i}))\le \frac {\eta^2k}{10^5}$. 
\end{lemma}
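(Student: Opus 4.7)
My plan is to verify the three conclusions separately: regularity/density/size of each restricted pair, the global lower bound on $|V(\mathcal N\colouringpI{i})|$, and the degree bound for $v\notin\shadowsplit$.

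For the regularity statement, fix any pair $(X,Y)\in\mathcal N$ with $X,Y\notin\exceptSemSplit$. Definition~\ref{def:proportionalsplitting}\eqref{It:H3} (i.e.\ Lemma~\ref{lem:randomSplit}\eqref{It:H3}) gives
\[
|X\cap\colouringp{i}|,\;|Y\cap\colouringp{i}|\;\ge\;\proporce{i}|X|-k^{0.9}.
\]
Since $|X|=|Y|\ge \pi\clustersize\ge\pi\nu k$ and $\proporce{i}\ge\eta/100$ by~\eqref{eq:proporcevelke}, the error $k^{0.9}$ is negligible compared with $\proporce{i}|X|$, so $|X'|=|Y'|\ge \tfrac{\eta\pi}{200}\clustersize$ for $k$ large. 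Setting $\alpha:=|X'|/|X|\ge \proporce{i}/2\ge \eta/200>\epsilon$ (by the hierarchy in~\eqref{eq:KONST}), Fact~\ref{fact:BigSubpairsInRegularPairs} then yields that $(X',Y')$ is $\tfrac{2\epsilon}{\alpha}\le \tfrac{400\epsilon}{\eta}$-regular of density at least $d-\epsilon\ge d/2$.

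For the size bound, I separate $\mathcal N$ into \emph{good} pairs (both sides outside $\exceptSemSplit$) and \emph{bad} ones. Bad pairs contribute at most $4|\bigcup\exceptSemSplit|\le 4\exp(-k^{0.1})n$ to $V(\mathcal N)$ by Lemma~\ref{lem:randomSplit}\eqref{It:H1} together with $|X|=|Y|$. For each good pair,
\[
|V((X,Y)\colouringpI{i})|=2\min\{|X\cap\colouringp{i}|,|Y\cap\colouringp{i}|\}\ge 2\proporce{i}|X|-2k^{0.9}.
\]
Summing over the at most $n/(2\pi\clustersize)\le n/(2\pi\nu k)$ good pairs, the accumulated $k^{0.9}$-error is $O(nk^{-0.05})$, and combined with the bad-pair loss this is at most $2k^{-0.05}n$ for $k$ large, giving~\eqref{eq:restrictedmatchlarge}.

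The main work is the degree statement, which I would handle by splitting $V(\mathcal N)\colouringpI{i}\setminus V(\mathcal N\colouringpI{i})$ into (a)~vertices lying in a pair with at least one side in $\exceptSemSplit$, and (b)~vertices of good pairs that are in $(X\cup Y)\cap \colouringp{i}$ but not in $X'\cup Y'$. Every type-(a) vertex belongs to $\bigcup\exceptSemSplit\cup\bigcup\exceptSemSplit^*$, hence by the definition~\eqref{def:shadowsplit} of $\shadowsplit$ and $v\notin\shadowsplit$,
\[
\deg_{\GD}(v,\text{type (a)})\le \eta^2k/10^{10}.
\]
For type (b), the partition identity $\sum_{j\in\{0,1,2\}}|X\cap\colouringp{j}|=|X|$ combined with the lower bounds of Lemma~\ref{lem:randomSplit}\eqref{It:H3} for $X,Y\notin\exceptSemSplit$ forces both $|X\cap\colouringp{i}|$ and $|Y\cap\colouringp{i}|$ to lie within $O(k^{0.9})$ of $\proporce{i}|X|$, so the type-(b) contribution of each good pair has size $\bigl||X\cap\colouringp{i}|-|Y\cap\colouringp{i}|\bigr|=O(k^{0.9})$. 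Since $\GD\subset G-\HugeVertices$, if $v\in\HugeVertices$ then $\deg_{\GD}(v,\cdot)=0$ and we are done; otherwise Fact~\ref{fact:clustersSeenByAvertex} implies $v$ sees at most $2(\Omega^*)^2/(\gamma^2\nu)$ clusters of $\clusters$ in $\GD$, and since matching sides are pairwise disjoint with $|X|\ge\pi\clustersize$, each cluster hosts at most $1/\pi$ sides. Therefore $v$ meets type-(b) vertices from only $O\bigl((\Omega^*)^2/(\gamma^2\nu\pi)\bigr)$ pairs, each contributing $O(k^{0.9})$, for a total of $O(k^{0.95})$, which is much less than $\eta^2k/10^5$ for $k$ large. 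Adding the two bounds finishes the proof.

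The main obstacle is the degree bound: one has to identify the ``bad-pair'' leftover with the seed of $\shadowsplit$ (where the $\exceptSemSplit^*$ term of~\eqref{def:shadowsplit} is essential, since excepted sides drag their partners along) and then control the ``good-pair'' leftover purely by a counting argument via Fact~\ref{fact:clustersSeenByAvertex}; the role of $\exceptClustSplit$ in~\eqref{def:shadowsplit} is not used here but is harmless.
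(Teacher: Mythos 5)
Your treatment of the per-pair regularity/density/size bounds and of the global estimate~\eqref{eq:restrictedmatchlarge} is essentially correct and follows the natural route: restrict to pairs with both sides outside $\exceptSemSplit$, invoke Lemma~\ref{lem:randomSplit}\eqref{It:H3} and Fact~\ref{fact:BigSubpairsInRegularPairs}, and charge the exceptional pairs to $\exceptSemSplit$.

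However, the degree bound has a genuine gap. You split the lost set into type-(a) (pairs touching $\exceptSemSplit$) and type-(b) (good-pair leftover), and for type-(b) you assert that the partition identity $\sum_{j\in\{0,1,2\}}|X\cap\colouringp{j}|=|X|$ together with the \emph{lower} bounds of Lemma~\ref{lem:randomSplit}\eqref{It:H3} forces $|X\cap\colouringp{i}|$ to lie within $O(k^{0.9})$ of $\proporce{i}|X|$. This deduction is valid only if $\proporce{0}+\proporce{1}+\proporce{2}=1$: from the identity and the lower bounds one gets
\[
|X\cap\colouringp{i}|\;\le\;\Bigl(1-\sum_{j\neq i}\proporce{j}\Bigr)|X|+2k^{0.9}
\;=\;\Bigl(\proporce{i}+\bigl(1-\textstyle\sum_j\proporce{j}\bigr)\Bigr)|X|+2k^{0.9}.
\]
But in Section~\ref{sec:proof} the proportions are set to $\proporce{0}=\eta/100$ and $\proporce{i}=\eta/100+m_i/\bigl((1+\frac{\eta}{30})k\bigr)$ with $m_1+m_2\le k$, which gives $1-\sum_j\proporce{j}\gtrsim \eta/300$ --- a nontrivial constant, not a $k^{-c}$ error. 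Since $|X|\ge\pi\clustersize\ge\pi\nu k$, the per-pair type-(b) set can be of size $\Theta(\eta\cdot|X|)=\Theta(\eta k)$, not $O(k^{0.9})$. Consequently the final counting step via Fact~\ref{fact:clustersSeenByAvertex} does not deliver $\eta^2k/10^5$. What is missing is a matching \emph{upper-bound} (two-sided concentration) statement of the form $|X\cap\colouringp{i}|\le\proporce{i}|X|+k^{0.9}$ for $X\notin\exceptSemSplit$; Lemma~\ref{lem:randomSplit} as stated provides only the lower bound, and the partition identity cannot manufacture the upper bound when $\sum_j\proporce{j}<1$. The Chernoff-type random splitting underlying Lemma~\ref{lem:randomSplit} does concentrate from above as well, but you cannot invoke that from the stated lemma; you would either need to quote a strengthened form of Lemma~\ref{lem:randomSplit}, or find a different route to the degree bound (e.g.\ one that also uses $\exceptClustSplit$, which the definition of $\shadowsplit$ includes for a reason and which your argument explicitly discards as unused).
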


\subsection{The ten configurations}\label{ssec:10conf}
Here, we recall the configurations introduced in~\cite[Section~\ref{p2.ssec:TypesConf}]{cite:LKS-cut2}. Recall also that saying that ``we have Configuration X'', ``the graph is in Configuration X'', or  ``Configuration X occurs'' is the same.

\bigskip We start  by giving the definition of Configuration~$\mathbf{(\diamond1)}$. This is a very easy configuration in which a modification of the greedy tree-embedding strategy works.
\begin{definition}[\bf Configuration~$\mathbf{(\diamond1)}$]\index{mathsymbols}{**1@$\mathbf{(\diamond1)}$}
We say that a graph $G$ is in \emph{Configuration~$\mathbf{(\diamond1)}$} if
there exists a non-empty bipartite graph $H\subset G$ with $\mindeg_G(V(H))\ge k$ and $\mindeg(H)\ge k/2$.
\end{definition}

\bigskip We now introduce the configurations $\mathbf{(\diamond2)}$--$\mathbf{(\diamond5)}$ which make use of the set $\HugeVertices$. These configurations build on Preconfiguration $\mathbf{(\clubsuit)}$. 

\begin{definition}[\bf Preconfiguration 
$\mathbf{(\clubsuit)}$]\index{mathsymbols}{***@$\mathbf{(\clubsuit)}$}
\label{def:PreClub}
Suppose that we are in
Setting~\ref{commonsetting}. We say that the graph $G$ is in
\emph{Preconfiguration~$\mathbf{(\clubsuit)}(\Omega^\star)$} if the following
conditions are satisfied.
$G$ contains non-empty sets $L''\subset L'\subset
\largevertices{\frac9{10}\eta}{k}{\Gcapt}\setminus\HugeVertices$, and a non-empty set $\HugeVertices'\subseteq\HugeVertices$ such that
\begin{align}\label{eq:clubsuitCOND1}
\maxdeg_{\Gcapt} (L',\HugeVertices\setminus \HugeVertices')&<\frac{\eta
k}{100} \;\mbox{,}\\ 
\label{eq:clubsuitCOND2}
\mindeg_{\Gcapt}(\HugeVertices',\BUG{L'})&\ge \Omega^\star k\;\mbox{, and}\\
\label{eq:clubsuitCOND3}
\maxdeg_{\Gcapt}(L'',\largevertices{\frac9{10}\eta}{k}{\Gcapt}\setminus(\HugeVertices\cup
L'))&\le\frac{\eta k}{100}\;.
\end{align}
\end{definition}

\begin{definition}[\bf Configuration
$\mathbf{(\diamond2)}$]\index{mathsymbols}{**2@$\mathbf{(\diamond2)}$}Suppose that we are in Setting~\ref{commonsetting}. We say that the graph $G$ is in
\emph{Configuration $\mathbf{(\diamond2)}(\Omega^\star,
\tilde\Omega,\beta)$} if the following
conditions are satisfied.

 The triple $L'',L',\HugeVertices' $ witnesses preconfiguration
$\mathbf{(\clubsuit)}(\Omega^\star)$ in $G$. There exist a
non-empty set $\HugeVertices''\subset \HugeVertices'$, a set $V_1\subset V(\Gexp)\cap\YB\cap L''$, and a set $V_2\subset V(\Gexp)$ with the following properties.
\begin{align*}
\mindeg_{\Gcapt}(\HugeVertices'',V_1)&\ge\tilde\Omega k\;\\
\mindeg_{\Gcapt}(V_1,\HugeVertices'')&\ge \beta k\;,\\
\mindeg_{\Gexp}(V_1,V_2)&\ge \beta k\;,\\
\mindeg_{\Gexp}(V_2,V_1)&\ge \beta k\;.
\end{align*}
\end{definition}

\begin{definition}[\bf Configuration
$\mathbf{(\diamond3)}$]\index{mathsymbols}{**3@$\mathbf{(\diamond3)}$}
\label{def:CONF3}
Suppose that we are in
Setting~\ref{commonsetting}. We say that the graph $G$ is in
\emph{Configuration $\mathbf{(\diamond3)}(\Omega^\star,
\tilde\Omega,\zeta,\delta)$} if the following
conditions are satisfied.

 The triple $L'',L',\HugeVertices'$ witnesses preconfiguration
$\mathbf{(\clubsuit)}(\Omega^\star)$ in $G$. There exist a
non-empty set $\HugeVertices''\subset \HugeVertices'$, a set $V_1\subset \smallatoms\cap \YB\cap L''$, and a set $V_2\subset V(G)\setminus \HugeVertices$ such that the
following properties are satisfied.
\begin{align}
\nonumber
\mindeg_{\Gcapt}(\HugeVertices'',V_1)&\ge \tilde \Omega k\;,\\
\nonumber
\mindeg_{\Gcapt}(V_1,\HugeVertices'')&\ge \delta k\;,\\
\label{eq:WHtc}
\maxdeg_{\GD}(V_1, V(G)\setminus
(V_2\cup \HugeVertices))&\le \zeta k\;,\\ 
\label{confi3theothercondi}
\mindeg_{\GD}(V_2,V_1)&\ge \delta k\;.
\end{align}
\end{definition}

\begin{definition}[\bf Configuration
$\mathbf{(\diamond4)}$]\index{mathsymbols}{**4@$\mathbf{(\diamond4)}$}
\label{def:CONF4}
Suppose that we are in
Setting~\ref{commonsetting}. We say that the graph~$G$ is in
\emph{Configuration
$\mathbf{(\diamond4)}(\Omega^\star, \tilde\Omega,\zeta,\delta)$} if the
following conditions are satisfied.

 The triple $L'',L',\HugeVertices'$ witnesses preconfiguration
$\mathbf{(\clubsuit)}(\Omega^\star)$ in $G$. There exist a
non-empty set $\HugeVertices''\subset \HugeVertices'$, sets $V_1\subset \YB\cap L''$,
$\smallatoms'\subset \smallatoms$, and $V_2\subset V(G)\setminus \HugeVertices$ with the following properties
\begin{align}
\mindeg_{\Gcapt}(\HugeVertices'',V_1)&\ge \tilde\Omega k\;,\notag \\
\mindeg_{\Gcapt}(V_1,\HugeVertices'')&\ge \delta k\;,\notag \\
\mindeg_{\Gcapt\cup\GD}(V_1,\smallatoms')&\ge \delta k\;,\label{confi4:3} \\
\mindeg_{\Gcapt\cup\GD}(\smallatoms',V_1)&\ge \delta k\;,\label{confi4:4} \\
\mindeg_{\Gcapt\cup\GD}(V_2,\smallatoms')&\ge \delta k\;,\label{confi4othercondi} \\
\maxdeg_{\Gcapt\cup\GD}(\smallatoms',V(G)\setminus
(\HugeVertices\cup V_2))&\le \zeta k\;. \label{confi4lastcondi}
\end{align}
\end{definition}

\begin{definition}[\bf Configuration
$\mathbf{(\diamond5)}$]\index{mathsymbols}{**5@$\mathbf{(\diamond5)}$}
\label{def:CONF5}
Suppose that we are in
Setting~\ref{commonsetting}. We say that the graph $G$ is in
\emph{Configuration
$\mathbf{(\diamond5)}(\Omega^\star,\tilde\Omega,\delta,\zeta,\tilde\pi)$} if the
following conditions are satisfied.

 The triple $L'',L',\HugeVertices'$ witnesses preconfiguration
 $\mathbf{(\clubsuit)}(\Omega^\star)$ in $G$. There exists a non-empty set $\HugeVertices''\subset \HugeVertices'$, and a set $V_1\subset (\YB\cap
L''\cap \bigcup\clusters)\setminus V(\Gexp)$ such that the following conditions are fulfilled.
\begin{align}
\mindeg_{\Gcapt}(\HugeVertices'',V_1)&\ge \tilde\Omega k\;, \\
\mindeg_{\Gcapt}(V_1,\HugeVertices'')&\ge \delta k\;,
\label{eq:diamond5P2}\\
\mindeg_{\Gblack}(V_1)&\ge \zeta k\;.\label{confi5last}
\end{align}
Further, we have 
\begin{equation}\label{eq:diamond5P4}
\mbox{$C\cap V_1=\emptyset$ or $|C\cap V_1|\ge \tilde\pi|C|$}
\end{equation} for every
$C\in\clusters$.
\end{definition}

In remains to introduce
configurations~$\mathbf{(\diamond6)}$--$\mathbf{(\diamond10)}$. In these
configurations the set $\HugeVertices$ is not utilized. All these
configurations make use of Setting~\ref{settingsplitting}, i.e., the set
$V(G)\setminus \HugeVertices$ is partitioned into three sets
$\colouringp{0},\colouringp{1}$ and $\colouringp{2}$. The purpose of $\colouringp{0},\colouringp{1}$ and $\colouringp{2}$
is to embed the \kknnaaggssNOSPACE, the internal shrubs, and the
end shrubs of $T_\PARAMETERPASSING{T}{thm:main}$, respectively. Thus the
parameters $\proporce{0}, \proporce{1}$ and $\proporce{2}$ are chosen proportionally to
the sizes of these respective parts of $T_\PARAMETERPASSING{T}{thm:main}$.

We first introduce four preconfigurations $\mathbf{(\heartsuit 1)}$,
$\mathbf{(\heartsuit 2)}$, $\mathbf{(exp)}$ and $\mathbf{(reg)}$ which are
building bricks for
configurations~$\mathbf{(\diamond6)}$--$\mathbf{(\diamond9)}$.
The preconfigurations $\mathbf{(\heartsuit 1)}$ and $\mathbf{(\heartsuit 2)}$ will
be used for embedding end shrubs of a fine partition of the tree
$T_\PARAMETERPASSING{T}{thm:main}$, and preconfigurations $\mathbf{(exp)}$ and
$\mathbf{(reg)}$ will be used for embedding its \kknnaaggssNOSPACE.

 An \emph{$\M$-cover}\index{general}{cover}\index{mathsymbols}{*COVER@$\M$-cover} of a
\semiregular matching $\M$ is a family $\mathcal F\subset \V(\M)$ with the property that at least one of
the elements $S_1$ and $S_2$ is a member of $\mathcal F$, for each $(S_1,S_2)\in
\M$.

\begin{definition}[\bf Preconfiguration
$\mathbf{(\heartsuit 1)}$]\index{mathsymbols}{**1@$\mathbf{(\heartsuit1)}$}
\label{def:heart1}
 Suppose that we are in
Setting~\ref{commonsetting} and Setting~\ref{settingsplitting}. We say that the
graph $G$ is in \emph{Preconfiguration
$\mathbf{(\heartsuit 1)}(\gamma',h)$} of $V(G)$ if there are two
non-empty sets $V_0,V_1\subset
\colouringp{0}\setminus \left(\shadowsplit\cup\shadow_{\GD}(\WantiC, \frac{\eta^2 k}{10^5})\right)$ with the following
properties.
\begin{align}
\label{COND:P1:3}
\mindeg_{\Gcapt}\left(V_0,\Vgood\colouringpI{2}\right)&\ge h/2 \;\mbox{, and}\\
\label{COND:P1:4}
\mindeg_{\Gcapt}\left(V_1,\Vgood\colouringpI{2}\right)&\ge h \;.
\end{align} Further, there is an
$(\M_A\cup\M_B)$-cover $\mathcal F$ such that
\begin{equation}\label{COND:P1:5}
\maxdeg_{\Gcapt}\left(V_1,\bigcup\mathcal F\right)\le \gamma' k\;.
\end{equation}
\end{definition}

\begin{definition}[\bf Preconfiguration
$\mathbf{(\heartsuit 2)}$]\index{mathsymbols}{**2@$\mathbf{(\heartsuit2)}$} Suppose that we
are in Setting~\ref{commonsetting} and Setting~\ref{settingsplitting}. We say that the
graph $G$ is in \emph{Preconfiguration
$\mathbf{(\heartsuit 2)}(h)$} of $V(G)$ if there are two
non-empty sets $V_0,V_1\subset \colouringp{0}\setminus \left(\shadowsplit\cup \shadow_{\GD}(\WantiC, \frac {\eta^2 k}{10^5})\right)$ with the
following properties.
\begin{align}
\begin{split}\label{COND:P2:4}
\mindeg_{\Gcapt}\left(V_0\cup V_1,\Vgood\colouringpI{2}\right)&\ge h.
\end{split}
\end{align}
\end{definition}

\begin{definition}[\bf Preconfiguration
$\mathbf{(exp)}$]\index{mathsymbols}{**exp@$\mathbf{(exp)}$} 
\label{def:exp8}
Suppose that we
are in Setting~\ref{commonsetting} and Setting~\ref{settingsplitting}. We say that the
graph $G$ is in \emph{Preconfiguration
$\mathbf{\mathbf{(exp)}}(\beta)$} if there are two
non-empty sets $V_0,V_1\subset \colouringp{0}$ with the following properties.
\begin{align}
\label{COND:exp:1}
\mindeg_{\Gexp}(V_0,V_1)&\ge \beta k\;,\\
\label{COND:exp:2}
\mindeg_{\Gexp}(V_1,V_0)&\ge \beta k\;.
\end{align}
\end{definition}

\begin{definition}[\bf Preconfiguration
$\mathbf{(reg)}$]\index{mathsymbols}{**reg@$\mathbf{(reg)}$}
\label{def:reg}
 Suppose that we
are in Setting~\ref{commonsetting} and Setting~\ref{settingsplitting}. We say that the graph $G$ is in \emph{Preconfiguration
$\mathbf{\mathbf{(reg)}}(\tilde \epsilon, d', \mu)$}  if there are two  non-empty sets $V_0,V_1\subset \colouringp{0}$
and a non-empty family of vertex-disjoint $(\tilde\epsilon,d')$-super-regular pairs $\{(Q_0^{(j)},Q_1^{(j)}\}_{j\in\mathcal Y}$ (with respect to the edge set $E(G)$) with $V_0:=\bigcup Q_0^{(j)}$ and $V_1:=\bigcup Q_1^{(j)}$ such that
\begin{align}
\label{COND:reg:0}
\min\left\{|Q_0^{(j)}|,|Q_1^{(j)}|\right\}&\ge\mu k\;.
\end{align}
\end{definition}

\begin{definition}[\bf Configuration
$\mathbf{(\diamond6)}$]\index{mathsymbols}{**6@$\mathbf{(\diamond6)}$}
\label{def:CONF6}
Suppose that we are in
Settings~\ref{commonsetting} and~\ref{settingsplitting}. We say that the graph $G$
is in \emph{Configuration
$\mathbf{(\diamond6)}(\delta, \tilde \epsilon,d',\mu, \gamma', h_2)$} if the
following conditions are met.

The vertex sets $V_0,V_1$ 
witness Preconfiguration
 $\mathbf{(reg)}(\tilde \epsilon,d',\mu)$ or
 Preconfiguration~$\mathbf{(exp)}(\delta)$ and either Preconfiguration~$\mathbf{(\heartsuit1)}(\gamma',h_2)$ or
Preconfiguration~$\mathbf{(\heartsuit2)}(h_2)$. There exist non-empty sets $V_2,V_3\subset \colouringp{1}$ such that
 \begin{align}\label{COND:D6:1}
\mindeg_{G}(V_1,V_2)&\ge \delta k\;,\\ 
\label{COND:D6:2}
\mindeg_{G}(V_2,V_1)&\ge \delta k\;,\\ 
\label{COND:D6:3}
\mindeg_{\Gexp}(V_2,V_3)&\ge \delta k \;,\mbox{and}\\
\label{COND:D6:4}
\mindeg_{\Gexp}(V_3,V_2)&\ge \delta k\;.
\end{align}
\end{definition}

\begin{definition}[\bf Configuration
$\mathbf{(\diamond7)}$]\index{mathsymbols}{**7@$\mathbf{(\diamond7)}$}
\label{def:CONF7}
Suppose that we are in
Settings~\ref{commonsetting} and~\ref{settingsplitting}. We say that the graph $G$
is in \emph{Configuration
$\mathbf{(\diamond7)}(\delta, \rho', \tilde \epsilon, d',\mu, \gamma', h_2)$} if
the following conditions are satisfied.

The sets $V_0,V_1$  witness Preconfiguration
 $\mathbf{(reg)}( \tilde \epsilon, d',\mu)$ and either Preconfiguration~$\mathbf{(\heartsuit
 1)}(\gamma', h_2)$ or Preconfiguration~$\mathbf{(\heartsuit 2)}(h_2)$. There
 exist non-empty sets $V_2\subset \smallatoms\colouringpI{1}\setminus \exceptVertSplit$ and $V_3\subset \colouringp{1}$ such that
 \begin{align}
\label{COND:D7:1}
\mindeg_{G}(V_1,V_2)&\ge \delta k\;,\\
\label{COND:D7:2}
\mindeg_{G}(V_2,V_1)&\ge \delta k\;,\\
\label{COND:D7:3}
\maxdeg_{\GD}(V_2,\colouringp{1}\setminus V_3)&< \rho' k \;\mbox{and}\\
\label{COND:D7:4}
\mindeg_{\GD}(V_3,V_2)&\ge \delta k\;.
\end{align}
\end{definition}

\begin{definition}[\bf Configuration
$\mathbf{(\diamond8)}$]\index{mathsymbols}{**8@$\mathbf{(\diamond8)}$}
\label{def:CONF8}
Suppose that we are in
Settings~\ref{commonsetting} and~\ref{settingsplitting}. We say that the graph $G$
is in \emph{Configuration
$\mathbf{(\diamond8)}(\delta,\rho',\epsilon_1,\epsilon_2, d_1,d_2,\mu_1,\mu_2, h_1,h_2)$}
 if the following conditions are met.

The vertex sets $V_0,V_1$  witness Preconfiguration
 $\mathbf{(reg)}(\epsilon_2, d_2,\mu_2)$ and Preconfiguration~$\mathbf{(\heartsuit 2)}(h_2)$.
 There exist non-empty sets $V_2\subset \colouringp{0}$, $V_3,V_4\subset \colouringp{1}$, with $V_3\subset\smallatoms\setminus \exceptVertSplit$, and an $(\epsilon_1, d_1, \mu_1 k)$-\semiregular matching $\mathcal N$ absorbed by $(\M_A\cup\M_B)\setminus \NAtom$, with $V(\mathcal N)\subset \colouringp{1}\setminus V_3$ such that
\begin{align}
\label{COND:D8:1}
\mindeg_{G}(V_1,V_2)&\ge \delta k\;,\\
\label{COND:D8:2}
\mindeg_{G}(V_2,V_1)&\ge \delta k\;,\\
\label{COND:D8:3}
\mindeg_{\Gcapt}(V_2,V_3)&\ge \delta k\;,\\
\label{COND:D8:4}
\mindeg_{\Gcapt}(V_3,V_2)&\ge \delta k\;,\\
\label{COND:D8:5}
\maxdeg_{\GD}(V_3,\colouringp{1}\setminus V_4)&< \rho' k \;,\\
\label{COND:D8:6}
\mindeg_{\GD}(V_4,V_3)&\ge \delta k\;\mbox{, and}\\
\label{COND:D8:7}
\deg_{\GD}(v,V_3)+\deg_{\Gblack}(v,V(\mathcal N))&\ge h_1\;\mbox{for each $v\in V_2$.}
\end{align} 
\end{definition}
\begin{definition}[\bf Configuration
$\mathbf{(\diamond9)}$]\index{mathsymbols}{**9@$\mathbf{(\diamond9)}$}
\label{def:CONF9}
Suppose that we are in
Settings~\ref{commonsetting}, and~\ref{settingsplitting}. We say that the graph $G$
is in \emph{Configuration
$\mathbf{(\diamond9)}(\delta, \gamma', h_1, h_2, \epsilon_1, d_1,
\mu_1,\epsilon_2, d_2,\mu_2)$} if the following conditions are satisfied.

The sets $V_0,V_1$ together with the $(\M_A\cup\M_B)$-cover $\mathcal F'$
witness Preconfiguration~$\mathbf{(\heartsuit1)}(\gamma',h_2)$. 
 There exists an $(\epsilon_1, d_1, \mu_1 k)$-\semiregular matching $\mathcal N$ absorbed by $\M_A\cup\M_B$, with $V(\mathcal N)\subset \colouringp{1}$.
Further, there is a family $\{(Q_0^{(j)},Q_1^{(j)})\}_{j\in\mathcal Y}$ as in Preconfiguration~$\mathbf{(reg)}(\epsilon_2,d_2,\mu_2)$. There is a set $V_2\subseteq
 V(\mathcal N)\setminus \bigcup \mathcal F'\subset \bigcup\clusters$ with the
 following properties:
\begin{align}
\label{conf:D9-XtoV}
\mindeg_{\GD}\left(V_1, V_2\right)\ge h_1\;,&\\
\label{conf:D9-VtoX}\mindeg_{\GD}\left(V_2,V_1\right)\ge
\delta k\;.
\end{align}
\end{definition}

Our last configuration, Configuration~$\mathbf{(\diamond10)}$, will lead to an embedding very similar to the one
in the dense case (as treated in~\cite{PS07+}; this will be explained in detail in Subsection~\ref{ssec:EmbedOverview10}). In order to be able to formalize the configuration we need a preliminary definition. We shall
generalize the standard concept of a regularity graph (in the context of regular
partitions and Szemer\'edi's regularity lemma) to graphs with clusters whose sizes are only bounded from below.

\begin{definition}[\bf{$( \epsilon,d,\ell_1,\ell_2)$-regularized
graph}]\index{general}{regularized graph}\label{def:regularizedGraph} Let $G$ be
a graph, and let $\mathcal V$ be an $\ell_1$-ensemble that partitions $V(G)$.
Suppose that $G[X]$ is empty for each $X\in \mathcal V$ and suppose $G[X,Y]$ is
$\epsilon$-regular and of density either $0$ or at least $d$ for each $X,Y\in
\mathcal V$. Further suppose that for all $X\in \V$ it holds that 
$|\bigcup\neighbour_G(X)|\le \ell_2$.
Then we say that $(G,\mathcal V)$ is an \emph{$(\eps, d,\ell_1,
\ell_2)$-regularized graph}.

A \semiregular matching $\M$ of $G$ is \index{general}{consistent matching}\emph{consistent} with $(G,\mathcal V)$ if $\V(\M)\subset \V$.
\end{definition}

\begin{definition}[\bf Configuration
$\mathbf{(\diamond10)}(\tilde\eps,d',\ell_1, \ell_2,
\eta')$]\index{mathsymbols}{**10@$\mathbf{(\diamond10)}$}
\label{def:CONF10}
Assume Setting~\ref{commonsetting}. The graph $G$ contains an $(
\tilde\epsilon, d', \ell_1, \ell_2)$-regularized graph $(\tilde G,\V)$  and there
is a $( \tilde\epsilon,  d',\ell_1 )$-\semiregular matching $\M$ consistent with
$(\tilde G,\V)$.
There are a family $\LargeTen\subset \V$ and distinct clusters $A, B\in\V$  with
\begin{enumerate}[(a)]
\item\label{diamond10cond1} $E(\tilde G[A,B])\neq \emptyset$, 
\item\label{diamond10cond2} $\deg_{\tilde G}\big(v,V(\M)\cup \bigcup \LargeTen\big)\ge (1+\eta')k$ for all but at most $\tilde\epsilon |A|$
vertices $v\in A$ and for all but at most $\tilde\epsilon|B|$ vertices $v\in B$, and
\item\label{diamond10cond3}
 for each $X\in\LargeTen$ we have $\deg_{\tilde G}(v)\ge (1+\eta')k$ for all but at most $\tilde\epsilon|X|$ vertices $v\in X$.
 \end{enumerate}
\end{definition}

\section{Embedding trees}\label{sec:embed}
In this section we provide an embedding of a tree $T_\PARAMETERPASSING{T}{thm:main}\in\treeclass{k}$ in
the setting of the configurations introduced in
Subsection~\ref{ssec:10conf}.  In Section~\ref{ssec:embeddingOverview} we
first give a fairly detailed overview of the embedding techniques used. In
Section~\ref{ssec:Duplicate} we introduce a class of stochastic processes which
will be used for some embeddings. Section~\ref{ssec:EmbeddingShrubs} contains a
number of lemmas about embedding small trees, and use them  for embedding \kknnaaggss and shrubs of a given fine partition of $T_\PARAMETERPASSING{T}{thm:main}$. Embedding the entire tree $T_\PARAMETERPASSING{T}{thm:main}$ is then handled in the final Section~\ref{sec:MainEmbedding}. There we have to distinguish between particular configurations. The configurations are grouped into three categories (Section~\ref{sssec:EmbedDiamon0Diamond1}, Section~\ref{sssec:EmbedMoreComplex}, and Section~\ref{sssec:OrderedSkeleton}) corresponding to the similarities between the configurations.

\subsection{Overview of the embedding procedures}\label{ssec:embeddingOverview}
We outlined the high-level embedding strategy in based on the previous work in the dense setting (c.f.~\cite{PS07+}) in \cite[{Section~\ref{p1.ssec:motivation}}]{cite:LKS-cut1}. In this section we however have already a finer structure given by one of the configurations.

Recall that we are working under Setting~\ref{commonsetting}. 
Given a host graph $G_\PARAMETERPASSING{T}{thm:main}$ with one of the Configurations $\mathbf{(\diamond2)}$--$\mathbf{(\diamond10)}$, we have to embed in it a given tree $T=T_\PARAMETERPASSING{T}{thm:main}\in\treeclass{k}$, which comes with its  
$(\tau k)$-fine partition $(W_A,W_B,\shrubA,\shrubB)$. The $\tau k$-fine partition of $T$ will make it possible to combine embeddings of smaller parts of $T$ into one embedding of the whole tree. This means that we will first develop tools for embedding singular shrubs and
\kknnaaggss of the $(\tau k)$-fine partition in various basic building bricks of the
configurations: the avoiding set $\smallatoms$, the expander $\Gexp$,  regular pairs, and  vertices of huge degree
$\HugeVertices$. Second, we will combine these basic techniques to embed the
entire tree $T$. Here, the order in which different parts of $T$ are embedded
is important. Also, it will be crucial at some points to reserve places for parts of the tree
which will be embedded only later. 

In the following subsections, we sketch our embedding techniques. We group them into five categories comprising  related configurations\footnote{Configuration $\mathbf{(\diamond1)}$ is trivial (see
Section~\ref{sssec:EmbedDiamon0Diamond1}) and needs no draft.}: Configurations
$\mathbf{(\diamond2)}$--$\mathbf{(\diamond5)}$, Configurations $\mathbf{(\diamond6)}$--$\mathbf{(\diamond7)}$,  Configuration~$\mathbf{(\diamond8)}$, Configuration~$\mathbf{(\diamond9)}$, and Configuration~$\mathbf{(\diamond10)}$, treated in
Sections~\ref{ssec:EmbedOverview25},~\ref{ssec:EmbedOverview67}, \ref{ssec:EmbedOverview8}, \ref{ssec:EmbedOverview9}, \ref{ssec:EmbedOverview10},
respectively. 

To illustrate our embedding techniques in more detail, and how they combine, we chose to explain the embedding procedure for Configuration~$\mathbf{(\diamond7)}$~$\mathbf{(exp)}$~$\mathbf{(\heartsuit 1)}$  even more in details. This is done in Section~\ref{sssec:7exp1}. Not all the techniques are used in $\mathbf{(\diamond7)}$~$\mathbf{(exp)}$~$\mathbf{(\heartsuit 1)}$; in particular that configuration does not deal with huge degree vertices (as we do in Section~\ref{ssec:EmbedOverview25}) and does not make use of $\Gblack$. Yet, at least in this configuration, it may be a useful intermediate step between the description in Section~\ref{ssec:EmbedOverview67} and the full proof in Lemma~\ref{lem:embed:total68}.

\subsubsection{Embedding overview for Configurations
$\mathbf{(\diamond2)}$--$\mathbf{(\diamond5)}$}\label{ssec:EmbedOverview25}
In each of the Configurations
$\mathbf{(\diamond2)}$--$\mathbf{(\diamond5)}$ we have sets $\HugeVertices'',\HugeVertices',L'', L'$ and $V_1$. Further, we have some additional sets ($V_2$ and/or $\smallatoms'$) depending on the particular configuration.

A common embedding scheme for Configurations $\mathbf{(\diamond2)}$--$\mathbf{(\diamond5)}$ is illustrated in Figure~\ref{fig:DIAMOND25overview}. 
\begin{figure}[ht]
\centering 
\includegraphics{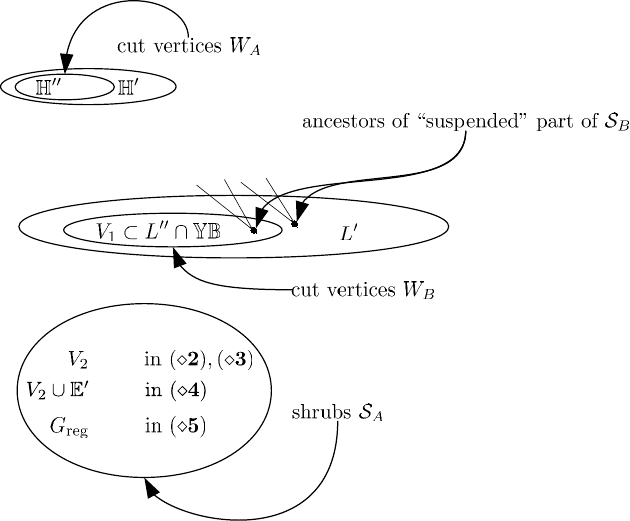}
\caption[Embedding overview for Configurations~$\mathbf{(\diamond2)}$--$\mathbf{(\diamond5)}$]{An overview of embedding 
of a tree $T\in\treeclass{k}$ given with its fine partition $(W_A,W_B,\shrubA,\shrubB)$ using
Configurations~$\mathbf{(\diamond2)}$--$\mathbf{(\diamond5)}$. The \kknnaaggss are
embedded between $\HugeVertices''$ and $V_1$, all the shrubs $\shrubA$ are embedded in sets specific to particular configurations so that the vertices neighbouring the seeds~$W_A$ are embedded in $V_1$. Parts of the shrubs $\shrubB$ are embedded directly (using various embedding techniques), while the rest is ``suspended'', i.e., the ancestors of the unembedded remainders are embedded on vertices which have large degrees in $\HugeVertices'$. The embedding of $\shrubB$ is then finalized in the last stage.}
\label{fig:DIAMOND25overview}
\end{figure}
There are two stages of the embedding procedure: the \kknnaaggssNOSPACE, the shrubs $\shrubA$ and some parts of the shrubs $\shrubB$ are embedded in Stage~1, and then in Stage~2 the remainders of $\shrubB$ are embedded. Recall that $\shrubA$ contains both internal and end shrubs while $\shrubB$ contains exclusively end shrubs (Definition~\ref{ellfine}~\eqref{Bend}). We note that here the shrubs $\shrubB$ are further subdivided and some parts of them are embedded in Stage~1 and some in Stage~2.

\begin{itemize}
\item In Stage~1, the \kknnaaggss of $T$ are embedded in $\HugeVertices''$ and $V_1$ so that $W_A$ is mapped to $\HugeVertices''$ and $W_B$ is mapped to $V_1$.
\item In Stage~1, the internal and end shrubs of $\shrubA$ are embedded using the sets $V_1,V_2$ and $\smallatoms'$ which are specific to the particular Configurations~$\mathbf{(\diamond2)}$--$\mathbf{(\diamond5)}$. The vertices of $\shrubA$ neighbouring the seeds~$W_A$ are always embedded in $V_1$. Parts of the shrubs $\shrubB$ are embedded while the ancestors of the unembedded remainders are embedded on vertices which have large degrees in $\HugeVertices'$. 
\item In Stage~2, the embedding of $\shrubB$ is finalized. The remainders of $\shrubB$ are embedded starting with embedding their roots in $\HugeVertices'$.
\end{itemize}
A hierarchy of the embedding lemmas used to resolve Configurations
$\mathbf{(\diamond2)}$--$\mathbf{(\diamond5)}$ is given in Table~\ref{tab:Conf25}.
\begin{table}
\centering
\begin{tabular}{ccccc}
\hline
\multicolumn{5}{|c|}{Main embedding lemma: Lemma~\ref{lem:conf2-5}}\\
\hline
\multicolumn{1}{c}{$\Uparrow$}& &\multicolumn{1}{c}{$\Uparrow$}& &\multicolumn{1}{c}{$\Uparrow$}\\
\cline{1-1}\cline{3-3}\cline{5-5}
\multicolumn{1}{|c|}{Shrubs $\shrubA$}& &\multicolumn{1}{|c|}{Shrubs $\shrubB$ (Stage 1): Lemma~\ref{lem:blueShrubSuspend}}& &\multicolumn{1}{|c|}{Shrubs $\shrubB$ (Stage 2): Lemma~\ref{lem:embedC'endshrub}}\\
\cline{3-3}\cline{5-5}
\multicolumn{1}{|l|}{$\mathbf{(\diamond2)}$: Lemma~\ref{lem:embed:greyFOREST}}& & & & \\
\multicolumn{1}{|l|}{$\mathbf{(\diamond3)}$: Lemma~\ref{lem:HE3}}& & & & \\
\multicolumn{1}{|l|}{$\mathbf{(\diamond4)}$: Lemma~\ref{lem:HE4}}& & & & \\
\multicolumn{1}{|l|}{$\mathbf{(\diamond5)}$: regularity}& & & & \\
\cline{1-1}
\end{tabular}
\caption[Embedding lemmas for Configurations $\mathbf{(\diamond2)}$--$\mathbf{(\diamond5)}$]{Embedding lemmas employed for Configurations $\mathbf{(\diamond2)}$--$\mathbf{(\diamond5)}$.}
\label{tab:Conf25}
\end{table}

\subsubsection{Embedding overview for Configurations
$\mathbf{(\diamond6)}$--$\mathbf{(\diamond7)}$}\label{ssec:EmbedOverview67}
Suppose Setting~\ref{commonsetting} and~\ref{settingsplitting} (see Remark~\ref{rem:h1h2} below for a comment on the constants $\proporce{0},\proporce{1},\proporce{2}$). Recall that we have in each of these configurations sets $V_0\cup V_1\subset \colouringp{0}$, sets $V_2\cup V_3\subset \colouringp{1}$ and $\Vgood\colouringpI{2}$.

A common embedding scheme for Configurations $\mathbf{(\diamond6)}$--$\mathbf{(\diamond7)}$ is illustrated in Figure~\ref{fig:DIAMOND67overview}. 
\begin{figure}[ht]
\centering 
\includegraphics{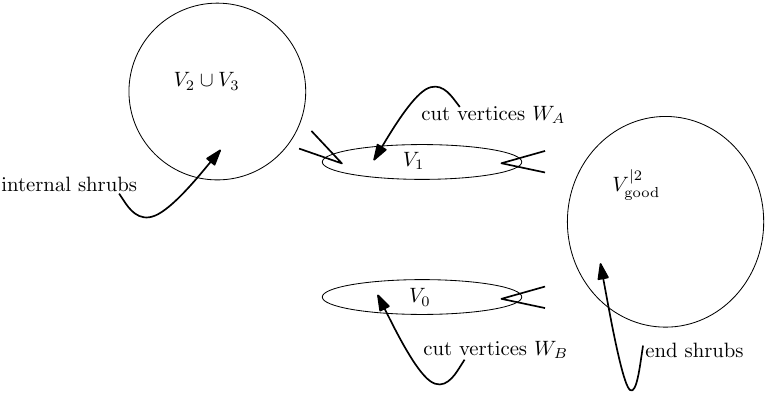}
\caption[Embedding overview for Configurations~$\mathbf{(\diamond6)}$--$\mathbf{(\diamond7)}$]{An overview of embedding a fine partition $(W_A,W_B,\shrubA,\shrubB)$
of a tree $T\in\treeclass{k}$ using
Configurations~$\mathbf{(\diamond6)}$--$\mathbf{(\diamond7)}$. The \kknnaaggss are
embedded between $V_0$ and $V_1$, the internal shrubs are embedded in
$V_2\cup V_3$, and the end shrubs are embedded
using $\Vgood\colouringpI{2}$.}
\label{fig:DIAMOND67overview}
\end{figure}
The embedding has three parts.
\begin{itemize}
\item The \kknnaaggss of $T$ are embedded between $V_0$ and $V_1$ so that $W_A$ is
mapped to $V_1$ and $W_B$ is mapped to $V_0$ using either the
Preconfiguration~$\mathbf{(exp)}$ or $\mathbf{(reg)}$. Thus the seeds~$W_A\cup W_B$ are mapped to $\colouringp{0}$.
\item The internal shrubs  of $T$ are embedded in $V_2\cup V_3$, always putting 
neighbours of $W_A$ into $V_2$. Note that the internal shrubs are therefore
embedded in $\colouringp{1}$, and thus there is no interference with embedding
the \kknnaaggssNOSPACE. We need to understand why a mere degree of $\delta k$ (from $V_1$ to $V_2$, ensured by~\eqref{COND:D6:1}
and~\eqref{COND:D7:1}, with $\delta\ll 1$) is sufficient for embedding internal
shrubs of potentially big total order, that is, how to ensure that already
embedded internal trees do not cause a blockage later. Here the
expansion\footnote{This expansion is given by the presence of $\Gexp$ in
Configurations~$\mathbf{(\diamond6)}$
(cf.~\eqref{COND:D6:3}--\eqref{COND:D6:4}), and by the presence of the avoiding
set $\smallatoms$ in Configurations~$\mathbf{(\diamond7)}$ ($V_2\subset
\smallatoms\colouringpI{1}\setminus\exceptVertSplit$).} ruling between the $V_2$
and $V_3$ comes into play. This property (together with other properties of
Preconfigurations~$\mathbf{(exp)}$ and $\mathbf{(reg)}$) will allow that, once
finished embedding an internal tree, the follow-up \kknnaagg can be embedded in a
place (in $V_1$) which sees very little of the previously embedded internal shrubs.
 
This is the only part of the embedding process which makes use of the specifics of Configurations~$\mathbf{(\diamond6)}$ and $\mathbf{(\diamond7)}$. For this reason we will be able to follow the same embedding scheme as presented here also for Configuration~$\mathbf{(\diamond8)}$, the only difference being the embedding of the internal shrubs (see Section~\ref{ssec:EmbedOverview8}).
\item The end shrubs are embedded in the yet unoccupied part of $G$.
For this we use the properties of Preconfigurations~$\mathbf{(\heartsuit1)}$ or $\mathbf{(\heartsuit2)}$.
The end shrubs are embedded using (but not entirely into) the designated vertex
set $\Vgood\colouringpI{2}$. 
\end{itemize}
The above embedding scheme is divided into two main steps: first the \kknnaaggss and the
internal trees are embedded (see Lemma~\ref{lem:embed:skeleton67}), and this
partial embedding is then extended to end shrubs (see Lemmas~\ref{lem:embed:heart1} and~\ref{lem:embed:heart2}). A more detailed hierarchy of the embedding lemmas used is given in Table~\ref{tab:Conf69}.
\begin{table}
\centering
\begin{tabular}{ccccc}
\hline
\multicolumn{5}{|c|}{Main embedding lemma: Lemma~\ref{lem:embed:total68}}\\
\hline
\multicolumn{3}{c}{$\Uparrow$}& &\multicolumn{1}{c}{$\Uparrow$}\\
\cline{1-3}\cline{5-5}
\multicolumn{3}{|c|}{Internal part}&\multicolumn{1}{|c|}{ }&\multicolumn{1}{|c|}{End shrubs}\\
\multicolumn{3}{|c|}{$\mathbf{(\diamond6)}$, $\mathbf{(\diamond7)}$: Lemma~\ref{lem:embed:skeleton67}}
& &\multicolumn{1}{|l|}{$\mathbf{(\heartsuit1)}$: Lemma~\ref{lem:embed:heart1}}\\
\multicolumn{3}{|c|}{$\mathbf{(\diamond8)}$: Lemma~\ref{lem:embed:skeleton8}}
& &\multicolumn{1}{|l|}{$\mathbf{(\heartsuit2)}$: Lemma~\ref{lem:embed:heart2}}\\
\cline{1-3}
\cline{5-5}
\multicolumn{1}{c}{$\Uparrow$}&\multicolumn{1}{c}{ }&\multicolumn{1}{c}{$\Uparrow$}& & \\
\cline{1-1}\cline{3-3}
\multicolumn{1}{|c|}{\KknnaaggssNOSPACE}&\multicolumn{1}{c}{ }&\multicolumn{1}{|c|}{Internal shrubs}& & \\
\multicolumn{1}{|l|}{$\mathbf{(exp)}$: Lemma~\ref{lem:embed:greyFOREST}}&\multicolumn{1}{c}{ }&\multicolumn{1}{|c|}{$\mathbf{(\diamond6)}$: Lemma~\ref{lem:embedStoch:DIAMOND6}}& &\\
\multicolumn{1}{|l|}{$\mathbf{(reg)}$: Lemma~\ref{lem:embed:superregular}}&\multicolumn{1}{c}{ }&\multicolumn{1}{|c|}{$\mathbf{(\diamond7)}$: Lemma~\ref{lem:embedStoch:DIAMOND7}}& &\\
\cline{1-1}
& & \multicolumn{1}{|c|}{$\mathbf{(\diamond8)}$: Lemmas~\ref{lem:embedStoch:DIAMOND7},~\ref{lem:embed:BALANCED},~\ref{lem:embed:regular}}& & \\
\cline{3-3}
\end{tabular}
\caption[Embedding lemmas for Configurations $\mathbf{(\diamond6)}$--$\mathbf{(\diamond8)}$]{Embedding lemmas employed for Configurations $\mathbf{(\diamond6)}$--$\mathbf{(\diamond8)}$ when embedding a tree $T\in\treeclass{k}$ with a given fine partition.}
\label{tab:Conf69}
\end{table}
\begin{remark}\label{rem:h1h2}
In Configuration~$\mathbf{(\diamond6)}$,
the number $\proporce{1}$ will be approximately the proportion of the total order of the internal shrubs of a given fine partition $(W_A,W_B,\shrubA,\shrubB)$ of $T$ while $\proporce{2}$ will be approximately the proportion of the total order of the end shrubs. The number $\proporce{0}$ is just a small constant. 

These numbers -- scaled up by $k$ -- determine the parameter $h_1\approx \proporce{1}k$ (in Configurations~$\mathbf{(\diamond8)}$ and $\mathbf{(\diamond9)}$) and $h_2\approx\proporce{2}k$ (in Configurations~$\mathbf{(\diamond6)}$--$\mathbf{(\diamond9)}$). The properties of these configurations will then allow to embed all the internal shrubs and end shrubs. Note that the parameter $h_1$ does not appear in Configurations~$\mathbf{(\diamond6)}$ and $\mathbf{(\diamond7)}$. This suggests that the total order of the internal shrubs is not at all important in 
Configurations~$\mathbf{(\diamond6)}$--$\mathbf{(\diamond7)}$. Indeed, we would succeed even embedding a tree with internal shrubs of total order say $100k$.\footnote{Configuration~$\mathbf{(\diamond8)}$ has this property only in part. We would succeed even embedding a tree with principal subshrubs of total order say $100k$ provided that the  total order of peripheral subshrubs is somewhat smaller than  $h_1$.}

In view of this it might be tempting to think that the end shrubs in $\shrubA$ could also be embedded using the same technique as the internal shrubs into the sets $V_2\cup V_3$ provided by these configurations (cf.\ Figure~\ref{fig:DIAMOND67overview}). This is however not the case. Indeed, the minimum degree conditions~\eqref{COND:D6:1},~\eqref{COND:D7:1}, and~\eqref{COND:D8:1} allow embedding only a small number of shrubs from a single cut-vertex $x\in W_A$ while there may be many end shrubs attached to $x$; cf.\ Remark~\ref{rem:internalVSend}\eqref{it:fewinternaltrees}.
\end{remark}

\subsubsection{Detailed overview of the embedding process for Configuration~$\mathbf{(\diamond7)}$~$\mathbf{(exp)}$~$\mathbf{(\heartsuit 1)}$}
\label{sssec:7exp1}
The purpose of this section is to further detail the embedding described in Section~\ref{ssec:EmbedOverview67} in the case of Configuration $\mathbf{(\diamond7)}$~$\mathbf{(exp)}$~$\mathbf{(\heartsuit 1)}$. We decided to choose this particular subconfiguration since the corresponding embedding exhibits many new features that come with the sparse decomposition.

We assume the same setting as in Section~\ref{ssec:EmbedOverview67} (in particular, recall Remark~\ref{rem:h1h2}).

The embedding process will first deal with \kknnaaggss and internal shrubs of $T$. Only after having embedded all those we turn our attention to end shrubs. We remind the reader that the sets $V_0\cup V_1$, $V_2\cup V_3$ and $\Vgood\colouringpI{2}$ are disjoint and thus the embedding into these respective parts do not interfere with each other.

\begin{table}
	\centering
\begin{tabular}{l|l}
defining formula for sets $F_i$ & formula for sets $F_i$ using purely the sets $U_j$\\
\hline		
$F_1\subset \shadow\left(U_2,\Theta(k)\right)$ & 
$F_1\subset \shadow\left(U_2,\Theta(k)\right)$ 
\\
$F_2\subset \shadow\left(U_1\cup F_1,\Theta(k)\right)$ &
$F_2\subset \shadow\left(U_1,\Theta(k)\right)\cup \shadow^{(2)}\left(U_2,\Theta(k)\right)$ 
\\
$F_3\subset \shadow\left(U_2\cup F_2,\Theta(k)\right)$ & $F_3\subset \shadow\left(U_2,\Theta(k)\right)\cup
\shadow^{(2)}\left(U_1,\Theta(k)\right)\cup \shadow^{(3)}\left(U_2,\Theta(k)\right)$ 
\\
\end{tabular}
	\caption{Hierarchy of shadows defining the sets $F_i$ as used in Section~\ref{sssec:7exp1}. Some of these shadows are with respect to the graph $\GD$ and some with respect to the graph $\Gcapt-\HugeVertices$.}
	\label{tab:shadows}
\end{table}
For the purpose of this overview, the sets $U_i$, $i=0,1,2,3$, will refer to the set of vertices in $V_i$ already used by the embedding at the very moment of the embedding procedure we are presently dealing with. Apart from the sets~$U_i$ of \emph{used} vertices, we define also sets of \emph{forbidden} vertices $F_i\subseteq V_i$, for $i=1,2,3$, which contain vertices whose use could possibly lead to a situation where we would be stuck with no possibility to extend the given partial embedding. More precisely, the set $F_i$ will consist of those vertices of $V_i$ that send $\Theta(k)$ (where the hidden constant in $\Theta(k)$ is much smaller than~1) edges to one of the sets $U_j$, and/or to one of the sets~$F_j$. So, $F_i$ can be expressed using shadows. More precisely, we set $F_1=V_1\cap \shadow_{\Gcapt-\HugeVertices}\left(U_2,\Theta(k)\right)$, $F_2= V_2\cap \shadow_{\Gcapt-\HugeVertices}\left(U_1\cup F_1,\Theta(k)\right)$, and $F_3=V_3 \cap \shadow_{\GD}\left(U_2\cup F_2,\Theta(k)\right)$. These definitions are shown in Table~\ref{tab:shadows}. It can be seen from Table~\ref{tab:shadows} that each set $F_i$ can be expressed purely in terms of the sets $U_j$ using shadows of exponent at most 3. Note that $\sum_i|U_i|\le k$.
As we do not use the set~$\HugeVertices$ of large degree vertices, the sizes of the sets $F_i$ will be at most linear in~$k$. Indeed, 
\begin{equation}\label{eq:Flinear}
|F_i|\le \left|\bigcup_{s=1}^3\shadow^{(s)}_{\GD\cup(\Gcapt-\HugeVertices)}\left(\bigcup_j U_j,\Theta(k)\right)\right|\eqBy{F\ref{fact:shadowbound}}O(k)\;.
\end{equation}
This is crucial in order to use the properties of the expanding graph~$\Gexp$ and the avoiding set~$\smallatoms$.

\begin{figure}[t]
	\centering 
	\includegraphics{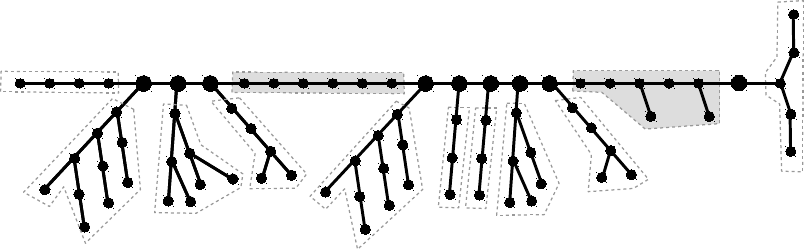}
	\caption{An example of a path-like tree. Cut vertices of its fine partition drawn bigger. Shrubs are drawn by a dashed line. Internal shrubs are drawn on gray background.}
	\label{fig:pathliketrees}
\end{figure}
In order not to clutter this overview with too many technical details, we chose to explain the embedding procedure on a rather simple type of trees: ``path-like'' trees. With the term {\it path-like trees} we mean trees having the property that the deletion of the external shrubs and the contraction of the internal shrubs, with respect to their fine-partition, leads to a path.
 See Figure~\ref{fig:pathliketrees}. Our motivation for working with path-like trees in this overview is that if the tree $T$ is more complex we face the complication of parallel branching of the embedding procedure. (This complication is handled by using the stochastic process  $\Duplicate$ as outlined in~\cite[Section~\ref{p0.sssec:whyGexp}]{cite:LKS-cut0}.) Note that, however, the family of path-like trees is general enough that it contains trees with any given ratio of internal shrubs and end shrubs.

 At every step of the embedding procedure, we will avoid the sets $U_i$ and $F_i$, making an exception for the roots  of internal shrubs, which may be mapped even to $F_2\setminus U_2$. It will be clear from the following, why we need   this exception and why we can afford it. Note that at the beginning of our embedding procedure, the sets $U_i, F_i$ are all empty, and thus trivial to avoid.
 
As outlined  in Table~\ref{tab:Conf69},we shall make use of the setting of $\mathbf{(exp)}$ to embed \kknnaaggssNOSPACE, of $\mathbf{(\heartsuit 1)}$ to embed the end shrubs, and the specifics of Configuration~$\mathbf{(\diamond7)}$ will be used for embedding the internal shrubs.

\paragraph{Embedding the first \kknnaaggNOSPACE}We start by embedding the \kknnaagg containing a fixed root $R$ of $T$ mapping~$W_A$ to~$V_1$ and mapping~$W_B$ to~$V_0$. The \kknnaaggss are only of size $O(1)$ by Definition~\ref{ellfine}\eqref{few}. So, the mere minimum degree conditions \eqref{COND:exp:1} and~\eqref{COND:exp:2} are sufficient for  embedding the \kknnaagg while avoiding the sets $U_0$ and $U_1$. In addition, we wish to avoid  the set $F_1$. 
While embedding the first \kknnaagg, we have not embedded any internal shrub yet. Therefore, initially, the set~$U_2$ is empty, and so is the set $F_1$.

\begin{figure}[ht]
	\centering 
	\includegraphics{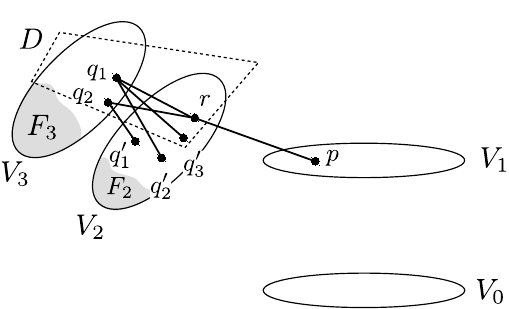}
	\caption{Embedding an internal shrub in Section~\ref{sssec:7exp1}. A suitable dense spot $D$ shown dashed.}
	\label{fig:detail7int}
\end{figure}
The rest of the embedding combines three techniques: embedding internal shrubs, embedding \kknnaaggssNOSPACE, and embedding end shrubs.

\paragraph{Embedding an internal shrub}Assume that we are at a given time of the embedding process when we have just finished embedding some \kknnaagg and are about to embed the next internal shrub~$T^*$. A picture corresponding to the description below is given in Figure~\ref{fig:detail7int}. As the predecessor~$p$ of the root~$r$ of the shrub is mapped to $V_1$, \eqref{COND:D7:1} tells us that the image of~$p$ has a substantial degree into $V_2$. Since $p$ was mapped outside of~$F_1$, the image of~$p$ has a substantial degree to $V_2\setminus U_2$. The set $V_2\setminus U_2$ has the avoiding property (see Definition~\ref{def:avoiding}), and therefore only very few candidates should not be used for the accomodating $r$, as they are \emph{exceptional}  with respect to the set~$U_3\cup F_3$ (which is of size $O(k)$ by~\eqref{eq:Flinear}). Therefore, we can map $r$ to some non-exceptional vertex in $V_2\setminus U_2$. In order to embed the children $q_1,\ldots,q_\ell$ of $r$ we shall use the property of the avoiding set, i.e., we use that there is a dense spot~$D$ containing the image of~$r$ such that 
\begin{equation}
\label{eq:makeuseavoid}
|D\cap (U_3\cup F_3)|\le\gamma^2k\;.
\end{equation}
As the image of~$r$ has substantial minimal degree in~$D\cap \colouringp{1}$ by Setting~\ref{settingsplitting}\eqref{It:H4} and only a very small portion of it goes outside of~$V_3$ (by~\eqref{COND:D7:3}) or to $U_3\cup F_3$ (by~\eqref{eq:makeuseavoid}), we can map $q_1,\ldots,q_\ell$ to $V_3\setminus (U_3\cup F_3)$ (recall that $\ell\le \tau k$, and~$\tau$ is the smallest constant in our hierarchy). 

The minimum degree condition~\eqref{COND:D7:4} together with the fact that the children $q_1,\ldots,q_\ell$ were embedded outside of $F_3$ this will ensure that we can map the grandchildren $q'_1,\ldots,q'_{\ell'}$ of $r$ to $V_2$ while avoiding the set $U_2\cup F_2$.

As we have seen above, it is enough to avoid~$U_2$ and the set of exceptional vertices in~$V_2$ (in the sense of the avoiding set) to be able to further extend the embedding of the internal shrub, by finding (possibly different) dense spots $D_1', \ldots, D_{\ell'}'$ containing $q_1', \ldots, q_{\ell}'$, respectively, such that $|D_i'\cap (U_3\cup F-3)|\le \gamma^2k$. We repeat this process until the embedding of~$T^*$ is finished.

\medskip

 The idea behind defining the set~$F_2$ is to prevent getting stuck when we need to map the next seed from $W_A$ to the set~$V_1\setminus (U_1\cup F_1)$, as here again we have no structural information on~$V_2$ or between~$V_2$ and~$V_1$ we can exploit (the avoiding property is useful only to go from~$V_2$ to~$V_3$, as it can be combined with the negligible loss of degree outside~$V_3$).

Before we turn our attention to further parts of the embedding process let us meditate on the reason for allowing the embedding of the root $r$ of $T^*$ in~$F_2$ and why we can afford such an exception. 
If we had to avoid the set~$F_2$ for the embedding of the roots of the internal shrubs, we would need to include a shadow of $F_2$ in~$F_1$. On the other hand, the set~$F_2$ includes a shadow of~$F_1$, so this would create a loop in the definitions. We can afford this exception for the following reason.
For any vertex mapped to $V_2\setminus F_2$, we can ensure that if it has a child belonging to~$W_A$, then this child can be  mapped to $V_1\setminus (U_1\cup F_1)$. This however is not guaranteed for the roots of the internal shrubs. Therefore, it is important that no root of internal shrub has a child belonging to~$W_A$. This is the reason behind Property~\eqref{short} of Definition~\ref{ellfine}.\footnote{Actually, a slightly weaker condition would be sufficient here.  Configuration~$\mathbf{(\diamond8)}$, however, is more complex, justifying the necessity of the stronger condition given in Property~\eqref{short} of Definition~\ref{ellfine}.}

\paragraph{Embedding further \kknnaaggssNOSPACE} Recall that the first \kknnaagg has already been embedded. We shall now explain how we make use of the expanding property of~$\Gexp$ from Preconfiguration~$\mathbf{(exp)}$ to embed any further \kknnaagg $X$. First, note that the first vertex of $X$ we are about to embed can be mapped to a suitable $w\in V_1\setminus (U_1\cup F_1)$, as its predecessor $q$ (which was a part of a previous internal shrub) does not belong to~$F_2$.\footnote{The only \kknnaagg without a predecessor contains the root $R$, and we have explained how to embed it.} Hence, let us assume that any vertex $x\in W_A$ is mapped to some vertex~$w\in V_1\setminus (U_1\cup F_1)$.  We want  to pick a prospective candidate among the neighbours of~$w$ to which we shall  map any given child of~$x$. The only properties required from this candidate is to be unused and to have substantial degree to $V_1\setminus (U_1\cup F_1)$. Only a tiny fraction of the neighbours of~$w$ lie in $U_0$, as the size of $W_A\cup W_B$ is $O(1)$ by Definition~\ref{ellfine}\eqref{few}. By~\eqref{COND:exp:1}, any  vertex in $\neighbour(w)\cap (V_0\setminus U_0)$ is a suitable prospective candidate, except those that send many edges to $U_1\cup F_1$  (in $\Gexp$). However, there are only very few such vertices by Fact~\ref{fact:shadowboundEXPANDER}. Thus, \eqref{COND:exp:1} tells us that we can accomodate $x$.

One could argue that while embedding the \kknnaaggss and internal shrubs, the sets~$U_i$, and thus~$F_i$, do increase dynamically. However, this is not a real problem and can easily be dealt with. Indeed, in every step of our embedding process, we have a substantial number of candidates we can choose from (of the order of magnitude $\delta k$). The size of one \kknnaaggNOSPACE, respectively of one internal tree, is of a much smaller order. Therefore, it is enough to update the sets~$U_i$ and~$F_i$ only at certain times.

\paragraph{Embedding $\shrubB$-shrubs}Once we have embedded all \kknnaaggss and all internal shrubs, we start embedding shrubs that are adjacent to~$W_B$. By Definition~\ref{ellfine}\eqref{Bend} these are end shrubs. As explained in~\cite[{Section~\ref{p1.sssec:roughversusfine}}]{cite:LKS-cut1}, the embedding of the end shrubs is much easier since we do not have to return to $V_0$ and $V_1$ for embedding cut vertices.

Let us note that at this stage of the embedding process no vertex of $\colouringp{2}$, and thus of~$\Vgood\colouringpI{2}$, has been used. The total order of the end shrubs is about~$h_2\approx\proporce{2}k$. Definition~\ref{ellfine}\eqref{Bsmall} tells us that the total order of $\shrubB$ is at most $h_2/2$. Display~\eqref{COND:P1:3} tells us that the degree of the vertices in~$V_0$ to~$\Vgood\colouringpI{2}$ is at least~$h_2/2$. As we suspend the embedding of the end shrubs adjacent to vertices in~$W_A$ until the last stage, there are always enough unused neighbours of vertices from~$V_0$ lying in~$\Vgood\colouringpI{2}$. To extend the embedding from a root to the entire end shrub it corresponds to, we use our basic techniques that build on the avoiding property, on the properties of the nowhere-dense graph,\footnote{The two are explained  in~\cite[Section~\ref{p0.sssec:whyavoiding}]{cite:LKS-cut0}, and in~\cite[Section~\ref{p0.sssec:whyGexp}]{cite:LKS-cut0}.}
or on exploiting regular pairs. The definitions~\eqref{eq:defVgood} and~\eqref{eq:defV+} indeed provide us with a setting in which it is possible to extend the embedding from~$\Vgood$ as explained in~\cite[{Section~\ref{p1.ssec:motivation}}]{cite:LKS-cut1}. The order in which we embed the $\shrubB$-shrubs is important in order to fill the end-clusters of regular pairs of $(\mathcal M_A\cup \mathcal M_B)\colouringpI{2}$ at the same pace as long as possible.

%

\paragraph{Embedding $\shrubA$-end shrubs}
It remains to embed the end shrubs from $\shrubA$. We shall use the same techniques we used for $\shrubB$-shrubs.

By~\eqref{COND:P1:4}, the minimum degree from~$V_1$ to~$\Vgood\colouringpI{2}$ is at least~$h_2$ and the total order of all end shrubs (including those from~$\shrubB$) is slightly less than~$h_2$. Therefore, there are always sufficient unused neighbours of vertices from~$V_1$ in~$\Vgood\colouringpI{2}$. Finally, \eqref{COND:P1:5} means that we do not need to care whether we fill of the end-clusters of regular pairs of $(\mathcal M_A\cup \mathcal M_B)\colouringpI{2}$ in a balanced way.

\subsubsection{Embedding overview for Configuration $\mathbf{(\diamond8)}$}\label{ssec:EmbedOverview8}
Suppose we are in Setting~\ref{commonsetting} and~\ref{settingsplitting}. We are working with sets $V_0$, $V_1$, $\Vgood\colouringpI{2}$, $V_2, V_3$ and $V_4$ and with a \semiregular matching $\mathcal N$ coming from the configuration.

The embedding scheme follows Table~\ref{tab:Conf69}, and is illustrated in Figure~\ref{fig:DIAMOND8}. 
\begin{figure}[ht]
\centering 
\includegraphics{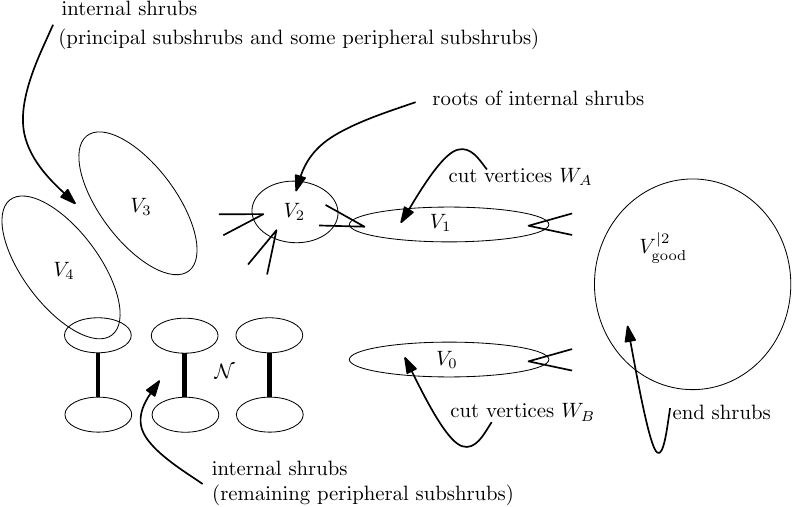}
\caption[Embedding overview for Configuration~$\mathbf{(\diamond8)}$]{An overview of embedding a fine partition $(W_A,W_B,\shrubA,\shrubB)$
of a tree $T\in\treeclass{k}$ using
Configuration~$\mathbf{(\diamond8)}$. The \kknnaaggss are
embedded between $V_0$ and $V_1$. The roots of the internal shrubs are embedded in $V_2$. Some of the subshrubs of the internal shrubs are embedded in $V_3\cup V_4$ and some in $\mathcal N$; principal subshrubs are always embedded in $V_3\cup V_4$. The end shrubs are embedded using the properties of  $\Vgood\colouringpI{2}$.}
\label{fig:DIAMOND8}
\end{figure}
The embedding of the \kknnaaggss and of the external shrubs is done in the same way as in Configurations~$\mathbf{(\diamond6)}$--$\mathbf{(\diamond7)}$. We only describe here the way the internal shrubs are embedded. Their roots are embedded in~$V_2$. From that point we proceed embedding subshrub by subshrub. Some of the subshrubs get embedded between $V_3$ and $V_4$. This pair of sets has the same expansion property as the pair $V_2,V_3$ in Configuration~$\mathbf{(\diamond7)}$. In particular, it allows to avoid the shadow of the already occupied set so that the follow-up \kknnaagg can be embedded in a location almost isolated from the previous images, similarly as described in Section~\ref{ssec:EmbedOverview67}. For this reason we make sure that principal subshrubs get embedded here. The degree condition from $V_2$ to $V_3$ is too weak to ensure that all remaining subshrubs are embedded between $V_3$ and $V_4$. Therefore we might have to embed some subshrubs in $\mathcal N$.  Condition~\eqref{COND:D8:7} --- where $h_1$ is 
approximately the order of the internal shrubs, as in Remark~\ref{rem:h1h2} --- indicates that it should be possible to accommodate all 
the subshrubs.
For technical reasons, the order in which different types of subshrubs are embedded is very important.

\subsubsection{Embedding overview for Configuration $\mathbf{(\diamond9)}$}\label{ssec:EmbedOverview9}
The embedding process in Configuration~$\mathbf{(\diamond9)}$
follows the same scheme as in Configurations
$\mathbf{(\diamond6)}$--$\mathbf{(\diamond8)}$, but the embedding of the
internal shrubs follows the regularity method. Assuming the simplest situation $\mathcal F=\V_2(\mathcal N)$ and $V_2=V_1(\mathcal N)$, we
would have $\mindeg_{\Gblack}(V_1,V_1(\mathcal N))\ge
h_1$ (cf.~\eqref{conf:D9-XtoV}). See Figure~\ref{fig:DIAMOND9} for an illustration.
\begin{figure}[ht]
\centering 
\includegraphics{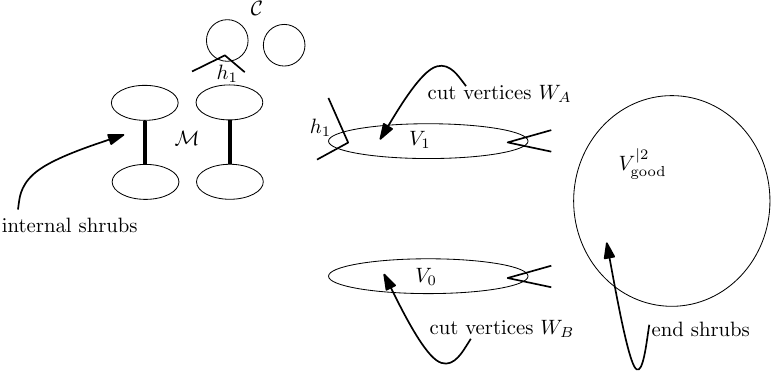}
\caption[Embedding overview for Configuration~$\mathbf{(\diamond9)}$]{An overview of embedding a fine partition $(W_A,W_B,\shrubA,\shrubB)$
of a tree $T\in\treeclass{k}$ using
Configuration~$\mathbf{(\diamond9)}$. The \kknnaaggss are
embedded between $V_0$ and $V_1$, the internal shrubs using the regularity
method in $\mathcal N$ and the end shrubs are embedded
using $\Vgood\colouringpI{2}$.}
\label{fig:DIAMOND9}
\end{figure}
Similarly as above, the \kknnaaggss are embedded between $V_0$ and $V_1$. The internal
shrubs are accommodated using the regularity method in $\mathcal N$, and the end
shrubs are embedded in $\Vgood\colouringpI{2}$ using Preconfiguration~$\mathbf{(\heartsuit1)}$. The embedding lemma for this configuration is given in Lemma~\ref{lem:embed9}.

\subsubsection{Embedding overview for Configuration $\mathbf{(\diamond10)}$}\label{ssec:EmbedOverview10}
Configuration~$\mathbf{(\diamond10)}$ is very closely related to the structure obtained by Piguet and Stein~\cite{PS07+} in their solution of the dense approximate case of Conjecture~\ref{conj:LKS}.\footnote{In~\cite[Section~\ref{p1.ssec:motivation}]{cite:LKS-cut1} we described in quite some detail how our main ``rough structural result'',~\cite[Lemma~\ref{p1.prop:LKSstruct}]{cite:LKS-cut1} relates to and differs from the Piguet--Stein structure. The description in this section, however, goes in a different direction since Configuration~$\mathbf{(\diamond10)}$ is much narrower that the general structure asserted in \cite[Lemma~\ref{p1.prop:LKSstruct}]{cite:LKS-cut1}.}

\begin{theorem}[Piguet--Stein~\cite{PS07+}]\label{thm:PiguetStein}
For any $q>0$ and $\alpha>0$ there exists a number $n_0$ such that for any $n>n_0$ and
$k>qn$ the following holds. 
Each $n$-vertex graph $G$ with at least
$n/2$ vertices of degree at least $(1+\alpha)k$ contains each tree of order $k+1$.
\end{theorem}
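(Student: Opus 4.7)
The plan is to prove Theorem~\ref{thm:PiguetStein} via the standard regularity-method template, which in the dense regime $k>qn$ amounts to obtaining a structure very close to Configuration~$\mathbf{(\diamond10)}$ directly and embedding into it. First, I would apply Szemer\'edi's Regularity Lemma to $G$ with parameters $\epsilon\ll d\ll \alpha,q$, producing an $\epsilon$-regular equipartition into clusters of common size $L$. Since $k>qn$, we have $L=\Theta(k)$, so a regular pair provides room of order $k$. Form the reduced graph $R$, whose edges are the pairs of clusters of regular density at least $d$. A standard transference (ignoring at most $(\epsilon+d)n$ exceptional edges per vertex) converts the hypothesis ``at least $n/2$ vertices of $G$ have degree $\ge (1+\alpha)k$'' into: at least $(1/2+\alpha/3)s$ clusters $C$ of $R$ lie in a distinguished family $\mathcal{L}$, and each such $C$ satisfies $\deg_R(C)\cdot L\ge (1+\alpha/2)k$.

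Second, I would extract the key combinatorial structure from~$R$. Using a K\"onig/matching-type argument on $\mathcal{L}$, one finds an edge $C_1C_2$ of $R$ with $C_1,C_2\in\mathcal{L}$, together with a matching $M$ of $R$, such that every typical vertex of $C_1\cup C_2$ sends at least $(1+\alpha/3)k$ edges into $V(M)\cup C_1\cup C_2$. This is the dense analogue of Configuration~$\mathbf{(\diamond10)}$. To avoid degenerate cases one rules out the ``extremal'' alternative of $R$ being (nearly) a disjoint union of cliques on $\mathcal{L}$, which is incompatible with $k>qn$ and the minimum-degree hypothesis. Finally, super-regularize $(C_1,C_2)$ by deleting atypical vertices, obtaining minimum degree at least $(d-\epsilon)L$ across the pair.

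Third, I would embed $T$. Take an $\ell$-fine partition $(W_A,W_B,\shrubA,\shrubB)$ of $T$ as in Lemma~\ref{lem:TreePartition} with $\ell=\tau k$ and $\tau$ very small; by~\eqref{few} we have $|W_A|+|W_B|=O(1/\tau)$, an absolute constant, and every shrub has at most $\tau k$ vertices. Process an ordered skeleton of the fine partition, mapping hubs into $C_1\cup C_2$ (with $W_A\to C_1$ and $W_B\to C_2$) one vertex at a time by choosing typical images via Fact~\ref{fact:manyTypicalVertices}, and mapping each shrub, rooted at the child of its seed, into a suitably chosen regular pair $(X,Y)\in M$ for which $X$ lies in the neighbourhood of the seed's already-placed image. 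Inside $(X,Y)$, a tree of size at most $\tau k$ is embedded greedily using Fact~\ref{fact:manyTypicalVertices}, the $\epsilon$-regularity absorbing the loss of atypical candidates at each step.

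The main obstacle, as expected, is global bookkeeping: we must ensure that as shrubs accumulate inside $M$ no pair fills beyond its capacity, that both sides of each used pair are exhausted at roughly the same pace, and that $C_1,C_2$ retain enough fresh vertices throughout. This is handled by allocating shrubs to pairs in proportion to their relative available room, reserving a small $\epsilon L$-buffer in each cluster, and using the $\alpha$-slack in the degree condition to absorb all losses (atypical vertices, buffers, and the portion of $C_1\cup C_2$ reserved for hubs). Property~\eqref{Bsmall} of the fine partition plays the same role here as in the present paper: it lets us defer the end shrubs of $\shrubB$ to a last, easy cleanup stage, thereby decoupling the delicate internal-shrub phase from the end-shrub phase.
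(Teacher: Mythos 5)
Your proposal reproduces the general shape of the Piguet--Stein argument (regularity lemma, reduced graph, a matching plus a distinguished edge, embedding shrubs into regular pairs while controlling the fill), but it misstates the key structural dichotomy and thereby has a real gap. You claim one can always land in the ``good'' case --- an edge $C_1C_2\in R$ such that every typical vertex of $C_1\cup C_2$ sends $(1+\alpha/3)k$ edges into $V(M)\cup C_1\cup C_2$, i.e.\ the analogue of~\textbf{(H1')} in Section~\ref{ssec:EmbedOverview10} --- and that the alternative can be ``ruled out'' because $R$ being close to a disjoint union of cliques on $\mathcal{L}$ is incompatible with $k>qn$. That is not what Piguet and Stein prove, nor is it true: their structural lemma (their Lemma~8, quoted in Section~\ref{ssec:EmbedOverview10}) produces one of \emph{two} genuine outcomes, \textbf{(H1)} or \textbf{(H2)}, and both occur even when $k>qn$. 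Case~\textbf{(H2)} is not an extremal degeneracy to dispose of; it is the regime in which the matching $M$ covers $\neighbour_R(A)$ but not $\neighbour_R(B)$, so one endcluster of the distinguished edge has only the weaker $(1+\frac{\eta}{2})\frac{k}{2}$ degree into $\bigcup(\BL\cup V(M))$, and the embedding has to be redesigned around this asymmetry (their Section~3.7, as opposed to 3.6 for \textbf{(H1)}). The present paper acknowledges this by pairing Configuration~$\mathbf{(\diamond10)}$ with \textbf{(H1')} and noting that the counterpart of \textbf{(H2')} lives in the separate Configuration~$\mathbf{(\diamond9)}$. Without a case analysis handling \textbf{(H2)} --- or a concrete argument for why it cannot occur here, which you do not supply --- your Step~2 does not yield the structure your Step~3 relies on, and the proof does not close.

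A secondary but real issue: even granting \textbf{(H1')}, your Step~3 glosses over the balancing problem inside the matching $M$. A hub vertex $w$ placed in $C_1$ may see several $M$-edges that are already saturated on one side but not the other; then $\deg(w,V(M))$ overestimates the usable room. Piguet and Stein (and the analogous Lemma~\ref{lem:fillingCD} in this paper) handle this by explicitly splitting each batch of shrubs into a balanced part and an unbalanced part before embedding, not by a generic ``allocate in proportion to available room'' heuristic. Your buffering remark gestures at this but does not pin down why the two sides of every used pair remain within $O(\tau k)$ of each other at every stage, which is exactly the invariant that makes the count of usable neighbours of the next hub work out.
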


Let us describe their proof first. Piguet and Stein prove that when $k>qn$ (for some fixed $q>0$ and $k$ sufficiently large) the cluster graph\footnote{ordinary, in the sense of the classic regularity lemma} $\BGblack$ of a graph $G\in \LKSgraphs{n}{k}{\eta}$ contains the following structure (cf.~\cite[Lemma~8]{PS07+}). There is a set of clusters $\BL\subset \clusters$ such that each cluster in $\BL$ contains only vertices of captured degrees at least $(1+\frac\eta2)k$. There is a matching $M\subset \BGblack$, and an edge $AB$, with $A,B\in\BL$. One of the following conditions is satisfied
\begin{enumerate}
 \item[\textbf{(H1)}] $M$ covers $\neighbour_{\BGblack}(\{A,B\})$, or
 \item[\textbf{(H2)}] $M$ covers $\neighbour_{\BGblack}(A)$, and the vertices in $B$ have captured degrees at least $(1+\frac\eta2)\frac k2$ into $\bigcup(\BL\cup V(M))$. Further, each edge in $M$ has at most one endvertex in $\neighbour_{\BGblack}(A)$.
\end{enumerate}

Piguet and Stein use structures~\textbf{(H1)} and~\textbf{(H2)} to embed any
given tree $T\in\treeclass{k}$ into $G$ using the regularity method; see
Sections~3.6 and~3.7 in~\cite{PS07+}, respectively. Actually, a slight relaxation of~\textbf{(H1)} and~\textbf{(H2)} would be sufficient for the embedding to work, as can be easily seen from their proof: Again, there is a set of clusters $\BL\subset \clusters$ such that each cluster in $\BL$ contains only vertices of captured degrees at least $(1+\frac\eta2)k$, there is a matching $M\subset \BGblack$, and an edge $AB$, $A,B\in\BL$. One of the following conditions is satisfied
\begin{enumerate}
 \item[\textbf{(H1')}] the vertices in $A\cup B$ have captured degrees at least $(1+\frac\eta2)k$ into the vertices of $\bigcup (\BL\cup V(M))$, or
 \item[\textbf{(H2')}] the vertices in $A$ have captured degrees at least $(1+\frac\eta2)k$ into the vertices of $\bigcup V(M)$, and the vertices in $B$ have captured degrees at least $(1+\frac\eta2)\frac k2$ into $\bigcup(\BL\cup V(M))$. Further, each edge in $M$ has at most one endvertex in $\neighbour_{\BGblack}(A)$.
\end{enumerate}
It can be seen that Configuration~$\mathbf{(\diamond10)}$ is a direct
counterpart to~\textbf{(H1')}.\footnote{Observe that some parts of $\BGblack$
are irrelevant in the embedding process of~\cite{PS07+}. The objects $\BGblack$,
$\BL$, and $M$ in the structural result  of~\cite{PS07+} correspond to $(\tilde G,\V)$, $\mathcal L^*$, and $\M$ in Configuration~$\mathbf{(\diamond10)}$.} (The counterpart of~\textbf{(H2')} is contained in Configuration~$\mathbf{(\diamond9)}$ and the similarity is somewhat weaker.)

\medskip
The embedding lemma for Configuration~$\mathbf{(\diamond10)}$ is stated in Lemma~\ref{lem:embed10}.

\subsection{The role of random splitting}\label{ssec:whyrandomsplitting}
The random splitting as introduced in Setting~\ref{settingsplitting} is used in Configurations $\mathbf{(\diamond6)}$--$\mathbf{(\diamond9)}$; the set $\colouringp{0}$ will host the cut-vertices $W_A\cup W_B$, the set $\colouringp{1}$ will host the internal shrubs, and the set $\colouringp{2}$ will (essentially) host the end shrubs of a $(\tau k)$-fine partition of $T_\PARAMETERPASSING{T}{thm:main}$.

The need for introducing the random splitting is dictated by Configurations $\mathbf{(\diamond6)}$--$\mathbf{(\diamond9)}$. To see this, let us try to follow the embedding plan from, for example, Section~\ref{ssec:EmbedOverview67} without the random splitting, i.e., dropping the conditions $\subset \colouringp{0}$, $\subset \colouringp{1}$, $\subset \colouringp{2}$ from Definitions~\ref{def:heart1}--\ref{def:CONF7}. Then the sets $V_2$ and $V_3$ in Figure~\ref{fig:DIAMOND67overview}, which will host the internal shrubs, may interfere with $V_0$ and $V_1$ primarily designated for $W_A$ and $W_B$. In particular, the conditions on degrees between $V_0$ and $V_1$ given by \eqref{COND:exp:1}--\eqref{COND:exp:2} in Definition~\ref{def:exp8}, or given by the super-regularity in Definition~\ref{def:reg} (in which $\beta_{\PARAMETERPASSING{D}{def:exp8}}>0$, or $d'_{\PARAMETERPASSING{D}{def:reg}}\mu_{\PARAMETERPASSING{D}{def:reg}}>0$ are tiny) may be insufficient for embedding greedily all 
the cut-vertices and all the internal shrubs of $T_\PARAMETERPASSING{T}{thm:main}$. It should be noted that this problem occurs even in 
Preconfiguration~$\mathbf{(exp)}$, i.e., the expanding property does not add
enough strength to the minimum degree conditions. \footnote{See~\cite[Section~\ref{p0.sssec:whyGexp}]{cite:LKS-cut0} for details.} 
Restricting $V_0$ and $V_1$ to host only the
cut-vertices (only $O(1/\tau)=o(k)$ of them in total, cf.
Definition~\ref{ellfine}\eqref{few}), resolves the problem.

The above justifies the distinction between the space $\colouringp{0}$ for embedding the cut-vertices and the space $\colouringp{1}\cup\colouringp{2}$ for embedding the shrubs. There are some other approaches which do not need to further split $\colouringp{1}\cup\colouringp{2}$ but doing so seems to be the most convenient.

\subsection{Stochastic process $\Duplicate(\ell)$}\label{ssec:Duplicate}
Let us introduce a class of stochastic processes, which we call
\index{mathsymbols}{*Duplicate@$\Duplicate(\ell)$}$\Duplicate(\ell)$ ($\ell\in\NN$). These are discrete processes $(X_1,Y_1),(X_2,Y_2),\ldots,(X_q,Y_q)\in\{0,1\}^2$ (where $q\in \mathbb N$ is arbitrary) satisfying the following.
\begin{itemize}
  \item For each $i\in[q]$, we have either
  \begin{enumerate}[(a)]
    \item $X_i=Y_i=0$ (deterministically), or
    \item $X_i=Y_i=1$ (deterministically), or
    \item\label{duplC} exactly one of $X_i$ and $Y_i$ is one, and in that case $\probability[X_i=1]=\frac12$.
  \end{enumerate}
  \item If the distribution of $(X_i,Y_i)$ is according to~\eqref{duplC}, then the random choice is made independently of the values $(X_j,Y_j)$ ($j<i$).
  \item We have $\sum_{i=1}^q(X_i+Y_i)\le \ell$.
\end{itemize}

We note that this definition is not deep and its purpose is only to adopt the language we shall use later. The following lemma asserts that the first and second components of a process $\Duplicate(\ell)$ are typically balanced.

\begin{lemma}\label{lem:randomduplicate}
Suppose that $(X_1,Y_1),(X_2,Y_2),\ldots,(X_q,Y_q)$ is a process in
$\Duplicate(\ell)$. Then for any $a>0$ we have
$$\probability\left[\sum_{i=1}^q
X_q-\sum_{i=1}^q Y_q\ge a\right]\le \exp\left(-\frac{a^2}{2\ell}\right)\;.$$
\end{lemma}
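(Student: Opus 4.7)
The plan is to prove this by a standard Hoeffding / Azuma--type moment generating function argument, where the main quantitative gain (getting $\ell$ rather than $q$ in the exponent) comes from tracking a per-step variance proxy.

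First I would set $Z_i := X_i - Y_i$ and $W_i := X_i + Y_i$, so that the quantity to be controlled is $S_q := \sum_{i=1}^q Z_i$, while the hypothesis of $\Duplicate(\ell)$ reads $\sum_{i=1}^q W_i \le \ell$. Inspecting the three cases in the definition of $\Duplicate(\ell)$: in case (a) we have $Z_i=0$ and $W_i=0$; in case (b) we have $Z_i=0$ and $W_i=2$; in case (c) we have $W_i=1$ and $Z_i$ is a Rademacher variable (i.e.\ $\pm 1$ with equal probability), independent of the past by the third bullet of the definition.

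Next, let $\mathcal F_{i-1}$ be the $\sigma$-algebra generated by $(X_1,Y_1),\dots,(X_{i-1},Y_{i-1})$ together with whatever auxiliary randomness the process uses to decide which of (a), (b), (c) occurs at step $i$; in particular $W_i$ is $\mathcal F_{i-1}$-measurable. The conditional MGF of $Z_i$ is $1$ in cases (a), (b) and $\cosh(t)\le e^{t^2/2}$ in case (c). Since $W_i=1$ exactly in case (c) while $W_i\ge 0$ otherwise, we obtain the uniform bound
\[
\mathbb{E}\!\left[e^{tZ_i}\,\middle|\,\mathcal F_{i-1}\right]\;\le\;e^{t^2 W_i/2}\qquad(t>0).
\]
Iterating this via the tower property and using $\sum_{i=1}^q W_i\le \ell$ almost surely,
\[
\mathbb{E}\!\left[e^{tS_q}\right]\;\le\;\mathbb{E}\!\left[\exp\!\left(\tfrac{t^2}{2}\sum_{i=1}^q W_i\right)\right]\;\le\;e^{t^2\ell/2}.
\]

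Finally I would apply Markov's inequality to obtain $\probability[S_q\ge a]\le e^{-ta}\mathbb{E}[e^{tS_q}]\le e^{-ta+t^2\ell/2}$, and optimize over $t>0$ by choosing $t=a/\ell$, giving the desired bound $\exp(-a^2/(2\ell))$. There is no real obstacle here; the only point requiring care is to bound the one-step MGF by $e^{t^2 W_i/2}$ rather than by $e^{t^2/2}$, since the latter would only yield $q$ (not $\ell$) in the denominator of the exponent, which would be too weak for the intended applications in the embedding arguments.
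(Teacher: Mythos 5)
Your proof is correct and rests on the same Hoeffding-type exponential-moment idea the paper uses, but you implement it more carefully. The paper's proof collects the set $J$ of indices at which case~(c) occurs, notes $|J|\le\ell$, and invokes the i.i.d.\ Chernoff bound~\eqref{eq:CHERNOFF} for the sum $\sum_{i\in J}(X_i-Y_i)$; strictly speaking $J$ may be random and history-dependent (which is precisely the situation in which $\Duplicate$ is deployed in Lemma~\ref{lem:randomshrubembedding}), so one should first condition on the auxiliary randomness that selects the case at each step and then observe that the remaining coin flips are still independent Rademacher variables. Your version makes this conditioning explicit: you track the per-step variance proxy $W_i=X_i+Y_i$, note that it is $\mathcal F_{i-1}$-measurable, establish the conditional MGF bound $\mathbb E[e^{tZ_i}\mid\mathcal F_{i-1}]\le e^{t^2W_i/2}$, and iterate via the tower property (equivalently, $\exp\bigl(tS_q-\tfrac{t^2}{2}\sum_{i\le q}W_i\bigr)$ is a supermartingale), combining this with the pointwise constraint $\sum_iW_i\le\ell$. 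Both routes give $\mathbb E[e^{tS_q}]\le e^{t^2\ell/2}$ and hence the stated tail bound after optimizing $t=a/\ell$; yours is a touch more self-contained and transparently robust to the adaptivity of the case choice, while the paper's is shorter because it delegates the MGF computation to a ready-made Chernoff inequality.
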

\begin{proof}
We shall use the following version of the Chernoff bound for sums of
independent random variables $Z_i$, with distribution
$\probability[Z_i=1]=\probability[Z_i=-1]=\frac12$.
\begin{equation}\label{eq:CHERNOFF}
\probability\left[\sum_{i=1}^n Z_i\ge a\right]\le
\exp\left(-\frac{a^2}{2n}\right)\;.
\end{equation}

Let $J\subset [q]$ be the set of all indices $i$ with $X_i+Y_i=1$. By the definition of $\Duplicate(\ell)$, we have $|J|\le \ell$. 
By~\eqref{eq:CHERNOFF} we have
\begin{align*}
\probability\left[\sum_J
(X_i-Y_i)\ge a\right]\le \exp\left(-\frac{a^2}{2|J|}\right)\le
\exp\left(-\frac{a^2}{2\ell}\right) \;.
\end{align*}
\end{proof}

We shall use the stochastic process $\Duplicate$ to guarantee that certain fixed
vertex sets do not get overfilled during our tree embedding procedure.
$\Duplicate$ is used in Lemmas~\ref{lem:embedStoch:DIAMOND6}
and~\ref{lem:embedStoch:DIAMOND7} through Lemma~\ref{lem:randomshrubembedding}. The way we use $\Duplicate$ was sketched in \cite[Section~\ref{p0.sssec:whyGexp}]{cite:LKS-cut0}.

\subsection{Embedding small trees}\label{ssec:EmbeddingShrubs}
When embedding the tree $T_\PARAMETERPASSING{T}{thm:main}$ in our proof of Theorem~\ref{thm:main} it will be important to control where different bits of $T_\PARAMETERPASSING{T}{thm:main}$ go. This motivates the following notation. Let $X_1,\ldots,X_\ell\subset V(T)$ be arbitrary vertex sets of a tree $T$, and let $V_1,\ldots,V_\ell\subset V(G)$ be arbitrary vertex sets of a graph $G$. Then an embedding $\phi:V(T)\rightarrow V(G)$ of $T$ in $G$ is an \emph{$(X_1\hookrightarrow V_1,\ldots,X_\ell\hookrightarrow V_\ell)$-embedding} \index{mathsymbols}{**embedding@$(X_1\hookrightarrow V_1,\ldots,X_\ell\hookrightarrow V_\ell)$-embedding}\index{general}{**embedding@$(X_1\hookrightarrow V_1,\ldots,X_\ell\hookrightarrow V_\ell)$-embedding} if $\phi(X_i)\subset V_i$ for each $i\in[\ell]$.

We provide several sufficient conditions for embedding a small tree with
additional constraints. 

\HIDDENTEXT{Lemma about spots hidden under SPOTS}

The first lemma deals with embeddings using an avoiding
set.
\begin{lemma}\label{lem:embed:avoidingFOREST}Let $\Lambda,k
\in \NN$ and let $\epsilon,\gamma\in (0,\frac12)$ with $\gamma^2 >\eps$.
Suppose $\smallatoms$ is a $(\Lambda,\epsilon,\gamma,k)$-avoiding set with respect to  a set  $\DenseSpots$ of $(\gamma k,\gamma)$-dense spots in a graph $H$. Suppose that $(T_1,r_1),\ldots,(T_\ell,r_\ell)$ are rooted trees with $|\bigcup_i T_i|\leq \gamma k/2$. Let $U\subset V(H)$ with $|U|\le \Lambda k$, and let $U^*\subseteq  \smallatoms$ with $ |U^*|\ge\eps k+\ell$.
 Then there are mutually disjoint $(r_i\hookrightarrow U^*, V(T_i)\setminus\{r_i\}\hookrightarrow V(H)\setminus U)$-embeddings of the trees $(T_i,r_i)$ in $H$.
\end{lemma}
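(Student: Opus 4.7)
The plan is to apply the avoiding property of $\smallatoms$ once, up front, to pre-select suitable image points for the roots, and then to embed each tree greedily inside an associated dense spot.

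First, apply the $(\Lambda, \epsilon, \gamma, k)$-avoiding property to $U$ itself (legitimate since $|U|\le\Lambda k$). This produces, for all but at most $\epsilon k$ vertices $v\in\smallatoms$, a dense spot $D(v)\in\DenseSpots$ containing $v$ with $|U\cap V(D(v))|\le\gamma^2 k$; call such $v$ \emph{good}. Since $|U^*|\ge\epsilon k+\ell$, the set $U^*$ contains at least $\ell$ good vertices, so I may pick $\ell$ distinct good $\rho_1,\ldots,\rho_\ell\in U^*$ together with associated dense spots $D_1,\ldots,D_\ell$; the $\rho_i$'s will serve as the images of the roots.

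Next, I process the trees in order $i=1,2,\ldots,\ell$. When embedding $(T_i,r_i)$, I set $\phi_i(r_i):=\rho_i$ and extend $\phi_i$ by a breadth-first search from $r_i$, keeping every image inside $V(D_i)$: each newly processed vertex $u$, with its already-embedded parent $p$, is mapped to an unused neighbour of $\phi_i(p)$ inside $V(D_i)$. Since $\mindeg(D_i)>\gamma k$, the vertex $\phi_i(p)$ has more than $\gamma k$ neighbours in $V(D_i)$, so the only thing to check is that enough of them remain available.

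The critical verification is that the number of forbidden vertices inside $V(D_i)$ stays strictly below $\gamma k$. These forbidden vertices comprise $U\cap V(D_i)$ (at most $\gamma^2 k$ by the choice of $D_i$), the images already placed for $T_1,\ldots,T_{i-1}$ and for the partial $T_i$, and the reserved but still unused future root-images $\rho_{i+1},\ldots,\rho_\ell$. The latter two sets are disjoint, so their combined size is at most $\sum_{j\le i} v(T_j) + (\ell - i)$. Since $v(T_j)\ge 1$ for every $j$ we have $\sum_{j>i}v(T_j)\ge\ell-i$, and hence $\sum_{j\le i}v(T_j) + (\ell - i)\le\sum_j v(T_j)\le\gamma k/2$. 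Thus the forbidden count is at most $\gamma^2 k+\gamma k/2<\gamma k$ (using $\gamma<\tfrac12$, so $\gamma^2<\gamma/2$), and an available neighbour always exists. The main obstacle is precisely this accounting: one must resist double-counting reserved roots against placed images, and the telescoping bound $\sum_{j\le i}v(T_j)+(\ell-i)\le\sum_j v(T_j)$ is what keeps the book-keeping within the $\gamma k/2$ budget. Once this is in place, injectivity of each $\phi_i$, pairwise disjointness of the images, $\phi_i(r_i)\in U^*$, and $\phi_i(V(T_i)\setminus\{r_i\})\subseteq V(D_i)\setminus U\subseteq V(H)\setminus U$ follow immediately from the construction.
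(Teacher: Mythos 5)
Your proposal is correct and follows essentially the same route as the paper's proof: invoke the avoiding property once for $U$ to obtain a bad set $Y$ with $|Y|\le\epsilon k$, place the roots in $U^*\setminus Y$, and extend each $T_i$ greedily inside the associated dense spot $D_i$, using $\mindeg(D_i)>\gamma k$ together with the bound $\gamma^2 k+\gamma k/2<\gamma k$ (from $\gamma<\tfrac12$) on the forbidden set. Your accounting is a touch more explicit than the paper's (which simply charges everything against the total budget $\sum_i v(T_i)\le\gamma k/2$ after placing all roots up front), but the underlying argument is identical.
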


\begin{proof}
Since $\smallatoms$ is $(\Lambda,\epsilon,\gamma,k)$-avoiding, there
exists a set $Y\subset \smallatoms$ with $|Y|\le\epsilon k$, such that each vertex $v$ in
$\smallatoms\setminus Y$ has degree at least $\gamma k$ into some $(\gamma
k,\gamma)$-dense spot $D\in \DenseSpots$ with $|U\cap V(D)|\le\gamma^2k$. In particular, $U^* \setminus Y$ is large enough so that we can embed all vertices $r_i$ there. We successively extend this embedding to an embedding of $\bigcup_i T_i$, at each step finding a suitable image in $V(D)\setminus U$ for one neighbour of an already embedded vertex $v\in \bigcup_i V(T_i)$. This is possible since the image of $v$ has degree at least $\gamma k - |U\cap V(D)|> \gamma k/2\geq \sum_i v(T_i) $ into $V(D)\setminus U$.
\end{proof}

The next lemma deals with embedding a tree into a nowhere-dense graph, a
prime example of which is the graph $\Gexp$.

\begin{lemma}\label{lem:embed:greyFOREST}
Let $k\in\NN$, let $Q\ge 1$ and let $\gamma,\zeta\in (0,1)$ be such that
 $128Q\gamma\le\zeta^2$. Let $H$ be a
$(\gamma k,\gamma)$-nowhere-dense graph. Let $(T_1,r_1),\ldots,(T_\ell,r_\ell)$ be rooted trees of total order less than $\zeta k/4$. Let $V_1,V_2,U,U^*\subset V(H)$  be
four sets with $U^*\subset V_1$, $|U|<Q k$,
$|U^*|>\frac{32Q^2\gamma}{\zeta} k +\ell$, and
$\mindeg_{H}(V_j,V_{3-j})\ge\zeta k$ for $j=1,2$.   Then there are mutually
disjoint $(r_i\hookrightarrow U^*, \Veven(T_i) \hookrightarrow V_1\setminus U,\Vodd(T_i)\hookrightarrow V_2\setminus U)$-embeddings of the trees
$(T_i,r_i)$ in $H$.
\end{lemma}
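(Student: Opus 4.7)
The plan is a greedy breadth-first tree embedding combined with a shadow-avoidance argument based on Fact~\ref{fact:shadowboundEXPANDER}. First, I would define the ``bad'' set $B:=\shadow_H(U,\zeta k/2)$. Since $128Q\gamma\le\zeta^2\le\zeta$ gives $Q\le(\zeta/2)/(16\gamma)$, Fact~\ref{fact:shadowboundEXPANDER} applies with $|U|\le Qk$ and threshold $\alpha k=\zeta k/2$, yielding $|B|\le 32Q^2\gamma k/\zeta$, which is comfortably smaller than $\zeta k/4$ by the hypothesis relating $Q,\gamma,\zeta$. Informally, vertices in $B$ are precisely those that would see so much of $U$ that from them we could no longer escape into $V_{3-j}\setminus U$ to continue the embedding.

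With $B$ in hand, the embedding is carried out in two phases. First, I would place the roots $r_1,\ldots,r_\ell$ injectively into $U^*\setminus B$, which is possible since $|U^*|>32Q^2\gamma k/\zeta+\ell\ge|B|+\ell$. Second, I would extend the partial embedding into each $T_i$ in BFS order, maintaining the invariant that every image produced so far lies outside $B$ (and outside $U$, and outside the image built up so far). To extend from a vertex $v$ of $T_i$ with $\phi(v)=w\in V_j\setminus B$ to an unembedded neighbour $v'\in V_{3-j}$ of $v$, I would choose $\phi(v')$ in $V_{3-j}\setminus\bigl(U\cup B\cup\phi(\text{already embedded})\bigr)$. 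Such a choice exists because $w\notin B$ implies $\deg_H(w,U)\le\zeta k/2$, which together with $\mindeg_H(V_j,V_{3-j})\ge\zeta k$ gives $\deg_H(w,V_{3-j}\setminus U)\ge\zeta k/2$; subtracting the small set $B$ and the already-used image (of total size less than $\zeta k/4$, by the hypothesis on $|\bigcup_i V(T_i)|$) still leaves a strictly positive number of candidates. Iterating this step along a BFS traversal of $T_i$, and repeating for $i=1,\dots,\ell$, produces the desired mutually disjoint embeddings, since the image is treated as a single growing set.

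The main obstacle I foresee is not the embedding mechanism itself but the bookkeeping of constants: one has to verify simultaneously that the hypothesis $128Q\gamma\le\zeta^2$ makes Fact~\ref{fact:shadowboundEXPANDER} applicable, that it forces $|B|$ well below $\zeta k/4$ so the greedy extension never gets stuck, and that the cardinality bound $|U^*|>32Q^2\gamma k/\zeta+\ell$ matches the shadow estimate with enough slack to house the $\ell$ roots outside $B$. Once the invariant ``images avoid $B$'' is in place, everything else is routine greedy embedding and so I would expect this to be the only delicate point in the proof.
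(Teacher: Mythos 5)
Your proposal matches the paper's proof essentially verbatim: define $B:=\shadow_H(U,\zeta k/2)$, bound $|B|\le 32Q^2\gamma k/\zeta$ via Fact~\ref{fact:shadowboundEXPANDER}, place the roots in $U^*\setminus B$, and extend greedily using the invariant that images avoid $B$ (so each already-embedded vertex retains degree at least $\zeta k/2$ into $V_{3-j}\setminus U$, hence at least $\zeta k/4$ into $V_{3-j}\setminus(U\cup B)$, leaving room). The constant-bookkeeping you flag as the only delicate point is exactly what the paper checks, and your treatment of it is the same.
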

\begin{proof}
Set $B:=\shadow_H(U,\zeta k/2)$. By Fact~\ref{fact:shadowboundEXPANDER}, we have
$|B|\le\frac{32Q^2\gamma}{\zeta} k\le\zeta k/4$. In particular, $U^*\setminus B$ is large enough to accommodate the images $\phi (r_i)$ of all vertices $r_i$.

Successively, extend $\phi$, in each step mapping a neighbour
$u$ of some already embedded vertex $v\in \bigcup_i V(T_i)$ to a yet
unused neighbour of $\phi(v)$ in $V_j\setminus (B\cup
U)$, where $j$ is either 1 or 2, depending on the parity of $\dist_T(r,v)$.
 This is possible as
 $\phi(v)$, lying outside $B$,  has at least $\zeta k/2$ neighbours in $V_i\setminus U$. Thus $\phi (v)$ has  at least $\zeta k/4$ neighbours in $V_i\setminus (U\cup B)$, which is more than $\sum_iv(T_i)$. 
\end{proof}

The next three lemmas (Lemma~\ref{lem:embed:regular}--\ref{lem:embed:superregular}) deal with embedding trees in a regular or a super-regular pair. Before stating them, we give an auxiliary lemma that will be used in the proof of Lemma~\ref{lem:fillingCD}.
\begin{lemma}\label{lem:2ratio}
	Let $\{x_i\}_{i=1}^s$, $\{y_i\}_{i=1}^s$ be two families of reals in $[0, K]$, with $\sum_i x_i>0$. Write $X:=\sum_i x_i$, $Y:=\sum_i
	y_i$, and $\gamma:=Y/X$. Then for each $X'\in [0,X]$ there is a set
	$I\subset [s]$ such that 
	\begin{enumerate}[(a)]
		\item $\sum_{i\in I}x_i\le X'\le \sum_{i\in I} x_i+K$, and
		\item $\sum_{i\in I}y_i-K\le \gamma X'\le \sum_{i\in I}y_i+2K$. 
	\end{enumerate}
\end{lemma}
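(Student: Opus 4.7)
The plan is to use linear programming rounding. I consider the feasibility LP of finding $\lambda\in[0,1]^s$ with $\sum_i\lambda_i x_i=X'$ and $\sum_i\lambda_i y_i=\gamma X'$. This is feasible because the uniform choice $\lambda_i:=X'/X$ satisfies both equalities (the second one using $\gamma=Y/X$). Pass to any basic feasible solution~$\lambda^*$; since there are only two equality constraints, $\lambda^*$ has at most two fractional coordinates. Write $I^*:=\{i:\lambda_i^*=1\}$ and $F:=\{i:\lambda_i^*\in(0,1)\}$. The remainder of the argument rounds $\lambda^*$ to the indicator vector of a suitable set $I$ with the stated tolerances.

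If $|F|\le 1$, I take $I:=I^*$. Then $\sum_I x_i=X'-\lambda_{j_1}^*x_{j_1}\in[X'-K,X']$ and $\sum_I y_i=\gamma X'-\lambda_{j_1}^*y_{j_1}\in[\gamma X'-K,\gamma X']$ (where $\lambda_{j_1}^*x_{j_1}=\lambda_{j_1}^*y_{j_1}=0$ if $F=\emptyset$), so both (a) and (b) hold.

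The main case is $|F|=2$, say $F=\{j_1,j_2\}$. I split on the quantity $A:=\lambda_{j_1}^*x_{j_1}+\lambda_{j_2}^*x_{j_2}$. If $A\le K$, I take $I:=I^*$: then $\sum_I x_i=X'-A\in[X'-K,X']$, while $\sum_I y_i=\gamma X'-\lambda_{j_1}^*y_{j_1}-\lambda_{j_2}^*y_{j_2}\in[\gamma X'-2K,\gamma X']$, so that (b) holds precisely because of the asymmetric upper bound $\gamma X'\le \sum_I y_i+2K$. If $A>K$, I claim that at least one of $(1-\lambda_{j_1}^*)x_{j_1}\le \lambda_{j_2}^*x_{j_2}$ or $(1-\lambda_{j_2}^*)x_{j_2}\le \lambda_{j_1}^*x_{j_1}$ holds: otherwise, summing the strict reverses would yield $x_{j_1}+x_{j_2}>2A>2K$, contradicting $x_{j_i}\le K$. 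Assuming WLOG the former, I take $I:=I^*\cup\{j_1\}$: the inequality gives $\sum_I x_i=X'+(1-\lambda_{j_1}^*)x_{j_1}-\lambda_{j_2}^*x_{j_2}\in[X'-K,X']$ (upper bound from the inequality, lower bound from $\lambda_{j_2}^*x_{j_2}\le K$), and $\sum_I y_i=\gamma X'+(1-\lambda_{j_1}^*)y_{j_1}-\lambda_{j_2}^*y_{j_2}\in[\gamma X'-K,\gamma X'+K]$, so both (a) and (b) hold.

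The main obstacle I anticipate is exactly the $|F|=2$ case: the harder direction of (a) forces $\sum_I x_i\le X'$, and one cannot in general simply round both fractional variables down (that only guarantees $\sum_I x_i\in[X'-2K,X']$). The case split on $A$, together with the asymmetric bound in (b), is specifically what is needed to reconcile the tight one-sided $K$-bound in (a) with the inevitable $2K$-error in the $y$-coordinate when both fractionals are rounded down.
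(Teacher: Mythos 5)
Your LP-rounding proof is correct, and it is a genuinely different argument from the paper's. The paper constructs a greedy chain $\emptyset = J_1\subset J_2\subset\cdots$ of index sets, at each step adding an index whose sign of $\gamma x_j - y_j$ opposes the current discrepancy $\gamma\sum_{J_\ell}x_j-\sum_{J_\ell}y_j$ (such an index exists by averaging), and it asserts the invariant that this discrepancy stays in $[-K,K]$ along the whole chain; taking $I:=J_p$ for the largest $p$ with $\sum_{J_p}x_j\le X'$, the lower bound in (b) is read off from the invariant at $\ell=p$ and the upper bound from the invariant at $\ell=p+1$. Your proof replaces this greedy invariant with the structural fact that a vertex of the polytope $\{\lambda\in[0,1]^s:\sum_i\lambda_i x_i=X',\ \sum_i\lambda_i y_i=\gamma X'\}$ has at most two fractional coordinates, and the case split on $A:=\lambda_{j_1}^*x_{j_1}+\lambda_{j_2}^*x_{j_2}$ versus $K$ is exactly where the asymmetric $K$-slack of (a) and $2K$-slack of (b) are reconciled, as you correctly anticipated.

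In fact your route is more robust than a mere stylistic alternative, because the paper's displayed invariant can fail. Its lower bound is sound: a downward increment satisfies $\gamma x_j - y_j\ge -y_j\ge -K$. But an upward increment, taken when the discrepancy is negative, can be as large as $\gamma K$, which exceeds $K$ whenever $\gamma>1$. Concretely, with $K=1$, $x=(0,0,1,0)$, $y=(1,1,0,1)$ one has $\gamma=3$; step~1 must add one of the indices $1,2,4$, step~2 is then forced to add index~$3$ (the only remaining index with $\gamma x_j>y_j$), and the resulting $J_3$ gives $\gamma\sum_{J_3}x_j-\sum_{J_3}y_j=3-1=2>K$, contradicting the upper bound of the claimed invariant. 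The lemma's statement is nevertheless true (your rounding argument is a complete proof of it), and nothing in the applications inside Lemma~\ref{lem:fillingCD} obviously forces $\gamma\le 1$; so the greedy chain's upper-bound step needs repair, while your argument sidesteps the issue entirely.
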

\begin{proof}
	Inductively construct sets $J_\ell\subseteq [s]$ as follows for $\ell =1,\ldots, s$. We start by setting $J_1=\emptyset$. 
	In step $\ell$, if $\gamma \sum_{j\in J_\ell}x_{j}\ge
	\sum_{j\in J_\ell} y_{j}$, then choose $j_\ell\in[s]\setminus J_{\ell}$ such that $\gamma x_{j_\ell}\le y_{j_\ell}$. Otherwise, take $j_\ell\in[s]\setminus J_{\ell}$  with $\gamma x_{j_\ell}> y_{j_\ell}$. The existence of such an index $j_\ell$ follows by averaging. Set $J_{\ell+1}:=J_{\ell}\cup\{j_\ell\}$. Our procedure ensures that for each $\ell$ we have
	\begin{equation}\label{yiyiyi}
	\sum_{j\in J_\ell } y_{j}-K\le \gamma \sum_{j\in J_\ell}x_j \le \sum_{j\in J_\ell } y_{j}+K. 
	\end{equation}
	
	Now for a given $X'$, let $p$ be the largest integer such that
	$\sum_{j\in J_p} x_{j}\le X'$. Setting $I:=J_p$, we clearly have
	(a), while the first inequality in (b) holds because of~\eqref{yiyiyi} (first
	inequality) for $\ell =p$.  For the second inequality in (b), it is enough to focus on the case $p\neq s$, as otherwise $X=X'$ and consequently $\gamma X'=\sum_{i\in I} y_{j}$.
	But then, by the definition of~$p$ and by~\eqref{yiyiyi} (second inequality) for $\ell= p+1$,
	\[
	\gamma X' \leq \gamma \sum_{i\in J_{p+1}}x_i \leq \sum_{i\in J_{p+1}}y_i+K \leq\sum_{i\in I}y_i+2K,
	\]
	as desired.
\end{proof}

\begin{lemma}\label{lem:embed:regular}
 Let $\epsilon>0$
and $\beta>2\epsilon$. Let $(C,D)$ be an $\epsilon$-regular pair in a graph $H$, with $|C|=|D|=:\ell$, and with density $\density(C,D)\geq
3\beta$. Suppose that there are sets $X\subseteq C$, $Y\subseteq D$, and
$X^*\subset X$ satisfying $\min\{|X|,|Y|\}\ge 4\frac{\epsilon}{\beta}\ell$ and
$|X^*|> \frac\beta2\ell$.  Let $(T,r)$ be a rooted tree of order $v(T)\le
\epsilon \ell$.  Then there exists an $(r\hookrightarrow
X^*,\Veven(T)\hookrightarrow X,\Vodd(T)\hookrightarrow Y)$-embedding of $T$ in $H$.
\end{lemma}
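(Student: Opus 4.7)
The plan is to proceed by greedy BFS embedding after setting up the regularity properties of the restricted pair $(X,Y)$. Since $\min\{|X|,|Y|\}\ge (4\epsilon/\beta)\ell$ and $\beta>2\epsilon$, Fact~\ref{fact:BigSubpairsInRegularPairs} tells me that $(X,Y)$ is a $(\beta/2)$-regular pair of density at least $3\beta-\epsilon\ge 2\beta$. Call a vertex $x\in X$ \emph{typical} if $\deg_H(x,Y)\ge (3\beta/2)|Y|$, and define typical vertices in $Y$ analogously (with the role of $X$). By Fact~\ref{fact:manyTypicalVertices}, the number of non-typical vertices on each side is at most $(\beta/2)\ell$.

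Because $|X^*|>(\beta/2)\ell$ and at most $(\beta/2)\ell$ vertices of $X$ are non-typical, the set $X^*$ contains at least one typical vertex; choose any such vertex as $\phi(r)$. Now I would extend the embedding in BFS order, processing vertices of $T$ in order of increasing distance from $r$, and maintaining the invariant that every already-embedded vertex $\phi(v)$ is typical. Suppose a vertex $v\in V(T)$ has been embedded as a typical vertex $\phi(v)$, and I wish to embed a child $u$ of $v$. Without loss of generality $\phi(v)\in X$, so I need to find a typical, yet unused neighbour of $\phi(v)$ in $Y$.

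By the typicality of $\phi(v)$ there are at least $(3\beta/2)|Y|$ candidate neighbours in $Y$. At most $(\beta/2)\ell\le(\beta/2)|Y|\cdot(|Y|/|Y|)\le (\beta/2)|Y|\cdot\mathrm{const}$ of them are non-typical (more precisely, at most $(\beta/2)|Y|$ by Fact~\ref{fact:manyTypicalVertices}), and at most $v(T)\le\epsilon\ell\le(\beta/4)|Y|$ have already been used (where the last inequality uses the assumption $|Y|\ge(4\epsilon/\beta)\ell$). Hence there are at least
\[
\left(\tfrac{3\beta}{2}-\tfrac{\beta}{2}-\tfrac{\beta}{4}\right)|Y|=\tfrac{3\beta}{4}|Y|>0
\]
typical unused neighbours of $\phi(v)$ in $Y$, so I can set $\phi(u)$ to be any one of them and the BFS invariant is preserved. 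The symmetric argument (with the roles of $X,Y$ swapped) handles the case $\phi(v)\in Y$.

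This procedure successfully embeds $(T,r)$ as a $(r\hookrightarrow X^*,\Veven(T)\hookrightarrow X,\Vodd(T)\hookrightarrow Y)$-embedding. There is no real obstacle beyond this bookkeeping: all three budget losses (non-typical vertices on the target side, used vertices, and the initial restriction from $C,D$ to $X,Y$) are absorbed by the comfortable gap between the density $3\beta$ and the regularity parameter $\epsilon<\beta/2$, and by the size assumption on $X^*$.
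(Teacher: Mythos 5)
Your argument is correct and takes essentially the same route as the paper's proof: restrict to $(X,Y)$ via Fact~\ref{fact:BigSubpairsInRegularPairs}, isolate typical vertices via Fact~\ref{fact:manyTypicalVertices}, pick a typical start in $X^*$, and extend greedily while staying typical. (The line ``$(\beta/2)\ell\le(\beta/2)|Y|\cdot(|Y|/|Y|)\le(\beta/2)|Y|\cdot\mathrm{const}$'' is garbled — note $\ell\ge|Y|$ — but your parenthetical ``more precisely, at most $(\beta/2)|Y|$'' is the bound you actually use, and it is correct.)
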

\begin{proof} We shall construct an embedding $\phi: V(T)\rightarrow X\cup
	Y$ satisfying the requirements of the lemma.
	Fact~\ref{fact:BigSubpairsInRegularPairs} implies that $(X,Y)$ is $\beta/2$-regular of density greater than $2\beta$. By
	Fact~\ref{fact:manyTypicalVertices}, there are sets
	$X'\subset X$ and $Y'\subset Y$ with $|X'|>(1-\beta/2) |X|$ and $|Y'|>(1-\beta /2) |Y|$ such
	that $\mindeg(X',Y)\ge \frac 32\beta |Y|$, $\mindeg(Y',X)\ge
	\frac 32 \beta |X|$. Then
	\begin{equation}\label{eq:mindegreeenoughembed}
	\begin{split}
	\mindeg(H[X',Y'])\ge \beta \min\{|X|,|Y|\}\ge 2\epsilon
	\ell > v(T).
	\end{split}
	\end{equation}
	Choose any vertex in $X^*\cap X'$ (which is non-empty by the
	above calculations) for $\phi(r)$. By~\eqref{eq:mindegreeenoughembed}  we can greedily extend $\phi$ to an embedding $\phi: V(T)\rightarrow X'\cup Y'$.
\end{proof}

\begin{lemma}\label{lem:fillingCD}
 Let
$\beta,\epsilon>0$ and $\ell \in\NN$ be such that $\epsilon\le \beta^2/8$. Let
$(C,D)$ be an $\epsilon$-regular pair with $|C|=|D|=\ell$ of density
$\density(C,D)\geq 3\beta$ in a graph $H$. 
Let $(T_1,r_1),(T_2,r_2),\ldots ,(T_s,r_s)$ be rooted trees with
$v(T_i)\leq\epsilon\ell$ for all $i\in [s]$. Let $U\subset V(H)$ fulfill $|C\cap U|=|D\cap U|$, and let $X^*\subseteq (C\cup D)\setminus U$ be such that
\begin{equation}\label{eq:conFill}
|X^*|\geq \sum_{i=1}^sv(T_i)
+ 50\beta\ell\;.
\end{equation}
 Then there are mutually
disjoint $(r_i\hookrightarrow X^*, V(T_i) \hookrightarrow (C\cup D)\setminus U)$-embeddings of the trees
$(T_i,r_i)$ in~$H$.
\end{lemma}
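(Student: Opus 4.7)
\emph{Proof plan.}

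I plan to embed the trees $(T_i,r_i)$ sequentially, invoking Lemma~\ref{lem:embed:regular} for each. For each tree I first decide whether to embed the root $r_i$ (and hence the class $\Veven(T_i,r_i)$) into $C$ or into $D$; this is encoded by a partition $[s]=I_C\sqcup I_D$ chosen in advance. Set $a_i:=|\Veven(T_i,r_i)|$, $b_i:=|\Vodd(T_i,r_i)|$, $A:=\sum_i a_i$, $B:=\sum_i b_i$, $S:=A+B$, $m:=\ell-|C\cap U|=|C\setminus U|=|D\setminus U|$, $X^*_C:=X^*\cap C$, $X^*_D:=X^*\cap D$. From $X^*\subseteq(C\cup D)\setminus U$ one gets $|X^*|\leq 2m$, so~\eqref{eq:conFill} yields $S\leq 2m-50\beta\ell$.

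\textbf{Step~1 (Partitioning).} For $I\subseteq[s]$ let
\[
f_C(I):=\sum_{i\in I}a_i+\sum_{i\notin I}b_i=B+\sum_{i\in I}(a_i-b_i),\qquad f_D(I):=S-f_C(I);
\]
these count how many $T_i$-vertices the partition forces into $C$ and into $D$. I aim for $I=I_C$ satisfying $\max\{f_C,f_D\}\leq m-\beta\ell$, together with $f_C\leq|X^*_C|-\beta\ell$ whenever $I_C\neq\emptyset$ and the symmetric bound for $D$. Applying Lemma~\ref{lem:2ratio} with $x_i:=v(T_i)$, $y_i:=a_i$, $K:=\epsilon\ell$ and an adjustable $X'$, I can push $f_C(I)$ to any target value inside $[\min(A,B),\max(A,B)]$ (which always contains $S/2$), up to an error of $O(\epsilon\ell)$. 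A short case analysis (splitting on whether $\min\{|X^*_C|,|X^*_D|\}$ exceeds $S/2+\beta\ell$) shows that a suitable target lies in the feasibility window $[\max(0,S-|X^*_D|+\beta\ell),\ \min(m-\beta\ell,|X^*_C|-\beta\ell)]$: the slack $50\beta\ell$ in $|X^*|\geq S+50\beta\ell$ both makes this window of width at least $48\beta\ell$ and rules out the extremal regimes in which the achievability and feasibility windows could be disjoint.

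\textbf{Step~2 (Sequential embedding).} I process the trees in any order, maintaining the used-vertex sets $P_C\subseteq C\setminus U$ and $P_D\subseteq D\setminus U$. For each $i\in I_C$ I invoke Lemma~\ref{lem:embed:regular} on the pair $(C,D)$ with $X:=C\setminus(U\cup P_C)$, $Y:=D\setminus(U\cup P_D)$, and local root-set $X^*_{\mathrm{loc}}:=X^*_C\setminus P_C$. Since $\epsilon\leq\beta^2/8$ gives $4\epsilon/\beta\leq\beta/2$, the bound $f_C\leq m-\beta\ell$ yields $|X|,|Y|\geq\beta\ell\geq 4\epsilon\ell/\beta$, and $f_C\leq|X^*_C|-\beta\ell$ yields $|X^*_{\mathrm{loc}}|\geq|X^*_C|-f_C\geq\beta\ell>\beta\ell/2$. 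Lemma~\ref{lem:embed:regular} then supplies the required $(r_i\hookrightarrow X^*,V(T_i)\hookrightarrow(C\cup D)\setminus U)$-embedding of $T_i$, with $r_i$ placed in $X^*_C$, $\Veven(T_i,r_i)$ in $C\setminus(U\cup P_C)$ and $\Vodd(T_i,r_i)$ in $D\setminus(U\cup P_D)$. Trees with $i\in I_D$ are handled symmetrically.

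The hard part is Step~1: constructing one partition that simultaneously balances the vertex-usage between $C$ and $D$ and respects the $X^*$-capacity on each side. The $50\beta\ell$ slack in hypothesis~\eqref{eq:conFill} is precisely what makes this simultaneous control possible via Lemma~\ref{lem:2ratio} together with the boundary-case check.
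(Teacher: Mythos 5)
Your proposal takes a genuinely different route from the paper. The paper reduces to a fixed family of regular sub-pairs: it sets $m=|X^*\cap D|\le M=|X^*\cap C|$ (WLOG), carves out a balanced sub-block $W\subset X^*\cap C$ with $|W|=m$, and applies Lemma~\ref{lem:2ratio} twice to split $[s]$ into three groups $I_1,I_2,I''$ which are embedded, respectively, in $(W,X^*\cap D)$ with roots in $W$, in $(X^*\cap D,W)$ with roots in $X^*\cap D$, and in $((C\cap X^*)\setminus W, D\setminus(X^*\cup U))$ with roots in $(C\cap X^*)\setminus W$. Your scheme is conceptually lighter: a single two-way choice $I_C\sqcup I_D$ of which cluster each root lands in, one application of Lemma~\ref{lem:2ratio} to calibrate $f_C$, and cumulative bookkeeping of the used sets $P_C,P_D$. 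This does work, and is arguably the cleaner way to think about the lemma; but the paper's three-way split with the auxiliary set $W$ has the advantage that the capacity constraints are static and never interact across tree groups, which makes the verification a one-liner per group.

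There is, however, a gap in Step~1 as you have written it. The claim that the feasibility window $[\max(0,S-|X^*_D|+\beta\ell),\,\min(m-\beta\ell,|X^*_C|-\beta\ell)]$ has width at least $48\beta\ell$ is false in general. When $S<|X^*_D|$ the lower endpoint is clipped to $\beta\ell$ and the width becomes $|X^*_C|-2\beta\ell$, which can be negative — for instance $|X^*_D|=50\beta\ell$, $|X^*_C|=\beta\ell$, $S=\beta\ell$ satisfies all hypotheses with the window empty. The assertion that the slack ``rules out the extremal regimes in which the achievability and feasibility windows could be disjoint'' is likewise wrong: take $S<2\beta\ell$ with $\max\{A,B\}<\beta\ell$ and $|X^*_C|\ge\beta\ell$; then $[\min(A,B),\max(A,B)]\subset[0,\beta\ell)$ while the window sits in $[\beta\ell,\infty)$. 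In all of these degenerate cases the right move is to drop the $X^*_C$-constraint entirely by taking $I_C=\emptyset$ (all roots in $D$); one then only needs $A\le|X^*_D|-\beta\ell$ and $A,B\le m-\beta\ell$, and a short calculation using $|X^*_D|\ge|X^*|-|X^*_C|\ge S+50\beta\ell-|X^*_C|$ confirms these. You gesture at this possibility with the phrase ``whenever $I_C\neq\emptyset$,'' but your subsequent interval arithmetic treats the $X^*_C$-constraint as unconditional, which is where the argument as written breaks. The scheme is salvageable, but the case split on whether $|X^*_C|$ or $|X^*_D|$ is too small (and the fallback to $I_C=\emptyset$ or $I_D=\emptyset$) must be carried out explicitly; this is precisely the role played by Case~(C1) in the paper's proof.
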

\begin{proof}
	Let us write $M:=|X^*\cap C|$ and $m:=|X^*\cap D|$. Without loss of generality, we assume that $M\ge m$. For each $i\in [s]$, let us write $a_i$ and $b_i$ for the
	number of vertices of $T_i$ at even and odd distance from $r_i$, respectively.
	Furthermore, we write $A:=\sum_i a_i$, $B:=\sum_i b_i$, and $\gamma:=B/A$.
%
	In a first step, we shall partition the set $[s]$ into three sets $I_1,I_2$, and $I''$, according to three cases: 
	\begin{itemize}
		\item [(C1)] $m\le 4\beta\ell$,
		\item [(C2)]  $m>4\beta\ell$, and  $2(m-4\beta\ell)\ge A+B$,
		\item [(C3)]  $m>4\beta\ell$, and  $2(m-4\beta\ell)< A+B$.
	\end{itemize}
	
	Once this has been done, we will show how to embed the rooted trees $T_i$, using this partition.	

	In Case~(C1), we set $I_1=I_2:=\emptyset$, and $I'':=[s]$. 
	For Cases~(C2), and~(C3), we first partition~$[s]$ into two sets $I$ and $I''$ and will make use of an auxiliary set $I'$ in order to obtain~$I_1$ and~$I_2$ as follows. 
In Case~(C2), set $I:=[s]$, $I'':=\emptyset$, and $I':=I$.
In Case~(C3), 
	we apply Lemma~\ref{lem:2ratio} with
	input $(x_i)_{i\in[s]}:=(a_i)_{i\in[s]}$, $(y_i)_{i\in[s]}:=(b_i)_{i\in[s]}$,
	$X':=\frac{2A}{A+B}(m-4\beta\ell)$, and the bound $K:=\frac \beta 4\ell$. The  bound $X'\leq X=A$ required in Lemma~\ref{lem:2ratio} follows from the second property of Case~(C3). The lemma yields a set
	$I\subset[s]$ such that 
	\begin{align}
	\label{eq:I'x1}\sum_{I} a_i&\le \frac{2A}{A+B}(m-4\beta\ell)\;,\\
	\label{eq:I'x2}\frac{2A}{A+B}(m-4\beta\ell)&\le  \sum_{I} a_i+\frac \beta 4\ell\;,\\
	\label{eq:I'y1}\sum_{I} b_i-\frac \beta 4\ell&\le  \frac{2B}{A+B}(m-4\beta\ell)\;\mbox{, and}\\
	\label{eq:I'y2}\frac{2B}{A+B}(m-4\beta\ell)&\le \sum_{I} b_i+\frac \beta 2\ell\;.
	\end{align}
	Bound~\eqref{eq:I'x2} can be used to bound
	$\sum_{I''}a_i$ for the complementary set $I'':=[s]\setminus
	I$ as follows.
	\begin{align}
	\nonumber
	\sum_{I''}a_i&\le
	A-\left(\frac{2A}{A+B}(m-4\beta\ell)-\frac \beta 4\ell\right)=A\left(1-\frac{2(m-4\beta\ell)}{A+B}\right)+\frac \beta 4\ell\\ &\le
	(M+m-50\beta \ell)\left(1-\frac{2(m-4\beta\ell)}{M+m-50\beta \ell}\right)+\frac \beta 4\ell\le M-m-40\beta \ell\;,
	\label{eq:I''x}
	\end{align}
	where we employed the bound $A\le A+B\le M+m-50\beta\ell$ from~\eqref{eq:conFill}.
	Likewise, we have from~\eqref{eq:I'y2} that
	\begin{align}
	\label{eq:I''y}
	\sum_{I''}b_i\le  M-m-40\beta \ell\;.
	\end{align}
	The main feature of Lemma~\ref{lem:2ratio} is that the ratio
	$\sum_{I}b_i:\sum_{I}a_i$ is almost exactly $\gamma$. In order to even out
	a small imperfection we may have, let us introduce a dummy pair $(a_0,b_0)$, with
	$0<a_0, b_0\le\beta\ell/2$ such that for $I':=I\cup\{0\}$, we have 
	$$\frac{\sum_{I'}b_i}{\sum_{I'}a_i}=\gamma\;.$$
	The existence of such a pair $(a_0,b_0)$ follows from the properties of
	Lemma~\ref{lem:2ratio}.
	
In Cases~(C2), and~(C3), we apply Lemma~\ref{lem:2ratio}  to further partition the set~$I$.
	More specifically, the input of Lemma~\ref{lem:2ratio} consists of
	$X':=\frac{A}{A+B}(m-4\beta \ell)$, $(x_i)_{i\in I'}:=(a_i)_{i\in I'}$, 
	$(y_i)_{i\in I'}:=(b_i)_{i\in I'}$, and $K:=\frac \beta 2\ell$.  Lemma~\ref{lem:2ratio}  gives
	an index set $J_1\subset I'$. Set $I_1:=J_1\setminus\{0\}\subseteq I$. We have that
	\begin{align}
	\label{eq:I1x1}\sum_{I_1} a_i&\le \frac{A}{A+B}(m-4\beta \ell)\;,\\
	\label{eq:I1x2}\frac{A}{A+B}(m-4\beta \ell)&\le \sum_{I_1} a_i+\beta \ell\;,\\
	\label{eq:I1y1}\sum_{I_1} b_i-\frac \beta 2\ell&\le \frac{B}{A+B}(m-4\beta \ell)\;\mbox{, and}\\
	\label{eq:I1y2}\frac{B}{A+B}(m-4\beta \ell)&\le \sum_{I_1} b_i+\frac {3}{2}\beta \ell\;.
	\end{align}
	Set $I_2:=I\setminus I_1$. From~\eqref{eq:I'x1} and~\eqref{eq:I1x2}
	we have
	\begin{equation}\label{eq:I2x}
	\sum_{I_2} a_i\le
	\frac{2A}{A+B}(m-4\beta\ell)-\left(\frac{A}{A+B}(m-4\beta\ell)-\beta \ell\right)=\frac{A}{A+B}(m-4\beta \ell)+\beta \ell\;.
	\end{equation}
	Similarly,~\eqref{eq:I'y1} and~\eqref{eq:I1y2} give
	\begin{equation}\label{eq:I2y}
	\sum_{I_2} b_i\le \frac{B}{A+B}(m-4\beta \ell)+2\beta\ell\;.
	\end{equation}
	We shall now see how the partition $[s]=I_1\cup I_2\cup I''$ gives us
	instructions to embed the trees $T_1,\ldots,T_s$ one by one. The
	trees $T_i$, $i\in I_1$ are embedded in the bipartite graph $(W,(D\cap X^*)\setminus U)$, where $W$ is an
	arbitrary subset of $C\cap X^*$ of size $m$, with the root $r_i$
	embedded in $W$. The trees $T_i$, $i\in
	I_2$ are embedded in the bipartite graph $(D\cap X^*, W)$, with the root $r_i$
	embedded in $D\cap X^*$. Finally, the trees $T_i$, $i\in I''$ are
	embedded in $((C\cap X^*)\setminus W,D\setminus (X^*\cup U))$, with the root
	embedded in $(C\cap X^*)\setminus W$. We can embed the trees
	$(T_i)_{i\in I_1\cup I_2}$ as described above, by repetitively using 
	Lemma~\ref{lem:embed:regular}, as we have enough space for the embeddings:
	summing up~\eqref{eq:I1x1} and~\eqref{eq:I2y} we have $$\sum_{I_1}
	a_i+\sum_{I_2}b_i\le m-4\beta \ell+2\beta \ell=|W|-2\beta \ell\;,$$
	and similarly from~\eqref{eq:I1y1} and~\eqref{eq:I2x} we have
	$$\sum_{I_1}
	b_i+\sum_{I_2}a_i\le m-4\beta \ell+\frac 32\beta \ell\le |D\cap X^*|-2\beta \ell\;.$$
	Likewise, the trees $(T_i)_{i\in I''}$ can be embedded in~$((C\cap X^*)\setminus
	W,D\setminus (X^*\cup U))$ with the help of Lemma~\ref{lem:embed:regular},
	as~\eqref{eq:I''x} says that $\sum_{I''}a_i\le |(C\cap X^*)\setminus W|-40\beta \ell$, and as~\eqref{eq:I''y} says that $\sum_{I''}b_i\le
	|(C\cap X^*)\setminus W|-40\beta \ell\le|D\setminus (X^*\cup U)|-40\beta \ell$.
\end{proof}

\begin{lemma}\label{lem:embed:superregular}
Let $d>10\epsilon>0$. Suppose that $(A,B)$ forms an $(\epsilon,d)$-super-regular pair with $|A|,|B|\ge \ell$. Let $U_A\subset A$, $U_B\subset B$ be such that $|U_A|\le |A|/2$ and $|U_B|\le d|B|/4$. Let $(T,r)$ be a rooted tree of order at most $d\ell/4$, and let $v\in A\setminus U_A$ be arbitrary. Then there exists an $(r\hookrightarrow v,\Veven(T,r)\hookrightarrow A\setminus U_A, \Vodd(T,r)\hookrightarrow B\setminus U_B)$-embedding of $T$.
\end{lemma}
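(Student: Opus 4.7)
The plan is to embed $T$ greedily in a BFS order from $r$, setting $\phi(r):=v$. The only subtle point is that $|U_A|$ may be as large as $|A|/2$, so super-regularity alone does not guarantee that an arbitrary vertex $w\in B$ has many neighbours in $A\setminus U_A$. I would therefore first use $\epsilon$-regularity to identify a large set of ``typical'' vertices in $B$, and restrict the embedding of $\Vodd(T,r)$ to this set; once this is done, the remainder is a routine greedy count.

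Concretely, define
\[
B^*:=\{w\in B\::\:\deg(w,A\setminus U_A)\ge (d-2\epsilon)|A\setminus U_A|\}.
\]
Since $|A\setminus U_A|\ge |A|/2>\epsilon|A|$ and $\density(A,B)\ge d$ by super-regularity, $\epsilon$-regularity of $(A,B)$ forces $|B\setminus B^*|\le \epsilon|B|$: otherwise the pair $(A\setminus U_A, B\setminus B^*)$ would have density less than $d-2\epsilon$, a discrepancy of more than $\epsilon$ with $\density(A,B)$, contradicting $\epsilon$-regularity.

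Now embed the vertices of $T$ in BFS order starting with $\phi(r):=v\in A\setminus U_A$. When processing a vertex $u\neq r$ with parent $p$ already embedded, two cases arise. If $u\in\Veven(T,r)$, then $p\in\Vodd(T,r)$ and by construction $\phi(p)\in B^*$, so $\phi(p)$ has at least $(d-2\epsilon)|A\setminus U_A|\ge (d-2\epsilon)|A|/2$ neighbours in $A\setminus U_A$; subtracting the fewer than $v(T)\le d\ell/4\le d|A|/4$ already-used vertices leaves at least $d|A|/4-\epsilon|A|>0$ candidates, where positivity uses $d>10\epsilon$. If $u\in\Vodd(T,r)$, then $\phi(p)\in A\setminus U_A$ has at least $d|B|$ neighbours in $B$ by super-regularity; discarding the at most $|U_B|\le d|B|/4$ forbidden vertices, the at most $\epsilon|B|$ vertices in $B\setminus B^*$, and the at most $v(T)\le d|B|/4$ already-used vertices still leaves at least $d|B|/2-\epsilon|B|>0$ candidates, again using $d>10\epsilon$. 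Picking any valid candidate for $\phi(u)$ maintains $\phi(\Vodd(T,r))\subset B^*$ and $\phi(\Veven(T,r))\subset A\setminus U_A$, so the greedy procedure runs to completion and yields the required embedding.
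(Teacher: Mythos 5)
Your proof is correct and follows essentially the same approach as the paper's: restrict to vertices whose degrees into the other side are well-behaved, then embed greedily. The paper's sketch simply reduces to the proof scheme of Lemma~\ref{lem:embed:regular} (which uses $\epsilon$-regularity to pass to typical subsets on both sides) and notes that super-regularity makes any $v\in A\setminus U_A$ admissible as the root image; you give a streamlined self-contained version that exploits super-regularity to skip the $A$-side typicality restriction entirely, confining the typicality argument to your set $B^*$.
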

\begin{proof}[Sketch of the proof]
The lemma is a variant of Lemma~\ref{lem:embed:regular} with only two qualitative differences. Firstly, the assumptions of the lemma are stronger in that we now have super-regularity rather than regularity. Secondly, the assertion of the lemma is stronger in that we can map the root of the tree on a specific vertex $r\hookrightarrow v$, rather  than into a specified set $r_\PARAMETERPASSING{L}{lem:embed:regular}\hookrightarrow X^*_\PARAMETERPASSING{L}{lem:embed:regular}$. The proof scheme of Lemma~\ref{lem:embed:regular} indeed gives this stronger assertion under the current assumptions. To see this, note that in the proof of Lemma~\ref{lem:embed:regular}, it was enough to map $r$ to an arbitrary vertex which had enough degree to the destination set ($B\setminus U_B$, in the present lemma) of its children. In the current setting, any $v\in A\setminus U_A$ can serve as such a vertex as $\deg(v, B\setminus U_B)\ge \deg(v, B)-|U_B|\ge d|B|-d|B|/4=\frac34d|B|$, where the last inequality uses the super-regularity of $(A,B)$.
\end{proof}

Suppose that we have to embed a rooted tree $(T,r)$, and its root was
already mapped on a vertex $\phi(r)$. Suppose that $r$ has degree $\ell_X+\ell_Y$ in a
regular pair $(X,Y)$, where $\ell_X:=\deg(\phi(r), X), \ell_Y:=\deg(\phi(r),
Y)$, with $\ell_X\ge \ell_Y$, say.
The hope is that we can embed $T$ in $(X,Y)$ as long as $v(T)$ is a bit smaller
than $\ell_X+\ell_Y$. For this, the greedy strategy does not work (see
Figure~\ref{fig:embedBalancedUnbalance}) and we need to be somewhat more careful.
\begin{figure}[ht]
\centering 
\includegraphics{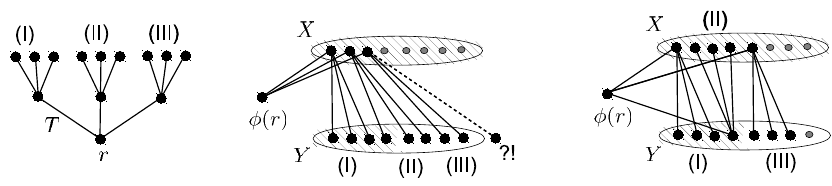}
\caption[Balanced and unbalanced embedding in a regular pair]{An example of a
rooted tree $(T,r)$, depicted on the left. The forest $T-r$ has three components (I), (II), (III) of total
order 12. Say the vertex $r$ is embedded so that for the regular pair $(X,Y)$ we
have $\deg(\phi(r),X)=8$, $\deg(\phi(r),Y)=4$ (neighbourhoods of $\phi(r)$
hatched).

While the greedy strategy does not work (middle), splitting the process into
a balanced and an unbalanced stage (right) does --- here the components~(I) and~(II)
are embedded in the balanced stage and the component~(III) in the unbalanced
stage.}
\label{fig:embedBalancedUnbalance}
\end{figure}
 We split the embedding process into two stages.
In the first stage we choose a subset of the components of $T-r$ of total order 
approximately $2\min\big(\ell_X,\ell_Y\big)=2\ell_Y$. When embedding these, we choose 
orientations of each component in such a way that the image is approximately
balanced with respect to $X$ and $Y$. In the second stage we 
embed the remaining components so that their roots are embedded in $X$.
We refer to the first stage as 
\index{general}{balanced way of embedding}\index{general}{unbalanced way of embedding}\emph{embedding in a balanced way}, and 
to the second stage as  \emph{embedding in an unbalanced way}.

The next lemma says that each regular pair can be filled-up in a balanced way by
trees.
\begin{lemma}\label{lem:embed:BALANCED}
Let $G$ be a graph, $v\in V(G)$ be a vertex, $\M$ be an
$(\epsilon,d,\nu k)$-\semiregular matching in $G$, and $\{f_{CD}\}_{(C,D)\in\M}$ be a
family of integers between $-\tau k$ and $\tau k$. Suppose $(T,r)$ is a rooted tree,
$$v(T)\le \left(1-\frac{4(\epsilon+\frac{\tau}\nu)}{d-2\epsilon}\right)|V(\M)|\;,$$ with the property that each component of $T-r$ has order at most $\tau k$. If $V(\M)\subset \neighbour_G(v)$ then there
exists an $(r\hookrightarrow v,V(T-r)\hookrightarrow V(\M))$-embedding $\phi$ of $T$ such that for each $(C,D)\in\M$ we have $|C\cap
\phi(T)|+f_{CD}=|D\cap \phi(T)|\pm\tau k$.
\end{lemma}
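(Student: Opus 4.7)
The plan is to map $r\mapsto v$ and then, since $V(\M)\subseteq\neighbour_G(v)$, to distribute the components of $T-r$ among the pairs of $\M$ one at a time, choosing for each component both a pair and an orientation (i.e.\ which side of the pair receives its root). The freedom to choose the orientation is exactly what will produce the prescribed discrepancy $f_{CD}$; the choice of pair is what will keep us from overfilling one side.

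Set $\eta:=\frac{4(\epsilon+\tau/\nu)}{d-2\epsilon}$ and, for each $(C,D)\in\M$, write $\ell_{CD}:=|C|=|D|\geq \nu k$. Since every component of $T-r$ has order at most $\tau k\leq (\tau/\nu)\ell_{CD}$, I process the pairs in some order and greedily assign to each pair $(C,D)$ a batch $\mathcal B_{CD}$ of still-unembedded components with $\sum_{T^*\in\mathcal B_{CD}} v(T^*)\le (2-\eta)\ell_{CD}$, filling up to within $\tau k$ of this cap unless no components remain. The hypothesis $v(T)\le(1-\eta)|V(\M)|$ ensures that the total room $\sum_{(C,D)}(2-\eta)\ell_{CD}=(1-\eta/2)|V(\M)|$ comfortably exceeds $v(T)-1$, even with the $\tau k$-per-pair slack. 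So all components get placed.

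Within a batch $\mathcal B_{CD}$, for each component $T^*$ I let $a_{T^*}$ and $b_{T^*}$ be the sizes of the two colour classes of $T^*$ (with the root of $T^*$ in the $a$-class), and pick an orientation $o_{T^*}\in\{\pm 1\}$ (positive meaning the root maps into $C$) so that
\[
\Bigl|\,f_{CD}+\sum_{T^*\in\mathcal B_{CD}} o_{T^*}(a_{T^*}-b_{T^*})\Bigr|\le \tau k.
\]
This is a standard signing argument: process the components in any order, keep a running sum initialised at $f_{CD}$, and always add the sign that minimises its absolute value; since $|a_{T^*}-b_{T^*}|\le v(T^*)\le \tau k$ and $|f_{CD}|\le \tau k$, the invariant $|S|\le \tau k$ is maintained throughout. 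Orienting the components in this way then guarantees $|C\cap\phi(T)|-|D\cap\phi(T)|=\sum_{T^*\in\mathcal B_{CD}} o_{T^*}(a_{T^*}-b_{T^*})=-f_{CD}\pm \tau k$, which is exactly the required balance.

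Finally, for each pair $(C,D)$ I embed the components of $\mathcal B_{CD}$ with the chosen orientations one after another by repeated application of Lemma~\ref{lem:embed:regular} (using $\beta$ a small multiple of $d-2\epsilon$, so that $3\beta\le\density(C,D)$), always taking the root landing-set $X^*$ to lie in whichever of $C$ or $D$ the orientation requires, and excluding the previously used vertices via $U$. The main obstacle is the accounting: I must check that the $\eta\ell_{CD}$ of slack left unfilled in each pair suffices to keep the residual set $X^*\setminus U$ of size at least $(\beta/2)\ell_{CD}$ in both colour classes at every step, so that Lemma~\ref{lem:embed:regular} remains applicable. This follows because the signing argument keeps the cumulative imbalance below $\tau k=o(\eta\ell_{CD})$ throughout (not only at the end), and because $\eta$ was chosen large enough to absorb both the $O(\epsilon/(d-2\epsilon))\ell_{CD}$ loss inherent to Lemma~\ref{lem:embed:regular} and the $O(\tau k)$ slack per pair introduced by the batching.
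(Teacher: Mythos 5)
Your high-level plan — map $r\mapsto v$, batch the components of $T-r$ to the pairs of $\M$ so each pair receives total order at most $(2-\eta)\ell_{CD}$, then choose orientations by a signing argument to control the per-pair discrepancy — is the right one, and both the batching arithmetic and the signing invariant are correctly argued. (The paper itself gives no proof of this lemma, deferring to~\cite[Lemma~5.12]{HlaPig:LKSdenseExact}.)

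However, the final step does not close. You claim you can embed each batch by ``repeated application of Lemma~\ref{lem:embed:regular}'', and that the loss inherent to that lemma is ``$O(\epsilon/(d-2\epsilon))\ell_{CD}$''. This estimate is wrong. Lemma~\ref{lem:embed:regular} requires simultaneously $\min\{|X|,|Y|\}\ge 4(\epsilon/\beta)\ell$ and $|X^*|>(\beta/2)\ell$, with $2\epsilon<\beta\le \density(C,D)/3$. Taking $X^*=X=C\setminus U$, the first bound pushes $\beta$ up, the second pushes it down, and the best compromise is $\beta=\sqrt{8\epsilon}$, at which point both bounds demand $|C\setminus U|\ge \sqrt{2\epsilon}\,\ell$ (and with $\beta=d/3$ they demand $|C\setminus U|>\tfrac{d}{6}\ell$). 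Meanwhile the signing and batching leave you only $|C\setminus U|\gtrsim (\eta/2)\ell -\tau k\approx \tfrac{2\epsilon}{d-2\epsilon}\ell$. In the regime of the paper (where $\epsilon\ll d^2$, so $\tfrac{2\epsilon}{d}\ll\sqrt{2\epsilon}\ll d$), this is far too small, so Lemma~\ref{lem:embed:regular} simply is not applicable. The constant $4$ in the hypothesis $v(T)\le\bigl(1-\tfrac{4(\epsilon+\tau/\nu)}{d-2\epsilon}\bigr)|V(\M)|$ is not large enough to buy you the $\Omega(\sqrt\epsilon\,\ell)$ or $\Omega(d\ell)$ slack that Lemma~\ref{lem:embed:regular} needs.

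What you actually need to exploit is that the root of \emph{each} component may go to \emph{any} vertex of $C\setminus U$, since $V(\M)\subset N_G(v)$ — there is no small prescribed landing set $X^*$. So replace the black-box application by the direct argument: among $C\setminus U$ (which has size $>\epsilon\ell$), $\epsilon$-regularity guarantees a vertex $u$ with $\deg(u,D\setminus U)\ge (d-\epsilon)|D\setminus U|$; place the component's root there, and extend greedily, always staying among vertices with $\deg\ge(d-\epsilon)$-fraction into the opposite residual. Since at most $\epsilon\ell$ vertices of either side fail this condition, the usable degree at each step is at least $(d-\epsilon)|D\setminus U|-\epsilon\ell-\tau k$, and a short computation shows this is $\ge 0$ exactly when $\eta/2\ge\tfrac{\epsilon+\tau/\nu}{d-\epsilon}+\tfrac{\tau}{\nu}$, which follows from the lemma's hypothesis (indeed this is essentially what fixes the constant~$4$). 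With that replacement your batching and signing produce a correct proof.
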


The proof of Lemma~\ref{lem:embed:BALANCED} is standard, and is given for
example in~\cite[Lemma~5.12]{HlaPig:LKSdenseExact}. 

Lemma~\ref{lem:embed:BALANCED} suggests the following definitions. 
The \emph{discrepancy}\index{general}{discrepancy} of a set $X$ with respect to a pair of sets $(C,D)$ is the number $|C\cap X|-|D\cap X|$.
$X$ is \emph{$s$-balanced}\index{general}{balanced set} with respect to a \semiregular matching $\M$ if the discrepancy of $X$ with respect to each $(C,D)\in \M$ is at most $s$ in absolute value.
\begin{lemma}\label{lem:embed:ONESIDE}
Let $G$ be a graph, $v\in V(G)$ be a vertex, $\M$ be an
$(\epsilon,d,\nu k)$-\semiregular matching in $G$ with an $\M$-cover $\mathcal F$, and $U\subset
V(G)$. Suppose $(T,r)$ is a rooted tree with 
$$
v(T)+|U|\le
\deg_G\left(v,V(\M)\setminus \bigcup\mathcal F\right)-\frac{4(\epsilon+\frac{\tau}\nu)}{d-2\epsilon}|V(\M)|\;,$$
such that each component
of $T-r$ has order at most $\tau k$. Then there
exists an $(r\hookrightarrow v,V(T-r)\hookrightarrow V(\M)\setminus U)$-embedding $\phi$ of $T$.
\end{lemma}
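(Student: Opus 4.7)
My plan is to mirror the proof of Lemma~\ref{lem:embed:BALANCED}, with the twist that the roots of the components of $T-r$ must land in the restricted set
\[
W:=N_G(v)\cap (V(\M)\setminus \textstyle\bigcup\mathcal F)\setminus U,
\]
which by the hypothesis satisfies $|W|\ge v(T-r)+\Delta$, where $\Delta:=\tfrac{4(\epsilon+\tau/\nu)}{d-2\epsilon}|V(\M)|$. Since $\mathcal F$ is an $\M$-cover, for every pair in~$\M$ I may designate one side, say $C_i$, to lie in~$\mathcal F$, forcing $W\subseteq\bigcup_i D_i$ (pairs with both sides in $\mathcal F$ are irrelevant, as they cannot host any root).

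First I set $\phi(r):=v$ and embed the components $T_1,\ldots,T_q$ of $T-r$, with respective roots $x_1,\ldots,x_q$, one at a time. Each $T_j$ is to be embedded entirely inside a single matching pair $(C_i,D_i)$: the root image $\phi(x_j)$ is chosen in $W\cap D_i$ and is required to be \emph{typical} in $(C_i,D_i)$ (at most $\epsilon|D_i|$ vertices fail this by Fact~\ref{fact:manyTypicalVertices}); the rest of $T_j$ is then built by a BFS, alternating between~$D_i$ at even depths and~$C_i$ at odd depths. The greedy extension succeeds by combining Fact~\ref{fact:manyTypicalVertices} with Fact~\ref{fact:BigSubpairsInRegularPairs} applied to the pair restricted to its unused portion, provided that on each side a $\beta$-fraction of the vertices remains unused and outside~$U$, where $\beta$ is a small constant slightly above $\tfrac{2\epsilon}{d-\epsilon}$. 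Components are processed in an order (e.g.\ by decreasing size) and each is placed into a pair chosen to maximise residual capacity subject to still having an unused $W$-vertex.

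The main obstacle is to show that at every stage some matching pair~$i$ is \emph{available}, meaning that $W\cap D_i$ still contains an unused vertex \emph{and} the used fractions on both $C_i$ and $D_i$ lie below $1-\beta$. This is precisely where the specific slack $\Delta$ is tailored: the $\epsilon$-contribution in its definition absorbs the loss of atypical vertices, while the $\tau/\nu$-contribution absorbs the one-shot addition of a single component of order at most $\tau k$ to a pair. I would verify the existence of an available pair by a double-counting argument: pairs failing the used-fraction condition on either side have their combined $D_i$-mass bounded by $\tfrac{2(v(T)+|U|)}{1-\beta}$ (summing the failing inequalities together with $\sum_i|E_i^D|\le v(T-r)$ and $\sum_i|E_i^C|\le v(T-r)$), while pairs whose unused $W$-quota is exhausted can hold at most $v(T-r)$ vertices of $W$; subtracting these two bounds from $|W|\ge v(T-r)+\Delta$ and invoking the hypothesis on $v(T)+|U|$ leaves a positive residue, yielding the needed pair. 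This is structurally the same accounting as in the proof of Lemma~\ref{lem:embed:BALANCED} in~\cite[Lemma~5.12]{HlaPig:LKSdenseExact}; the only novelty here is bookkeeping the extra constraint that roots must live in~$W$, which is exactly what the enlarged slack in the hypothesis of Lemma~\ref{lem:embed:ONESIDE} is designed to accommodate.
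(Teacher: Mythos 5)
Your overall strategy is the right one and matches what the paper clearly has in mind when it calls the proof ``standard'' and refers to Lemma~\ref{lem:embed:BALANCED} (the paper does not in fact supply a proof of Lemma~\ref{lem:embed:ONESIDE}). In particular, your identification of $W:=N_G(v)\cap (V(\M)\setminus\bigcup\mathcal F)\setminus U$, the observation that the $\M$-cover forces $W$ to live on a designated side of each pair, the component-by-component embedding with one component per pair, and the splitting of the slack into an $\epsilon$-term (atypical vertices) and a $\tau/\nu$-term (one-component-per-pair waste) all reflect the intended mechanism.

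However, the double-counting step as you have written it does not close. You subtract the gross $D_i$-mass of full pairs, bounded by $\frac{2(v(T)+|U|)}{1-\beta}$, from $|W|\ge v(T-r)+\Delta$ where $\Delta=\frac{4(\epsilon+\tau/\nu)}{d-2\epsilon}|V(\M)|$, and claim a positive residue follows from the hypothesis. But the hypothesis only gives $v(T)+|U|\le \deg_G(v,V(\M)\setminus\bigcup\mathcal F)-\Delta$, and $\deg_G(v,V(\M)\setminus\bigcup\mathcal F)$ can be as large as $|V(\M)|/2$; so $v(T)+|U|$ may be of order $|V(\M)|$ while $\Delta$ is only an $O(\epsilon+\tau/\nu)$-fraction of $|V(\M)|$. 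The difference $\Delta-\frac{2(v(T)+|U|)}{1-\beta}$ is then (very) negative. The flaw is that you are charging the \emph{entire} $D_i$-mass of a full pair against the slack, whereas most of that mass is either already counted as ``used'' or never belonged to $W$ in the first place. The accounting must instead charge only the \emph{unused} $W$-vertices that are trapped in dead pairs: for a pair declared dead because its $D_i$-side is $(1-\beta)$-full, this is at most $\beta V_i$, summing to $O(\beta|V(\M)|)$; for a pair declared dead because its $C_i$-side is full while $D_i$ is not, one has to control the $C_i$/$D_i$ imbalance (coming from the parity split of the components and from a possibly lopsided placement of $U$), and this is where the $\tau/\nu$ contribution and the minimum pair size $\nu k$ are used. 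Once the loss is bounded by $O\bigl((\epsilon+\tau/\nu)|V(\M)|\bigr)$ rather than by $O(v(T)+|U|)$, the slack $\Delta$ absorbs it and the residue is positive. Your sketch omits this refinement, and so as stated would not verify the existence of an available pair at every step.
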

The proof of Lemma~\ref{lem:embed:ONESIDE} is again standard and we again omit it.

The following lemma uses a probabilistic technique to embed a shrub while reserving a  set of vertices in the host graph for later use. We wish the reserved set to use about as much space inside certain given sets $P_i$ as the image of our shrub does. (In later applications the sets $P_i$ correspond to neighbourhoods of vertices which are still `active'.) 

Lemma~\ref{lem:randomshrubembedding} will find an immediate application in all the remaining lemmas of this subsection. However it is really necessary only for Lemmas~\ref{lem:embedStoch:DIAMOND6}--\ref{lem:embedStoch:DIAMOND7}, which deal with
embedding shrubs in the presence of one of the Configurations~$\mathbf{(\diamond
6)}$--$\mathbf{(\diamond8)}$. For Lemmas~\ref{lem:HE3} and~\ref{lem:HE4}, which are for Configurations~$\mathbf{(\diamond
3)}$ and $\mathbf{(\diamond4)}$, a simpler auxiliary lemma (without reservations) would suffice.

\begin{lemma}\label{lem:randomshrubembedding}
Let $H$ be a graph, let $X^*,X_1,X_2,
P_1,P_2,\ldots,P_L\subset V(H)$, and let $(T_1,r_1)$, \ldots, 
$(T_\ell,r_\ell)$ be rooted trees, such that $L\le k$,
$|P_j|\le k$ for each $j\in[L]$, and $|X^*|\geq 2\ell$.
Suppose that $\mindeg (X_1\cup X^*,X_{2})\geq 2\sum v(T_i)$ and $\mindeg
(X_2,X_{1})\geq 2\sum v(T_i)$.

Then there exist pairwise disjoint $\left(r_i\hookrightarrow
X^*,\Veven(T_i,r_i)\setminus\{r_i\}\hookrightarrow
X_1,\Vodd(T_i,r_i)\hookrightarrow X_2\right)$-em\-beddings $\phi_i$ of $T_i$ in
$G$ and a set $C\subset (X_1\cup X_2)\sm\bigcup\phi_i(T_i)$ of size $\sum v(T_i)$ such that for each $j\in[L]$ we have
\begin{equation}
  \label{Pjdoesitright}  |P_j\cap \bigcup\phi_i(T_i)|\le |P_j\cap C|+k^{3/4}.
\end{equation}
\end{lemma}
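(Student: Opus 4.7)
The plan is to construct the embeddings $\phi_i$ and the reserve set $C$ simultaneously by a randomized greedy procedure in which at every step two candidate host-vertices are pre-selected and a fair coin decides which becomes the image of the current tree-vertex and which becomes the companion placed in $C$. For each set $P_j$ the resulting pair of counts will evolve as a process in the class $\Duplicate$ of Section~\ref{ssec:Duplicate}, so Lemma~\ref{lem:randomduplicate} together with a union bound over the $L\le k$ sets $P_j$ will deliver~\eqref{Pjdoesitright}.

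I would process the trees $T_1,\ldots,T_\ell$ in order, and within each tree visit the vertices in BFS order starting at the root. For the root $r_i$, pick two distinct unused vertices $a_i,b_i\in X^*$; this is possible because $|X^*|\ge 2\ell$. Toss a fair coin to declare which of $a_i,b_i$ is $\phi(r_i)$ and which is the companion $c(r_i)$. For a non-root vertex $v$ whose parent $u$ has already been embedded, let $X_{j(v)}\in\{X_1,X_2\}$ be the side dictated by the parity of $v$ relative to $r_i$. The degree hypothesis $\mindeg(X_1\cup X^*,X_2)\ge 2\sum_iv(T_i)$ or $\mindeg(X_2,X_1)\ge 2\sum_iv(T_i)$, combined with the observation that at most $2(\sum_iv(T_i)-1)$ vertices of $X_1\cup X_2$ are used by the time we reach $v$, guarantees that there are at least two unused vertices $a_v,b_v\in N(\phi(u))\cap X_{j(v)}$; again a fair coin assigns $\phi(v)$ and $c(v)$ to $\{a_v,b_v\}$. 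Set $C:=\{c(v)\colon v\text{ ranges over all processed tree-vertices}\}$; under the natural assumption $X^*\subseteq X_1\cup X_2$ implicit in the intended applications, this gives $|C|=\sum_iv(T_i)$ and $C\subset (X_1\cup X_2)\setminus\bigcup_i\phi_i(T_i)$.

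Fix $P_j$ and enumerate the processed vertices as $v_1,v_2,\ldots$ in processing order. Set $(X_s,Y_s):=\bigl(\mathbf{1}[\phi(v_s)\in P_j],\mathbf{1}[c(v_s)\in P_j]\bigr)$. If both $a_{v_s},b_{v_s}\in P_j$ then $X_s=Y_s=1$ deterministically; if neither lies in $P_j$, then $X_s=Y_s=0$; and if exactly one does, the independent fair coin gives $\probability[X_s=1]=\tfrac12$. Thus $(X_s,Y_s)$ realizes a process in $\Duplicate(k)$, since $\sum_s(X_s+Y_s)\le|P_j|\le k$. Applying Lemma~\ref{lem:randomduplicate} with $a:=k^{3/4}$ yields $\probability\bigl[|P_j\cap\bigcup_i\phi_i(T_i)|-|P_j\cap C|\ge k^{3/4}\bigr]\le\exp(-k^{1/2}/2)$. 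A union bound over the $L\le k$ sets bounds the failure probability by $k\exp(-k^{1/2}/2)<1$ for sufficiently large $k$, so some outcome of the coin tosses produces an embedding for which~\eqref{Pjdoesitright} holds for every $j\in[L]$.

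The main obstacle is the bookkeeping needed to confirm that at each step the degree hypothesis, after subtracting all previously used vertices in $X_1\cup X_2$, still leaves at least two free neighbours, and that the random pairings honestly realize a $\Duplicate$-process simultaneously for every $P_j$. The one delicate conceptual point is the treatment of root companions: they must lie in $X_1\cup X_2$ so that they can be counted towards $|C|$, which both explains the factor $2$ in the hypothesis $|X^*|\ge 2\ell$ and implicitly requires $X^*\subseteq X_1\cup X_2$, a condition clearly met in the intended applications.
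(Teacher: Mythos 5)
Your proposal is correct and follows essentially the same approach as the paper: pre-select two candidate host-vertices at each step (using the degree hypothesis minus the at most $2(\sum v(T_i)-1)$ already-used vertices), flip a fair coin to assign image versus companion, realize a $\Duplicate(k)$-process for each $P_j$, apply Lemma~\ref{lem:randomduplicate} with $a:=k^{3/4}$, and finish with a union bound over $j\in[L]$. Your remark that the containment $C\subset(X_1\cup X_2)$ tacitly requires $X^*\subseteq X_1\cup X_2$ is well spotted --- the paper's own proof text only asserts $C\subseteq V(H)\setminus\bigcup\phi_i(T_i)$, so the stronger containment in the lemma statement indeed leans on what holds in the intended applications.
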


\begin{proof}
Let $m:=\sum v(T_i)$.

We construct pairwise disjoint random $\left(r_i\hookrightarrow
X^*,\Veven(T_i,r_i)\setminus\{r_i\}\hookrightarrow
X_1,\Vodd(T_i,r_i)\hookrightarrow X_2\right)$-embeddings $\phi_i$ and a set
$C\subseteq V(H)\sm\bigcup\phi_i(T_i)$ which satisfies~\eqref{Pjdoesitright}
with positive probability. Then the statement follows.

Enumerate the vertices of $\bigcup T_i$ as $\bigcup
V(T_i)=\{v_1,\ldots,v_{m}\}$ such that $v_i=r_i$ for
$i=1,\ldots,\ell$, and such that for each $j>\ell$ we have that the parent of
$v_j$ lies is the set $\{v_1,\ldots,v_{j-1}\}$.
Pick pairwise disjoint sets $A_1,\ldots,A_\ell\subset X^*$ of size two.
Choose uniformly and independently at random an element $x_j\in A_j$. Denote the other element of $A_j$ as $y_j$. 

Now, successively for $i= \ell+1,\ldots, m$, we shall  define vertices
$x_i$ and $y_i$. Let $r$ denote the root of the tree in which $v_i$ lies, and
let $v_s=\parent(v_i)$ be the parent of~$v_i$.
We shall choose $x_{i},y_j\in X_{j_i}$ where $j_i=\dist (r,v_i) \mod 2 +1$.
In step $i$, proceed as follows. Since $x_s\in X_{j_{s}}$ (or since $x_s\in X^*$), we have 
$$\deg (x_{s},X_{j_i}\sm  \bigcup_{h<i}\{x_h,y_h\})\geq 2.$$ Hence, 
we may take an arbitrary subset $A_i\subseteq (\neighbour(x_{s})\cap
X_{{j_{i}}})\setminus  \bigcup_{h<i}\{x_h,y_h\}$ of size exactly two. As above,
randomly label its elements as $x_i$ and $y_i$ independently of all other
choices.

The choices of the maps $(v_j\mapsto x_j)_{j=1}^{m}$ determine
$\phi_1,\ldots,\phi_\ell$.
Then $C:=\{y_1,\ldots,y_m\}$
has  size exactly $m$ and avoids $\bigcup \phi_i (T_i)$. 

 For each
$j\in[L]$ we set up a stochastic process $\mathfrak
S^{(j)}=\left((X_i^{(j)},Y_i^{(j)})\right)_{i=1}^{m}$, defined by
$X^{(j)}_i=\mathbf{1}_{\{x_i\in P_j\}}$ and $Y^{(j)}_i=\mathbf{1}_{\{y_i\in P_j\}}$. Note that $\mathfrak S^{(j)}\in
\Duplicate(|P_j|)\subset\Duplicate(k)$. Thus, for a fixed $j\in[L]$, by
Lemma~\ref{lem:randomduplicate}, the probability that
$|P_j\cap(\bigcup\phi_i(T_i))|>|P_j\cap C|+k^{3/4}$ is at most
$\exp(-\sqrt{k}/2)$.
Using the union bound over all $j\in [L]$ we get that Property~\ref{stochEmbedDIAMOND6P2} holds with probability at least $$1-L\cdot \exp\left(-\frac{\sqrt{k}}{2}\right)>0\;.$$ This finishes the proof.
\end{proof}

We now get to the first application of Lemma~\ref{lem:randomshrubembedding}.

\begin{lemma}\label{lem:embedStoch:DIAMOND6}
Assume we are in Setting~\ref{commonsetting}. Suppose
that we are given sets $V_2,V_3\subset V(G)$ are such that we have
\begin{equation}\label{mucuruvi}
\mindeg_H(V_2, V_3)\geq \delta k\quad\mbox{and}\quad\mindeg_H(V_3, V_{2})\geq \delta k,
\end{equation}
where  $\delta>300/k$, and $H$ is a $(\gamma k, \gamma)$-nowhere dense subgraph of $G$. Suppose that $U,U^*,
P_1,P_2,\ldots,P_L\subset V(G)$,
and $L\le k$, are such that $|U|\le \frac{ \delta}{24\sqrt\gamma} k$, $U^*\subset V_2$, $|U^*|\ge\frac{\delta}{4}k$, and $|P_j|\le k$ for each
$j\in[L]$.
Let $(T,r)$ be a rooted tree of order at most $\delta k/8$.

Then there exists a $\left(r\hookrightarrow U^*,\Veven(T,r)\setminus\{r\}\hookrightarrow
V_2\setminus U,\Vodd(T,r)\hookrightarrow V_3\setminus U\right)$-embedding $\phi$ of $T$ in
$G$ and a set
 $C\subset (V_2\cup V_3) \sm (U\cup \phi(T))$ of size $v(T)$ such that for each $j\in[L]$ we have
\begin{equation}
  \label{stochEmbedDIAMOND6P2} |P_j\cap \phi(T)|\le |P_j\cap C|+k^{3/4}.
\end{equation}
\end{lemma}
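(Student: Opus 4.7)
The strategy is to reduce this lemma to an application of Lemma~\ref{lem:randomshrubembedding} inside the nowhere-dense subgraph~$H$, after first shaving off a small set~$B$ of ``exceptional'' vertices which have too many neighbours in~$U$. Since $U$ is small and $H$ is nowhere-dense, Fact~\ref{fact:shadowboundEXPANDER} will show that~$B$ is small enough to be absorbed into the slack provided by the minimum-degree hypothesis~\eqref{mucuruvi}.

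First, I set $B:=\shadow_H(U,\delta k/4)$. The quantitative version of Fact~\ref{fact:shadowboundEXPANDER} applied with $Q:=\delta/(24\sqrt{\gamma})$ and $\alpha:=\delta/4$ (the hypothesis $Q\le \alpha/(16\gamma)$ reduces to $\sqrt\gamma\le 3/8$, which holds by~\eqref{eq:KONST}) yields the bound $|B|\le 16Q^{2}\gamma k/\alpha\le \delta k/9$. I then define
\[
X_1:=(V_2\setminus U)\setminus B,\quad X_2:=(V_3\setminus U)\setminus B,\quad X^*:=U^*\setminus B.
\]
By definition of~$B$, every $v\in X_1\cup X^*\subset V_2\setminus B$ has $\deg_H(v,U)<\delta k/4$, and since $v\in V_2$ it satisfies $\deg_H(v,V_3)\ge\delta k$; discarding also the at most $|B|\le\delta k/9$ vertices in $B\cap V_3$ gives $\deg_H(v,X_2)\ge 3\delta k/4-\delta k/9\ge 2\cdot(\delta k/8)\ge 2v(T)$. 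The symmetric argument using $\mindeg_H(V_3,V_2)\ge\delta k$ yields $\mindeg_H(X_2,X_1)\ge 2v(T)$. Moreover $|X^*|\ge |U^*|-|B|\ge \delta k/4-\delta k/9\ge 5\delta k/36>2$, using $\delta>300/k$.

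Thus all hypotheses of Lemma~\ref{lem:randomshrubembedding} are satisfied with $\ell=1$, the single rooted tree $(T,r)$, the sets $X^*, X_1, X_2$ just defined, and the given sets $P_1,\ldots,P_L$. That lemma delivers an $(r\hookrightarrow X^*,\Veven(T,r)\setminus\{r\}\hookrightarrow X_1,\Vodd(T,r)\hookrightarrow X_2)$-embedding~$\phi$ of~$T$ and a set $C\subset (X_1\cup X_2)\setminus\phi(T)$ of size $v(T)$ satisfying $|P_j\cap\phi(T)|\le|P_j\cap C|+k^{3/4}$ for every $j\in[L]$. Because $X_1\cup X_2\subset (V_2\cup V_3)\setminus U$ and $X^*\subset U^*$, this embedding has exactly the form required, and $C\subset (V_2\cup V_3)\setminus(U\cup\phi(T))$, completing the proof.

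The only thing that needs any care here is ensuring that the exceptional set~$B$ produced by the shadow bound is genuinely smaller than the slack in the minimum-degree hypotheses; once the constants work out (which they do, thanks to the extra factor of $\sqrt\gamma$ hidden in the bound on~$|U|$), the embedding follows mechanically from Lemma~\ref{lem:randomshrubembedding}.
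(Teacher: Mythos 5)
Your proof is correct and follows exactly the same route as the paper's: set $B:=\shadow_H(U,\delta k/4)$, bound $|B|$ via Fact~\ref{fact:shadowboundEXPANDER}, define $X_1,X_2,X^*$ by removing $U\cup B$, verify the degree hypotheses, and invoke Lemma~\ref{lem:randomshrubembedding} with $\ell=1$. In fact your write-up is slightly more careful than the paper's: you consistently work in the general nowhere-dense graph $H$ from the hypothesis (the paper's proof implicitly specialises to $\Gexp$), and you explicitly spell out the arithmetic $|B|\le \delta k/9$, $|X^*|\ge 5\delta k/36>2$, and $23\delta k/36\ge 2v(T)$ rather than leaving it as a chain of inequalities.
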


\begin{proof}
Set $B:=\shadow_{\Gexp}(U,\delta k/4)$.
By Fact~\ref{fact:shadowboundEXPANDER}, we have that
$|B|\le 64\frac{\gamma}{\delta}(\frac{ \delta}{24\sqrt\gamma})^2 k\le\frac{\delta}{4}k-2$. In
particular, $X^*:=U^*\setminus B$ has size at least $2$. Set $X_1:=V_2\sm (U\cup B)$ and set $X_2:=V_3\sm (U\cup B)$. Using~\eqref{mucuruvi},
we find that
$$\mindeg_{\Gexp}(X_1,X_2)\ge \delta k - \maxdeg_{\Gexp}(X_1,U) - |B|\ge 
\delta k - \frac{\delta}{4}k - \frac{\delta}{4}k
\ge 2v(T)\;,$$
and similarly, $\mindeg_{\Gexp}(X_2,X_1)\ge 2v(T)$.
We may thus apply Lemma~\ref{lem:randomshrubembedding} to obtain the desired embedding $\phi$ and the set $C$.
\end{proof}

\begin{lemma}\label{lem:embedStoch:DIAMOND7}
Assume Setting~\ref{commonsetting} and Setting~\ref{settingsplitting}. Suppose
that we are given sets $Y_1,Y_2\subset\colouringp{1}\setminus \exceptVertSplit$
with $Y_1\subset\smallatoms$, and
\begin{enumerate}[(i)]
\item $\maxdeg_{\GD}(Y_1,\colouringp{1}\setminus
Y_2)\le \frac{\eta \gamma}{400}$, and \label{luckyluke}
\item $\mindeg_{\GD}(Y_2,Y_1)\ge \delta k$.\label{jollyjumper}
\end{enumerate}

Suppose that $U,U^*, P_1,P_2,\ldots,P_L\subset V(G)$ are sets such that $|U|\le
\frac{\Lambda\delta}{2\Omega^*} k$, $U^*\subset Y_1$,
with $|U^*|\ge\frac{\delta}{4}k$, $|P_j|\le k$ for each $j\in[L]$, and $L\le
k$.
Suppose $(T_1,r_1),\ldots,(T_\ell,r_\ell)$ are rooted trees of total order at
most $\delta k/1000$.
Suppose further that $\delta<\eta\gamma/100$, $\epsilon'<\delta/1000$,  and $k>1000/\delta$.

Then there exist pairwise disjoint $\left(r_i\hookrightarrow
U^*,\Veven(T_i,r_i)\hookrightarrow Y_1\setminus U,\Vodd(T_i,r_i)\hookrightarrow
Y_2\setminus U\right)$-em\-beddings $\phi_i$ of $T_i$ in $G$ and a set $C\subset
V(G-\bigcup \phi_i(T_i))$ of size $\sum v(T_i)$ such that for each $j\in[L]$ we
have that
\begin{equation}
 \label{stochEmbedDIAMOND7P2} |P_j\cap \bigcup\phi_i(T_i)|\le |P_j\cap
 C|+k^{3/4}.
\end{equation}
\end{lemma}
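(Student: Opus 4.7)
\emph{Plan.} The strategy is to reduce to Lemma~\ref{lem:randomshrubembedding}, following the template of Lemma~\ref{lem:embedStoch:DIAMOND6}. The only substantive change is that the shadow bound supplied by Fact~\ref{fact:shadowboundEXPANDER}, which pinned down the ``bad set'' in Configuration~$\mathbf{(\diamond 6)}$, is now replaced by the exceptional set produced by the avoiding property of $Y_1\subseteq\smallatoms$.

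First I would invoke Definition~\ref{def:avoiding} with $U$ as the forbidden set; since $|U|\le\frac{\Lambda\delta}{2\Omega^*}k\le\Lambda k$, we obtain a set $B\subseteq Y_1$ with $|B|\le\epsilon' k$ such that every $v\in Y_1\setminus B$ is contained in some dense spot $D_v\in\DenseSpots$ with $|V(D_v)\cap U|\le\gamma^2 k$. Set $X^*:=U^*\setminus B$, $X_1:=(Y_1\setminus B)\setminus U$, and $X_2:=Y_2\setminus U$. From $|U^*|\ge\delta k/4$ and $\epsilon'<\delta/1000$ we get $|X^*|\ge\delta k/5\ge 2\ell$, one of the prerequisites of Lemma~\ref{lem:randomshrubembedding}.

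Next I would verify the two minimum-degree hypotheses of Lemma~\ref{lem:randomshrubembedding} in the host graph $G$. For $v\in X_1\cup X^*$, the dense spot $D_v$ is the main tool: Definition~\ref{def:proportionalsplitting}\eqref{It:H4} with the index corresponding to $\colouringp{1}$ supplies at least $\proporce{1}\gamma k-k^{0.9}$ $D_v$-neighbours of $v$ inside $\colouringp{1}$; condition~(\ref{luckyluke}) removes at most $\eta\gamma k/400$ of them for belonging to $\colouringp{1}\setminus Y_2$, and the choice of $D_v$ removes at most $\gamma^2 k$ for lying in $U$. Combined with $\proporce{1}\ge\eta/100$ (from~\eqref{eq:proporcevelke}) and the hypothesis $\delta<\eta\gamma/100$, this leaves well above $2\sum_i v(T_i)$ neighbours of $v$ in $X_2$. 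For $v\in X_2$, condition~(\ref{jollyjumper}) yields $\deg_{\GD}(v,Y_1)\ge\delta k$; subtracting the contribution of $B$ (at most $\epsilon' k$) and of $U\cap Y_1$ (controlled by combining the calibration $|U|\le\frac{\Lambda\delta}{2\Omega^*}k$ with Fact~\ref{fact:boundedlymanyspots}, which bounds the number of dense spots through $v$, and with the bound $\deg_{\GD}(v)\le\Omega^*k$) gives $\deg_{\GD}(v,X_1)\ge\delta k/2\ge 2\sum_i v(T_i)$.

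With both minimum-degree hypotheses in place, I would apply Lemma~\ref{lem:randomshrubembedding} to $X^*, X_1, X_2$ and the family $P_1,\ldots,P_L$; its conclusion supplies the pairwise disjoint embeddings $\phi_i$ of the required bipartite type and a reserve set $C\subseteq(X_1\cup X_2)\setminus\bigcup_i\phi_i(T_i)$ of size $\sum_i v(T_i)$ satisfying~\eqref{stochEmbedDIAMOND7P2} for every $j\in[L]$, which is exactly the conclusion sought. The main obstacle in the whole argument is the second degree verification: the naive bound $\deg_{\GD}(v,U)\le|U|$ is too weak by a factor of order $\Lambda/\Omega^*$, so one has to exploit the edge-disjointness of $\DenseSpots$ together with the maximum-degree bound on $v$ in $\GD$ to turn the hypothesis on $|U|$ into the required $\delta k/2$-size slack.
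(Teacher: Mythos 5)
Your route — reduce to Lemma~\ref{lem:randomshrubembedding} via the avoiding property of $Y_1$, mirroring Lemma~\ref{lem:embedStoch:DIAMOND6} — is the same overall plan as the paper's, and your verification of the first minimum-degree condition (from $X_1\cup X^*$ into $X_2$) is essentially right. But the second minimum-degree condition, which you correctly flag as the obstacle, is not actually resolved by your sketch, and this is a genuine gap.

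The paper's fix is to work from the outset with the shadow-augmented set $U':=\shadow_{\GD}(U,\delta k/2)\cup U$. The calibration $|U|\le\frac{\Lambda\delta}{2\Omega^*}k$ is there precisely so that Fact~\ref{fact:shadowbound} gives $|U'|\le\Lambda k$, which allows the avoiding property of $Y_1$ to be invoked with $U'$ (not $U$) as the forbidden set, yielding $|V(D_v)\cap U'|\le\gamma^2 k$ for $v\in Y_1\setminus B$. One then takes $X_2:=Y_2\setminus(U'\cup B)$, not $Y_2\setminus U$. Every $v\in X_2$ now lies outside $\shadow_{\GD}(U,\delta k/2)$, so by definition of the shadow $\deg_{\GD}(v,U)\le\delta k/2$, and the second degree condition is immediate from~(\ref{jollyjumper}): $\deg_{\GD}(v,Y_1\setminus(U\cup B))\ge\delta k-\delta k/2-\epsilon'k\ge 2\sum_i v(T_i)$. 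Since $U\subseteq U'$, the target $X_2$ still sits inside $Y_2\setminus U$, so the conclusion $\Vodd(T_i,r_i)\hookrightarrow Y_2\setminus U$ is preserved.

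With your choice $X_2:=Y_2\setminus U$ and the avoiding property applied only to $U$, there is no way to bound $\deg_{\GD}(v,U\cap Y_1)$ for a generic $v\in X_2$: since $|U|$ may be of order $\Lambda\delta k/\Omega^*\gg\delta k$, a vertex of $Y_2\setminus U$ may well send all $\delta k$ of its $\GD$-edges into $Y_1\cap U$. The combination you suggest (Fact~\ref{fact:boundedlymanyspots} plus $\deg_{\GD}(v)\le\Omega^* k$ plus edge-disjointness) does not prevent this, because nothing stops $U$ from concentrating inside the few dense spots through $v$. You must delete the shadow of $U$ from $X_2$, and once you do, you must also apply the avoiding property with $U'$ rather than $U$ so that the $\gamma^2 k$ cap controls $|V(D_v)\cap U'|$ rather than merely $|V(D_v)\cap U|$ in the first degree calculation aimed at the now-smaller $X_2$.
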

\begin{proof} Set $U':=\shadow_{\GD}(U,\delta k/2)\cup U$. By
Fact~\ref{fact:shadowbound}, we have $|U'|\le \Lambda k$. As $Y_1$ is a
$(\Lambda,\epsilon',\gamma,k)$-avoiding set, by Definition~\ref{def:avoiding}
there exists a set $B\subset Y_1$, $|B|\le \epsilon' k$ such that for all $v\in
Y_1\setminus B$ there exists a dense spot $D_v\in\DenseSpots$ with $v\in V(D_v)$
and $|V(D_v)\cap U'|\le \gamma^2k$. As $Y_1$ is disjoint from
$\exceptVertSplit$, by Definition~\ref{def:proportionalsplitting}\eqref{It:H4}
and by~\eqref{eq:proporcevelke}, we have that $\deg_{D_v}(v,V(D_v)\colouringpI{1})\ge\frac{\eta\gamma}{200} k$. 
By~\eqref{luckyluke}, we have that $\deg_{\GD}(v,V(D_v)\colouringpI{1}\setminus Y_2)<\frac{\eta\gamma
}{400}k$, and hence, $$\deg_{\GD}\big(v,(V(D_v)\colouringpI{1}\cap Y_2)\setminus
U'\big)\ge \frac{\eta\gamma k}{400}-\gamma^2k\ge \frac{\eta\gamma k}{800}\;.$$ Thus,
\begin{equation}\label{gr1}
\mindeg_{\GD}(Y_1\setminus B, Y_2\setminus (U'\cup B))\ge
\frac{\eta\gamma k}{800} -\eps'k\ge 2\sum v(T_i)\;.
\end{equation}
Further, by the definition of $U'$ and by~\eqref{jollyjumper}, we have
\begin{equation}\label{gr2}
\mindeg_{\GD}(Y_2\setminus U',Y_1\setminus (U\cup B))\ge \frac{\delta k}2 -\eps'k \ge
2\sum v(T_i)\;.
\end{equation}

Set $X^*:=U^*\setminus B$, and note that $|X^*|\ge \delta
k/4-\epsilon' k\ge 2\ell$. Set $X_1:=Y_1\sm (U\cup B)$ and $X_2:=Y_2\sm (U'\cup
B)$.
Inequalities~\eqref{gr1} and~\eqref{gr2} guarantee that we may apply
Lemma~\ref{lem:randomshrubembedding} to obtain the desired embeddings $\phi_i$.
\end{proof}

\begin{lemma}\label{lem:HE3}
Assume Setting~\ref{commonsetting}. Suppose that the sets $L',L'',\HugeVertices',\HugeVertices'',V_1,V_2$ witness Configuration~$\mathbf{(\diamond3)}(0,0,\gamma/4,\delta)$.
Suppose that $U,U^*\subset V(G)$ are sets such that $|U|\le  k$,
$U^*\subset V_1$, $|U^*|\ge\frac{\delta}{4}k$.
Suppose $(T,r)$ is a rooted tree of order at most $\delta k/1000$. Suppose further that $\delta\leq\gamma/100$, $\epsilon'<\delta/1000$, and $4\Omega^*/\delta\le \Lambda$.

Then there is an $\left(r\hookrightarrow U^*,\Veven(T,r)\setminus\{r\}\hookrightarrow V_1\setminus
U,\Vodd(T,r)\hookrightarrow V_2\setminus U\right)$-embedding of~$T$ in~$G$.
\end{lemma}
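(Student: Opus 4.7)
The plan is to carry out a greedy embedding that mirrors the argument in Lemma~\ref{lem:embedStoch:DIAMOND7} but dispenses with the probabilistic reservation of a set $C$ (and hence with $\Duplicate$), since Lemma~\ref{lem:HE3} does not ask for a balanced twin set. The role of the avoiding property of $\smallatoms\supset V_1$ is to supply, for each even-distance vertex $v$, a dense spot $D_v$ that is essentially disjoint from the forbidden set $U$ (and its $\GD$-shadow); inside $D_v$ the max-degree condition $\maxdeg_{\GD}(V_1,V(G)\setminus(V_2\cup\HugeVertices))\le\tfrac\gamma4 k$ of Configuration~$\mathbf{(\diamond3)}$ forces nearly all neighbours of $v$ into $V_2$, while~\eqref{confi3theothercondi} lets us bounce back from $V_2$ to $V_1$.

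First I would set $U':=\shadow_{\GD}(U,\delta k/2)\cup U$. By Fact~\ref{fact:shadowbound}, since $\maxdeg(\GD)\le\Omega^*k$ on $V(G)\setminus\HugeVertices$, we get $|U'|\le (2\Omega^*/\delta)|U|+|U|\le 4\Omega^*k/\delta\le\Lambda k$, using the hypothesis $4\Omega^*/\delta\le\Lambda$. Applying Definition~\ref{def:avoiding} to the $(\Lambda,\epsilon',\gamma,k)$-avoiding set $\smallatoms$ with the test set $U'$, there is an exceptional set $B\subset\smallatoms$ with $|B|\le\epsilon' k$ such that every $v\in V_1\setminus B$ is contained in some dense spot $D_v\in\DenseSpots$ satisfying $|V(D_v)\cap U'|\le\gamma^2 k$.

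The key degree estimates then go as follows. For $v\in V_1\setminus B$, since $D_v$ is a $(\gamma k,\gamma)$-dense spot in $G-\HugeVertices$, $v$ has at least $\gamma k$ neighbours in $V(D_v)\subset V(G)\setminus\HugeVertices$ via $\GD$. Subtracting the at most $\gamma^2 k$ neighbours lying in $U'$ and the at most $\tfrac\gamma4 k$ neighbours lying outside $V_2\cup\HugeVertices$ (by the max-degree condition of Configuration~$\mathbf{(\diamond3)}$), we obtain
\[
\deg_{\GD}\bigl(v,V_2\setminus U'\bigr)\ \ge\ \gamma k-\gamma^2k-\tfrac{\gamma}{4}k\ \ge\ \tfrac{\gamma}{2}k\ \ge\ 2v(T).
\]
For $w\in V_2\setminus U'$, we have $w\notin\shadow_{\GD}(U,\delta k/2)$, so $\deg_{\GD}(w,U)\le\tfrac\delta2 k$; together with~\eqref{confi3theothercondi} this gives $\deg_{\GD}(w,V_1\setminus(U\cup B))\ge\delta k-\tfrac\delta2 k-\epsilon' k\ge\tfrac\delta4 k\ge 2v(T)$.

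Finally I would perform the greedy embedding on $X^*:=U^*\setminus B$, which has size at least $\tfrac\delta4 k-\epsilon' k\ge v(T)$. Map $r$ to any unused vertex of $X^*\subset V_1\setminus(U\cup B)$. Process the remaining vertices of $T$ in breadth-first order; when extending from a vertex $v$ already mapped to $V_1\setminus(U\cup B)$ to a child at odd distance from $r$, use the $\tfrac\gamma2 k$ neighbours of $\phi(v)$ in $V_2\setminus U'$; when extending from an image in $V_2\setminus U'$ to a child at even distance from $r$, use the $\tfrac\delta4 k$ neighbours in $V_1\setminus(U\cup B)$. Since at each step the number of previously used images is at most $v(T)\le\delta k/1000$, a fresh candidate always exists, yielding the required $(r\hookrightarrow U^*,\Veven(T,r)\setminus\{r\}\hookrightarrow V_1\setminus U,\Vodd(T,r)\hookrightarrow V_2\setminus U)$-embedding. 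The only delicate point—and the main obstacle—is checking that the dense spots delivered by the avoiding property automatically avoid $\HugeVertices$ (so that the max-degree bound to $V(G)\setminus(V_2\cup\HugeVertices)$ really controls the leak out of $V_2$); this is guaranteed because $\DenseSpots$ lives in $G-\HugeVertices$ by Definition~\ref{sparseclassdef}\eqref{def:spaclahastobeboucla} and Definition~\ref{bclassdef}\eqref{defBC:densepairs}.
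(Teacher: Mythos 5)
Your proof is correct and follows essentially the same route as the paper: you define the same set $U'=\shadow_{\GD}(U,\delta k/2)\cup U$, apply the avoiding property of $\smallatoms$ (restricted to $V_1$) to obtain an exceptional set $B$ of size at most $\epsilon' k$, derive the two minimum-degree inequalities from~\eqref{eq:WHtc} and~\eqref{confi3theothercondi}, and then embed greedily. The only (inessential) difference is that the paper finishes by invoking Lemma~\ref{lem:randomshrubembedding} with empty sets $P_i$ rather than writing out the greedy extension explicitly; since no balanced reservation set is needed here, the two finishes coincide.
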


\begin{proof} The proof of this lemma is very similar to the one of Lemma~\ref{lem:embedStoch:DIAMOND7} (in fact, even easier). 
Set $U':=\shadow_{\GD}(U,\delta k/2)\cup U$ and note that $|U'|\le \Lambda k$ by
Fact~\ref{fact:shadowbound}. As $V_1$ is $(\Lambda,\epsilon',\gamma,k)$-avoiding, by Definition~\ref{def:avoiding} there is a set $B\subset V_1$, $|B|\le \epsilon' k$ such that for all $v\in V_1\setminus B$
there exists a dense spot $D_v\in\DenseSpots$ with 
$\deg_{D_v}(v,V(D_v)\sm U')\ge \gamma k/2$. 
By~\eqref{eq:WHtc}, we know that $\deg_{\GD}(v,V(D_v)\setminus
V_2)\leq \gamma  k/4$, and hence, 
$\deg_{\GD}\big(v,(V(D_v)\cap V_2)\setminus U'\big)\ge
\gamma k/{4}$.
Thus,
\begin{equation}\label{gr1grgr}
\mindeg_{\GD}(V_1\setminus B, V_2\setminus U')\ge
\frac{\gamma k}{4} \ge 2v(T)\;.
\end{equation}
Further, by the definition of $U'$ and by~\eqref{confi3theothercondi}, we have
\begin{equation}\label{gr2grgr}
\mindeg_{\GD}(V_2\setminus U',V_1\setminus U)\ge \frac{\delta k}2 \ge 2v(T)\;.
\end{equation}

Set $X^*:=U^*\setminus B$, and note that $|X^*|\ge \delta k/4-\epsilon' k\ge 2$. Set $X_1:=V_1\sm (U\cup B)$ and $X_2:=V_2\sm (U'\cup B)$. Inequalities~\eqref{gr1grgr} and~\eqref{gr2grgr} guarantee that we may apply Lemma~\ref{lem:randomshrubembedding} (with empty sets $P_i$) to obtain the desired embedding $\phi$.
\end{proof}

\begin{lemma}\label{lem:HE4}
Assume Setting~\ref{commonsetting}. Suppose that the sets $L',L'',\HugeVertices',\HugeVertices'',V_1,\smallatoms', V_2$ witness Configuration~$\mathbf{(\diamond4)}(0,0,\gamma/4,\delta)$.
Suppose that $U\subset V(G)$, $U^*\subset V_1$ are sets such
that $|U|\le k$ and $|U^*|\ge\frac{\delta}{4}k$.
Suppose $(T,r)$ is a rooted tree of order at most $\delta k/20$ with a fruit $r'$. Suppose further that $4\epsilon'\le \delta\le \gamma/100$, and $ \Lambda\geq 300(\frac{\Omega^*}{\delta})^3$.

Then there exists an $\left(r\hookrightarrow U^*,r'\hookrightarrow
V_1\setminus U, 
V(T)\setminus\{r,r'\}\hookrightarrow (\smallatoms'\cup V_2)\setminus
 U\right)$-embedding of~$T$ in~$G$.
\end{lemma}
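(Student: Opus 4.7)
The strategy closely follows Lemma~\ref{lem:HE3}, with the added twist that the fruit~$r'$ must be returned to $V_1\setminus U$. The plan is to embed $T$ greedily in BFS order from $r$, alternating between $V_1\cup V_2$ at even distances and $\smallatoms'$ at odd distances, treating $r$ and $r'$ as two exceptions placed into $V_1$. The four key transitions used are $V_1\leftrightarrow\smallatoms'$ (supplied by~\eqref{confi4:3} and~\eqref{confi4:4}), $V_2\to\smallatoms'$ (supplied by~\eqref{confi4othercondi}), and $\smallatoms'\to V_2$ (which is not a stated condition of Configuration~$\mathbf{(\diamond 4)}$ but is extracted from the avoiding property of $\smallatoms'\subset\smallatoms$ together with~\eqref{confi4lastcondi}).

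Concretely, set $H:=\Gcapt\cup\GD$ and
\[
U':=U\cup\bigcup_{i=1}^{3}\shadow_{H}^{(i)}(U,\delta k/4).
\]
By Fact~\ref{fact:shadowbound} together with the hypothesis $\Lambda\ge 300(\Omega^*/\delta)^3$, we have $|U'|\le\Lambda k$. Applying the avoiding property of $\smallatoms'$ to $U'$ gives a set $B\subset\smallatoms'$ of size at most $\epsilon'k$ such that each $v\in\smallatoms'\setminus B$ lies in some dense spot $D_v\in\DenseSpots$ with $|V(D_v)\cap U'|\le\gamma^2 k$; since dense spots are disjoint from $\HugeVertices$, combining $\deg_{D_v}(v,V(D_v)\setminus U')\ge\gamma k/2$ with~\eqref{confi4lastcondi} yields $\mindeg_H(\smallatoms'\setminus B,V_2\setminus U')\ge\gamma k/4$. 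The standard shadow argument applied to the three remaining degree conditions of Definition~\ref{def:CONF4} produces
\begin{align*}
\mindeg_{H}(V_2\setminus U',\smallatoms'\setminus U)&\ge 3\delta k/4,\\
\mindeg_{H}(V_1\setminus U',\smallatoms'\setminus U)&\ge 3\delta k/4,\\
\mindeg_{H}(\smallatoms'\setminus U',V_1\setminus U)&\ge 3\delta k/4,
\end{align*}
each comfortably exceeding $2v(T)\le\delta k/10$.

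Equipped with these bounds I perform the BFS embedding. The root $r$ is sent to some $\phi(r)\in U^*$. For each subsequent vertex $u$ of $T$ with already-embedded parent $p$, I pick an unused neighbour of $\phi(p)$ in the designated target set: $\smallatoms'\setminus(B\cup U')$ if $\dist_T(r,u)$ is odd, $V_2\setminus U'$ if $\dist_T(r,u)$ is even and $u\notin\{r,r'\}$, and $V_1\setminus U$ if $u=r'$ (its parent lies in $\smallatoms'\setminus U'$, so this last transition invokes the third of the displayed bounds). Descendants of $r'$ are handled by the same alternation restarted from $V_1$. Since at most $v(T)\le\delta k/20$ vertices are ever used, the minimum-degree surplus of at least $\delta k/2$ makes every step succeed.

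The main obstacle is to include enough successive levels of shadow in the definition of $U'$ so that at every step the current vertex $\phi(p)$ lies outside the appropriate shadow and hence has enough good neighbours in the next target set. Because $r'$ is a fruit we have $\dist_T(r,r')\ge 4$, and the path from $r$ to~$r'$ traverses exactly three shadow-sensitive transitions $V_1\to\smallatoms'\to V_2\to\smallatoms'$ before returning to $V_1$ at $r'$; this is precisely what forces the cubic factor in the hypothesis $\Lambda\ge 300(\Omega^*/\delta)^3$ and matches the three iterated shadows taken in the definition of $U'$. Past $r'$ the alternation repeats periodically and no further shadow-trimming is required.
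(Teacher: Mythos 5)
Your high-level strategy mirrors the paper's: iterate shadows to build a forbidden set $U'$, invoke the avoiding property of $\smallatoms'$ together with~\eqref{confi4lastcondi} to manufacture the $\smallatoms'\to V_2$ transition, and then greedily embed the tree alternating between $\smallatoms'$ and $V_2$, with $r$ and the fruit $r'$ placed in $V_1$. The paper executes this by applying the modular Lemma~\ref{lem:randomshrubembedding} twice --- once for $T-T(r,\uparrow r')$ rooted at $r$ and once for $T(r,\uparrow r')$ rooted at $r'$ --- rather than making $r'$ an in-line exception in a single BFS pass; your one-pass variant is a valid equivalent as long as the bookkeeping is carried out, and the fruit condition $\dist_T(r,r')\ge 4$ is used in the same way.

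However, there is a genuine gap in the shadow step. You define $H:=\Gcapt\cup\GD$ and take $U':=U\cup\bigcup_{i\le 3}\shadow_H^{(i)}(U,\delta k/4)$, then invoke Fact~\ref{fact:shadowbound} to bound $|U'|$ by $\Lambda k$. But Fact~\ref{fact:shadowbound} requires $\maxdeg(H)\le \Omega^* k$, and $\Gcapt$ contains every edge incident to the huge-degree set $\HugeVertices$, whose vertices have degree at least $\Omega^{**}k$ and possibly as large as $n$. In the intended application (Lemma~\ref{lem:conf2-5}) the set $U$ is the image of the partial embedding, and this image contains $\HugeVertices''$-vertices, so $U\cap\HugeVertices\ne\emptyset$. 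The double-counting behind the shadow bound is $|\shadow(U,\alpha k)|\cdot\alpha k\le\sum_{u\in U}\deg_H(u)$, and a single $\HugeVertices$-vertex in $U$ with degree close to $n$ already makes $|\shadow_H(U,\delta k/4)|$ as large as $\Theta(n/k)\cdot k$, which need not be $\le\Lambda k$; the avoiding property then cannot be applied. The paper sidesteps this by computing shadows in $\Gcapt-\HugeVertices$ and in $\GD$ (both graphs with maximum degree $\le\Omega^*k$), so the $\HugeVertices$-portion of $U$ contributes nothing. The fix to your argument is to replace $H$ by $(\Gcapt-\HugeVertices)\cup\GD$ (or by $\Gcapt-\HugeVertices$ and $\GD$ used in cascade, as in the paper); with that change your chain of min-degree estimates and the $\Lambda\ge 300(\Omega^*/\delta)^3$ count go through as you wrote them.
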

\begin{proof}\
Set 
$$U':=\tilde U\cup \shadow_{\Gcapt-\HugeVertices}(U,\delta k/4)\cup
\shadow^{(2)}_{\Gcapt-\HugeVertices}(\tilde U,\delta k/4)$$ and let $$U'':=\tilde U\cup \shadow_{\GD}(U',\delta
k/2).$$ We use Fact~\ref{fact:shadowbound} to see that $|U'|\le \frac{\delta}{4\Omega^*} \Lambda
 k$ and  $|U''|\le \Lambda
 k$.  We then use Definition~\ref{def:avoiding} and~\eqref{confi4lastcondi} to find a set $B\subset \smallatoms'$ of size at most $\epsilon' k$ such that
\begin{equation}\label{gr81grgr}
\mindeg_{\GD}(\smallatoms'\setminus B, V_2\setminus U'')\ge 2v(T)\;.
\end{equation}

 Using~\eqref{gr81grgr},  and employing~\eqref{confi4:3} and ~\eqref{confi4othercondi}, we see that we may apply Lemma~\ref{lem:randomshrubembedding} with $X^*_\PARAMETERPASSING{L}{lem:randomshrubembedding}:= U^*$,
 $X_{1,\PARAMETERPASSING{L}{lem:randomshrubembedding}}:=\smallatoms'\sm (B\cup U')$ and $X_{2,\PARAMETERPASSING{L}{lem:randomshrubembedding}}:=V_2\sm U''$ (and with empty sets $P_i$) in order to embed the tree $T-T(r,\uparrow r')$ rooted at $r$.
Then embed $T(r,\uparrow r')$, by applying  Lemma~\ref{lem:randomshrubembedding} a second time, using~\eqref{confi4:3} and~\eqref{confi4:4}.
\end{proof}

\subsection{Main embedding lemmas}\label{sec:MainEmbedding}
For this section, we need to introduce the notion of a ghost.
The idea  behind this notion is that once we used a set $U$ for the embedding of our tree, the remainder of the graph cannot be used as before. Namely, if $U$ covers part of a cluster of some matching edge, then we will not be able to fill up the partner cluster using usual regularity embedding techniques.\footnote{An example where this issue arises was given in~\cite[Figure~\ref{p1.fig:why52}]{cite:LKS-cut1}.}
The ghost of $U$ will block the unusable part of the partner cluster, and we will know that we cannot expect to fill it up.

 Given a \semiregular matching $\mathcal N$, we call an involution $\mathfrak{d}:V(\mathcal N)\rightarrow V(\mathcal N)$ with the property that $\mathfrak{d}(S)=T$ for each $(S,T)\in\mathcal N$ a \index{general}{matching involution}\emph{matching involution}.

Assume Setting~\ref{commonsetting} and fix a matching involution $\mathfrak{b}$ for $\mathcal M_A\cup\mathcal M_B$. For any set $U\subset V(G)$ we then define \index{general}{ghost}\index{mathsymbols}{*GHOST@$\ghost(U)$} 
$$\ghost(U):=U\cup \mathfrak{b}\big(U\cap V(\M_A\cup\M_B)\big)\;.$$
Clearly, we have that $|\ghost(U)|\le 2|U|$, and $|\ghost(U)\cap S|=|\ghost(U)\cap T|$ for each $(S,T)\in\M_A\cup\M_B$.

The notion of a ghost extends to other \semiregular matchings. If $\mathcal N$ is a \semiregular matching and $\mathfrak{d}$ is a matching involution for $\mathcal N$ then we write $\ghost_{\mathfrak{d}}(U):=U\cup \mathfrak{d}\big(U\cap V(\mathcal N)\big)$.

\subsubsection{Embedding in Configuration~$\mathbf{(\diamond1)}$}\label{sssec:EmbedDiamon0Diamond1}
This subsection contains an easy observation that each tree of order $k$ is contained in $G$ if the graph $G$ contains Configuration~$\mathbf{(\diamond1)}$.

\begin{lemma}\label{lem:embed:greedy}
Let   $G$ be a graph, and let $A, B\subseteq V(G)$ be such that
$\mindeg(G[A,B])\ge k/2$, and $\mindeg(A)\ge k$. Then
each tree of order $k$ is contained in $G$.
\end{lemma}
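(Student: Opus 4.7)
My plan is to embed $T$ via a two-phase breadth-first traversal. The cases $k\le 2$ will be handled directly (for $k=1$ use $A\neq\emptyset$; for $k=2$ map one endpoint to any $a\in A$ and the other to any unused neighbour, available since $\deg(a)\ge 2$), so I will assume $k\ge 3$. Let $X,Y$ be the bipartition classes of~$T$ with $|X|\le|Y|$, so $|X|\le k/2$. A degree-sum argument shows that $X$ cannot consist only of leaves of~$T$ (otherwise $|X|=k-1>k/2$), so I will root~$T$ at a non-leaf $r\in X$. Then $\Veven(T,r)=X$ has size at most $k/2$, and the set of odd-depth internal vertices has size at most $|\Veven(T,r)|-1\le k/2-1$, because each such vertex contributes a distinct even-depth child.

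In Phase~1 I will embed the subtree $T^-$ spanned by the internal vertices of~$T$ (which is connected and contains~$r$) by BFS from~$r$, mapping even-depth vertices into~$A$ and odd-depth vertices into~$B$. At every step the parent lies on the opposite side of the bipartition, so the hypothesis $\mindeg(G[A,B])\ge k/2$ supplies at least $k/2$ candidates in the required class, while the two cardinality bounds above guarantee that strictly fewer than $k/2$ vertices of that class have been used so far. Hence an unused candidate exists and each step succeeds.

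In Phase~2 I will attach the leaves of~$T$, processing odd-depth leaves first and even-depth leaves second. An odd-depth leaf has its parent in~$A$ with degree at least $k$ in $G$, so an unused neighbour always exists (fewer than $k$ vertices are used at any time); I will place such a leaf preferentially in $V(G)\sm A$, falling back to~$A$ only when every $(V(G)\sm A)$-neighbour of the parent is already used, which requires at least $k/2$ vertices outside~$A$ to be occupied. A short accounting then yields that the number $l_A$ of odd-depth leaves eventually landing in~$A$ satisfies $l_A\le\max\{0,|\Vodd(T,r)|-k/2\}$, whence $|\Veven(T,r)|+l_A\le k/2$. Consequently, when placing any even-depth leaf (whose parent lies in~$B$), the $A$-usage is at most $k/2-1$, and the $\ge k/2$ $A$-neighbours of that parent still contain an unused vertex.

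The main obstacle will be exactly this Phase~2 bookkeeping: showing, under the ``prefer $V(G)\sm A$'' rule, that the bound $l_A\le\max\{0,|\Vodd(T,r)|-k/2\}$ genuinely holds. This is the step that truly needs $\mindeg(A)\ge k$ (rather than merely the bipartite degree condition), ensuring that the fallback to~$A$ never fails for lack of unused neighbours; all remaining steps reduce to standard greedy arguments from $\mindeg(G[A,B])\ge k/2$.
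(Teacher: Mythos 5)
Your argument is correct, and it reaches the same conclusion via a genuinely different decomposition of~$T$. The paper also roots so that the smaller colour class goes to~$A$, but then invokes Fact~\ref{fact:treeshavemanyleaves}: the \emph{larger} class~$X$ contains at least $|X|-|Y|+1$ leaves, so after removing only those leaves~$W$ one has $|X\setminus W|\le |Y|-1<k/2$. Hence $T-W$ (which still contains all the smaller-class leaves) fits into $A\cup B$ in a single greedy pass driven purely by $\mindeg(G[A,B])\ge k/2$, and $W$ is attached at the end via $\mindeg(A)\ge k$, with no preference rule and no balance accounting. You instead strip \emph{all} leaves, embed only the internal subtree $T^-$ (giving the same cardinality bounds $i_X\le|X|\le k/2$ and $i_Y\le|X|-1$, but derived by the ``distinct even-depth child'' count rather than via the leaf-count fact), and then must control where the odd-depth leaves land, since they are in the larger class and could occupy $A$-space needed by the even-depth leaves. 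Your ``prefer $V(G)\setminus A$'' rule and the bound $l_A\le\max\{0,|\Vodd(T,r)|-k/2\}$ do close this gap: if a fallback into~$A$ ever occurs, then at that moment at least $k/2$ vertices outside~$A$ are occupied, all of them being internal odd-depth images or previously placed odd leaves, which forces $i_Y+l_{\bar A}\ge k/2$ and hence $l_A\le|Y|-k/2$, exactly the inequality you need so that $|X|+l_A\le k/2$ and the even-depth leaves can still be placed into $A$. So both proofs work and use $\mindeg(A)\ge k$ to absorb the leaves of the larger class; the paper's choice of removing only those leaves makes the greedy pass self-evidently safe and avoids all of your Phase~2 bookkeeping, at the (small) cost of citing the leaf-count lemma.
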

\begin{proof}
Let $T\in\treeclass{k}$ have colour classes $X$ and $Y$, with $|X|\ge k/2 \ge
|Y|$.  By Fact~\ref{fact:treeshavemanyleaves}, for the set $W$ of those leaves of $T$ that lie in $X$, we have $|X\setminus W|\le k/2$. We embed
$T-W$ greedily in $G$, mapping $Y$ to $A$ and $X\setminus W$ to $B$. We then embed $W$ using the fact that $\mindeg(A)\ge k$.
\end{proof}

\subsubsection{Embedding in Configurations $\mathbf{(\diamond2)}$--$\mathbf{(\diamond5)}$}\label{sssec:EmbedMoreComplex}
In this section we show how to embed $T_\PARAMETERPASSING{T}{thm:main}$ in the presence of configurations $\mathbf{(\diamond2)}$--$\mathbf{(\diamond5)}$. As outlined in Section~\ref{ssec:EmbedOverview25} our main embedding lemma, Lemma~\ref{lem:conf2-5}, builds on Lemma~\ref{lem:blueShrubSuspend} which handles Stage~1 of the embedding, and Lemma~\ref{lem:embedC'endshrub} which handles Stage~2.

\begin{lemma}\label{lem:embedC'endshrub}
Assume we are in Setting~\ref{commonsetting}.
Suppose $L'',L',\HugeVertices'$ witness Preconfiguration
$\mathbf{(\clubsuit)}(\frac{10^5\Omega^*}{\eta})$.
 Let $(T,r)$ be a rooted tree of order at most $\gamma^2\nu k/6$. 
 Let $U\subset V(G)$ with $|U|+v(T)\le k$, and let $v\in \HugeVertices'\setminus U$. 
 Then there exists an $(r\hookrightarrow v,V(T)\hookrightarrow V(G)\setminus U)$-embedding of $(T,r)$.
\end{lemma}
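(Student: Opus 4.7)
The plan is to construct $\phi$ greedily in breadth-first order, starting with $\phi(r):=v$ and using only edges of $\Gcapt$ throughout. Two facts are exploited repeatedly: first, $\mindeg_{\Gcapt}(\HugeVertices',L'')\ge \frac{10^5\Omega^*}{\eta}k$ by~\eqref{eq:clubsuitCOND2}; and second, every vertex of $L''$ has captured degree at least $(1+\tfrac{9}{10}\eta)k$, since $L''\subseteq \largevertices{\tfrac{9}{10}\eta}{k}{\Gcapt}$. The total forbidden set (namely $U$ together with previously placed images of $T$) has size at most $|U|+v(T)\le k$ throughout, which is negligible compared to either captured-degree bound; this gives the necessary slack at every step.

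Concretely, I would first set $\phi(r):=v$ and choose distinct fresh images for the children of $r$ from $L''\cap N_{\Gcapt}(v)\setminus U$; by~\eqref{eq:clubsuitCOND2} at least $\tfrac{10^5\Omega^*}{\eta}k-k$ such choices are available. We maintain the invariant that the image of every non-leaf vertex of $T$ lies in $\HugeVertices'\cup L''$. For any later vertex $y$ with $\phi(y)\in L''$, conditions~\eqref{eq:clubsuitCOND1} and~\eqref{eq:clubsuitCOND3} together show that at most $\tfrac{\eta k}{100}+\tfrac{\eta k}{100}=\tfrac{\eta k}{50}$ of $\phi(y)$'s captured neighbours lie outside the "good" region $\HugeVertices'\cup L'\cup(\smallvertices{\tfrac{9}{10}\eta}{k}{\Gcapt}\setminus\HugeVertices)$. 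Subtracting the at most $k$ forbidden vertices, this still leaves many captured neighbours of $\phi(y)$ in the good region; internal children of $y$ are placed at such neighbours chosen so as to lie in $\HugeVertices'\cup L''$ (restoring the invariant), whereas leaves of the subtree may go to any remaining fresh captured neighbour.

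The main obstacle is that Preconfiguration~$\mathbf{(\clubsuit)}$ only bounds the captured degree of $L''$-vertices into several complementary "bad" regions, but does not directly provide a lower bound on $\deg_{\Gcapt}(w,\HugeVertices'\cup L'')$ for $w\in L''$. I would handle this by permitting short detours through $L'\setminus L''$, where only the weaker bound~\eqref{eq:clubsuitCOND1} on $\maxdeg_{\Gcapt}(L',\HugeVertices\setminus\HugeVertices')$ is needed to maintain sufficient captured degree for the next extension. Because $v(T)\le \gamma^2\nu k/6$ is orders of magnitude smaller than the captured-degree lower bounds $(1+\tfrac{9}{10}\eta)k$ and $\tfrac{10^5\Omega^*}{\eta}k$, the slack is more than enough to absorb any such detours, and the remaining bookkeeping—updating the forbidden set and ensuring that distinct siblings receive distinct images—is routine.
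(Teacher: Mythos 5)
Your greedy BFS strategy does not close the gap you yourself identify; the gap is in fact fatal. You want to maintain the invariant that non-leaf tree vertices are mapped into $\HugeVertices'\cup L''$, but Preconfiguration~$\mathbf{(\clubsuit)}$ provides no lower bound on $\deg_{\Gcapt}(w,\HugeVertices'\cup L'')$ for $w\in L''$. Conditions~\eqref{eq:clubsuitCOND1} and~\eqref{eq:clubsuitCOND3} only say that $w$ sends few captured edges to $\HugeVertices\setminus\HugeVertices'$ and to $\largevertices{\frac9{10}\eta}{k}{\Gcapt}\setminus(\HugeVertices\cup L')$; the bulk of $w$'s captured degree can land entirely in $\smallvertices{\frac9{10}\eta}{k}{\Gcapt}\setminus\HugeVertices$, where there is no lower bound on captured degree at all, or in $L'\setminus L''$. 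The ``detour through $L'\setminus L''$'' you propose does not help: \eqref{eq:clubsuitCOND3} constrains only $L''$, so a vertex in $L'\setminus L''$ may send its entire captured degree to $\largevertices{\frac9{10}\eta}{k}{\Gcapt}\setminus(\HugeVertices\cup L')$ and to low-captured-degree vertices, and you are stuck again after a single step.

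The paper's proof avoids per-vertex greedy continuation altogether. It proceeds by induction on $v(T)$ and observes the decomposition $L''\subset V(\Gexp)\cup\smallatoms\cup L_{\HugeVertices}\cup L_{\smallatoms}\cup L_{\clusters}$, where $L_{\HugeVertices},L_{\smallatoms},L_{\clusters}$ are shadow sets of $L''$ relative to $\HugeVertices$, $\smallatoms$ and $V(\Gblack)$. Because $\deg_G(v,L'')$ is huge by~\eqref{eq:clubsuitCOND2}, $v$ must have high degree into one of these five sets, which gives a five-way case split. In Cases~I--III and~V the \emph{entire} remaining subtree hanging off $v$ (or off a fixed first layer) is embedded at once using a specialized lemma --- Lemma~\ref{lem:embed:greyFOREST} for the nowhere-dense graph $\Gexp$, Lemma~\ref{lem:embed:avoidingFOREST} for the avoiding set $\smallatoms$, Lemma~\ref{lem:embed:regular} for regular pairs of $\Gblack$ --- none of which requires a per-vertex degree bound along the path, as each exploits a global structural property of the respective object. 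Only Case~IV returns to $\HugeVertices'$ and invokes the induction hypothesis. A greedy argument cannot substitute for this mechanism, because at each step it would need degree lower bounds that the hypotheses simply do not supply.
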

\begin{proof}
We proceed by induction on the order of $T$. The base case
$v(T)\leq 2$ obviously holds. Let us assume Lemma~\ref{lem:embedC'endshrub} is true for all trees $T'$ with $v(T')<v(T)$.

Let $U_1:=\shadow_{\Gcapt}(U-\HugeVertices,\eta k/200)$, and $U_2:=\bigcup \{C\in \clusters\::\: |C\cap U|\ge \frac12|C|\}$. We have $|U_1|\le \frac{200\Omega^*}{\eta}k$ by Fact~\ref{fact:shadowbound}, and $|U_2|\le 2|U|$. Set 
\begin{align*}
L_{\smallatoms} &:=\BUG{L'}\cap\shadow_{\Gcapt}\left(\smallatoms,\frac{\eta k}{50}\right),\\
L_{\HugeVertices} &:=\BUG{L'}\cap\shadow_{\Gcapt}\left(\HugeVertices, |U\cap\HugeVertices|+\frac{\eta k}{50}\right),\text{ and }\\
L_{\clusters} &:=\BUG{L'}\cap\shadow_{\Gblack}\left(V({\Gblack}),(1+\frac{\eta}{50})k-|U\cap\HugeVertices|\right)\;.
\end{align*}
Observe that $L_{\clusters}\subseteq \bigcup \clusters$ and that since $\BUG{L'}\subseteq 
\largevertices{\frac9{10}\eta}{k}{\Gcapt}\setminus\HugeVertices$, we have
$$\BUG{L'}\subset V(\Gexp)\cup\smallatoms\cup L_{\HugeVertices}\cup L_{\smallatoms}\cup L_{\clusters}\;.$$
As by~\eqref{eq:clubsuitCOND2}, we have $\deg_G(v,\BUG{L'})\ge \frac{10^5\Omega^*k}{\eta}>5(|U\cup U_1 \cup U_2|+v(T)+\eta k)$,
 one of the following five cases must occur.

\medskip
\noindent{\underline{\sl Case I: $\deg_G(v,V(\Gexp)\setminus U)>v(T)+\eta k$.}}
Lemma~\ref{lem:embed:greyFOREST} gives an embedding of the forest $T-r$ (whose components are rooted at neighbours of $r$). The input sets/parameters of Lemma~\ref{lem:embed:greyFOREST} are $Q_\PARAMETERPASSING{L}{lem:embed:greyFOREST}:=1$, 
$\zeta_\PARAMETERPASSING{L}{lem:embed:greyFOREST}:=12\sqrt\gamma$, $U^*_\PARAMETERPASSING{L}{lem:embed:greyFOREST}:=(\neighbour_G(v)\cap V(\Gexp))\setminus U$, $U_\PARAMETERPASSING{L}{lem:embed:greyFOREST}:=U$, and $V_1=V_2:=V(\Gexp)$.

\smallskip
\noindent{\underline{\sl Case II: $\deg_G(v,\smallatoms\setminus U)>v(T)+\eta k$.}} Lemma~\ref{lem:embed:avoidingFOREST} gives an embedding of the forest $T-r$ (whose components are rooted at neighbours of $r$). The input sets/parameters of Lemma~\ref{lem:embed:avoidingFOREST} are $U^*_\PARAMETERPASSING{L}{lem:embed:avoidingFOREST}:=(\neighbour_G(v)\cap \smallatoms)\setminus U$, $U_\PARAMETERPASSING{L}{lem:embed:avoidingFOREST}:=U$ and $\eps_\PARAMETERPASSING{L}{lem:embed:avoidingFOREST}:=\eps'\leq\eta$.
Here, and below, we implicitly assume parameters of the same name to be the same, i.e.~$\gamma_\PARAMETERPASSING{L}{lem:embed:avoidingFOREST}:=\gamma$.

\smallskip
\noindent{\underline{\sl Case III: $\deg_G(v,L_{\smallatoms}\setminus (U\cup U_1))>v(T)+\eta k$.}} We only outline the strategy. Embed the children of~$r$ in $L_{\smallatoms}\setminus (U\cup U_1)$ using a map $\phi:\children_T(r)\rightarrow L_{\smallatoms}\setminus (U\cup U_1)$. By definition of $L_{\smallatoms}$, and $U_1$, we have $\deg_{\Gcapt}(\phi(w),\smallatoms\setminus U)> \frac{\eta k}{100}$ for each $w\in \children_T(r)$. Now, for every $w\in \children_T(r)$ we can proceed as in Case~II to extend this embedding to the rooted tree $\big(T(r,\uparrow w) ,w\big)$. That is, Case~III is ``Case~II with an extra step in the beginning''.


\smallskip
\noindent{\underline{\sl Case IV: $\deg_G(v,L_{\HugeVertices}\setminus U)>v(T)+\eta k$.}} 
We embed the children $\children_T(r)$ of $r$ in distinct vertices of $L_{\HugeVertices}\setminus U$. This is possible by the assumption of Case~IV. 

Now,~\eqref{eq:clubsuitCOND1} implies that $\mindeg_{\Gcapt}(L_{\HugeVertices},\HugeVertices')\ge |U\cap \HugeVertices|+ \frac{\eta k}{100}$. Consequently, $\mindeg_{\Gcapt}(L_{\HugeVertices},\HugeVertices'\setminus U)\ge \frac{\eta k}{100}$. Therefore, for each $w\in\children_T(r)$ embedded in  $L_{\HugeVertices}\setminus U$ we can find an embedding of $\children_T(w)$ in $\HugeVertices'\setminus U$ such that the images of grandchildren of $r$ are disjoint. We fix such an embedding. We can now apply induction. More specifically, for each grandchild $u$ of $r$ we embed the rooted tree $\big(T(r,\uparrow u), u\big)$ using Lemma~\ref{lem:embedC'endshrub} (employing induction) using the updated set $U$, to which the images of the newly embedded vertices were added.

\smallskip
\noindent{\underline{\sl Case V: $\deg_G(v,L_{\clusters}\setminus (U\cup  U_1\cup U_2))\ge v(T)$.}} Let $u_1,\ldots,u_\ell$ be the children of $r$. Let us consider arbitrary distinct neighbours $x_1,\ldots,x_\ell\in L_{\clusters}\setminus (U\cup U_1\cup U_2)$ of $v$. Let $T_i:=T(r,\uparrow u_i)$. We sequentially embed the rooted trees $(T_i,u_i)$, $i=1,\ldots,\ell$, writing $\phi$ for the embedding. In step $i$, consider the set $W_i:=\left(U\cup \bigcup_{j<i} \phi(T_j)\right)\setminus \HugeVertices$. Let $D_i\in\clusters$ be the cluster containing $x_i$. By the definitions of $L_{\clusters}$ and of $U_1$, $$\deg_{\Gblack}(x_i,V(\Gblack)\setminus W_i)\ge \frac{\eta k}{50}-\frac{\eta k}{200}\ge \frac{\eta k}{100}\;.$$ 
Fact~\ref{fact:clustersSeenByAvertex} yields a cluster $C_i\in \clusters$ for which $$\deg_{\Gblack}(x_i,C_i\setminus W_i)\ge \frac{\eta}{100}\cdot \frac{\gamma\clustersize}{2(\Omega^*)^2}>\frac{\gamma^2\clustersize}{2}+v(T)>\frac{12\epsilon'\clustersize}{\gamma^2}+v(T)\;.$$ In particular there 
is at least one edge from $E(\Gblack)$ between $C_i$ and $D_i$, and therefore, $(C_i,D_i)$ forms an $\epsilon'$-regular pair of density at least $\gamma^2$ in $\Gblack$. 
Map $u_i$ to $x_i$ and let $F_1,\ldots,F_m$ be the components of the forest $T_i-u_i$.
We now  sequentially embed the trees $F_j$ in the pair $(D_i,C_i)$ using Lemma~\ref{lem:embed:regular}, with  
$X_\PARAMETERPASSING{L}{lem:embed:regular}:=C_i\setminus (W_i\cup \bigcup_{q<j}\phi(F_q))$, $X^*_\PARAMETERPASSING{L}{lem:embed:regular}:=\neighbour_{\Gblack}(x_i,X_\PARAMETERPASSING{L}{lem:embed:regular})$, $Y_\PARAMETERPASSING{L}{lem:embed:regular}:=D_i\setminus (W_i\cup \{x_i\}\cup \bigcup_{q<j}\phi(F_q))$, $\eps_\PARAMETERPASSING{L}{lem:embed:regular}:=\eps'$,
and $\beta_\PARAMETERPASSING{L}{lem:embed:regular}:=\gamma^2/3$.
\end{proof}

We are now ready for the lemma that will handle Stage~1 in configurations $\mathbf{(\diamond2)}$--$\mathbf{(\diamond5)}$.

\begin{lemma}\label{lem:blueShrubSuspend}
Assume we are in Setting~\ref{commonsetting}, with $L'',L', \HugeVertices'$ witnessing
Preconfiguration $\mathbf{(\clubsuit)}(\Omega^\dagger)$ in $G$.
Let $U\subset V(G)\setminus \HugeVertices$
and let $(T,r)$ be  a rooted tree
with $v(T)\le k/2$ and $|U|+v(T)\le k$.
Suppose that each component of $T-r$ has order at most $\tau k$.
Let $x\in (L''\cap \YB)\setminus\bigcup_{i=0}^2\shadow_{\Gcapt}^{(i)}(\ghost(U),
\eta k/1000 )$.

Then there is a subtree $T'$ of $T$ with $r\in V(T')$
which has an $(r\hookrightarrow x, V(T')\setminus\{r\}\hookrightarrow
V(G)\setminus \HugeVertices)$-embedding $\phi$.
Further, the components of $T-T'$ can be partitioned into two (possibly empty) families $\mathcal C_1$ and $\mathcal C_2$, such that
 the
following two assertions
hold.
\begin{enumerate}[(a)]
 \item 
 \label{eq:boty}
If $\mathcal C_1\neq\emptyset$, then $\mindeg_{\Gcapt}(\phi(\parent (V(\bigcup
\mathcal C_1))),\HugeVertices')> k+\frac{\eta k}{100} -v(T')$, 
\item
\label{eq:botyzwei} 
 $\parent (V(\bigcup \mathcal C_2))\subseteq \{r\}$, and
$\deg_{\Gcapt}(x,\HugeVertices')>\frac k2+\frac{\eta k}{100}
-v(T'\cup\bigcup \mathcal C_1)$.
\end{enumerate}
\end{lemma}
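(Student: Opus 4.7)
The plan is to construct $T'$ and $\phi$ by a greedy BFS: start with $\phi(r):=x$, $T'=\{r\}$, and $\mathcal{C}_1=\mathcal{C}_2=\emptyset$, and process the vertices of $T'$ in BFS order. At each $v'\in T'$ and each child $c$ of $v'$ in $T$, we either suspend $c$ (adding the $T$-subtree below $c$ to $\mathcal{C}_2$ if $v'=r$ and to $\mathcal{C}_1$ otherwise) or embed it. The rule is to suspend as soon as the threshold in assertion (b) (when $v'=r$) or (a) (when $v'\neq r$) is already satisfied at $u:=\phi(v')$, with the right-hand side evaluated using the \emph{current} values of $v(T')$ and $v(T'\cup\bigcup\mathcal{C}_1)$; otherwise we embed $c$ onto a carefully chosen $\Gcapt$-neighbour of $u$. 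Since $v(T')$ and $v(T'\cup\bigcup\mathcal{C}_1)$ can only grow, the thresholds can only become weaker, so (a) and (b) hold for the final partition by construction.

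The crux is showing that an embedding step is always feasible. I would restrict the image of every non-root vertex to the safe set $\mathcal{S}:=L'\setminus(U\cup\ghost(U)\cup\phi(T'))$, so that each subsequent image inherits from $L'\subseteq\largevertices{\frac{9}{10}\eta}{k}{\Gcapt}$ the lower bound $\deg_{\Gcapt}(\cdot)\ge(1+\frac{9\eta}{10})k$. When a non-root $v'$ is processed and the (a) threshold is not yet met at $u\in L'$, Clubsuit property~\eqref{eq:clubsuitCOND1} yields $\deg_{\Gcapt}(u,V(G)\setminus\HugeVertices)\ge\frac{8\eta}{10}k+v(T')$. Clubsuit property~\eqref{eq:clubsuitCOND3} then confines all but at most $\frac{\eta k}{100}$ of these non-$\HugeVertices$ neighbours that also lie in $\largevertices{\frac{9}{10}\eta}{k}{\Gcapt}$ into $L'$ (this is where we additionally arrange, during the first two BFS levels, that images lie in $L''$). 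Finally, Fact~\ref{fact:shadowbound} applied to $\Gcapt-\HugeVertices$, which has maximum degree at most $\Omega^{*}k$, bounds $|\shadow^{(1)}_{\Gcapt}(\ghost(U),\frac{\eta k}{1000})|$ by $\frac{2000\Omega^{*}}{\eta}|\ghost(U)|$; combined with $|U|+|\phi(T')|\le k$ this leaves a positive supply of admissible targets in $\mathcal{S}$.

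At the root $r$ the analogous argument uses $x\in\YB$, giving $\deg_{\Gcapt}(x,V_{+}\setminus L_\#)\ge(1+\frac{\eta}{10})\frac{k}{2}$, together with Clubsuit~\eqref{eq:clubsuitCOND1} and the failure of the (b) threshold to produce at least $\frac{3\eta k}{100}+v(T'\cup\bigcup\mathcal{C}_1)$ $\Gcapt$-neighbours of $x$ inside $\Vgood$. The hypothesis $x\notin\shadow^{(1)}_{\Gcapt}(\ghost(U),\frac{\eta k}{1000})$ costs us at most $\frac{\eta k}{1000}$ of these candidates, and the already-embedded vertices cost at most $v(T')-1\le k/2$ more; since $|U|\le k$ is already accounted for, the supply is positive. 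The stronger hypothesis $x\notin\shadow^{(2)}_{\Gcapt}(\ghost(U),\frac{\eta k}{1000})$ then lets us insist that the image $u_1$ of a child of $r$ itself satisfies $\deg_{\Gcapt}(u_1,\ghost(U))\le\frac{\eta k}{1000}$, bootstrapping the safe-set invariant into the first interior BFS level.

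The main obstacle is verifying feasibility uniformly at every depth of the BFS, since the iterated shadow hypothesis on $x$ controls the first two levels only directly. At deeper levels the argument must rely on the crude global size bound on $\shadow^{(1)}_{\Gcapt}(\ghost(U),\frac{\eta k}{1000})$ from Fact~\ref{fact:shadowbound} combined with the large $\Gcapt$-degree guaranteed by $L'$-membership; what makes this work is that the totality of exceptional sets (already-used vertices, $U$, $\ghost(U)$, $\HugeVertices$, and vertices in $\largevertices{\frac{9}{10}\eta}{k}{\Gcapt}\setminus(\HugeVertices\cup L')$) is controlled by $v(T)\le\frac{k}{2}$ together with the various Clubsuit allowances, all of which together stay strictly below the $\Gcapt$-degree of any $u\in\mathcal{S}$ as long as the suspension threshold at $u$ is not yet met.
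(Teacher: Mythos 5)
The greedy BFS cannot be sustained inside $L'$, and this breaks the whole scheme. For a vertex $u\in L'$ not yet meeting the suspension threshold, your embedding step needs many $\Gcapt$-neighbours of $u$ inside $L'$, but the clubsuit conditions give no such control. Clubsuit~\eqref{eq:clubsuitCOND1} bounds $\deg_{\Gcapt}(u,\HugeVertices\setminus\HugeVertices')$, and clubsuit~\eqref{eq:clubsuitCOND3} bounds $\deg_{\Gcapt}\big(u,\largevertices{\frac9{10}\eta}{k}{\Gcapt}\setminus(\HugeVertices\cup L')\big)$ --- and only for $u\in L''$, not for the general $u\in L'$ you encounter beyond your first levels --- but nothing at all bounds $\deg_{\Gcapt}\big(u,\smallvertices{\frac9{10}\eta}{k}{\Gcapt}\big)$. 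The entire non-$\HugeVertices$ part of $u$'s degree may therefore land on vertices of low captured degree outside $L'$, and the process is stuck: there is nowhere to extend $\phi$, and the suspension threshold (which is about $\HugeVertices'$-degree, not about shortages elsewhere) is not triggered. Your own degree count at $x$ already shows the problem: the supply you produce lies in $\Vgood=V_+\setminus(\HugeVertices\cup L_\#)$, which is built from $V(\Gexp)$, $\smallatoms$, $V(\M_A\cup\M_B)$, and parts of $\largevertices{\eta}{k}{G}$, and is not a subset of $L'$; even the first step cannot be restricted to $L'$. The global shadow bound from Fact~\ref{fact:shadowbound} is orthogonal to this issue: it controls how many vertices lie in the shadow, not what fraction of $u$'s neighbourhood lies in $L'$.

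The paper's proof sidesteps this by never running a BFS inside $L'$. It first decides $\mathcal C_2$ by a total-order count against $\deg_{\Gcapt}(x,\Vgood)$ (essentially the computation in your root paragraph, pushed to the end), and then places each remaining component of $T-r$ \emph{entirely} inside one structured piece of the sparse decomposition whose embedding properties do propagate: the regular pairs of the matching $\M_A\cup\M_B$ (via Lemma~\ref{lem:fillingCD}), the nowhere-dense graph $\Gexp$ (via Lemma~\ref{lem:embed:greyFOREST}), the avoiding set $\smallatoms$ (via Lemma~\ref{lem:embed:avoidingFOREST}), or --- after exactly one step into an $L'$-vertex $y$ --- one of $\smallatoms$, a $\Gblack$-regular pair, or suspension into $\mathcal C_1$. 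An exhaustive degree accounting at $x$ and at $y$ shows that at each stage one of these destinations receives enough degree. The missing idea is this component-level dispatch into sets with exploitable structure (regularity, expansion, avoiding); degree bookkeeping inside $L'$ alone cannot carry the embedding, because $L'$ is not closed under taking $\Gcapt$-neighbourhoods in any usable sense.
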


\begin{proof}
Let $\mathcal C$ be the family of all components of $T-r$. We start by defining
$\mathcal C_2$. Then we have to distribute  $T-\bigcup\mathcal C_2$ between
$T'$ and $\mathcal C_1$. First, we find a set $\mathcal C_M\subseteq \mathcal
C\sm\mathcal C_2$ which fits into the matching $\mathcal M_A\cup \mathcal M_B$
(and thus will form a part of $T'$). Then, we consider the remaining components of
$\mathcal C\sm\mathcal C_2$: some of these will be embedded entirely,  of others
we only embed the root, and leave the rest for $\mathcal C_1$. Everything embedded
will become a part of $T'$.

Throughout the proof we write $\shadow$ for $\shadow_{\Gcapt}$.

\medskip

Set $\overline\Vgood:=\Vgood\setminus \shadow(\ghost(U),\frac{\eta k}{1000})$, and choose
 $\tilde{\mathcal C}\subseteq\mathcal C$ such that 
\begin{equation}\label{eq:HNY}
\deg_{\Gcapt}(x, \overline\Vgood)-\frac{\eta k}{30}
\ < \ 
\sum_{S\in \tilde{\mathcal C}} v(S)
\ \leq \
\max\left\{0,\deg_{\Gcapt}\left(x, \overline\Vgood\right)-\frac{\eta k}{40}\right\}.
\end{equation}
Set $\mathcal C_2:=\mathcal C\sm \tilde{\mathcal C}$.
Note that this choice clearly satisfies
the first part of~\eqref{eq:botyzwei}. Let us now verify the second part of~\eqref{eq:botyzwei}. For this, we calculate
\begin{align*}
\deg_{\Gcapt}(x, \HugeVertices') & \geq
\deg_{\Gcapt}(x,V_+\setminus L_\#)-\deg_{\Gcapt}\left(x,\shadow\left(\ghost(U),\frac{\eta k}{1000}\right)\right)
\\
&~~~~-\deg_{\Gcapt}\left(x,V_+\setminus\left(L_\#\cup \shadow\left(\ghost(U),\frac{\eta k}{1000}\right)\cup\HugeVertices\right)\right)\\
&~~~~-\deg_{\Gcapt}(x,\HugeVertices\setminus\HugeVertices')\\
\JUSTIFY{by~\eqref{eq:defYB}, $x\not\in\shadow^{(2)}(\ghost(U),\frac{\eta k}{1000})$,~\eqref{eq:HNY}, \eqref{eq:clubsuitCOND1}}
&\ge \left(\frac k2 +
\frac{\eta k}{20}\right)-\frac{\eta k}{1000}-\left(
\sum_{S\in \tilde{\mathcal C}} v(S)+\frac{\eta
k}{30}\right)-\frac{\eta k}{100}\\
& > \ \frac k2 - \sum_{S\in \tilde{\mathcal C}} v(S) + \frac{\eta k}{20}
\\
&\geq \frac k2
- v\left(T'\cup\bigcup\mathcal C_1\right)+ \frac{\eta k}{100},
\end{align*}
as needed for~\eqref{eq:botyzwei}. 

\smallskip

Now, set 
\begin{equation}\label{eq:trickyM}
\mathcal M :=\big\{(X_1,X_2)\in \mathcal
M_A\cup \mathcal M_B\::\: \deg_{\GD}(x,(X_1\cup X_2)\setminus\smallatoms)>
0\big\}\;.
\end{equation}
\begin{claim}\label{cl:Megdes}
We have $|V(\mathcal M)|\le \frac{4(\Omega^*)^2}{\gamma^2}k$.
\end{claim} 
\begin{proof}[Proof of Claim~\ref{cl:Megdes}]
Indeed, let $(X_1,X_2)\in\M$, i.e.~$(X_1,X_2)\in\M_A\cup\M_B$ with $\deg_{\GD}(x,(X_1\cup
X_2)\setminus\smallatoms)> 0$. Then, using Property~\ref{commonsetting3} of
Setting~\ref{commonsetting}, we see that there exists a cluster $C_{(X_1,X_2)}\in\clusters$
such that $\deg_{\GD}(x,C_{(X_1,X_2)})>0$, and either $X_1\subset
C_{(X_1,X_2)}$ or $X_2\subset C_{(X_1,X_2)}$. In particular, there exists
a dense spot $(A_{(X_1,X_2)},B_{(X_1,X_2)};F_{(X_1,X_2)})\in\DenseSpots$ such
that $x\in A_{(X_1,X_2)}$, and $X_1\subset B_{(X_1,X_2)}$ or $X_2\subset
B_{(X_1,X_2)}$. By Fact~\ref{fact:boundedlymanyspots}, there are at most
$\frac{\Omega^*}{\gamma}$ such dense spots, let $Z$ denote the union of all vertices contained in these spots. 
Fact~\ref{fact:sizedensespot} implies that $|Z|\le \frac{2(\Omega^*)^2}{\gamma^2}k$.
Thus $|V(\M)|\le 2 |V(\M)\cap Z|\le 2 |Z|\le
\frac{4(\Omega^*)^2}{\gamma^2}k$. 
\end{proof}
First we shall embed as
many components from $\tilde{\mathcal C}$ in $\mathcal M$ as possible. To this
end, consider an inclusion-maximal subset $\mathcal C_M$ of $\tilde{\mathcal
C}$ with
\begin{equation}\label{eq:trickySum}
\sum_{S\in\mathcal C_M}v(S)\leq \deg_{\Gcapt}(x,V(\mathcal
M))-\frac{\eta k}{1000}\;.
\end{equation}

We aim to utilize the degree of $x$ to $V(\M)$
to embed $\mathcal C_M$ in $V(\M)$, using the regularity method.
 
\begin{subremark}
This remark (which may as well be skipped at a first reading) is aimed at those readers who are wondering about a seeming inconsistency of the defining
formulas~\eqref{eq:trickyM} for $\M$, and~\eqref{eq:trickySum} for $\mathcal
C_M$. 
That is,~\eqref{eq:trickyM} involves the degree in $\GD$ and
excludes the set $\smallatoms$, while~\eqref{eq:trickySum} involves the degree in
$\Gcapt$. The setting in~\eqref{eq:trickyM} was chosen so that it allows us to
control the size of $\M$ in Claim~\ref{cl:Megdes}, crucially relying on Property~\ref{commonsetting3} of
Setting~\ref{commonsetting}. Such a
control is necessary to make the regularity method work. Indeed,  in each regular
pair there may be a small number of atypical vertices\footnote{The issue of atypicality itself could be avoided by preprocessing each pair $(S,T)$ of $\M_A\cup\M_B$
and making it super-regular. However this is not possible for atypicality with
respect to a given (but unknown in advance) subpair $(S',T')$.}, and we must
avoid these vertices when embedding the components by the regularity method.
 Thus without the control on $|\M|$ it might happen that the degree of $x$  
 is unusable because $x$ sees very small numbers of atypical vertices in an enormous number 
 of sets corresponding to $\M$-vertices. On the other hand, the edges $x$ sends 
 to $\smallatoms$ can be utilized by other techniques in later stages.
Once we have defined $\M$ we want to use the full degree to $V(\M)$ to ensure we
can embed the shrubs as balanced as possible into the $\M$-edges. This is necessary as otherwise part of the degree of $x$ might be unusable for the embedding, e.g.~because it might go to $\M$-vertices whose partners are already full.
\end{subremark}

 For each  $(C,D)\in
\mathcal M$ we choose a family $\mathcal C_{CD}\subseteq \mathcal C_M$ maximal such that 
\begin{equation}
\sum_{S\in \mathcal C_{CD}}v(S)\le
\deg_{\Gcapt}\left(x,(C\cup  D)\setminus \ghost
(U)\right)- \left(\frac\gamma{\Omega^*}\right)^3 |C|\;,\label{eq:hezkamistnost}
\end{equation}
and further, we
require  $\mathcal C_{CD}$ to be disjoint from families
$\mathcal C_{C'D'}$ defined in previous
steps. 
We claim that $\{\mathcal C_{CD}\}_{(C,D)\in\M}$
forms a partition of $\mathcal C_M$, i.e., all the elements of $\mathcal C_M$
are  used. Indeed, otherwise, by the maximality of $\mathcal
C_{CD}$ and since the components of $T-r$ have size at most $\tau k$, we obtain
\begin{align}\label{eq:ForM}
\begin{split}
\sum_{S\in \mathcal C_{CD}}v(S)&\ge \deg_{\Gcapt}(x,(C\cup  D)\setminus \ghost
(U))-\left(\frac\gamma{\Omega^*}\right)^3 |C|-\tau k\\
&\geByRef{eq:KONST} \deg_{\Gcapt}(x,(C\cup  D)\setminus \ghost
(U))-2\left(\frac\gamma{\Omega^*}\right)^3 |C|\;,
\end{split}
\end{align}
for each $(C,D)\in\M$.
Then we have
\begin{align*}
\sum_{S\in \mathcal
C_M}v(S)&>\sum_{(C,D)\in \mathcal
M}\sum_{S\in \mathcal C_{CD}} v(S)\\
\JUSTIFY{by~\eqref{eq:ForM}}&\ge \sum_{(C,D)\in
\mathcal M} \left(\deg_{\Gcapt}(x,(C\cup
D)\setminus \ghost(U))-2\left(\frac\gamma{\Omega^*}\right)^3 |C|\right)\\
\JUSTIFY{by Claim~\ref{cl:Megdes} and
Fact~\ref{fact:boundMatchingClusters}} &\ge \deg_{\Gcapt}\left(x,V(\mathcal
M)\setminus \ghost(U)\right)-2\left(\frac\gamma{\Omega^*}\right)^3 \cdot \frac{2(\Omega^*)^2}{\gamma^2}k\\ 
\JUSTIFY{as $x\not\in\shadow(\ghost(U))$}&\ge
\deg_{\Gcapt}(x,V(\mathcal M))- \frac{\eta k}{1000}\\
\JUSTIFY{by~\eqref{eq:trickySum}}&\ge\sum_{S\in \mathcal C_M}v(S)\;,
\end{align*}
a contradiction.

\def\LfCD{\PARAMETERPASSING{L}{lem:fillingCD}}
We use Lemma~\ref{lem:fillingCD} to
embed the components of $\mathcal C_{CD}$ in $(C\cup D)\setminus \ghost(U)$ with the
following setting: $C_\LfCD:=C$, $D_\LfCD:=D$, $U_\LfCD:=\ghost(U)$, $X^*_\LfCD:=(\neighbour_{\Gcapt}(x)\cap (C\cup D))\setminus U_\LfCD$,
 and $(T_i, r_i)$ are the rooted trees from $\mathcal C_{CD}$
with the roots being the neighbours of $r$. The constants in Lemma~\ref{lem:fillingCD} are
$\epsilon_\LfCD:=\epsilon'/8$, $\beta_\LfCD:=\sqrt{\epsilon '}$, and
$\ell_\LfCD:=|C|\ge \nu\pi k$. The rooted trees in $\mathcal C_{CD}$ are smaller than $\epsilon_\LfCD\ell_\LfCD$ by~\eqref{eq:KONST}. Condition~\eqref{eq:conFill} is satisfied by~\eqref{eq:hezkamistnost}, and since $(\gamma/{\Omega^*})^3\geq 50\sqrt{\eps '}$.

\medskip

It remains to deal with the components of $\tilde{\mathcal C}\setminus\mathcal
C_M$.
In the sequel we shall assume that $\tilde{\mathcal C}\setminus\mathcal
C_M\neq \emptyset$ (otherwise skip this step and go directly to the definition of $T'$ and $\mathcal C_1$, with $p=0$). Thus, by our choice of $\mathcal C_M$, we have
\begin{equation}\label{eq:maxT1'}
\sum_{S\in\mathcal C_M}v(S)\ge
\deg_{\Gcapt}(x,V(\mathcal
M))-\frac{\eta k}{900}\;. \end{equation}

Let $T_1,T_2,\ldots,T_p$ be the trees of $\tilde{\mathcal
C}\setminus\mathcal C_M$ rooted at the vertices $r_i\in\children(r)\cap
V(T_i)$ neighbouring~$r$. We shall sequentially extend our embedding of $\mathcal C_M$ to subtrees $T'_i\subset T_i$. Let $U_i\subset V(G)$ be the union of the images of $\bigcup\mathcal C_M\cup\{r\}$ and of $T'_1,\ldots,T'_i$ under this embedding.

Suppose that we have embedded the trees $T'_1,\ldots,T'_i$ for some
$i=0,1,\ldots,p-1$. We claim that at least one of the following holds.
\begin{enumerate}
  \item[{\bf (V1)}]
  $\deg_{\Gcapt}(x,V(\Gexp)\setminus
  (U\cup U_i))\ge \frac{\eta k}{1000}$,
  \item[{\bf
  (V2)}]$\deg_{\Gcapt}(x,\smallatoms\setminus
  (U\cup U_i))\ge \frac{\eta k}{1000}$, or
  \item[{\bf
  (V3)}]$\deg_{\Gcapt}(x,L'\setminus
  (V(\Gexp)\cup\smallatoms\cup U\cup U_i\cup \shadow(\ghost(U),\frac{\eta k}{1000})))\ge
  \frac{\eta k}{1000}$.
\end{enumerate}
Indeed, suppose that none of {\bf(V1)}--{\bf (V3)} holds. Then, first note that since $U\subseteq \ghost(U)$ and since $x\notin \shadow(\ghost(U),\eta k/1000)$, we have
\begin{equation}\label{whatxsendstoU}
\deg_{\Gcapt}(x,U)\leq \frac{\eta k}{1000}.
\end{equation}

Also,
\begin{equation}\label{MAMBMatoms}
 \deg_{\GD}(x,V(\M_A\cup \M_B)) \leq\deg_{\GD}(x,V(\M)\cup\smallatoms).
\end{equation}

Thus,
\begin{align*}
\deg_{\Gcapt}&\left(x,\Vgood\setminus\shadow(\ghost(U), \frac{\eta k}{1000})\right)\\
\JUSTIFY{by \eqref{whatxsendstoU} and \eqref{MAMBMatoms}, def of $\Vgood$}&\le 
\deg_{\Gcapt}\left(x,\left(V(\M)\cup V(\Gexp)\cup \smallatoms\cup L'\right)\setminus
(U\cup \shadow(\ghost(U), \frac{\eta k}{1000})\right)\\
&~~~~+\deg_{\Gcapt}\big(x,\largevertices{\frac9{10}\eta}{k}{\Gcapt}\setminus(\HugeVertices\cup
L')\big)+\frac{\eta k}{1000}\\ 
\JUSTIFY{by~\eqref{eq:clubsuitCOND3}}&\le
\deg_{\Gcapt}\left(x,\left( V(\Gexp)\cup
\smallatoms\cup L'\right)\setminus
(V(\M)\cup U\cup \shadow(\ghost(U),\frac{\eta k}{1000}))\right)
\\
&~~+
\deg_{\Gcapt}\left(x,V(\M)\right)+\frac{\eta k}{100}+\frac{\eta k}{1000}
\\
\JUSTIFY{by $\neg{\bf(V1)}$, $\neg{\bf(V2)}$, $\neg{\bf(V3)}$,
by~\eqref{eq:maxT1'}} &\le
3\cdot\frac{\eta
k}{1000}+\sum_{j=1}^iv(T'_j)
+
\sum_{S\in\mathcal C_M}v(S)+\frac{\eta k}{900}
+\frac{\eta k}{100}+\frac{\eta k}{1000}
\\
&< \sum_{S\in\tilde{\mathcal
C}}v(S)+\frac{\eta k}{40}\;,
\end{align*}
a contradiction to~\eqref{eq:HNY}.

In cases {\bf(V1)}--{\bf(V2)} we shall embed the entire tree
$T'_{i+1}:=T_{i+1}$. In case {\bf(V3)} we either embed the entire
tree $T'_{i+1}:=T_{i+1}$, or embed only one vertex $T'_{i+1}:=r_{i+1}$ (that will only happen in case  {\bf (V3c)}). In the latter case,  we keep track of the components of $T_{i+1}-r_{i+1}$ in the set $\mathcal C_{1,i+1}$  (we tacitly assume we set $\mathcal C_{1,i+1}:=\emptyset$ in all cases other than {\bf (V3c)}). The union of the sets $\mathcal C_{1,i}$ will later form the set $\mathcal C_1$. Let us go through
our three cases in detail.

\smallskip

In case {\bf(V1)} we embed $T_{i+1}$ rooted at $r_{i+1}$
using Lemma~\ref{lem:embed:greyFOREST}
\def\Leg{\PARAMETERPASSING{L}{lem:embed:greyFOREST}}
 for one tree (i.e.~$\ell_\Leg:= 1$) with the following sets/parameters:
$H_\Leg:=\Gexp$,
$U_\Leg:=U\cup U_i$, $U^*_\Leg:=\neighbour_{\Gcapt}(x)\cap
(V(\Gexp)\setminus(U\cup U_i))$, $V_1=V_2:=V(\Gexp)$, $Q_\Leg:=1$, $\zeta_\Leg:=\rho$, and 
$\gamma_\Leg:=\gamma$. Note that $|U\cup
U_i|< k$, that $|\neighbour_{\Gcapt}(x)\cap
(V(\Gexp)\setminus (U\cup U_i))|\ge \eta k/1000>32\gamma k/\rho+1$,  
that $v(T_{i+1})\le
\tau k<\rho k/4$ and that $128\gamma<\rho^2$.

\smallskip

In case {\bf(V2)} we embed $T_{i+1}$ rooted at $r_{i+1}$
using Lemma~\ref{lem:embed:avoidingFOREST} 
\def\Lavoid{\PARAMETERPASSING{L}{lem:embed:avoidingFOREST}} for one tree (i.e.~$\ell_\Lavoid:= 1$) with the following setting:
$H_\Lavoid:= G-\HugeVertices$, $\smallatoms_\Lavoid:=\smallatoms$,
$U_\Lavoid:=U\cup U_i$, $U^*_\Lavoid:=\neighbour_{\Gcapt}(x)\cap (\smallatoms\setminus(U\cup U_i))$, $\Lambda_\Lavoid:=\Lambda$, $\gamma_\Lavoid:=\gamma$, and
$\epsilon_\Lavoid:=\epsilon'$. Note that $|U\cup U_i|\le k<\Lambda
k$, that $|\neighbour_{\Gcapt}(x)\cap (\smallatoms\setminus (U\cup U_i))|\ge \eta
k/1000 >2\eps' k$, and that $v(T_{i+1})\le \tau k<\gamma k/2$. 

\smallskip

We commence case~{\bf(V3)} with an auxiliary claim.
\begin{claim}\label{cl:TEC}
There exists a cluster $C_0\in\clusters$ such that $$\deg_{\GD}\left(x,(C_0\cap L')\setminus \left(V(\Gexp)\cup U\cup U_i\cup
  \shadow\left(\ghost(U),\frac{\eta k}{1000}\right)\right)\right)\ge \frac{\eps'}{\gamma^2} \clustersize\;.$$
\end{claim} 
\begin{proof}[Proof of Claim~\ref{cl:TEC}]
Observe that
$L'\setminus
(V(\Gexp)\cup\smallatoms\cup \HugeVertices\cup U\cup U_i)\subset \bigcup
\clusters$. Furthermore, since $x\in\bigcup\clusters$, we have \[E_{\Gcapt}\left[x,L'\setminus \left(V(\Gexp)\cup\smallatoms\cup
U\cup U_i\cup \shadow\left(\ghost(U),\frac{\eta k}{1000}\right)\right)\right]\subset E(\GD)\;.\] By
Fact~\ref{fact:clustersSeenByAvertex}, there are at most $\frac{2(\Omega^*)^2k}{\gamma^2\clustersize}$ clusters $C\in\clusters$ such that
$\deg_{\GD}(x,C)>0$. Using the assumption~{\bf(V3)}, there exists a
cluster $C_0\in\clusters$ such that 
\begin{align*}
\deg_{\GD}\left(x,(C_0\cap L')\setminus
(V(\Gexp)\cup U\cup U_i\cup \shadow(\ghost(U),\frac{\eta k}{1000}))\right)&\ge
\frac{\eta
k}{1000}\cdot\frac{\gamma^2 \clustersize}{2(\Omega^*)^2k}\\
&\overset{\eqref{eq:KONST}}\ge \frac{\eps'}{\gamma^2}\clustersize\;,
\end{align*} 
as desired.
\end{proof}
Let us take a cluster $C_0$ from Claim~\ref{cl:TEC}. We embed the root $r_{i+1}$ 
of $T_{i+1}$ in an arbitrary neighbour $y$ of $x$ in $(C_0\cap L')\setminus
(V(\Gexp)\cup U\cup U_i\cup \shadow(\ghost(U),\frac{\eta k}{1000}))$.

Let $H\subset G$ be the subgraph of $G$ consisting of all edges in dense spots
$\DenseSpots$, and all edges incident with~$\HugeVertices'$. 
As  by~\eqref{eq:clubsuitCOND1}, $y$ has at most $\eta k/100$ neighbours in $\HugeVertices\setminus\HugeVertices'$, and since $y\in L'\subseteq
 \largevertices{9\eta/10}{k}{\Gcapt}$ and $y\notin\shadow(U,\frac{\eta
k}{100})$, we find that
\begin{align*}
\deg_{H}\left(y,V(G)\setminus
((U\cup U_i)\cup(\HugeVertices\setminus\HugeVertices'))\right)
& \ge\left(1+\frac{9\eta}{10}\right)k-\frac{\eta k}{1000}-|U_i|-\frac{\eta
k}{100}\\
& > k-|U_i|+\frac{\eta k}{2}\;.
\end{align*}
Therefore, one of the three following subcases must occur. (Recall that $y\not\in\smallatoms$ as $y\in C_0\in\clusters$.)
\begin{enumerate}
  \item[{\bf (V3a)}] $\deg_{\Gcapt}(y,\smallatoms\setminus (U\cup U_i))\ge \frac{\eta
  k}{6}$, 
  \item[{\bf (V3b)}] $\deg_{\Gblack}(y,\bigcup\clusters
 \setminus (U\cup U_{i}))\ge\frac{\eta k}{6}$, or
  \item[{\bf (V3c)}] $\deg_{\Gcapt}(y,\HugeVertices')\ge k-|U_i|+\frac{\eta
  k}{6}$.
\end{enumerate}
In case~{\bf (V3a)} we embed the components  of $T_{i+1}-r_{i+1}$ (as trees
rooted at the children of $r_{i+1}$) using the same technique as in
case~{\bf (V2)}, with Lemma~\ref{lem:embed:avoidingFOREST}.

\smallskip

\def\LER{\PARAMETERPASSING{L}{lem:embed:regular}}
In~{\bf (V3b)} we embed the components  of $T_{i+1}-r_{i+1}$ (as trees
rooted at the children of $r_{i+1}$). By
Fact~\ref{fact:clustersSeenByAvertex} there exists a cluster $D\in \clusters$ such that 
\begin{equation}\label{V3bsizeX*}
\deg_{\Gblack}(y,D\setminus (U\cup U_i))\ge
\frac{\eta k}{6}\cdot
\frac{\gamma^2\clustersize}{2(\Omega^*)^2k}>\frac{\gamma^2}2\clustersize.
\end{equation}
We use Lemma~\ref{lem:embed:regular} with input $\epsilon_\LER:=\epsilon'$, 
$\beta_\LER:=\gamma^2$, $C_\LER:=D$, $D_\LER:=C_0$, $X^*_\LER=X_\LER:=D\setminus (U\cup U_i)$ and $Y_\LER:=C_0\setminus (U\cup U_i\cup\{y\})$ 
to embed the tree $T_{i+1}$ into
the pair $(C_0,D)$, by embedding the components of $T_{i+1}-r_{i+1}$ one after
the other. The numerical conditions of Lemma~\ref{lem:embed:regular} hold because of Claim~\eqref{cl:TEC} and because of~\eqref{V3bsizeX*}.

\smallskip

In case~{\bf (V3c)} we set $T'_{i+1}:=r_{i+1}$ and define $\mathcal C_{1,i+1}$
as  set of all components of $T_{i+1}-r_{i+1}$. Then $\phi(\parent(\bigcup
\mathcal C_{1,i+1} )\cap V(T'_{i+1}))=\{y\}$ and
\begin{equation}\label{uiuiuiui}
 \deg_{\Gcapt}(y,\HugeVertices')\geq  k-|U_i|+\frac{\eta k}{6}\;.
\end{equation}

When all the trees $T_1,\ldots,T_p$ are processed, we define $T':=\{r\}\cup \bigcup\mathcal C_M\cup \bigcup_{i=1}^pT'_i$, and set $\mathcal C_1:=\bigcup_{i=1}^p\mathcal C_{1,i}$.
Thus also~\eqref{eq:boty} is satisfied by~\eqref{uiuiuiui} for $i=p$, since $|T'|=|U_p|$.
This finishes the proof of the lemma.
\end{proof}

It turns out that our techniques for embedding a tree $T\in\treeclass{k}$ for 
Configurations~$\mathbf{(\diamond2)}$--$\mathbf{(\diamond5)}$ are very similar.
In Lemma~\ref{lem:conf2-5} below we resolve these tasks together. The proof of
Lemma~\ref{lem:conf2-5} follows the same basic strategy for each of the
configurations~$\mathbf{(\diamond2)}$--$\mathbf{(\diamond5)}$ and differs only
in the elementary procedures of embedding shrubs of $T$.

\begin{lemma}\label{lem:conf2-5}
Suppose that we are in Setting~\ref{commonsetting}, and one of the following
configurations can be found in $G$:
\begin{enumerate}[(a)]
  \item Configuration~$\mathbf{(\diamond2)}\left((\Omega^*)^2,
  5(\Omega^*)^9, \rho^3 \right)$,
  \item Configuration~$\mathbf{(\diamond3)}\left((\Omega^*)^2,
  5(\Omega^*)^9, \gamma/2, \gamma^3/100\right)$,
  \item Configuration~$\mathbf{(\diamond4)}\left((\Omega^*)^2,
  5(\Omega^*)^9, \gamma/2, \gamma^4/100\right)$, or
  \item Configuration~$\mathbf{(\diamond5)}\left((\Omega^*)^2,
  5(\Omega^*)^9,\epsilon', 2/(\Omega^*)^3,\frac{1}{(\Omega^*)^5}\right)$,
\end{enumerate}
Let $(T,r)$ be a rooted tree of order $k$ with a $(\tau k)$-fine partition
$(W_A,W_B,\shrubA,\shrubB)$. Then $T\subset G$.
\end{lemma}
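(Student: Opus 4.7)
The plan is to build $\phi:V(T)\to V(G)$ by sweeping through an ordered skeleton $(X_0,X_1,\ldots,X_m)$ of the given $(\tau k)$-fine partition (supplied by Lemma~\ref{lem:orderedskeleton}), maintaining a set $U$ of already-used vertices and a matching involution $\mathfrak b$ for $\M_A\cup\M_B$ so that $\ghost(U)$ is available. In all four target configurations the witness triple $(L'',L',\HugeVertices')$ realizes preconfiguration $\mathbf{(\clubsuit)}((\Omega^*)^2)$, which by~\eqref{eq:KONST} (giving $(\Omega^*)^2\ge 10^5\Omega^*/\eta$) is at least as strong as $\mathbf{(\clubsuit)}(10^5\Omega^*/\eta)$, so Lemmas~\ref{lem:embedC'endshrub} and~\ref{lem:blueShrubSuspend} are applicable. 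I will root $T$ at an endpoint of the skeleton, swap the labels of $W_A$ and $W_B$ if needed, and from then on map $W_A$ into $\HugeVertices''$ and $W_B$ into $V_1$.

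Stage~1 handles the hubs, the shrubs of $\shrubA$, and initial parts of the shrubs of $\shrubB$. Whenever the skeleton asks to embed a hub, I extend $\phi$ greedily one vertex at a time: a $W_A$-vertex is mapped to an unused $\Gcapt$-neighbour of its parent's image inside $\HugeVertices''$ (feasible because $\mindeg_{\Gcapt}(V_1,\HugeVertices'')$ is at least $\rho^3 k$, $\gamma^3 k/100$, $\gamma^4 k/100$ or $2k/(\Omega^*)^3$ depending on the configuration, while $|U\cap\HugeVertices''|\le|W_A|=O(1/\tau)$); a $W_B$-vertex is mapped to an unused $\Gcapt$-neighbour of its parent's image inside $V_1$ (feasible because $\mindeg_{\Gcapt}(\HugeVertices'',V_1)\ge 5(\Omega^*)^9 k$ dwarfs $|U\cap V_1|\le k$). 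Whenever the skeleton asks to embed a shrub $T^*\in\shrubA$ whose seed $s\in W_A$ satisfies $\phi(s)\in\HugeVertices''$, I first pick yet-unused $\Gcapt$-neighbours of $\phi(s)$ in $V_1$ to accommodate the children of $s$ inside $T^*$, and then extend into the configuration-specific set via a single invocation of Lemma~\ref{lem:embed:greyFOREST} (for $\mathbf{(\diamond2)}$), of Lemma~\ref{lem:HE3} (for $\mathbf{(\diamond3)}$), or of Lemma~\ref{lem:HE4} (for $\mathbf{(\diamond4)}$), where the fruit required by Lemma~\ref{lem:HE4} is supplied by Remark~\ref{rem:fruits} for internal shrubs and by any even-depth descendant (or a direct avoidance embedding) for end shrubs. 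For $\mathbf{(\diamond5)}$, I use \eqref{eq:diamond5P4} to locate a cluster $C\in\clusters$ with $|C\cap V_1|\ge\tilde\pi|C|$ and $\deg_{\Gcapt}(\phi(s),C\cap V_1)$ substantial, take a $\Gblack$-partner cluster $C'$ furnished by \eqref{confi5last}, and embed the forest $T^*-s$ into $(C\cap V_1,\,C'\setminus\ghost(U))$ by iterating Lemma~\ref{lem:embed:regular}. Whenever the skeleton asks to embed a shrub $T^*\in\shrubB$ whose seed $s\in W_B$ has $x:=\phi(s)\in V_1\subset L''\cap\YB$, I apply Lemma~\ref{lem:blueShrubSuspend} to $(T^*\cup\{s\},s)$; the family $\mathcal C_2$ it produces I discharge at once by iterated applications of Lemma~\ref{lem:embedC'endshrub} at neighbours of $x$ in $\HugeVertices'\setminus U$ (available by~\eqref{eq:botyzwei}), while each component $T''\in\mathcal C_1$ is queued together with the image $p$ of its parent, whose large $\Gcapt$-degree into $\HugeVertices'$ is the content of~\eqref{eq:boty}. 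After processing each piece I update $U$ and $\ghost(U)$.

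Stage~2 empties the queue left over from $\shrubB$. For each queued pair $(T'',p)$ I pick a still-unused vertex $v\in\HugeVertices'\cap\neighbour_{\Gcapt}(p)\setminus U$ — guaranteed by the inequality recorded via~\eqref{eq:boty} or~\eqref{eq:botyzwei}, which remains positive even after subtracting the cumulative mass consumed by earlier Stage~2 steps through the same $p$ — and invoke Lemma~\ref{lem:embedC'endshrub} to embed $T''$ with root at $v$. The hypothesis $|U|+v(T'')\le k$ is automatic because we maintain $|U|\le k-v(T'')$ throughout.

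The main obstacle is bookkeeping: at every invocation of a sublemma I must verify that the current $U$ fits into that lemma's size and shadow hypotheses. This succeeds because $|U|\le k$ throughout, $\Lambda$ is chosen via~\eqref{eq:KONST} to dominate all iterated $\GD$- and $\Gcapt$-shadows of $U$ and $\ghost(U)$, and each individual shrub has order at most $\tau k$, which is microscopic relative to $\delta k$, $\gamma k$ and the cluster size $\clustersize$. A secondary subtlety appears in $\mathbf{(\diamond5)}$, where Stage~1 deposits vertices into regular pairs belonging to $\M_A\cup\M_B$: by consistently using $\ghost(U)$ rather than $U$ when selecting the partner-cluster image at each regular-pair embedding step, I ensure that these pairs cannot later become unusable through one-sided over-filling. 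The verification that the parameters of Configurations~$\mathbf{(\diamond2)}$--$\mathbf{(\diamond5)}$ as stated in the hypothesis of the lemma translate into the parameter demands of Lemmas~\ref{lem:HE3}, \ref{lem:HE4}, \ref{lem:embed:greyFOREST}, \ref{lem:embed:regular}, \ref{lem:blueShrubSuspend} and~\ref{lem:embedC'endshrub} is routine using~\eqref{eq:KONST}.
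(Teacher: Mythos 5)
Your overall battle plan is the same as the paper's (embed hubs between $\HugeVertices''$ and $V_1$, $\shrubA$-shrubs via the configuration-specific lemma, $\shrubB$-shrubs via Lemma~\ref{lem:blueShrubSuspend} and later Lemma~\ref{lem:embedC'endshrub}), but there is a substantive gap in how you prepare for Lemma~\ref{lem:blueShrubSuspend}. That lemma requires its input vertex $x$ to lie outside $\bigcup_{i=0}^{2}\shadow_{\Gcapt}^{(i)}\big(\ghost(U),\eta k/1000\big)$, where $U$ is the set of vertices already used at the moment of application; this restriction is precisely what makes the degree estimates yielding \eqref{eq:boty} and \eqref{eq:botyzwei} go through. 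Your hub step places a $W_B$-vertex merely at ``an unused $\Gcapt$-neighbour of its parent's image inside $V_1$'' with no mention of this shadow, so the hypothesis can simply fail. Worse, you decouple the placement of $\phi(s)$ from the invocation of Lemma~\ref{lem:blueShrubSuspend}: $\phi(s)$ is fixed at an earlier hub step of the skeleton, $U$ grows substantially in between (especially because you discharge $\mathcal C_2$ in-line by iterated Lemma~\ref{lem:embedC'endshrub}), and you then call Lemma~\ref{lem:blueShrubSuspend} once \emph{per shrub} of $s$. Since $\deg_{\Gcapt}(\phi(s),\ghost(U))$ can rise by $\Theta(k)$ as $U$ grows, $\phi(s)$ can re-enter the shadow before the second or third call; ``$\Lambda$ dominates all iterated shadows'' bounds the \emph{size} of the shadow set, not whether a fixed vertex stays outside it. The paper avoids this by choosing $\phi(y)\in(\neighbour_G(\phi(x))\cap V_1)\setminus F$, with $F$ the iterated shadow of $\ghost(U)$, immediately before a \emph{single} call to Lemma~\ref{lem:blueShrubSuspend} for the whole forest $T(y)$.

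A second, related issue is your $\HugeVertices'$-accounting. The phrase ``remains positive even after subtracting the cumulative mass consumed by earlier Stage~2 steps through the same $p$'' suggests a per-parent budget, but $\HugeVertices'$ is a shared resource: Case~IV of Lemma~\ref{lem:embedC'endshrub} can route up to roughly half of each $T_z$ into $\HugeVertices'$, across all seeds. Because you interleave these discharges with root-placement, you lose the paper's clean invariant~(*2) that only $W_A$ enters $\HugeVertices$ during Stage~1, and the needed estimate becomes a \emph{global} one, namely that all $\HugeVertices'$-usage accrued during $\shrubB$-processing is bounded by $|W_A|+\sum_y v(T(y))\le |W_A|+k/2$, which is what keeps $k+\frac{\eta k}{100}-v(T'(y))$ ahead; you do not state this. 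The paper's ordering --- all $Z_2$-roots, then all $Z_1$-roots, then all Lemma~\ref{lem:embedC'endshrub} calls --- is exactly what makes this bookkeeping transparent. Finally, ``swapping the labels of $W_A$ and $W_B$ if needed'' is both unnecessary (the construction in Lemma~\ref{lem:TreePartition} already arranges $r\in W_A$) and unsafe, since \eqref{Bend} and \eqref{Bsmall} are asymmetric between $A$ and $B$.
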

\begin{proof} First observe that
each of the configurations given by (a)--(d) contains two sets $\HugeVertices''\subseteq \HugeVertices$ and $V_1\subseteq V(G)\sm\HugeVertices$ with
\begin{align}
\label{eq:sumC1}
\mindeg_{\Gcapt}(\HugeVertices'', V_1)&\ge 5(\Omega^*)^9k\; ,\\
\label{eq:sumC2}
\mindeg_{\Gcapt}(V_1,\HugeVertices'')&\ge \epsilon' k\; .
\end{align}

For any seed $z\in W_A\cup W_B$ we
define $T(z)$ as the forest consisting of all components of $T-(W_A\cup
W_B)$ that contain children of $z$. 
Throughout the proof, we write $\phi$ for the current partial embedding of $T$ into $G$.

\paragraph{Overview of the embedding procedure.} As outlined in Section~\ref{ssec:EmbedOverview25} the embedding scheme is the same for Configurations~$\mathbf{(\diamond 2)}$--$\mathbf{(\diamond 5)}$. The embedding $\phi$ is
defined in two stages. In Stage~1, we embed the seeds $W_A\cup
W_B$, all the internal shrubs, all the end shrubs of $\shrubA$, and a part\footnote{in the sense that individual shrubs $\shrubB$ may be embedded only in part} of the end shrubs of $\shrubB$. In Stage~2 we embed the rest of $\shrubB$. Which part of $\shrubB$ is embedded in Stage~1 and which part in Stage~2 will be determined during Stage~1. We first give a rough outline of  both stages listing some conditions which
we require to be met, and then we describe each of the stages in detail.

Stage~1 is defined in $|W_A\cup\{r\}|$ steps. First we map $r$ to any vertex
in $\HugeVertices''$. Then in each step we pick a vertex $x\in
W_A$ for which the embedding $\phi$ has already been defined but such that $\phi$ is
not yet defined for any of the children of $x$. In this step we embed 
$T(x)$, together with all the children and grandchildren of $x$ in the \kknnaagg which contains $x$. For each child
$y\in W_B\cap \children(x)$, Lemma~\ref{lem:blueShrubSuspend} determines a subforest $T'(y)\subset T(y)$ 
which is embedded in Stage~1, and sets $\mathcal C_1 (y)$ and $\mathcal C_2 (y)$, which will be embedded in Stage~2. 

The
embedding in each step of Stage~1 will be defined so that the following properties hold. 
\begin{enumerate}[(*1)]
\item All vertices from $W_A$ are mapped to $\HugeVertices''$.
\item All vertices except for $W_A$ are mapped to
$V(G)\setminus \HugeVertices$.
\item For each $y\in W_B$, for each $v\in\parent (V(\bigcup \mathcal C_1 (y)))$ we have that $$\deg_G(\phi(v),\HugeVertices')\ge
k + \tfrac {\eta k}{100} - v(T'(y))\;.$$
  \item For each $y\in W_B$, for each $v\in\parent (V(\bigcup \mathcal C_2 (y)))$ we have that $$\deg_G(\phi(v),\HugeVertices')\ge
\tfrac k2 + \tfrac {\eta k}{100} - v\left(T'(y)\cup \bigcup\mathcal C_1(y)\right)\;.$$
\end{enumerate}

In Stage~2, we shall utilize properties~(*3) and~(*4) to embed 
$T_B^*:=\bigcup\shrubB-\bigcup_{y\in W_B} T'(y)$. Stage~2 is substantially simpler than Stage~1; this is due to the fact that $T_B^*$ consists only of end shrubs. 

\paragraph{The embedding step of Stage~1.} The embedding
step is the same for
Configurations~$\mathbf{(\diamond2)}$--$\mathbf{(\diamond5)}$, except for
the embedding of internal shrubs. The order of the embedding steps is illustrated in Figure~\ref{fig:L825}.
\begin{figure}[t]
\centering 
\includegraphics{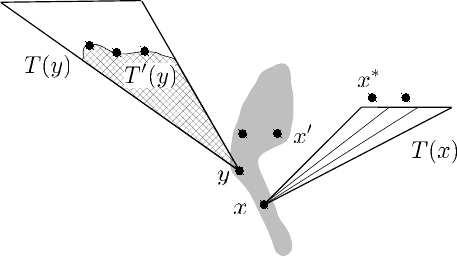}
\caption[Stage~1 of embedding in proof of Lemma~\ref{lem:conf2-5}]{Stage~1 of the embedding in the proof of Lemma~\ref{lem:conf2-5}. Starting from an already embedded seed $x\in W_A$ we extend the embedding (in this order) to\\
(1) all the children $y\in W_B$ of $x$ in the same \kknnaagg (in grey),\\
(2) a part $T'(y)$ of the forest $T(y)$,\\
(3) all the grandchildren $x'\in W_A$ of $x$ in the same \kknnaaggNOSPACE,\\
(4) the forest $T(x)$ together with the bordering cut-vertices $x^*\in W_A$.
}
\label{fig:L825}
\end{figure}

In each step we select a seed $x\in W_A$ already embedded in $G$ but such that none of $\children(x)$ are embedded. By (*1), or by the
choice of $\phi(r)$, we have $\phi(x)\in \HugeVertices''$. So by~\eqref{eq:sumC1} we have
\begin{equation}\label{elventarron}
 \deg_{\Gcapt}(\phi(x), V_1\sm U)\ge 5(\Omega^*)^9 k -k.
\end{equation}

First, we embed successively in $|W_B\cap \children (x)|$ steps the seeds
$y\in W_B\cap \children (x)$ together with  components $T'(y)\subset T(y)$ which will be determined on the way.
Suppose that in a certain step we are to embed $y\in W_B\cap \children (x)$
and the (to be determined) tree $T'(y)$. Let
$$F:=\bigcup_{i=0}^2\shadow^{(i)}_{\Gcapt-\HugeVertices}\left(\ghost(U), \frac{\eta
k}{10^5}\right)\;,$$ where $U$ is the set of vertices used by the embedding $\phi$ in previous steps. Since $|U|\le k$,  Fact~\ref{fact:shadowbound} gives us that $|F|\le \frac {10^{10}(\Omega^*)^2}{\eta^2}k$. We embed $y$
anywhere in $(\neighbour_G(\phi(x))\cap V_1)\setminus F$, cf.~\eqref{eq:sumC1}. Note that then (*2) holds for $y$. We use Lemma~\ref{lem:blueShrubSuspend} in
order to embed $T'(y)\subset T(y)$ (the subtree $T'(y)$ is determined by
Lemma~\ref{lem:blueShrubSuspend}). Lemma~\ref{lem:blueShrubSuspend} ensures
that~(*3) and~(*4) hold and that we have $\phi(V(T'(y)))\subseteq V(G)\setminus
\HugeVertices$. 

Also, we map the vertices $x'\in W_A\cap \children(y)$ to $\HugeVertices''\setminus U$. To justify this step, employing~(*2), it is enough to prove that 
\begin{equation}\label{eq:WNTP}
\deg(\phi(y),\HugeVertices'')\ge |W_A|\;.
\end{equation}
Indeed, on the one hand, we have $|W_A|\le 336/\tau$ by Definition~\ref{ellfine}\eqref{few}. On
the other hand, we have that $\phi(y)\in V_1$, and thus~\eqref{eq:sumC2} applies.
 We can thus embed $x'$ as planned, ensuring (*1), and finishing the step for $y$.

Next, we sequentially embed the components $\tilde T$ of $T(x)$. In
the following, we describe such an embedding procedure only for an internal shrub
$\tilde T$, with $x^*$ denoting the other neighbour of
$\tilde T$ in $W_A$ (cf.~(*1)). The case when $\tilde T$ is an end shrub is analogous: actually it is even easier as we do not
have to worry about placing $x^*$ well.
The actual embedding of $\tilde T$ together with $x^*$ depends on the configuration we are in. We shall slightly abuse notation by letting $U$ now denote everything embedded before the tree $\tilde T$.

\smallskip

For Configuration~$\mathbf{(\diamond2)}$, we use
Lemma~\ref{lem:embed:greyFOREST} for one tree, namely $\tilde T-x^*$, using the following setting:
$Q_\PARAMETERPASSING{L}{lem:embed:greyFOREST}:= 1,
\gamma_\PARAMETERPASSING{L}{lem:embed:greyFOREST}:= \gamma,
\zeta_\PARAMETERPASSING{L}{lem:embed:greyFOREST}:= \rho^3,
H_\PARAMETERPASSING{L}{lem:embed:greyFOREST}:= \Gexp$,  $U_\PARAMETERPASSING{L}{lem:embed:greyFOREST}:= U$, and 
$U^*_\PARAMETERPASSING{L}{lem:embed:greyFOREST}:=(\neighbour_{\Gcapt}(\phi(x))\cap
V_1)\setminus U$ (this last set is large enough by~\eqref{elventarron}). The child of
$x$ gets embedded in $(\neighbour_{\Gcapt}(\phi(x))\cap V_1)\setminus U$, the vertices at odd distance from $x$ get embedded in $V_1$, and the vertices at even distance from $x$ get embedded in $V_2$. In particular, $\parent_T(x^*)$, the parent of~$x^*$, gets embedded in $V_1$. After this, we accomodate $x^*$ in a vertex in $\HugeVertices''\setminus U$ which is adjacent to $\phi(\parent_T(x^*))$. This is possible by the same reasoning as in~\eqref{eq:WNTP}.

\smallskip

For Configuration~$\mathbf{(\diamond3)}$, we use
Lemma~\ref{lem:HE3} to embed $\tilde T$ with the setting
$\gamma_\PARAMETERPASSING{L}{lem:HE3}:= \gamma,
\delta_\PARAMETERPASSING{L}{lem:HE3}:= \gamma^3/100,
U_\PARAMETERPASSING{L}{lem:HE3}:= U$ and
$U^*_\PARAMETERPASSING{L}{lem:HE3}:=(\neighbour_{\Gcapt}(\phi(x))\cap V_1)\setminus U$ (this last set is large enough by~\eqref{elventarron}). 
Then the child of $x$ gets embedded in $(\neighbour_{\Gcapt}(\phi(x))\cap V_1)\setminus U$, vertices of $\tilde T$ of odd distance to $x$ (i.e.~of even distance to the root of $\tilde T$) get embedded in $V_1\setminus U$, and vertices of even distance get embedded in $V_2\setminus U$. We extend the embedding by mapping $x^*$ to a suitable vertex in $\HugeVertices''\setminus U$ adjacent to $\phi(\parent_T(x^*))$ in the same way as above.

\smallskip

For Configuration~$\mathbf{(\diamond4)}$, we use
Lemma~\ref{lem:HE4} to embed $\tilde T$ with the setting
$\gamma_\PARAMETERPASSING{L}{lem:HE4}:= \gamma,
\delta_\PARAMETERPASSING{L}{lem:HE4}:= \gamma^4/100,
U_\PARAMETERPASSING{L}{lem:HE4}:= U$ and
$U^*_\PARAMETERPASSING{L}{lem:HE4}:=(\neighbour_{\Gcapt}(\phi(x))\cap V_1)\setminus U$  (this last set is large enough by~\eqref{elventarron}). 
The fruit  $r'_\PARAMETERPASSING{L}{lem:HE4}$ in the lemma is chosen as $\parent_T(x^*)$. Note that this is indeed a fruit (in $\tilde T$) because of Definition~\ref{ellfine}~\eqref{short}.
Then the child of $x$ gets embedded in
$(\neighbour_{\Gcapt}(\phi(x))\cap V_1)\setminus U$, the vertex $r'_\PARAMETERPASSING{L}{lem:HE4}=\parent_T(x^*)$ gets embedded in $V_1\setminus U$, and the rest of $\tilde T$ gets embedded in $(\smallatoms'\cup V_2)\setminus U$. This allows us to extend the embedding to $x^*$ as above.

\smallskip

In Configuration~$\mathbf{(\diamond5)}$, let $\mathbf W\subset \clusters$  denote the set of those clusters, which have at least an $\frac{1}{2(\Omega^*)^5}$-fraction of their vertices contained in the set $U':=U\cup \shadow_{\Gblack}(U,k/(\Omega^*)^3)$. We get from Fact~\ref{fact:shadowbound} that $|U'|\le 2(\Omega^*)^4 k$, and consequently $|U'\cup \bigcup \mathbf W|\le 4(\Omega^*)^9 k$. By~\eqref{elventarron} we can find a vertex $v\in (\neighbour_G(\phi(x))\cap V_1)\setminus (U'\cup \bigcup \mathbf W)$. 

We use the fact that $v\not \in \shadow_{\Gblack}(U,k/(\Omega^*)^3)$ together with inequality~\eqref{confi5last}
to see that $\deg_{\Gblack}(v,V(\Gblack)\sm U)\geq k/(\Omega^*)^3$.
Now, since there are only boundedly many clusters seen from $v$ (cf. Fact~\ref{fact:clustersSeenByAvertex}),  there must be a cluster $D\in \clusters$ such that 
\begin{equation}\label{eq:extendfromx}
 \deg_{\Gblack}(v,D\setminus U)\ge \frac{\gamma^2}{2\cdot(\Omega^*)^5}|D|\ge\gamma^3 |D|\;.
\end{equation}
 Let $C$ be the cluster containing $v$. We have $|(C\cap V_1)\setminus U|\ge \frac{1}{2(\Omega^*)^5}|C|\ge\gamma^3 |C|$ because of~\eqref{eq:diamond5P4} and since $C\notin\mathbf W$. Thus, by Fact~\ref{fact:BigSubpairsInRegularPairs}, $\big((C\cap V_1)\setminus U,D\setminus U\big)$ is an $2\epsilon'/\gamma^3$-regular pair of density at least $\gamma^2/2$. We can therefore embed $\tilde T$ in this pair using the regularity method. Moreover, by~\eqref{eq:extendfromx}, we can do so by mapping the child $z$ of $x$ to $v$. Thus the parent of $x^*$ (lying at even distance to $z$) will be embedded in $(C\cap V_1)\setminus U$. We can then extend our embedding to $x^*$ as above.

\smallskip

This finishes our embedding of $T(x)$. Note that
in all cases we have $\phi(x^*)\in \HugeVertices''$ and $\phi(V(\tilde T))\subseteq V(G)\setminus \HugeVertices$, as required by~(*1) and~(*2).

\paragraph{The embedding steps of Stage~2.}
For $i=1,2$, set $Z_i:=\bigcup_{y\in W_B} \children(T'(y))\cap \bigcup\mathcal C_i(y)$.

First, we embed all the vertices $z\in Z_2$ in $\HugeVertices'$. By~(*2), until now, only vertices of $W_A\cup Z_2$ are mapped to $\HugeVertices'$, and using~(*4) and the properties~\eqref{few}, \eqref{Bend} and~\eqref{Bsmall} of Definiton~\ref{ellfine}, we see that
\begin{align*}
\deg_G(\phi(\parent(z)),\HugeVertices')
&\geq
\frac{\eta k}{100} + \frac k2 - \sum_{y\in W_B} v\left(T'(y)\cup \bigcup\mathcal C_1(y)\right)\\
&>|W_A|+ |Z_2|\;.
\end{align*}

So there is space for the vertex $z$ in $\HugeVertices'\cap
\phi(\neighbour_G(\parent(z)))$. 

Next, we embed all the vertices $z\in Z_1$ in $\HugeVertices'$. By~(*2), until now only vertices of $W_A\cup Z_2\cup Z_1$ are mapped to $\HugeVertices'$, and by~(*3) we have, similarly as above,
 $$\deg_G(\phi(\parent(z)),
\HugeVertices')>|W_A|+|Z_2|+|Z_1|\;.$$ So $z$ can be embedded in $\HugeVertices'\cap
\neighbour_G(\phi(\parent(z)))$ as planned.

Finally,
 for $z\in Z_1\cup Z_2$, denote by $T_z$ the component of $\mathcal C_1\cup\mathcal C_2$
that contains $z$.  We use
Lemma~\ref{lem:embedC'endshrub} to embed the rest of the
rooted tree $(T_z, z)$. (Note that our parameters work because of~\eqref{eq:KONST}.) 
Once all rooted trees $(T_z, z)$ with $z\in Z_1\cup Z_2$ have been processed, we have finished Stage~2 and thus the proof of the lemma.
\end{proof} 

\subsubsection{Embedding in Configurations $\mathbf{(\diamond6)}$--$\mathbf{(\diamond10)}$}\label{sssec:OrderedSkeleton}
We follow the schemes outlined in Sections~\ref{ssec:EmbedOverview67}, \ref{ssec:EmbedOverview8}, \ref{ssec:EmbedOverview9}, and~\ref{ssec:EmbedOverview10}.

Embedding a tree $T_\PARAMETERPASSING{T}{thm:main}\in\treeclass{k}$ using Configurations~$\mathbf{(\diamond6)}$, $\mathbf{(\diamond7)}$, or
$\mathbf{(\diamond8)}$ has two parts: first the internal part of
$T_\PARAMETERPASSING{T}{thm:main}$ is embedded, and then this partial embedding is extended to end shrubs of $T_\PARAMETERPASSING{T}{thm:main}$ as well. Lemma~\ref{lem:embed:skeleton67} (for configurations $\mathbf{(\diamond6)}$ and $\mathbf{(\diamond7)}$) and Lemma~\ref{lem:embed:skeleton8} (for configuration $\mathbf{(\diamond8)}$) are used for the former part, and Lemmas~\ref{lem:embed:heart1} and~\ref{lem:embed:heart2} (depending on whether we have $\mathbf{(\heartsuit1)}$ or $\mathbf{(\heartsuit2)}$) for the latter. Lemma~\ref{lem:embed:total68} then puts these two pieces together.

Embedding using Configurations~$\mathbf{(\diamond9)}$ and
$\mathbf{(\diamond10)}$ is resolved in
Lemmas~\ref{lem:embed9} and~\ref{lem:embed10}, respectively.

\begin{lemma}\label{lem:embed:skeleton67}
Suppose we are in Setting~\ref{commonsetting} and~\ref{settingsplitting}, and we
have one of the following two configurations:
\begin{itemize}
 \item Configuration~$\mathbf{(\diamond6)}(\delta_6, \tilde\epsilon,d',\mu,1,0)$, or
 \item
 Configuration~$\mathbf{(\diamond7)}(\delta_7,\frac{\eta \gamma}{400},\tilde\epsilon,d',\mu,1,0)$,
\end{itemize}
with $10^5\sqrt\gamma(\Omega^*)^2\le \delta_6^4 \le 1$, 
$10^2\sqrt{\gamma}(\Omega^*)^3/\Lambda\le \delta_7^3<\eta^3\gamma^3/10^6$, $d'>10\tilde \epsilon>0$, and $d'\mu\tau k\ge 4\cdot 10^3$.
Both configurations contain distinguished sets $V_0,V_1\subseteq  \colouringp{0}$ and $V_2,V_3\subseteq  \colouringp{1}$.

Suppose that $(W_A,W_B,\shrubA,\shrubB)$ is a $(\tau k)$-fine partition of a rooted tree
$(T,r)$ of order at most~$k$ such that $|W_A\cup W_B|\leq k^{0.1}$. Let $T'$ be the tree induced by all the cut-vertices
$W_A\cup W_B$ and all the internal shrubs.

Then there exists an embedding $\phi$ of $T'$ such that $$\phi(W_A)\subset V_1
\quad
\phi(W_B)\subset V_0
\quad\mbox{and}\quad
\phi(T'-(W_A\cup W_B))\subset \colouringp{1}\;.
$$
\end{lemma}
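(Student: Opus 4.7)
The plan is to process $T'$ along an ordered skeleton (Lemma~\ref{lem:orderedskeleton}), alternately embedding \kknnaaggss (with $W_A\hookrightarrow V_1$ and $W_B\hookrightarrow V_0$) using Preconfiguration $\mathbf{(exp)}$ or $\mathbf{(reg)}$, and embedding internal shrubs (rooted in $V_2$, with $\Vgood\colouringpI{2}$ unused) using Lemma~\ref{lem:embedStoch:DIAMOND6} or Lemma~\ref{lem:embedStoch:DIAMOND7}. Throughout the process I maintain, at each step, the sets $U_i\subset V_i$ of already used vertices, and three auxiliary ``forbidden'' sets
\[
F_1:=V_1\cap \shadow_{H}(U_2,\theta k),\quad F_2:=V_2\cap \shadow_H(U_1\cup F_1,\theta k),\quad F_3:=V_3\cap \shadow_{H'}(U_2\cup F_2,\theta k),
\]
where $H=\Gcapt-\HugeVertices$, $H'$ is either $\Gexp$ (in Configuration $\mathbf{(\diamond6)}$) or $\GD$ (in Configuration $\mathbf{(\diamond7)}$), and $\theta>0$ is a small constant, much smaller than $\delta_6$ and $\delta_7$. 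Iterating the definitions, each $F_i$ is contained in a union of at most three successive shadows of $U_1\cup U_2\cup U_3$. Since $\sum|U_j|\le k$ and we stay away from $\HugeVertices$, Fact~\ref{fact:shadowbound} yields $|F_i|=O(k)$ at all times.

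\medskip

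Formally, let $(X_0,X_1,\ldots,X_m)$ be an ordered skeleton of the partition, pruned to its restriction to $T'$ (so each $X_j$ is either a \kknnaaggNOSPACE\ or an internal shrub). The first \kknnaagg $X_0$, containing $r$, is embedded greedily: in both configurations the minimum-degree conditions~\eqref{COND:exp:1}--\eqref{COND:exp:2} (respectively the super-regularity~\eqref{COND:reg:0}) imply that from any already-placed vertex we can find in $V_1\setminus (U_1\cup F_1)$, resp.\ $V_0\setminus U_0$, a neighbour, because $|W_A\cup W_B|\le k^{0.1}$ is much smaller than $\delta_6 k$ or $d'\mu k$. Inductively, assume $X_0,\ldots,X_{j-1}$ have been embedded and update $U_i$ and $F_i$ accordingly.

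\medskip

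If $X_j$ is a \kknnaaggNOSPACE, it is attached to the already-embedded part through a single seed $v\in W_A\cup W_B$ whose image is known. In $\mathbf{(exp)}$ we extend greedily using~\eqref{COND:exp:1}--\eqref{COND:exp:2}, and to control that new $V_0$-vertices keep large degree into $V_1\setminus (U_1\cup F_1)$ we discard the shadow $\shadow_{\Gexp}(U_1\cup F_1,\delta_6 k/3)$, which by Fact~\ref{fact:shadowboundEXPANDER} has size $O(\gamma k/\delta_6)\ll\delta_6 k$. In $\mathbf{(reg)}$ we instead apply Lemma~\ref{lem:embed:superregular} inside the super-regular pair containing $\phi(v)$; the numerical condition holds because each \kknnaagg has order $O(1)$, dwarfed by $d'\mu k$. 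In both cases the bipartition of $T$ forces $W_A\hookrightarrow V_1$ and $W_B\hookrightarrow V_0$.

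If $X_j$ is an internal shrub, its root $r^*$ is attached to an already-embedded vertex $\phi(x)$ with $x\in W_A$, hence $\phi(x)\in V_1\setminus (U_1\cup F_1)$. The minimum-degree condition~\eqref{COND:D6:1} (respectively \eqref{COND:D7:1}) together with the definition of $F_1$ gives at least $\delta/2\cdot k$ candidates for $\phi(r^*)$ in $V_2\setminus (U_2\cup F_2)$. With such a choice fixed, apply Lemma~\ref{lem:embedStoch:DIAMOND6}, resp.\ Lemma~\ref{lem:embedStoch:DIAMOND7}, to embed $X_j-r^*$ in $(V_2\cup V_3)\setminus (U_2\cup U_3\cup F_2\cup F_3)$; property~\eqref{short} of Definition~\ref{ellfine} ensures that possibly placing $r^*$ in $F_2$ causes no problem later because the grandchildren of $r^*$ can safely be routed through $V_2\setminus F_2$. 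After embedding $X_j$, update $U_i,F_i$. The input sizes meet the hypotheses of the embedding lemmas because $v(X_j)\le\tau k$, $|U|\le k\le \Lambda k$, and $|F_i|=O(k)\le \frac{\delta}{24\sqrt{\gamma}}k$ (respectively $\le \frac{\Lambda\delta}{2\Omega^*}k$) by~\eqref{eq:KONST}.

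\medskip

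Iterating through all $j\le m$ yields the required embedding $\phi$ of $T'$. The main obstacle, which is precisely what motivates the definition of the $F_i$, is that the minimum-degree conditions between the $V_i$ are merely linear-in-$k$ (with tiny constant $\delta$), so the greedy extension can fail unless one forbids the thin shadows; the crucial point is that these shadows stay linear in $k$ because we avoid $\HugeVertices$, which is exactly the reason Configurations $\mathbf{(\diamond6)}$--$\mathbf{(\diamond7)}$ live outside the huge-vertex regime and Setting~\ref{settingsplitting} separates $W_A\cup W_B$ (in $\colouringp{0}$) from the internal shrubs (in $\colouringp{1}$).
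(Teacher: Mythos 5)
Your high‑level plan matches the paper's: process an ordered skeleton, embed \kknnaaggss between $V_0$ and $V_1$ via $\mathbf{(exp)}$ or $\mathbf{(reg)}$, embed internal shrubs between $V_2$ and $V_3$ via Lemma~\ref{lem:embedStoch:DIAMOND6} or~\ref{lem:embedStoch:DIAMOND7}, and control shadows to keep the greedy extension alive. But the proof as written has a genuine gap: it reproduces the informal ``forbidden sets $F_i$'' scheme of Section~\ref{sssec:7exp1}, which the paper explicitly restricts to \emph{path‑like} trees. For a general tree, many seeds $y\in W_A$ are placed before their incident shrubs are processed, and a shrub attached to one seed $y_1$ alternates through $V_2\cup V_3$ with nothing steering its $V_2$‑part away from $S_{y_2}:=V_2\cap\neighbour_G(\phi(y_2))$ for a \emph{different} seed $y_2$. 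Your $F_1$ only guarantees that $\phi(y_2)$ had low degree into the \emph{then‑current} $U_2$; it says nothing about later shrubs eating $S_{y_2}$. Since $|S_{y_2}|\approx\delta k$ while the cumulative shrub mass is $\Theta(k)$, the neighbourhood $S_{y_2}$ can be exhausted entirely, and the root of a later shrub of $y_2$ then has nowhere to go. This is precisely what the paper's invariant~(b) and the auxiliary reservation sets $D_i$ (the output $C$ of Lemma~\ref{lem:randomshrubembedding}, surfaced through Lemmas~\ref{lem:embedStoch:DIAMOND6}--\ref{lem:embedStoch:DIAMOND7}) prevent: passing the family $\{P_t\}:=\{S_y\}_{y\in W_{A,j},\,j<i}$ to the stochastic embedding lemma, one obtains a set $D_i$, disjoint from $\phi(X_i)$ and of the same order, with $|S_y\cap\phi(X_i)|\le|S_y\cap D_i|+k^{3/4}$; since the $D_j$'s are pairwise disjoint and essentially disjoint from the image, this forces the used part of $S_y$ to stay below $|S_y|/2+o(k)$, leaving room for future roots. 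You invoke Lemmas~\ref{lem:embedStoch:DIAMOND6}--\ref{lem:embedStoch:DIAMOND7} but never specify the $P_j$'s, never use the returned set $C$, and never state or maintain the balance invariant, so the claim that there are ``$\delta/2\cdot k$ candidates for $\phi(r^*)$'' is unsupported beyond the first shrub of each seed.

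There is also a smaller internal inconsistency: you first assert $\delta k/2$ candidates for $\phi(r^*)$ in $V_2\setminus(U_2\cup F_2)$, citing only that $\phi(x)\notin F_1$ (which controls $\deg(\phi(x),U_2)$, not $\deg(\phi(x),F_2)$), and later concede that $r^*$ may in fact land in $F_2$ because of Definition~\ref{ellfine}\eqref{short}. The latter is the correct reading of the overview; the former is not justified, and if it were needed you would face the circular‑definition loop the paper warns about. In short: the skeleton of your argument is the right one, but the decisive ingredient --- the stochastic reservation that rules out parallel‑branching collisions --- is absent, and the lemma cannot be proved without it (or a substitute).
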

\begin{proof}
For simplicity, let us assume that $r\in W_A$. The case when $r\in W_B$ is similar. 
The $(\tau k)$-fine partition
$(W_A,W_B,\shrubA,\shrubB)$ induces a $(\tau k)$-fine partition in $T'$.
By
Lemma~\ref{lem:orderedskeleton}, the tree $T'$ has an ordered skeleton $(X_0, X_1,\ldots, X_m)$ where the $X_i$ are either shrubs or \kknnaaggss ($X_0$ being a \kknnaaggNOSPACE).

Our strategy is as follows. We sequentially
embed the \kknnaaggss and the internal shrubs in the order given by the ordered
skeleton.  For embedding the \kknnaaggss we use Lemma~\ref{lem:embed:greyFOREST} in Preconfiguration~$\mathbf{\mathbf{(exp)}}$, and
Lemma~\ref{lem:embed:superregular} in Preconfiguration~$\mathbf{\mathbf{(reg)}}$. For embedding the internal shrubs, we use
Lemmas~\ref{lem:embedStoch:DIAMOND6} and~\ref{lem:embedStoch:DIAMOND7}
if we have Configurations~$\mathbf{\mathbf{(\diamond6)}}$, and~$\mathbf{\mathbf{(\diamond7)}}$, respectively.

Throughout, $\phi$
denotes the current (partial) embedding of
$(X_0,X_1\ldots,X_m)$.
In consecutive steps, we extend $\phi$.  We  define  auxiliary sets $D_i\subset V(G)$  which will serve for reserving space for the roots of the shrubs $X_i$. So the set $Z_{<i}:=\bigcup_{j<i} (\phi(X_{j})\cup D_{j})$ contains what is already used and what should (mainly) be avoided. 

Let $W_{A,i}:=W_A\cap V(X_i)$, and $W_{B,i}:=W_B\cap V(X_i)$. For each $y\in
W_{A,{j}}$ with $j\leq i$ let $$S_y:=(V_2\cap \neighbour_G(\phi(y)))\setminus
Z_{<i},$$ except if the latter set has size $>k$, in that case we choose a
subset of size $k$. This is a target set for the roots of shrubs adjacent to $y$.

Also, in the case $X_i$ is a shrub, we write $r_i$ for its root, and $f_i$ for the only other vertex neighbouring $W_A\cup W_B$.  Note that $f_i$ is a fruit of $(X_i,r_i)$.

The value $h=6$ or $h=7$ indicates whether we have configuration $\mathbf{\mathbf{(\diamond6)}}$ or $\mathbf{\mathbf{(\diamond7)}}$.
Define 
\begin{align}\label{eq:defFF}
 F_i:=\shadow_{G-\HugeVertices}\left(Z_{<i},\frac{\delta_h k}4\right)\cup Z_{<i}\;.
\end{align}

Define $U_i:=F_i$ if we have Preconfiguration $\mathbf{\mathbf{(exp)}}$ (note that in that case we have Configuration~$\mathbf{\mathbf{(\diamond6)}}$). To define $U_i$ in case of Preconfiguration $\mathbf{\mathbf{(reg)}}$ we make use of the super-regular pairs $(Q^{(j)}_0,Q^{(j)}_1)$ ($j\in\mathcal Y$). Set
\begin{equation}\label{eq:defUU}
U_i:=F_i\cup\bigcup\left\{Q^{(j)}_1\::\: j\in\mathcal Y, |Q^{(j)}_1\cap F_i|\ge \frac{|Q^{(j)}_1|}2\right\}\;.
\end{equation}
In either case, we have $|U_i|\le 2 |F_i|$.

Finally, set 
\begin{align}
 \label{eq:defWW}
W_i&:=\shadow_{G-\HugeVertices}\left(U_i,\frac{\delta_h k}2\right)\cup Z_{<i}\;.
\end{align}

We will now show how to embed successively all $X_i$. At each step $i$, our embedding $\phi$ will have the following properties:
\begin{enumerate}[(a)]
\item $\phi(W_{A,i})\subset V_1\sm F_i$ and $\phi(W_{B,i})\subset V_0$,\label{eatmoreseeds}
\item for each $y\in W_{A,{j}}$ with $j\leq i$ we have   $
|S_y\cap
  \phi(X_i)|\le |S_y\cap D_{i}|+k^{3/4}$,  \label{thereservationfortheroots}
\item $|Z_{< i+1}|\le 2k$,\label{sizeCiDi} 
\item\label{Didisjunkt} $D_i\subseteq \colouringp{1}\sm (\phi (X_i)\cup Z_{<i})$,
\item\label{Xifastdisjunkt} $\phi(X_i-r_i)$ is disjoint from $ \bigcup_{j<i} D_j$,
\item $\phi(f_i)\in V_2\sm W_i$ if $X_i$ is a shrub,\label{eatmorefruit}
\item $\phi (X_i)\subset \colouringp{1}$ if $X_i$ is a shrub.\label{putitintocolour1}
\end{enumerate}

(We remark that since $r_i$ is not defined for \kknnaaggss $X_i$, condition~\eqref{Xifastdisjunkt} means that $\phi(X_i)$ is disjoint from $ \bigcup_{j<i}\cup D_j$ for \kknnaaggss $X_i$.)

It is clear that conditions \eqref{eatmoreseeds} and \eqref{putitintocolour1} ensure that in step $m$ we have found the desired embedding for $T'$.

Before we show how to embed each $X_i$ fulfilling the properties above, let us quickly derive a useful bound.
By Fact~\ref{fact:shadowbound} and~\eqref{sizeCiDi}, we have that $|F_i|\le \frac{9\Omega^*}{\delta_h}k$
 for all $i\le m$. Thus,
using  $|U_i|\le 2 |F_i|$ and again Fact~\ref{fact:shadowbound} and~\eqref{sizeCiDi}, we get that
\begin{equation}\label{eq:Usmallish}
|W_i|\le \frac{38(\Omega^*)^2}{\delta_h^2}k\;.
\end{equation}

\smallskip

Now suppose we are at step $i$ with $0\leq i\leq m$. That is, we have already embedded all $X_j$ with $j<i$, and are about to embed $X_i$. 

\medskip

First assume that $X_i$ is a \kknnaaggNOSPACE. Note that if $i\neq 0$, then there is exactly one fruit $f_\ell$ with $\ell<i$ which neighbours $X_i$. Set $N_i:=\neighbour_G(\phi(f_\ell))$ in this case, and let $N_i:=V(G)$ for $i=0$. 
We distinguish between the two preconfigurations we might be in. 

\smallskip

Suppose first we are in 
Preconfiguration~$\mathbf{\mathbf{(exp)}}$. Recall that then we are in Configuration~$\mathbf{(\diamond6)}$. 

We use Lemma~\ref{lem:embed:greyFOREST} to embed the single tree $X_i$  with the following setting: $\ell_\PARAMETERPASSING{L}{lem:embed:greyFOREST}:=1$,  $V_{1,\PARAMETERPASSING{L}{lem:embed:greyFOREST}}:=V_1$, $V_{2,\PARAMETERPASSING{L}{lem:embed:greyFOREST}}:=V_0$, $U^*_\PARAMETERPASSING{L}{lem:embed:greyFOREST}:=(N_i\cap V_1)\setminus U_i= (N_i\cap V_1)\setminus F_i$, $U_\PARAMETERPASSING{L}{lem:embed:greyFOREST}:=U_i=F_i$, $Q_\PARAMETERPASSING{L}{lem:embed:greyFOREST}:=\frac{18\Omega^*}{\delta_6}$, $\zeta_\PARAMETERPASSING{L}{lem:embed:greyFOREST}:=\delta_6$, and $\gamma_\PARAMETERPASSING{L}{lem:embed:greyFOREST}:=\gamma$. 
Note that $U^*_\PARAMETERPASSING{L}{lem:embed:greyFOREST}$ is large enough by~\eqref{eatmorefruit} for $\ell$ and by~\eqref{COND:D6:2} and~\eqref{COND:D7:2}, respectively.
Lemma~\ref{lem:embed:greyFOREST} gives an embedding of the tree $X_i$ such that $\phi(\Veven(X_i))\subset V_1\setminus F_i$ and $\phi(\Vodd(X_i))\subset V_0\setminus F_i$ , which maps the root of $X_i$ to the neighbourhood of its parent's image. Note that this ensures~\eqref{eatmoreseeds} and~\eqref{Xifastdisjunkt} for step $i$, and setting $D_i:=\emptyset$ we also ensure~\eqref{sizeCiDi} and~\eqref{Didisjunkt}. Property~\eqref{thereservationfortheroots} holds since $V_2\cap \phi (X_i)=\emptyset$. Since $X_i$ is a \kknnaaggNOSPACE,~\eqref{eatmorefruit} and~\eqref{putitintocolour1} are empty.

\smallskip

Suppose now we are in 
Preconfiguration~$\mathbf{\mathbf{(reg)}}$. Then
let $j\in \mathcal Y$ be such that $(N_i\cap Q_1^{(j)})\setminus U_i\neq \emptyset$. Such an index $j$ exists by~\eqref{eatmorefruit} for $\ell$ and by~\eqref{COND:D6:2} and~\eqref{COND:D7:2}, respectively, if $i\neq 0$, and trivially if $i=0$. We shall use Lemma~\ref{lem:embed:superregular} to embed $X_i$ in $(Q^{(j)}_0,Q^{(j)}_1)$. More precisely, we use Lemma~\ref{lem:embed:superregular}
with $A_\PARAMETERPASSING{L}{lem:embed:superregular}:=Q^{(j)}_1$, $B_\PARAMETERPASSING{L}{lem:embed:superregular}:=Q^{(j)}_0$, $\epsilon_\PARAMETERPASSING{L}{lem:embed:superregular}:=\tilde \epsilon$, $d_\PARAMETERPASSING{L}{lem:embed:superregular}:=d'$, $\ell_\PARAMETERPASSING{L}{lem:embed:superregular}:=\mu k$, $U_A:=U_i\cap A$, $U_B:=\phi(W_{B,<i})\cap B$ (then $|U_A|\leq |A|/2$ by the definition of $U_i$ and the choice of $j$).

Lemma~\ref{lem:embed:superregular} yields a $(\Veven(X_i)\hookrightarrow
V_1\setminus F_i, \Vodd(X_i)\hookrightarrow V_0)$-embedding of $X_i$, which maps the root of $X_i$ to the neighbourhood of its parent's image. Setting $D_i:=\emptyset$, we have~\eqref{eatmoreseeds}--\eqref{putitintocolour1}.

\medskip

So let us now assume that $X_i$ is a shrub. The  parent $y$ of
the root $r_{i}$ of $X_{i}$ lies in $W_{A,\ell}$ for some $\ell<i$.  By~\eqref{eatmoreseeds} for $\ell$, we mapped $y$ to a vertex $\phi(y)\in V_1\sm F_\ell$.
 As $\deg_{G}(\phi(y),V_2)\ge \delta_h k$ (by~\eqref{COND:D6:1} and \eqref{COND:D7:1}, respectively), and since $\phi (y)\notin F_\ell$, we have 
 \begin{equation}\label{cerropochoco}
 |S_y|\ge \frac{3\delta_h k}4.
 \end{equation} 
 
 Using~\eqref{thereservationfortheroots} for all $j$ with $\ell\leq j<i$, and using that the sets $D_j$ are pairwise disjoint by~\eqref{Didisjunkt}, we see that
  $$|S_y\cap \phi(X_0\cup\ldots \cup X_{i-1})|=|S_y\cap \phi(X_\ell \cup\ldots
  \cup X_{i-1})|\le |S_y\cap \bigcup_{\ell\le j<i}D_j|+m\cdot k^{3/4}\le 
  |S_y\cap \bigcup_{0\le j<i}D_j| +m\cdot k^{3/4}. $$ Therefore, and as
  by~\eqref{Didisjunkt} and~\eqref{Xifastdisjunkt}, the sets $\phi(X_0\cup\ldots
  X_{i-1})$ and $\bigcup_{0\le j<i}D_j$ are disjoint except for the at most
  $m\le |W_A\cup W_B|\le k^{0.1}$ roots $r_j$ of shrubs $X_j$, and since
  $k\gg1$, we have $$|S_y|\geq |S_y\cap \phi(X_0\cup\ldots\cup X_{i-1})| + 
  |S_y\cap \bigcup_{0\le j<i}D_j| -m\ge 2|S_y\cap \phi(X_0\cup\ldots\cup
  X_{i-1})|-k^{0.9}. $$ Thus,
\begin{equation*}
|S_y\setminus \phi (X_0\cup\ldots\cup X_{i-1})|\ge \frac{|S_y|- k^{0.9}}{2}\overset{\eqref{cerropochoco}}\geq \frac{3\delta_h k}{8}-\frac{k^{0.9}}{2}>\frac{\delta_h k}{3}\;.
\end{equation*}

So for $U^*:=S_{y}\setminus \phi (X_0\cup\ldots\cup X_{i-1})$ we have that $|U^*|\ge \frac{\delta_h
k}{3}$.
 If we have Configuration~$\mathbf{(\diamond6)}$ or $\mathbf{(\diamond7)}$ we use 
Lemma~\ref{lem:embedStoch:DIAMOND6} or~\ref{lem:embedStoch:DIAMOND7},
respectively, with input 
$U_\PARAMETERPASSINGR{L}{lem:embedStoch:DIAMOND6}{lem:embedStoch:DIAMOND7}:=W_{i}$,
$U^*_\PARAMETERPASSINGR{L}{lem:embedStoch:DIAMOND6}{lem:embedStoch:DIAMOND7}:=U^*$,
$L_\PARAMETERPASSINGR{L}{lem:embedStoch:DIAMOND6}{lem:embedStoch:DIAMOND7}:=|W_{A,{i}}|$,
$\gamma_\PARAMETERPASSINGR{L}{lem:embedStoch:DIAMOND6}{lem:embedStoch:DIAMOND7}:=\gamma$,
the  family $\{P_t\}_\PARAMETERPASSINGR{L}{lem:embedStoch:DIAMOND6}{lem:embedStoch:DIAMOND7}:=\{S_y\}_{y\in
W_{A,{j}}, j<i}$, and the rooted tree $(X_{i},r_{i})$ with fruit $f_{i}$. Further,
for Configuration~$\mathbf{(\diamond6)}$, set $\delta_\PARAMETERPASSING{L}{lem:embedStoch:DIAMOND6}:=\delta_6$,  $V_{2,\PARAMETERPASSING{L}{lem:embedStoch:DIAMOND6}}:= V_2$ and $V_{3,\PARAMETERPASSING{L}{lem:embedStoch:DIAMOND6}}:=
V_3$ and for Configuration~$\mathbf{(\diamond7)}$, set $\delta_\PARAMETERPASSING{L}{lem:embedStoch:DIAMOND7}:=\delta_7$, $\ell_{\PARAMETERPASSING{L}{lem:embedStoch:DIAMOND7}}:=1$, $Y_{1,\PARAMETERPASSING{L}{lem:embedStoch:DIAMOND7}}:= V_2$ and $Y_{2,\PARAMETERPASSING{L}{lem:embedStoch:DIAMOND7}}:=
V_3$.
The  output of Lemma~\ref{lem:embedStoch:DIAMOND6} or~\ref{lem:embedStoch:DIAMOND7},
respectively, is the extension of our embedding $\phi$ to $X_{i}$, and
a set
$D_{i}:=C_\PARAMETERPASSINGR{L}{lem:embedStoch:DIAMOND6}{lem:embedStoch:DIAMOND7}\subseteq (V_2\cup V_3)\sm (W_i \cup \phi(X_i))$
for which property~\eqref{eatmoreseeds} (which is empty) and  properties~\eqref{thereservationfortheroots}--\eqref{putitintocolour1} hold. 
\end{proof}
 
\begin{lemma}\label{lem:embed:skeleton8}
Suppose we are in Setting~\ref{commonsetting} and~\ref{settingsplitting} and suppose further we
have Configuration~$\mathbf{(\diamond8)}(\delta,
\frac{\eta\gamma}{400},\epsilon_1,\epsilon_2,d_1,d_2,\mu_1,\mu_2,h_1,0)$, with
$2\cdot 10^5(\Omega^*)^6/\Lambda\le\delta^6$,
$\delta<\gamma^2\eta^4/(10^{16}(\Omega^*)^2)$, $d_2>10\epsilon_2>0$,
$d_2\mu_2\tau k\ge 4\cdot 10^3$, and $\max\{\epsilon_1, \tau/\mu_1\}\le \eta^2\gamma^2d_1/(10^{10}(\Omega^*)^3)$. Recall that we have
distinguished sets $V_0, \ldots, V_4$ and a \semiregular matching $\mathcal N$.

Let $(W_A,W_B,\shrubA,\shrubB)$ be a $(\tau k)$-fine partition of a rooted tree
$(T,r)$ of order at most~$k$. Let $T'$ be the tree induced by all the cut-vertices
$W_A\cup W_B$ and all the internal shrubs. Suppose that 
\begin{equation}\label{eq:takeaway}
v(T')<h_1-\frac{\eta^2 k}{10^5}\;.
\end{equation}

Then there exists an embedding $\phi$ of $T'$ such that $\phi(W_A)\subset V_1$,
$\phi(W_B)\subset V_0$, and $\phi(T')\subset \colouringp{0}\cup\colouringp{1}$.
\end{lemma}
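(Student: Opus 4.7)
The plan is to follow the same ordered-skeleton strategy as in Lemma~\ref{lem:embed:skeleton67}. I will apply Lemma~\ref{lem:orderedskeleton} to get an ordered skeleton $(X_0, X_1, \ldots, X_m)$ of $T'$, and process the \kknnaaggss and internal shrubs one by one in this order, maintaining a partial embedding $\phi$ together with reservation sets $D_i$ of size $|\phi(X_i)|$ as in Lemma~\ref{lem:embed:skeleton67}. The \kknnaaggss are embedded via Preconfiguration~$\mathbf{(reg)}(\epsilon_2, d_2, \mu_2)$ using Lemma~\ref{lem:embed:superregular}, placing $W_A$ into $V_1$ and $W_B$ into $V_0$; the connection to the adjacent internal shrub is made through conditions \eqref{COND:D8:1}--\eqref{COND:D8:2}, which let us embed the root $r_i$ of each internal shrub $X_i$ into a vertex $w_i \in V_2 \cap \neighbour_G(\phi(y))$, where $y \in W_A$ is the parent of $r_i$, while avoiding a forbidden shadow set $F_i$ defined analogously to \eqref{eq:defFF}.

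The new ingredient is the embedding of the subshrubs of $X_i - r_i$. The principal subshrub (which contains the fruit $f_i$ that will connect to the next \kknnaaggNOSPACE, cf.\ Remark~\ref{rem:fruits}) must be embedded into $V_3 \cup V_4$ so that $f_i$ can be placed in $V_2$, thus allowing the follow-up \kknnaagg to be reached via \eqref{COND:D8:1}. For this I use an adaptation of Lemma~\ref{lem:embedStoch:DIAMOND7} with $Y_1 := V_3$, $Y_2 := V_4$, exploiting \eqref{COND:D8:3}--\eqref{COND:D8:6} together with the fact that $V_3 \subset \smallatoms \setminus \exceptVertSplit$, just as in Case~(V3b) in the proof of Lemma~\ref{lem:blueShrubSuspend}. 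The peripheral subshrubs are then distributed greedily, each one embedded either in $V_3 \cup V_4$ via Lemma~\ref{lem:embedStoch:DIAMOND7}, or in an unused piece of an edge of $\mathcal{N}$ via Lemma~\ref{lem:embed:ONESIDE} (combined with Lemma~\ref{lem:embed:BALANCED} for balance bookkeeping). Feasibility of this greedy split is governed by condition~\eqref{COND:D8:7}, which guarantees
\[
\deg_{\GD}(w_i, V_3) + \deg_{\Gblack}(w_i, V(\mathcal{N})) \ge h_1.
\]
Since $v(T') < h_1 - \eta^2 k/10^5$ by \eqref{eq:takeaway}, the total demand never exceeds the combined capacity, even after discounting the small losses due to $F_i$, to previously used vertices, and to the ghosts $\ghost(U)$ introduced in Section~\ref{sec:MainEmbedding} to keep the two halves of each $\mathcal{N}$-edge balanced.

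The main obstacle is the parallel bookkeeping: we must simultaneously maintain (i) the reservation sets $D_i \subset \colouringp{1}$, of cumulative size roughly $|\phi(X_i)|$, inside the target neighbourhoods $S_y := V_2 \cap \neighbour_G(\phi(y))$ of every already-embedded seed $y \in W_A$, so that later cut-vertices can still find room in $V_1$; (ii) balance inside each matching edge of $\mathcal{N}$, via the ghost construction, so that Lemma~\ref{lem:embed:ONESIDE} applies repeatedly without depleting partner clusters; and (iii) the constraint $V_3 \subset \smallatoms$, which forces embeddings adjacent to $V_2$ to go through a dense-spot avoiding argument (i.e.\ Lemma~\ref{lem:embedStoch:DIAMOND7} rather than a pure expansion argument). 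All three are handled exactly as in Lemma~\ref{lem:embed:skeleton67}; since $|W_A \cup W_B| \le k^{0.1}$, the cumulative error terms of order $k^{0.9}$ per shrub remain negligible compared with the slack $\eta^2 k / 10^5$ afforded by \eqref{eq:takeaway}, and the numerical hypotheses on $\delta$, $\epsilon_1$, $\epsilon_2$, and $\tau/\mu_1$ ensure that the relevant shadows and ghost sets are small enough at every step.
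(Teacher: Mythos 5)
Your outline identifies the main players correctly (the ordered-skeleton strategy, the roles of $V_2$, $V_3\cup V_4$, the matching $\mathcal N$, and condition~\eqref{COND:D8:7}), but it glosses over the one idea that actually makes the $V_3$-bookkeeping work: the \emph{deferral} of some peripheral subshrubs to the very end of the procedure.

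Concretely, the issue is this. During the inductive loop, the roots of all \emph{future} principal subshrubs must be placed in the sets $S_y := V_3 \cap \neighbour_G(\phi(y))$ for $y \in W_C$, and these sets have size only about $\delta k$ (a tiny fraction of $k$). You propose to embed peripheral subshrubs ``greedily'' into $V_3 \cup V_4$ whenever they do not fit in $\mathcal N$, via Lemma~\ref{lem:embedStoch:DIAMOND7}. But that lemma also forces you to earmark an equal-sized reservation set $C$ inside $V_3 \cup V_4$, so every peripheral subshrub that goes through $V_3$ eats into $S_y$ \emph{twice}, for every active $y$. Since the total order of peripheral subshrubs is not controlled by any $O(\delta k)$ quantity, this overwhelms $S_y$ and there is no room left for later principal-subshrub roots. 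The paper circumvents this by only \emph{recording}, in step $i$, which $\mathcal P_{i,j}$ cannot be placed into $\mathcal N$ (tracked as property~\eqref{youknowi'mbad}), and embedding them only after the entire inductive procedure is over, at which point the $S_y$-invariant no longer needs maintaining, no more reservations need to be made, and condition~\eqref{COND:D8:7} together with the hypothesis~\eqref{eq:takeaway} yields the degree $\deg_{\GD}(\phi(w), V_3\setminus \mathrm{Im}(\phi)) \ge \eta^2 k/10^6$ needed to embed them. Without this postponement the argument does not close, so you should make it explicit and verify the derivation of~\eqref{youknowi'mbad}. Two more minor points: you import the hypothesis $|W_A\cup W_B|\le k^{0.1}$ from Lemma~\ref{lem:embed:skeleton67}, which is not assumed here (though it holds via Definition~\ref{ellfine}\eqref{few}, so this is harmless); and the paper does not use the ordered skeleton of Lemma~\ref{lem:orderedskeleton} as-is but a bespoke sequence $(M_i, Y_i, \mathcal P_i)$ built from ``extended \kknnaaggss'' $Y_K = K\cup\children_{T'}(K)$ — this is what makes the ordering ``embed $M_i$ before its follow-up \kknnaagg $Y_i$, then peripheral pieces $\mathcal P_i$'' clean, because the root of $Y_i$ is the fruit $f_i$ of $M_i$.
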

\begin{proof}
We assume that $r\in W_A$. The case when $r\in W_B$ is similar. 

Let $\mathcal K$ be the set of all  \kknnaaggss of the  $(\tau k)$-fine partition  $(W_A,W_B,\shrubA,\shrubB)$ of $T$. For each such \kknnaagg $K\in\mathcal K$ set  $Y_K:=K\cup \children_{T'}(K)$.
 We call the subgraphs $Y_K$ {\em extended \kknnaaggssNOSPACE}.
 Set $\mathcal Y:=\{Y_K:K\in\mathcal K\}$ and $W_C:=V(\bigcup\mathcal Y\sm \bigcup \mathcal K)$. Since $W_C\subseteq V(T')$, we clearly have that $|W_C|\leq|W_A\cup W_B|$.

Note that the forest $T'-\bigcup\mathcal Y$ consists of the set $\mathcal P$ of peripheral subshrubs of internal  shrubs of  $(W_A,W_B,\shrubA,\shrubB)$, and the set $\mathcal S$ of principal subshrubs of internal shrubs of  $(W_A,W_B,\shrubA,\shrubB)$.
It is not difficult to observe that there is a sequence $(X_0, X_1,\ldots, X_m)$ such that $X_i=(M_i, Y_i,\mathcal P_i)$, $M_i\in\mathcal S$ and $\mathcal P_i\subseteq\mathcal P$  for each $i\leq m$, and such that we have the following. 
\begin{enumerate}[(I)]
\item\label{startstartstart}  $M_0=\emptyset$ and $Y_0$ contains $r$.
\item\label{peugeot505goodbye}
 $\mathcal P_i$ are exactly those peripheral subshrubs whose parents lie in $Y_i$.
\item\label{esmeralda} The parent $f_i$ of $Y_i$ lies in $M_i$ (unless $i=0$).
\item\label{prince} The parent $r_i$ of $M_i$ lies in some $Y_j$ with $j<i$ (unless $i=0$),
\item $\bigcup_{i\leq m}V(M_i\cup Y_i\cup\bigcup \mathcal P_i)=V(T')$.
\end{enumerate}
See Figure~\ref{fig:subshrubs} for an illustration.
\begin{figure}[ht]
\centering 
\includegraphics{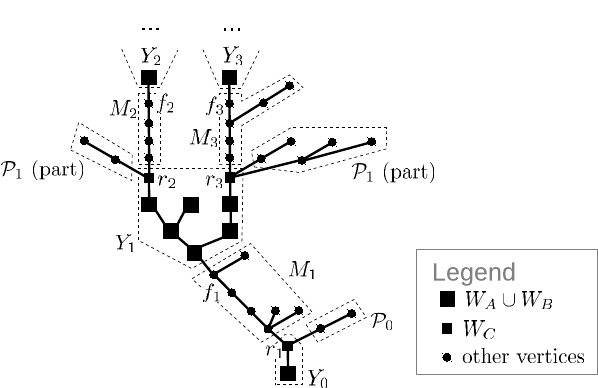}
\caption[Ordering the principal and peripheral subshrubs in
Lemma~\ref{lem:embed:skeleton8}]{An example of a sequence $(X_0,X_1,X_2,X_3,\ldots)$ in Lemma~\ref{lem:embed:skeleton8}.}
\label{fig:subshrubs}
\end{figure}

We now successively embed the elements of $X_i$, except possibly for a part of
the subshrubs in $\mathcal P_i$. The omitted peripheral subshrubs will be
embedded at the very end, after having completed the inductive procedure we are about to describe now.

We shall make use of the following lemmas:
 Lemma~\ref{lem:embed:superregular} (for embedding \kknnaaggssNOSPACE),
 Lemmas~\ref{lem:embed:BALANCED}  and~\ref{lem:embed:regular} (for embedding
 peripheral subshrubs in $\mathcal N$),  Lemma~\ref{lem:embedStoch:DIAMOND7}
 (for embedding principal subshrubs in $V_3\cup V_4$).

Throughout, $\phi$
denotes the current (partial) embedding of
$T'$. In each step $i$ we embed $M_i\cup Y_i$ and a subset of $\mathcal P_i$, and  denote by $\phi (X_i)$ the image of these sets (as far as it is defined). 
We also define an auxiliary set $D_i\subset V(G)$ which will 
ensure that there is enough space for the roots of the subshrubs $M_\ell$ with
$\ell >i$.
Set $$Z_{<i}:=\bigcup_{j<i} (\phi(X_{j})\cup D_{j}).$$ 
Our plan for embedding the various parts of $X_i$ is depicted in Figure~\ref{fig:Embed8detailed}, which is
a refined version of Figure~\ref{fig:DIAMOND8}.
\begin{figure}[ht]
\centering 
\includegraphics{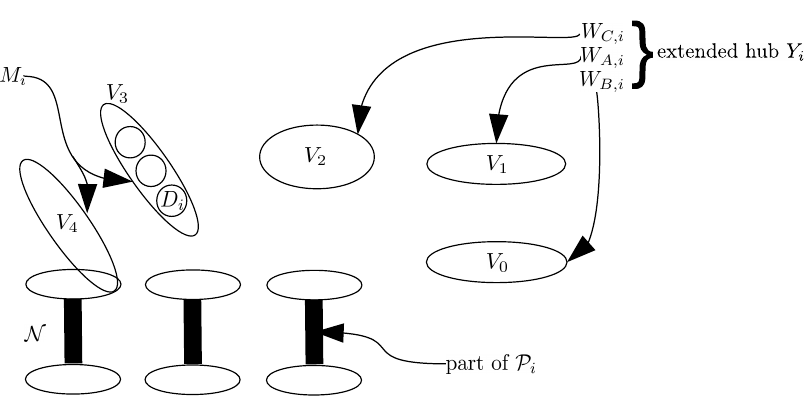}
\caption[Embedding the internal part of the tree in
Configuration~$\mathbf{(\diamond8)}$]{Embedding a part of the internal tree in
 Lemma~\ref{lem:embed:skeleton8}.}
\label{fig:Embed8detailed}
\end{figure} 

Let $W_{O,i}:=W_O\cap V(Y_i)$ for $O=A,B,C$. For each $y\in W_{C,{i}}$  let
$$S_y:=(V_3\cap \neighbour_G(\phi(y)))\setminus Z_{<i},$$ except if this set has
size more than $k$, in which case we choose any subset of size $k$. Similar as
in the preceding lemma, this is a target set for the roots of the
principal subshrub adjacent to $y$.

Fix a matching involution $\mathfrak{d}$ for $\mathcal N$, and 
 for $\ell=1,2$
define
\begin{align}\label{eq:defFF8}
 F^{(\ell)}_i:=Z_{<i}\cup \shadow^{(\ell)}_{G-\HugeVertices}\left(\ghost_{\mathfrak{d}}(Z_{<i}),\frac{\delta k}8\right)\;.
\end{align}

We  use the super-regular pairs $(Q^{(j)}_0,Q^{(j)}_1)$ ($j\in\mathcal Y$) to
define
\begin{equation}\label{eq:defUU8}
U_i:=F^{(2)}_i\cup\bigcup\left\{Q^{(j)}_1\::\: j\in\mathcal Y, |Q^{(j)}_1\cap F^{(2)}_i|\ge \frac{|Q^{(j)}_1|}2\right\}\;.
\end{equation}
We have 
\begin{equation}\label{Uileq2Fi}
|U_i|\le 2 |F^{(2)}_i|.
\end{equation}
Finally, for $\ell=1,2$ set 
\begin{align}
 \label{eq:defWW8}
W^{(\ell)}_i&:=\shadow^{(\ell)}_{G-\HugeVertices}\left(U_i,\frac{\delta k}2\right)\;.
\end{align}

We will now show how to define successively our embedding. At each step $i$, the embedding $\phi$ will be defined for $M_i\cup Y_i$ and a subset of $\mathcal P_i$, and it will have the following properties:
\begin{enumerate}[(a)]
\item $\phi(W_{A,i})\subset V_1\sm F^{(2)}_i$ and $\phi(W_{B,i})\subset V_0$,\label{eatmoreseeds8}
\item $\phi(W_{C,i})\subseteq V_2\sm F^{(1)}_i$,\label{wheretherootsgo}
\item $\phi(f_i)\in V_2\sm (F^{(1)}_i\cup W^{(1)}_i)$,\label{wherethefruitgoes}
\item for each $y\in W_{C,{j}}$ with $j\leq i$ we have   $
|S_y\cap  \phi(X_i)|\le |S_y\cap D_{i}|+ 2k^{3/4}$, 
\label{thereservationfortheroots8}
\item $|Z_{< i+1}|\le 2k$,\label{sizeCiDi8} 
\item\label{Didisjunkt8} $D_i\subseteq V_3\sm (\phi (X_i)\cup Z_{<i})$,
\item\label{Xifastdisjunkt8} $\phi(X_i\sm (V(M_i)\cap \children(W_C)))$ is
disjoint from $
\bigcup_{j<i} D_j $,\footnote{Note that $V(M_i)\cap \children(W_C)$ contains a
single vertex, the root of $M_i$.}
\item $\phi (X_i)\subset \colouringp{1} \cup \phi(Y_i \cup f_i)$,\label{putitintocolour18}
\item if $P\in \mathcal P_i$ is not embedded in step $i$ then for its parent $w\in W_C$ we have that $\deg_{\GD}(\phi(w),V_3)\ge h_1-|\phi(X_i)\cap V(\mathcal N)|-\frac{\eta^2
k}{10^6}$.\label{youknowi'mbad}
\end{enumerate}

Note that for~\eqref{putitintocolour18}, since $f_0$ is not defined, we assume $\phi (f_0)=\emptyset$.

Before continuing, let us remark that~\eqref{putitintocolour18} together with~\eqref{Didisjunkt8} implies that at each step $i$ we have
\begin{equation}\label{eq:littlein0}
\left|Z_{<i}\cap\colouringp{0}\right|\le 3\cdot (|W_A|+|W_B|)\leBy{D\ref{ellfine}\eqref{few}} \frac{2016}\tau< \frac{\delta k}8\;.
\end{equation}

Also note that 
by Fact~\ref{fact:shadowbound} and by~\eqref{sizeCiDi8}, we have
\begin{equation}\label{eq:Fsmallish8}
|F^{(2)}_i|\le \frac{65(\Omega^*)^2}{\delta^2}k\;,
\end{equation}
and
\begin{equation}\label{eq:Usmallish8}
|W^{(2)}_i|\le \frac{520(\Omega^*)^4}{\delta^4}k\;.
\end{equation}

By~\eqref{wheretherootsgo} and by~\eqref{COND:D8:3} we have that $|S_y|\ge
\frac{7\delta k}8$. Now, using~\eqref{thereservationfortheroots8},~\eqref{Didisjunkt8} and~\eqref{Xifastdisjunkt8},  we can calculate similarly as in the previous lemma that at each step $i$ we have
\begin{equation}\label{eq:zindD8}
\left|S_y\setminus \bigcup_{\ell\le i}\phi(X_\ell)\right|\ge \frac{3\delta k}8\;.
\end{equation}

\medskip

Now assume we are at step $i$ of the inductive procedure, that is, we have already dealt with $X_0,\ldots , X_{i-1}$ and wish to embed (parts of) $X_i$.

\bigskip

 We start with embedding $M_i$, except if $i=0$, when we go directly to embedding $Y_0$. We shall embed $M_i$ in
$V_3\cup V_4$, except for the fruit $f_i$, which will be mapped to $V_2$.
The embedding has three stages. First we embed
$M_i-M_i(\uparrow f_i)$, then we embed $f_i$, and finally we embed the
forest $M_i(\uparrow f_i)-f_i$. The embedding of $M_i-M_i(\uparrow f_i)$ is an
application of Lemma~\ref{lem:embedStoch:DIAMOND7} analogous to the case of
Configuration~$\mathbf{(\diamond7)}$ in the previous
Lemma~\ref{lem:embed:skeleton67}. That is, set
$Y_{1,\PARAMETERPASSING{L}{lem:embedStoch:DIAMOND7}}:=V_3$,
$Y_{2,\PARAMETERPASSING{L}{lem:embedStoch:DIAMOND7}}:=V_4$, let
$$U_\PARAMETERPASSING{L}{lem:embedStoch:DIAMOND7}^*:=S_{r_i}\setminus
\bigcup_{\ell< i}\phi(X_i),$$ where $r_i$ 
 lies in $W_C$ by~\eqref{prince}, and
 $$U_\PARAMETERPASSING{L}{lem:embedStoch:DIAMOND7}:=F^{(2)}_i\cup W^{(2)}_i\;.$$ Note that
\[
|U_\PARAMETERPASSING{L}{lem:embedStoch:DIAMOND7}|\le
\frac{10^3(\Omega^*)^4}{\delta^4}k\le \frac{\delta\Lambda}{2\Omega^*}k,
\]
and by~\eqref{eq:zindD8} (which we use for $i-1$), also
\[
|U^*_\PARAMETERPASSING{L}{lem:embedStoch:DIAMOND7}|\ge
\frac{3\delta k}8.
\]
 The family
$\{P_1,\ldots,P_L\}_\PARAMETERPASSING{L}{lem:embedStoch:DIAMOND7}$ is the same as $\{S_y\}_{y\in \bigcup_{j<i}W_{C,j}}$. There is only one tree to be embedded, namely 
$M_i-M_i(\uparrow f_i)$. It is not difficult to check that all the
conditions of Lemma~\ref{lem:embedStoch:DIAMOND7} are fulfilled.
Lemma~\ref{lem:embedStoch:DIAMOND7} gives an embedding of $M_i-M_i(\uparrow
f_i)$ in $V_3\cup V_4\subset \colouringp{1}$ with the property that
$\parent(f_i)$, the parent of the fruit~$f_i$,  is mapped to $V_3\setminus ( F^{(2)}_i \cup W^{(2)}_i )$. The lemma further gives a set
$D':=C_\PARAMETERPASSING{L}{lem:embedStoch:DIAMOND7}$ of size
$v(M_i-M_i(\uparrow f_i))$ such that $$|S_y\cap \phi(M_i-M_i(\uparrow f_i))|\le|S_y\cap
D'|+k^{0.75}$$ for each $y\in \bigcup_{j<i}W_{C,j}$.

Using the degree condition~\eqref{COND:D8:4} we can embed $f_i$
 to $$V_2\setminus ( F^{(1)}_i \cup W^{(1)}_i )$$
(recall that~\eqref{eq:littlein0} asserts that only very little space in $V_2$
is occupied). 
This ensures~\eqref{wherethefruitgoes} for $i$.

To embed $M_i(\uparrow f_i)-f_i$ we use again
Lemma~\ref{lem:embedStoch:DIAMOND7}. The parameters are this time
$Y_{1,\PARAMETERPASSING{L}{lem:embedStoch:DIAMOND7}}:=V_3$,
$Y_{2,,\PARAMETERPASSING{L}{lem:embedStoch:DIAMOND7}}:=V_4$,
\begin{align*}
U_\PARAMETERPASSING{L}{lem:embedStoch:DIAMOND7}^*&:=(\neighbour_G(\phi(f_i))\cap
V_3)\setminus (Z_{<i}\cup\phi(M_i-M_i(\uparrow f_i)) )\;\mbox{, and}\\
U_\PARAMETERPASSING{L}{lem:embedStoch:DIAMOND7}&:=Z_{<i}\cup
\phi(M_i-M_i(\uparrow f_i))\cup D'\;.
\end{align*}
Note that $|U_\PARAMETERPASSING{L}{lem:embedStoch:DIAMOND7}^*|\ge \frac{\delta k}4$
by~\eqref{COND:D8:3}, by the fact that $\phi(f_i)\not\in W^{(1)}_i$, and as $v(T_i)+i<\delta k/8$.
The family $\{P_1,\ldots,P_L\}_\PARAMETERPASSING{L}{lem:embedStoch:DIAMOND7}$ is $\{S_y\}_{y\in \bigcup_{j<i}W_{C,j}}$. The trees to be embedded are
the components of $M_i(\uparrow f_i)-f_i$ rooted at the children of
$f_i$.
All the conditions of Lemma~\ref{lem:embedStoch:DIAMOND7} are fulfilled. The
lemma provides an embedding in $V_3\cup V_4\subset \colouringp{1}$. It further
gives a set $D'':=C_\PARAMETERPASSING{L}{lem:embedStoch:DIAMOND7}$ of size
$v(M_i(\uparrow f_i))-1$ such that $$|S_y\cap \phi(M_i(\uparrow
f_i)-f_i)|\le|S_y\cap D''|+k^{0.75}$$ for each $y\in \bigcup_{j<i}W_{C,j}$.
Then $D_i:=V_3\cap(D'\cup D'')$ is such that for each $y\in \bigcup_{j<i}W_{C,j}$,
\begin{equation}\label{eq:greatlyduplicated}
|S_y\cap \phi(M_i)|\le  |S_y\cap D_i|+2k^{0.75}\;,
\end{equation}
as $S_y\subset V_3$ and $\phi(f_i)\notin V_3$. Note that this choice of $D_i$
also ensures~\eqref{sizeCiDi8} for $i$, and we have by the choices of
$U_\PARAMETERPASSING{L}{lem:embedStoch:DIAMOND7}^*$ and
$U_\PARAMETERPASSING{L}{lem:embedStoch:DIAMOND7}$ in both applications of
Lemma~\ref{lem:embedStoch:DIAMOND7} that
\begin{equation}\label{camundongo}
D_i\subseteq V_3\sm (\phi(M_i)\cup Z_{<i})\quad\text{ and }\quad
\phi(X_i\sm (V(M_i)\cap \children(W_C)))\cap
\bigcup_{j<i}D_j=\emptyset.\,
\end{equation}

\bigskip

We now turn to embedding $Y_i$.
 Our plan is to use first Lemma~\ref{lem:embed:superregular} to embed
$Y_i\sm W_C$ in $(Q^{(j)}_0,Q^{(j)}_1)$ for an appropriate index $j$. After
that, we shall show how to embed $W_{C,i}$.

If $i=0$ then take an arbitrary $j\in\mathcal Y$. Otherwise note that by~\eqref{esmeralda}, the parent $f_i$ of the root of $Y_i$ lies in $M_i$. Note that $f_i$ is a fruit in $M_i$. 
Let $j\in \mathcal Y$ be such that $(\neighbour_G(\phi(f_{i}))\cap
Q_1^{(j)})\setminus U_i\neq \emptyset$. 
 Such an index $j$ exists by~\eqref{COND:D8:2}
 and the fact that $
 \phi(f_{i})\not\in W^{(1)}_{i}$ by~\eqref{wherethefruitgoes} for $i$.
 
 We  use
Lemma~\ref{lem:embed:superregular} with
$A_\PARAMETERPASSING{L}{lem:embed:superregular}:=Q^{(j)}_1$,
$B_\PARAMETERPASSING{L}{lem:embed:superregular}:=Q^{(j)}_0$,
$\epsilon_\PARAMETERPASSING{L}{lem:embed:superregular}:= \epsilon_2$,
$d_\PARAMETERPASSING{L}{lem:embed:superregular}:=d_2$,
$\ell_\PARAMETERPASSING{L}{lem:embed:superregular}:=\mu_2 k$, $U_A:=U_i\cap
A_\PARAMETERPASSING{L}{lem:embed:superregular}$, $U_B:=Z_{<i}\cap
B_\PARAMETERPASSING{L}{lem:embed:superregular}$.  By the choice of $j$ and the
definition of $U_i$, we find that $U_A$ is small enough, and
using~\eqref{eq:littlein0} we see that $U_B$ is also small enough.
Lemma~\ref{lem:embed:superregular} yields a $(\Veven(Y_i-
W_C)\hookrightarrow V_1\setminus F^{(2)}_i, \Vodd(Y_i-
W_C)\hookrightarrow V_0)$-embedding of $Y_i-
W_C$. We clearly see
condition~\eqref{eatmoreseeds8} satisfied for $i$.

We now embed successively the vertices of the set
$W_{C,i}=\{w_\ell:\ell=1,\ldots ,|W_{C,i}|\}$.
By the definition of the set $W_C$, we know that the parent $x$ of~$w_\ell$ lies in $W_{A,i}$.
Combining~\eqref{COND:D8:1} with the fact that $\phi(x)\in V_1\sm F^{(2)}_i$ by~\eqref{eatmoreseeds8} for $i$, we have
that 
$$\left|\neighbour_G\left(\phi(x),V_2\setminus (F^{(1)}_i \sm Z_{<i})\right)\right|\ge \frac{7\delta k}8\;.$$ Thus
by~\eqref{eq:littlein0} and since $V_2\subseteq \colouringp{0}$, we can
accommodate $w_\ell$ in $V_2\setminus F^{(1)}_i$. This is as
desired for~\eqref{wheretherootsgo} in step $i$.

\bigskip

We now turn to $\mathcal P_i$. We will embed a subset of these peripheral subshrubs in
$\mathcal N$. This procedure is  divided into two stages. First we shall embed as many
subshrubs as possible in $\mathcal N$ in a balanced way, with the help of
Lemma~\ref{lem:embed:BALANCED}. When it is no longer
possible to embed any subshrub in a balanced way in $\mathcal N$, we  embed  in $\mathcal N$ as many of the leftover
subshrubs as possible, in an unbalanced way. For this part of the embedding we use
Lemma~\ref{lem:embed:regular}. 

 By~\eqref{peugeot505goodbye} all the parents of the subshrubs in $\mathcal
 P_i$ lie in $W_{C,i}$. For $w_\ell\in W_{C,i}$, let $\mathcal P_{i,\ell}$
 denote the set of all subshrubs in $\mathcal P_i$ adjacent to $w_\ell$. In the
 first stage, we shall embed, successively for $j=1, \ldots ,|W_{C,i}|$, either
 all or none of $\mathcal P_{i,j}$ in a balanced way in $\mathcal N$. Assume inductively that
\begin{equation}\label{eq:INDbalanced}
\phi\Big(\bigcup_{p<j}\mathcal P_{i,p}\Big) \mbox{ is $(\tau k)$-balanced with respect to $\mathcal N$.} 
\end{equation}

Construct a
\semiregular matching $\mathcal N_j$ absorbed by $\mathcal N$ as follows. Let 
$\mathcal N_j:=\{(X_1',X_2')\::\: (X_1,X_2)\in\mathcal N\}$, where for
$(X_1,X_2)\in\mathcal N$ we define $(X'_1,X_2')$ as the maximal balanced
unoccupied subpair seen from $\phi(w_{j})$, i.e., for $b=1,2$, we take
$$X'_b\subset \left(X_b\cap \neighbour_{\Gblack}(\phi(w_{j})\right)\setminus
\left(\phi(\bigcup_{p<j}\mathcal P_{i,p})\cup \bigcup_{\ell <i}\phi (X_\ell)\right)$$ maximal subject to $|X'_1|=|X'_2|$.
If $|V(\mathcal N_j)|\ge \frac{\eta^2 k}{10^7\Omega^*}$ then we shall embed $\mathcal P_{i,j}$, otherwise we do not embed  $\mathcal P_{i,j}$ in this step. 
So assume we decided to embed $\mathcal P_{i,j}$. Recall that the total order of the subshrubs in this set
 is at most $\tau k$.
Using the same argument as for Claim~\ref{cl:Megdes} we have 
$$\left|\bigcup\{X\cup Y\::\:(X,Y)\in\mathcal
N,\deg_{\GD}(\phi(w_{j}),X\cup
Y)>0\}\right|\le\frac{4(\Omega^*)^2}{\gamma^2}k\;.$$ Thus, there exists a subpair
$(X_1',X_2')\in \mathcal N_j$ of some $(X_1,X_2)\in \mathcal N$ with
\begin{equation}\label{eq:SIMcalc}
\frac{|X_1'|}{|X_1|}\ge
\frac{\tfrac{\eta^2}{10^7\Omega^*}k}{\tfrac{4(\Omega^*)^2}{\gamma^2}k}\ge \frac{\gamma^2 \eta^2}{10^8 (\Omega^*)^3}\;.
\end{equation}
In particular, $(X_1',X_2')$ forms a
$\frac{2\cdot 10^8\epsilon_1(\Omega^*)^3}{\gamma^2\eta^2}$-regular pair of
density at least $d_1/2$ by Fact~\ref{fact:BigSubpairsInRegularPairs}.
We use Lemma~\ref{lem:embed:BALANCED} to embed $\mathcal P_{i,j}$ in $\M_\PARAMETERPASSING{L}{lem:embed:BALANCED}:=\{(X_1',X_2')\}$. The family
$\{f_{CD}\}_\PARAMETERPASSING{L}{lem:embed:BALANCED}$ consists of a single
number $f_{(X_1',X_2')}$ which is the discrepancy of $\bigcup_{p<j}\phi (\mathcal P_{i,p})$ with respect to $(X_1,X_2)$. This guarantees
that~\eqref{eq:INDbalanced} is preserved. This finishes the $j$-th step.
We repeat this step until $j=|W_{C,i}|$, then we go to the next stage.

Denote by $\mathcal Q_i$ the set of all $P\in \mathcal P_{i}$ that have not been embedded in the first stage. Note that for each $Q\in \mathcal Q_i$, with $Q\in \mathcal P_{i,j}$, say, and for each $(X_1,X_2)\in\mathcal N$ there is a $b_{(X_1,X_2)}\in\{1,2\}$ such that for
\[
O_j:=\bigcup_{(X_1,X_2)\in\mathcal N}\left(X_{b_{(X_1,X_2)}}\cap \neighbour_{\Gblack}(\phi(w_{j})\right)\setminus
\left(\phi(\bigcup_{p<j}\mathcal P_{i,p})\cup \bigcup_{\ell <i}\phi (X_\ell)\right)
\]
we have that
\begin{equation}\label{boli}
|O_j|<\frac{\eta^2 k}{10^7\Omega^*}.
\end{equation}
The fact that $O_j$ is small implies that there is an $\mathcal N$-cover such that
the $\Gblack$-neighbourhood of $w_j$ restricted to this cover is essentially
exhausted by the image of $T'$.

\medskip
In the second stage, we shall embed some of the peripheral subshrubs of
$\mathcal Q_i$. They will be mapped in an unbalanced way to $\mathcal
N$. We will do this in steps $j=1, \ldots ,|W_{C,i}|$, and denote by
$\mathcal R_j$ the set of those $\mathcal P\subseteq \mathcal Q_i$ embedded
until step $j$. At step $j$, we decide to embed $\mathcal P_{i,j}$ if $\mathcal
P_{i,j}\subseteq \mathcal Q_i$ and

\begin{equation}\label{lacumparsita}
\deg_{\Gblack}\Big(\phi(w_{j}),V(\mathcal N)\setminus \phi(\bigcup\mathcal P_{i}\sm \mathcal Q_i)\Big) - |\bigcup\mathcal R_{ j-1}|\ge\frac{\eta^2 k}{10^6}\;.
\end{equation}

Let $$\tilde{\mathcal N}:=\left\{(X,Y)\in\mathcal N\::\: |(X\cup Y)\cap
Z_{<i}|<\frac{\gamma^2\eta^2}{10^9(\Omega^*)^2}|X|\right\}\;.$$ 

As by~\eqref{wheretherootsgo} we know that $w_{j}$
was embedded in $V_2\sm F^{(1)}_i$, we have
\begin{equation}\label{wasverlorengeht}
\deg_{\Gblack}\left(\phi(w_{j}),V(\mathcal N\sm \tilde{\mathcal N})\right)
\le
\frac{2\cdot 10^9 (\Omega^*)^2}{\gamma^2\eta^2} \cdot\frac{\delta k}{8}
\le 
\frac{\eta^2}{10^7} k\;.
\end{equation}

Using~\eqref{boli},~\eqref{lacumparsita} and~\eqref{wasverlorengeht}, similar calculations as in~\eqref{eq:SIMcalc} show the existence of a pair
$(X,Y)\in\tilde{\mathcal N}$  with
$$\deg_{\Gblack}\left(\phi(w_{j}),(X\cup Y)\setminus ( O_j\cup \phi(\bigcup\mathcal P_{i}\sm \mathcal Q_i))\right) - \left|(X\cup Y)\cap \phi(\bigcup\mathcal R_{ j-1})\right|\ge
\frac{\gamma^2\eta^2}{10^8(\Omega^*)^2}|X\cup Y|\;.$$
Then by the definition
of $\tilde{\mathcal N}$, and setting $Z_{<i}^+:=\ghost_{\mathfrak d}(Z_{<i})$ we get that
 \begin{align*}
 \deg_{\Gblack} & \left(\phi(w_{j}), (X\cup Y)\setminus \left(Z_{<i}^+\cup O_j \cup \phi\left(\bigcup\mathcal P_{i}\sm \mathcal Q_i\right)\right)\right) - \left|(X\cup Y)\cap \phi(\bigcup\mathcal R_{ j-1})\right|\\ &
 \ge \frac{\gamma^2\eta^2}{10^9(\Omega^*)^2}|X\cup Y|. 
\end{align*}
 By the definition of $O_j$, all of the degree counted here goes to one side of the matching edge $(X,Y)$, say to $X$. So
 \begin{align}
 \deg_{\Gblack}\left(\phi(w_{j}),X\setminus \left(Z_{<i}^+\cup \phi\left(\bigcup\mathcal
 P_{i}\sm \mathcal Q_i\cup \bigcup\mathcal R_{j-1}\right)\right)\right) - \left| Y\cap \phi(\bigcup\mathcal R_{ j-1})\right| & \ge 
 \frac{\gamma^2\eta^2}{10^9(\Omega^*)^2}|X|\label{aha1}
 \\ & \ge 
12\frac{\eps_1}{d_1}|X|+\tau k.\label{aha2}
\end{align}
We claim that furthermore,
\begin{equation}
\left|Y\setminus \left(Z_{<i}^+\cup\phi(\bigcup\mathcal P_{i}\sm \mathcal Q_i\cup \bigcup\mathcal R_{j-1})\right)\right|
\ge
\frac{\gamma^2\eta^2}{ 10^{10}(\Omega^*)^2}|Y|
\ge
12\frac{\eps_1}{d_1}|Y|+\tau k\;.\label{jajaja}
\end{equation}
Indeed, otherwise we get by~\eqref{aha1} that
\[
\left|X\setminus \left(Z^+_{<i}\cup\phi(\bigcup\mathcal P_{i}\sm \mathcal Q_i)\right)\right|
>
 \left|Y\setminus \left(Z^+_{<i}\cup\phi(\bigcup\mathcal P_{i}\sm \mathcal Q_i)\right)\right| + \frac{\gamma^2\eta^2}{ 10^{10}(\Omega^*)^2}|X|,
\]
which  is impossible by~\eqref{eq:INDbalanced} and since $|X|\geq \mu_1 k$.

Hence, by~\eqref{aha2} and~\eqref{jajaja}, we can  embed $\mathcal P_{i,j}$ into the unoccupied part $(X,Y)$ using
Lemma~\ref{lem:embed:regular} repeatedly.\footnote{Recall that the
total order of $\mathcal P_{i,j}$ is at most $\tau k$.}

\medskip

Note that  if some $\mathcal
P_{i,j}$ has not been embedded in either of the two stages,  then the vertex
$w_{j}$ must have a somewhat insufficient degree in $\mathcal N$. More precisely, employing~\eqref{lacumparsita} we see that
$\deg_{\Gblack}(\phi(w_{j}),V(\mathcal N))-|\phi(X_i)\cap V(\mathcal
N)|<\frac{\eta^2 k}{10^6}$. Combined with~\eqref{COND:D8:7}, we find that $$\deg_{\GD}(\phi(w_j),V_3)\ge h_1-|\phi(X_i)\cap V(\mathcal N)|-\frac{\eta^2
k}{10^6}\;,$$
in other words, \eqref{youknowi'mbad} holds for $i$. 

This finishes step $i$ of the embedding procedure. 
Recall that the sets $V_3$ and $V(\mathcal N)$ are
disjoint. Hence, by~\eqref{eatmoreseeds8} and~\eqref{wheretherootsgo}, the
principal subshrubs are the only parts of $T'$ that were embedded in $V_3$ (and
possibly elsewhere).
Thus, using~\eqref{camundongo}, we see that~\eqref{Didisjunkt8},~\eqref{Xifastdisjunkt8}  and~\eqref{putitintocolour18} are satisfied for $i$. Also, by~\eqref{eq:greatlyduplicated},~\eqref{thereservationfortheroots8} holds for $i$.

\bigskip

After having completed the inductive procedure, we still have to embed some
peripheral subshrubs.
Let us take sequentially those $P\in\mathcal P$  which  were not embedded. Say $w$ is the parent of $P$.
By~\eqref{youknowi'mbad} we have 
$$\deg_{\GD}(\phi(w),V_3\setminus \textrm{Im}(\phi))\ge
h_1-| \textrm{Im}(\phi)\cap V(\mathcal N)|-| \textrm{Im}(\phi)\cap V_3|-\frac{\eta^2
k}{10^6}\geByRef{eq:takeaway} \frac{\eta^2 k}{10^6}\;.$$
An application of Lemma~\ref{lem:embedStoch:DIAMOND7} in which
$Y_{1,\PARAMETERPASSING{L}{lem:embedStoch:DIAMOND7}}:=V_3$,
$Y_{2,\PARAMETERPASSING{L}{lem:embedStoch:DIAMOND7}}:=V_4$,
$U_\PARAMETERPASSING{L}{lem:embedStoch:DIAMOND7}:=\textrm{Im}(\phi)$, $U^*_\PARAMETERPASSING{L}{lem:embedStoch:DIAMOND7}:=\neighbour_{\GD}(\phi(w))\cap
V_3\setminus \textrm{Im}(\phi)$, and
$\{P_1,\ldots,P_L\}_\PARAMETERPASSING{L}{lem:embedStoch:DIAMOND7}:=\emptyset$
gives an embedding of $P$ in $V_3\cup V_4\subset \colouringp{1}$.

By conditions~\eqref{eatmoreseeds8},~\eqref{wheretherootsgo},~\eqref{wherethefruitgoes} and~\eqref{putitintocolour18}  we have thus found the desired embedding for $T'$.
\end{proof}

\begin{lemma}\label{lem:embed:heart1}
Suppose we are in Setting~\ref{commonsetting} and~\ref{settingsplitting}, and
that the sets $V_0$ and $V_1$ witness
Preconfiguration~$\mathbf{(\heartsuit1)}(2\eta^3 k/10^3,h)$. Suppose that
$U\subset \colouringp{0}\cup\colouringp{1}$. Suppose that
$\{x_j\}_{j=1}^\ell\subset V_0$ and $\{y_j\}_{j=1}^{\ell'}\subset V_1$ are sets of distinct vertices.\footnote{That is, $\{x_j\}_{j=1}^\ell\cup\{y_j\}_{j=1}^{\ell'}=\ell+\ell'$.} Let $\{(T_j,r_j)\}_{j=1}^\ell$ and
$\{(T'_j,r'_j)\}_{j=1}^{\ell'}$ be families of rooted trees such that each
component of $T_j-r_j$ and of $T'_j-r'_j$  has order at most $\tau k$.

If 
\begin{align}\label{eq:MALYSTROM1}
\sum_j v(T_j)&\le \frac h2-\frac{\eta^2 k}{1000}\;,\\
\label{eq:MALYSTROM2}
\sum_j v(T_j)+\sum_j v(T'_j)&\le h-\frac{\eta^2 k}{1000}\;,\mbox{and}\\
\label{eq:MALYSTROM3}
|U|+\sum_j v(T_j)+\sum_j v(T'_j)&\le k\;,
\end{align}
 then there exist  $(r_j\hookrightarrow x_j, V(T_j)\setminus\{r_j\}\hookrightarrow V(G)\setminus U)$-embeddings of $T_j$ and $(r'_j\hookrightarrow y_j, V(T'_j)\setminus\{r'_j\}\hookrightarrow V(G)\setminus U)$-embeddings of $T'_j$ in $G$, all mutually disjoint.
\end{lemma}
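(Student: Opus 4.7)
The plan is to embed in two phases. In Phase~1 I would embed the trees $(T_j,r_j)$ with $r_j\mapsto x_j\in V_0$, and in Phase~2 I would embed the trees $(T'_j,r'_j)$ with $r'_j\mapsto y_j\in V_1$. Phase~1 must come first because~\eqref{COND:P1:3} yields only degree $h/2$ from each $x_j$ into $\Vgood\colouringpI{2}$, making this the constrained stage. Phase~2 enjoys the stronger bound $h$ from~\eqref{COND:P1:4}, together with the matching-cover bypass~\eqref{COND:P1:5}, which essentially removes any balance considerations.

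Within each phase the roots would be processed one at a time, and for each root the components of its shrub would be embedded sequentially; each such component $C$ is a rooted tree of order at most $\tau k$ attached at a child of the root. Letting $U^*$ be the union of $U$ with all images committed so far, the bounds~\eqref{eq:MALYSTROM1}--\eqref{eq:MALYSTROM3} combined with~\eqref{COND:P1:3}--\eqref{COND:P1:4} would guarantee at least $\eta^2 k/1000$ unused $\Gcapt$-neighbours $w$ of the current root inside $\Vgood\colouringpI{2}$. I would then split into cases on the position of $w$ in the sparse decomposition. If $w\in V(\Gexp)$, extend the embedding of $C$ via Lemma~\ref{lem:embed:greyFOREST} using that $\Gexp$ is $(\gamma k,\gamma)$-nowhere-dense. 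If $w\in\smallatoms$, apply Lemma~\ref{lem:embed:avoidingFOREST}, using that $\smallatoms$ is $(\Lambda,\epsilon',\gamma,k)$-avoiding. If $w\in V(\M_A\cup\M_B)$, route $C$ into the regular partner pair of the matching edge containing $w$, using Lemma~\ref{lem:embed:BALANCED} in Phase~1 (to maintain matching balance) or Lemma~\ref{lem:embed:ONESIDE} in Phase~2 (where~\eqref{COND:P1:5} makes balance unnecessary). Otherwise $w$ is a large cluster vertex; here Fact~\ref{fact:clustersSeenByAvertex} locates a cluster $C^*\in\clusters$ to which $w$ has many $\Gblack$-edges, and Lemma~\ref{lem:embed:regular} embeds $C$ in the regular pair between the clusters containing $w$ and $C^*$.

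The hard part will be the Phase~1 balance invariant: for every $(X,Y)\in\M_A\cup\M_B$ one must maintain $\bigl||\phi(T)\cap X|-|\phi(T)\cap Y|\bigr|\le\tau k$, or else one side of a matching edge would become unusable for subsequent roots. This is what forces the use of Lemma~\ref{lem:embed:BALANCED} rather than a naive greedy strategy. The slack $\eta^2 k/1000$ in~\eqref{eq:MALYSTROM1}--\eqref{eq:MALYSTROM2}, together with Lemma~\ref{lem:RestrictionSemiregularMatching} guaranteeing that $(\M_A\cup\M_B)\colouringpI{2}$ remains a proper \semiregular matching with only $\eta^2 k/10^5$ leakage from vertices outside $\shadowsplit$, should ensure that none of the four subcases ever runs out of neighbours. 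The exclusion of $\shadowsplit\cup\shadow_{\GD}(\WantiC,\eta^2 k/10^5)$ from $V_0\cup V_1$ built into Preconfiguration~$\mathbf{(\heartsuit1)}$ is precisely what prevents pathological configurations in this case analysis.
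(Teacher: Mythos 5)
Your high-level plan is the right one, and you correctly identify the two central ideas: the asymmetry between $V_0$ (degree $h/2$, needs balance) and $V_1$ (degree $h$, cover $\mathcal F$ kills balance), and the case analysis on where in the sparse decomposition the free degree of $x_j$ lands. But there is a real gap in how you handle the matching case in Phase~1, and it is precisely the part the paper spends most of its effort on.

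Your plan routes every matching-edge hit through Lemma~\ref{lem:embed:BALANCED}. But that lemma requires the entire (sub)pair to sit inside $\neighbour_G(x_j)$, so it can only consume $\sum_{(A,B)}\min\{\deg(x_j,A),\deg(x_j,B)\}$, the ``balanced-seen'' part of $x_j$'s degree into $V(\M_A\cup\M_B)$. When $x_j$ sees $A$ heavily but $B$ hardly at all, that degree is invisible to Lemma~\ref{lem:embed:BALANCED}, yet it can dominate the count in~\eqref{COND:P1:3}. To use it you must embed one-sidedly into $(A,B)$ via Lemma~\ref{lem:embed:regular}; but then you need to know that the opposite side still has room, and a greedy per-root, per-component strategy gives you no control over this. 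The paper resolves this by splitting the $T_j$'s into three separate stages across all $j$ at once: Stage~I packs each $T_j$ into the maximal balanced submatching $\M_j$ seen from $x_j$ and crucially establishes that the total image $F_\ell$ stays $\tau k$-balanced w.r.t.\ $\M_A\cup\M_B$; this invariant drives Claim~\ref{cl:OneSideSee}, which shows that for the remaining submatching $\mathcal N^*$ there is a cover $\mathcal X_j$ with $\deg_{\GD}(x_j,\bigcup\mathcal X_j)$ negligible, i.e.\ whatever one-sided degree $x_j$ still has points at sides of matching edges with plenty of free space on the partner side. Only then can Stage~II's case $\mathbf{(U1)}$ embed one-sidedly with Lemma~\ref{lem:embed:regular} without wasting degree or hitting a full partner cluster. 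Your plan has no analogue of this, so after a few roots you may find all of some $x_j$'s remaining $\M$-degree stranded on one side of edges whose partner side is already exhausted.

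A second, smaller imprecision: you claim the bounds guarantee $\eta^2 k/1000$ unused $\Gcapt$-neighbours ``so split into cases on the position of $w$''. Having that many scattered unused neighbours is not the same as having $\Omega(\eta^2 k)$ of them concentrated in one of the four target sets; you need the pigeonhole of Claim~\ref{cl:OneInFour}, and its error budget leans on Claims~\ref{cl:PROCH}, \ref{cl:MjVSNj}, \ref{cl:ztratimeMaloW}, \ref{cl:OneSideSee} — again byproducts of the Stage~I preprocessing. So the structure you sketch is not merely a presentation choice; the interleaved per-root ordering loses the very invariant that makes the degree-accounting close.
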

\begin{proof}
The embedding has three stages. In Stage~I we embed some components of $T_j-r_j$
(for all $j=1,\ldots, \ell$) in the parts of $(\M_A\cup\M_B)$-edges which are
``seen in a balanced way from $x_j$''. In Stage~II we embed the remaining
components of $T_j-r_j$. Last, in Stage~III we embed all the components
$T'_j-r'_j$ (for all $j=1,\ldots, \ell'$).

Let us first give a bound on the total size of $(\M_A\cup\M_B)$-vertices $C\in\V(\M_A\cup\M_B)$, $C\subset \bigcup \clusters$ seen from a given vertex via edges of $\GD$. This bound will be used repeatedly.
\begin{claim}\label{cl:PROCH}
Let $v\in V(G)$. Then for  $\mathcal U:=\{C\in\V(\M_A\cup \M_B)\::\: C\subset\bigcup \clusters, \deg_{\GD}(x, C)>0\}$ we have 
\begin{align}\label{eq:PROCH1}
 \left|\bigcup \mathcal U\right|&\le  \frac{2(\Omega^*)^2 k}{\gamma^2}\;,\mbox{and}\\
\label{eq:PROCH2}
|\mathcal U|&\le  \frac{2(\Omega^*)^2k}{\gamma^2\pi\clustersize}\;.
\end{align}
\end{claim}
\begin{proof}[Proof of Claim~\ref{cl:PROCH}]
Let $\mathbf U\subset \clusters$ be the set of  those clusters which intersect $\neighbour_{\GD}(x_j)$. Using the same argument as in the proof of Claim~\ref{cl:Megdes} we get that $|\bigcup\mathbf U|\le \frac{2(\Omega^*)^2 k}{\gamma^2}$, i.e.~\eqref{eq:PROCH1} holds. Also,~\eqref{eq:PROCH2} follows since $\M_A\cup\M_B$ is $(\epsilon,d,\pi\clustersize)$-\semiregulars.
\end{proof}

\medskip\noindent\underline{Stage I:} We proceed inductively for $j=1,\ldots, \ell$. Suppose that we embedded some components $\mathcal F_1,\ldots, \mathcal F_{j-1}$ of the forests $T_1-r_1,\ldots,T_{j-1}-r_{j-1}$. We write $F_{j-1}$ for the partial images of this embedding. We inductively assume that
\begin{equation}\label{eq:Fjbal}
\mbox{$F_{j-1}$ is $\tau k$-balanced w.r.t.\ $\M_A\cup\M_B$.} 
\end{equation}

For each $(A,B)\in\M_A\cup \M_B$ with $\deg_{\GD}(x_j,(A\cup B)\setminus \smallatoms)>0$ take a subpair $(A',B')$,
$$A'\subset(A\cap\neighbour_{\GD\cup\Gcapt}(x_j))\colouringpI{2}\setminus F_{j-1} \quad\mbox{and}\quad B'\subset(B\cap\neighbour_{\GD\cup\Gcapt}(x_j))\colouringpI{2}\setminus F_{j-1}\;,$$
such that 
$$|A'|=|B'|=\min\big\{ |(A\cap\neighbour_{\GD\cup\Gcapt}(x_j))\colouringpI{2}\setminus F_{j-1}|, |(B\cap\neighbour_{\GD\cup\Gcapt}(x_j))\colouringpI{2}\setminus F_{j-1}|\big\}\;.$$
These pairs comprise a \semiregular matching $\mathcal N_j$. (Pairs
$(A,B)\in\M_A\cup \M_B$ with 
$$\deg_{\GD}(x_j,(A\cup B)\setminus
\smallatoms)=0$$
are not considered for the construction of $\mathcal N_j$.) 

Let
$\M_j:=\{(A',B')\in\mathcal N_j\::\: |A'|>\alpha |A|\}$, for 
 $$\alpha:=\frac{\eta^3\gamma^2}{10^{10}(\Omega^*)^2}.$$
  By
Fact~\ref{fact:BigSubpairsInRegularPairs} $\M_j$ is a $(2\epsilon/\alpha, d/2,
\alpha\pi \clustersize)$-\semiregular matching.
\begin{claim}\label{cl:MjVSNj}
We have $|V(\mathcal M_j)|\ge |V(\mathcal N_j)|-\frac{\eta^3 k}{10^9}$. 
\end{claim}
\begin{proof}[Proof of Claim~\ref{cl:MjVSNj}]
By~\eqref{eq:PROCH1}, and by Property~\ref{commonsetting3} of
Setting~\ref{commonsetting}, we have 
$$|V(\mathcal M_j)|\ge |V(\mathcal N_j)|-\alpha\cdot 2\cdot \frac{2(\Omega^*)^2 k}{\gamma^2}\;.$$
\end{proof}
Let $\mathcal F_j$ be a maximal set of components of $T_j-r_j$ for which
\begin{equation}\label{eq:maxFj}
v(\mathcal F_j)\le |V(\M_j)|-\frac{\eta^3 k}{10^9}\;.
\end{equation}
Observe that if $\mathcal F_j$ does not contain all the components of $T_j-r_j$ then
\begin{equation}\label{eq:natrave}
v(\mathcal F_j)\ge |V(\M_j)|-\frac{\eta^3 k}{10^9}-\tau k\ge |V(\M_j)|-\frac{2\eta^3 k}{10^9}\;.
\end{equation}

Lemma~\ref{lem:embed:BALANCED} yields an embedding of $\mathcal F_j$ in $\M_j$. Further the lemma together with the induction hypothesis~\eqref{eq:Fjbal} guarantees that the embedding can be chosen so that the new image set $F_{j}$ is $\tau k$-balanced w.r.t.\ $\M_A\cup\M_B$. We fix this embedding, thus ensuring~\eqref{eq:Fjbal} for step $i$. If $\mathcal F_j$ does not contain all the components of $T_j-r_j$ then~\eqref{eq:natrave} gives
\begin{equation}\label{eq:OnLawn}
|V(\M_j)\setminus F_j| \le \frac{2\eta^3 k}{10^9}\;.
\end{equation}

\medskip\noindent\underline{After Stage I:} Let $\mathcal N^*$ be a maximal
\semiregular matching contained in $(\M_A\cup\M_B)\colouringpI{2}$ which avoids
$F_\ell$. We need two auxiliary claims.
\begin{claim}\label{cl:ztratimeMaloW}
We have 
$$\maxdeg_{\GD}\left(V_0\cup V_1,  V(\M_A\cup\M_B)\colouringpI{2}\setminus(V(\mathcal N^*)\cup F_{\ell}\cup\smallatoms)\right)<\frac{\eta^3 k}{10^9}\;.$$
\end{claim}
\begin{proof}[Proof of Claim~\ref{cl:ztratimeMaloW}]
Let us consider an arbitrary vertex $x\in V_0\cup V_1$. By~\eqref{eq:PROCH2} the
number of $(\M_A\cup\M_B)$-vertices $C\subset \bigcup\clusters$ for which
$\deg_{\GD}(x,C)>0$ is at most $\frac{2(\Omega^*)^2k}{\gamma^2\pi\clustersize}$.

Due to~\eqref{eq:Fjbal}, we have
 for each $(\M_A\cup\M_B)$-edge $(A,B)$  that 
\begin{equation}\label{eq:sumicek}
\left|(A\cup B)\colouringpI{2}\setminus (V(\mathcal N^*)\cup F_\ell)\right|\le \tau k\;.
\end{equation}
                                                                                                                                                                                     
Thus summing~\eqref{eq:sumicek} over all $(\M_A\cup\M_B)$-edges $(A,B)$ with $\deg_{\GD}(x,(A\cup B)\setminus \smallatoms)>0$ we get
$$\deg_{\GD}\left(x,  V(\M_A\cup\M_B)\colouringpI{2}\setminus(V(\mathcal
N^*)\cup F_{\ell}\cup\smallatoms)\right)\le
\frac{4(\Omega^*)^2k}{\gamma^2\pi\clustersize} \cdot \tau k\;.$$ The claim now
follows by~\eqref{eq:KONST}.
\end{proof}
\begin{claim}\label{cl:OneSideSee}
Let $j\in[\ell]$ be such that $\mathcal F_j$ does not consist of all the components of $T_j-r_j$. Then there exists an $\mathcal N^*$-cover $\mathcal X_j$ such that $\deg_{\GD}\left(x_j,\bigcup \mathcal X_j\right)\le \frac{3 \eta^3k}{10^9}$.
\end{claim}
\begin{proof}[Proof of Claim~\ref{cl:OneSideSee}]
First, we define an $(\M_A\cup\M_B)$-cover $\mathcal R_j$ as follows. For an $(\M_A\cup\M_B)$-edge $(A,B)$ let $\mathcal R_j$ contain $A$ if
$$|(A\cap\neighbour_{\GD\cup\Gcapt}(x_j))\colouringpI{2}\setminus F_{j-1}|\le |(B\cap\neighbour_{\GD\cup\Gcapt}(x_j))\colouringpI{2}\setminus F_{j-1}|\;,$$ and $B$ otherwise. Observe that by the definition of $\mathcal N_j$, we have
\begin{equation}\label{eq:nulanatrave}
\deg_{\GD}\left(x_j,\bigcup \mathcal R_j\setminus V(\mathcal N_j)\right)=0\;.
\end{equation}
Also, we have $V(\mathcal N^*)\cap \bigcup \mathcal R_j\cap V(\M_j)\subset V(\mathcal N^*)\cap V(\M_j)\subset V(\M_j)\setminus F_j$. In particular,~\eqref{eq:OnLawn} gives that
\begin{equation}\label{eq:wac}
\left|V(\mathcal N^*)\cap \bigcup \mathcal R_j\cap V(\M_j)\right| \le \frac{2\eta^3 k}{10^9}\;.
\end{equation}

Let $\mathcal X_j$ be the restriction of $\mathcal R_j$ to $\mathcal N^*$. We then have 
\begin{align*}
\deg_{\GD}\left(x_j,\bigcup \mathcal X_j\right)&=\deg_{\GD}\left(x_j,V(\mathcal N^*)\cap \bigcup \mathcal R_j\right)\\
\JUSTIFY{by~\eqref{eq:nulanatrave}}&\le \deg_{\GD}\left(x_j, V(\mathcal N^*)\cap\bigcup \mathcal R_j\cap V(\mathcal M_j)\right)+\deg_{\GD}\left(x_j, V(\mathcal N_j)\setminus V(\M_j)\right)\\
\JUSTIFY{by~\eqref{eq:wac}, Claim~\ref{cl:MjVSNj}}&\le \frac{3\eta^3 k}{10^9}\;.
\end{align*}
\end{proof}

For every $j\in[\ell]$ we define $\mathcal N^*_j\subset \mathcal N^*$ as those $\mathcal N^*$-edges $(A,B)$ for which we have $$\big((A\cup B)\setminus\bigcup \mathcal X_j\big)\cap\smallatoms=\emptyset\;.$$

\medskip\noindent\underline{Stage II:} We shall inductively for $j=1,\ldots,
\ell$ embed those components of $T_j-r_j$ that are not included in $\mathcal
F_j$; let us denote the set of these components by $\mathcal K_j$.
There is nothing to do when $\mathcal K_j=\emptyset$, so let us assume otherwise.

We write  $\BL:=\{C\in\clusters\::\:C\subset\largevertices{\eta}{k}{G}\}$. Let
$K\in \mathcal K_j$ be a component that has not been embedded yet. We write $U'$
for the total image of what has been embedded (in Stage~I, and Stage~II so far), combined with $U$. We claim that $x_j$ has a substantial degree into one of four specific vertex sets.
\begin{claim}\label{cl:OneInFour}
At least one of the following four cases occurs.
\begin{itemize}
 \item[$\mathbf{(U1)}$] $\deg_{\GD}\left(x_j, V(\mathcal
 N^*_j)\setminus \bigcup \mathcal X_j\right)-|U'\cap
  V(\mathcal N^*_j)|\ge \frac{\eta^2
 k}{10^4}$,
 \item[$\mathbf{(U2)}$] $\deg_{\GD}\left(x_j, \smallatoms\setminus
 U'\right)\ge \frac{\eta^2 k}{10^4}$,
 \item[$\mathbf{(U3)}$] $\deg_{\Gcapt}\left(x_j, V(\Gexp)\setminus
 U'\right)\ge \frac{\eta^2 k}{10^4}$,
 \item[$\mathbf{(U4)}$] $\deg_{\GD}\left(x_j, \bigcup \BL\setminus (L_\#\cup
 V(\Gexp)\cup U')\right)\ge \frac{\eta^2 k}{10^4}$.
\end{itemize}
\end{claim}
\begin{proof}
Write $U'':=(U')\colouringpI{2}=U'\sm U$. By~\eqref{COND:P1:3}, we have 
\begin{align*}
\frac h2&\le  \deg_{\Gcapt}(x_j,\Vgood\colouringpI{2})\\
&\le 
\deg_{\GD}\left(x_j,V(\mathcal N^*_j)\colouringpI{2}  \setminus  \bigcup
\mathcal X_j\right)
+ \deg_{\GD}\left(x_j,\smallatoms\colouringpI{2}\setminus
(V(\mathcal N^*_j)\cup V(\Gexp) \cup \bigcup
\mathcal X_j)\right)\\
&~~~+
\deg_{\Gcapt}\left(x_j,V(\Gexp)\colouringpI{2}\right)+
\deg_{\GD}\left(x_j,\bigcup\BL\colouringpI{2}\setminus (L_\#\cup V(\Gexp)\cup
V(\mathcal N^*_j))\right)\\
&~~~+\deg_{\GD}\left(x_j,V(\M_A\cup \M_B)\colouringpI{2}\setminus (V(\mathcal
N^*)\cup\smallatoms)\right)+ \deg_{\GD}\left(x_j,\bigcup\mathcal X_j\right)\\
\JUSTIFY{by C\ref{cl:ztratimeMaloW}, C\ref{cl:OneSideSee}}
&\le \deg_{\GD}\left(x_j,V(\mathcal N^*_j)\setminus
 \bigcup \mathcal X_j\right)-\left|U'\cap V(\mathcal N^*_j)\right|\\
&~~~+\deg_{\GD}\left(x_j,\smallatoms\colouringpI{2}\setminus (V(\mathcal
N^*_j)\cup \bigcup \mathcal X_j\cup U'')\right)+
\deg_{\Gcapt}\left(x_j,V(\Gexp)\colouringpI{2}\setminus  U''\right)\\
&~~~+\deg_{\GD}\left(x_j,\bigcup\BL\colouringpI{2}\setminus (L_\#\cup
V(\Gexp)\cup  V(\mathcal N^*_j)\cup U'')\right)\\
&~~~+\frac{4\eta^3 k}{10^9}+|U''|\;.
\end{align*}
The claim follows since $|U''|\le \frac h2-\frac{\eta^2 k}{1000}$ by~\eqref{eq:MALYSTROM1}.
\end{proof}

We now briefly describe how to embed $K$ in each of the cases
$\mathbf{(U1)}$--$\mathbf{(U4)}$.

\begin{itemize}
 \item In case~$\mathbf{(U1)}$ 
recall that each $(\M_A\cup \M_B)$-edge contains at most one $\mathcal
N^*_j$-edge. Thus by~\eqref{eq:PROCH1} we get that there is an $(\M_A\cup \M_B)$-edge $(A,B)$ with
\begin{equation}\label{eq:smethod}
 \deg_{\GD}\left(x_j, (V(\mathcal N^*_j)\cap (A\cup B))\setminus \bigcup
 \mathcal X_j\right)-|V(\mathcal N^*_j)\cap U'\cap (A\cup B)|\ge \frac{\eta^2
 k}{10^4}\cdot \frac{\gamma^2}{2(\Omega^*)^2 k}\cdot |A|. \ \ \ \ 
\end{equation}
Let us fix this edge $(A,B)$, and let $(A',B')$ be the corresponding edge in
$\mathcal N^*_j$. Suppose without loss of generality that $B\in\mathcal X_j$. We
can now embed $K$ in $(A',B')$ using Lemma~\ref{lem:embed:regular}
with the following input: $C_\PARAMETERPASSING{L}{lem:embed:regular}:=A'
,D_\PARAMETERPASSING{L}{lem:embed:regular}:=B',
X_\PARAMETERPASSING{L}{lem:embed:regular}:=A'\setminus U',
X^*_\PARAMETERPASSING{L}{lem:embed:regular}:=\neighbour_{\GD}(x_j, A'\setminus
U'), 
Y_\PARAMETERPASSING{L}{lem:embed:regular}:=B'\setminus U',
\epsilon_\PARAMETERPASSING{L}{lem:embed:regular}:=\frac{8\cdot
10^4(\Omega^*)^2\epsilon}{\gamma^2\eta^2},
\beta_\PARAMETERPASSING{L}{lem:embed:regular}:=d/6$.
 With the help of~\eqref{eq:smethod}, we get that $$\min\{X_\PARAMETERPASSING{L}{lem:embed:regular},Y_\PARAMETERPASSING{L}{lem:embed:regular}\}\ge
|X^*_\PARAMETERPASSING{L}{lem:embed:regular}|\ge \frac
{\gamma^2\eta^2|A|}{4\cdot 10^4(\Omega^*)^2} \ge 4\frac{\eps_\PARAMETERPASSING{L}{lem:embed:regular}}{\beta_\PARAMETERPASSING{L}{lem:embed:regular}}|A'|\;.$$

 \item In Case~$\mathbf{(U2)}$ we embed $K$ using
 Lemma~\ref{lem:embed:avoidingFOREST} with the following input:
 $\epsilon_\PARAMETERPASSING{L}{lem:embed:avoidingFOREST}:= \epsilon'$, $U_\PARAMETERPASSING{L}{lem:embed:avoidingFOREST}:= U'$,
 $U^*_\PARAMETERPASSING{L}{lem:embed:avoidingFOREST}:= \neighbour_{\GD}(x_j,
 \smallatoms\setminus U')$, $\ell:=1$.

 \item In Case~$\mathbf{(U3)}$ we embed $K$ using Lemma~\ref{lem:embed:greyFOREST}
 with the following input: $H_\PARAMETERPASSING{L}{lem:embed:greyFOREST}:=\Gexp$, 
 $V_{1,\PARAMETERPASSING{L}{lem:embed:greyFOREST}}:=V_{2,\PARAMETERPASSING{L}{lem:embed:greyFOREST}}:=V(\Gexp)$,
 $U_\PARAMETERPASSING{L}{lem:embed:greyFOREST}:=U'$,
 $U^*_\PARAMETERPASSING{L}{lem:embed:greyFOREST}:= \neighbour_{\Gexp}(x_j,
 V(\Gexp)\setminus U')$, $Q_\PARAMETERPASSING{L}{lem:embed:greyFOREST}:=1$,
$\zeta_\PARAMETERPASSING{L}{lem:embed:greyFOREST}:= \rho$,
$\ell_\PARAMETERPASSING{L}{lem:embed:greyFOREST}:=1$.

 \item In Case~$\mathbf{(U4)}$ we proceed as follows. As
 $\deg_{\GD}(x_j,\WantiC)<\frac{\eta^2k}{10^5}$ (cf.
 Definition~\ref{def:heart1}), we have $$\deg_{\GD}\left(x_j, \bigcup
 \BL\setminus (L_\#\cup V(\Gexp)\cup \WantiC\cup U')\right)\ge \frac{2
 \eta^2 k}{10^5}\;.
 $$ 
As for~\eqref{eq:smethod}, we use~\eqref{eq:PROCH1} to find a cluster $A\in\BL$ with
\begin{equation}\label{barrioyungay}
 \deg_{\GD}\left(x_j, A\setminus (L_\#\cup V(\Gexp)\cup \WantiC \cup
 U')\right) \ge \frac{2\eta^2 k}{10^5}\cdot \frac{\gamma^2}{2(\Omega^*)^2
 k}\cdot |A|= \frac{\eta^2 \gamma^2}{10^5(\Omega^*)^2
 }\cdot |A|.
\end{equation}
Recall that by the definition of $L_\#$ and $ \WantiC$ (see~\eqref{eq:defLsharp} and~\eqref{eq:defWantiC}), we have that 
$$\mindeg_{\Gcapt}(A\setminus (L_\#\cup \WantiC), V(G)\setminus
\HugeVertices)\ge (1+\frac{4\eta}{5})k\;.$$
Thus, for the set $$A^*:=(\neighbour_{\GD}(x_j)\cap
A)\setminus (L_\#\cup V(\Gexp)\cup \WantiC \cup U')$$
at least one of the following subcases must occur:
\begin{enumerate}
 \item[$\mathbf{(U4a)}$] For at least $\frac12|A^*|$ vertices $v\in A^*$ we have
 $\deg_{\Gcapt}(v,\smallatoms\setminus U')\ge \frac{2\eta k}5$.
  \item[$\mathbf{(U4b)}$] For at least $\frac12|A^*|$ vertices $v\in A^*$ we have
 $\deg_{\Gblack}(v,\bigcup\clusters\setminus U')\ge \frac{2\eta k}5$.
\end{enumerate}
In case $\mathbf{(U4a)}$ we embed $K$ using
Lemma~\ref{lem:embed:avoidingFOREST}. The details are very similar to
$\mathbf{(U2)}$. As for case $\mathbf{(U2b)}$,
let us take an arbitrary vertex $v\in A^*$ with
$\deg_{\Gblack}(v,\bigcup\clusters\setminus U')\ge \frac{2\eta k}5$. In
particular, using~\eqref{eq:PROCH1}, we find a cluster $B\in \clusters$ with 
\begin{equation}\label{stantonmoore}
\deg_{\Gblack}(v,B\setminus
U')\ge \frac{\gamma^2\eta}{10(\Omega^*)^2}|B|\;.
\end{equation}
 Map the root $r_K$ of $K$ to
$v$ and embed $K-r_K$ in $(A,B)$ using
Lemma~\ref{lem:embed:regular}\footnote{Lemma~\ref{lem:embed:regular} deals with
embedding a single tree in a regular pair, whereas $K-r_K$ has several
components. We therefore apply the lemma repeatedly for each component.} with
the following input:
$C_\PARAMETERPASSING{L}{lem:embed:regular}:= B,
D_\PARAMETERPASSING{L}{lem:embed:regular}:= A,
X_\PARAMETERPASSING{L}{lem:embed:regular}:= B\setminus U',
Y_\PARAMETERPASSING{L}{lem:embed:regular}:= A\setminus U',
X^*_\PARAMETERPASSING{L}{lem:embed:regular}:= \neighbour_{\Gblack}(v, B\setminus
U'),
\beta_\PARAMETERPASSING{L}{lem:embed:regular}:= \gamma^2\eta/(10(\Omega^*)^2),
\epsilon_\PARAMETERPASSING{L}{lem:embed:regular}:= \epsilon'$. By~\eqref{barrioyungay} and~\eqref{stantonmoore} we see that $X_\PARAMETERPASSING{L}{lem:embed:regular},
Y_\PARAMETERPASSING{L}{lem:embed:regular}$ and $
X^*_\PARAMETERPASSING{L}{lem:embed:regular}$ are large enough.
\end{itemize}

\medskip\noindent\underline{Stage III:} In this stage we embed the trees
$\{T'_j\}_{j=1}^{\ell'}$. The embedding techniques are as in Stage~II. The cover
$\mathcal F'$ from Definition~\ref{def:heart1} plays the same role as the covers
$\mathcal X_j$ in Stage~II. Observe that $\mathcal F'$ is universal whereas the
covers $\mathcal X_j$ are specific for each vertex $x_j$. A second simplification is that in Stage~III we use
the \semiregular matching $(\M_A\cup \M_B)\colouringpI{2}$ for embedding (in a
counterpart of $\mathbf{(U1)}$) instead of $\mathcal N^*_j$.

Again we proceed inductively for $j=1,\ldots,
\ell$ with embedding the components of $T'_j-r'_j$, which we denote by $\mathcal
K'_j$.
 Let
$K\in \mathcal K'_j$ be a component that has not been embedded yet. We write
$U'$ for the total image of what has been embedded (in Stage~I,~II, and
Stage~III so far), combined with $U$ and let $U''=U'\cap \colouringp{2}$. We
claim that $y_j$ has a substantial degree into one of four specific vertex sets.
\begin{claim}\label{cl:OneInFour'}
At least one of the following four cases occurs.
\begin{itemize}
 \item[$\mathbf{(U1')}$] $\deg_{\GD}\left(y_j, V((\M_A\cup
 \M_B)\colouringpI{2})\setminus (\smallatoms\cup \bigcup \mathcal
 F')\right)\\-\left|U''\cap\left(\bigcup \mathcal F'\cup ( V((\M_A\cup
 \M_B)\colouringpI{2})\setminus \smallatoms\right)\right|\ge \frac{\eta^2 k}{10^4}$,
 \item[$\mathbf{(U2')}$] $\deg_{\GD}\left(y_j, \smallatoms\setminus
 U'\right)\ge \frac{\eta^2 k}{10^4}$,
 \item[$\mathbf{(U3')}$] $\deg_{\Gcapt}\left(y_j, V(\Gexp)\setminus
 U'\right)\ge \frac{\eta^2 k}{10^4}$,
 \item[$\mathbf{(U4')}$] $\deg_{\GD}\left(y_j, \bigcup \BL\setminus (L_\#\cup
 V(\Gexp)\cup U')\right)\ge \frac{\eta^2 k}{10^4}$.
\end{itemize}
\end{claim}
\begin{proof}
 As $y_j\in V_1$, we have that
\begin{align*}
h&\le  \deg_{\Gcapt}(y_j,\Vgood\colouringpI{2})\\
&\le 
\deg_{\GD}\left(y_j,V((\M_A\cup \M_B)\colouringpI{2})\setminus (\smallatoms\cup
V(\Gexp)\cup \bigcup \mathcal F')\right)+
\deg_{\GD}\left(y_j,\smallatoms\colouringpI{2}\setminus (V(\Gexp)\cup  \bigcup
\mathcal F'\right)\\ &~~~+\deg_{\GD}(y_j,\bigcup \mathcal F') +
\deg_{\GD}\left(y_j,\bigcup\BL\colouringpI{2}\setminus (L_\#\cup V(\Gexp)\cup 
V(\M_A\cup \M_B)\right)\\
&~~~+\deg_{\Gcapt}\left(y_j, V(\Gexp)\colouringpI{2}\right)+
\deg_{\GD}\left(y_j, V(\M_A\cup\M_B)\colouringpI{2}\setminus
V((\M_A\cup \M_B)\colouringpI{2})\right)\\ 
\JUSTIFY{by L~\ref{lem:RestrictionSemiregularMatching}}&\le
\deg_{\GD}\left(y_j,V((\M_A\cup \M_B)\colouringpI{2})\setminus (\smallatoms\cup V(\Gexp)\cup \bigcup \mathcal
F')\right)\\ &~~~-\left|U''\cap \left(\bigcup \mathcal F'\cup  (V((\M_A\cup
\M_B)\colouringpI{2})\setminus \smallatoms)\right)\setminus V(\Gexp)\right|\\
&~~~+\deg_{\GD}\left(y_j,\smallatoms\colouringpI{2}\setminus (U''\cup
V(\Gexp)\cup \bigcup \mathcal F')\right)
+\deg_{\Gcapt}\left(y_j,V(\Gexp)\colouringpI{2}\setminus U''\right)\\
&~~~+\deg_{\GD}\left(y_j,\bigcup\BL\colouringpI{2}\setminus (L_\#\cup
V(\Gexp)\cup V(\M_A\cup \M_B)\cup U'')\right)+\frac{2\eta^3
k}{10^3}+\frac {\eta^2k}{10^5}+|U''|\;.
\end{align*}
The claim follows since $|U''|\le \sum_{j}|T_j|+\sum_{j}|T_j'|\le h-\frac
{\eta^2k}{1000}$.

\end{proof}

Cases~$\mathbf{(U1')}$--$\mathbf{(U4')}$ are treated analogously as
Cases~$\mathbf{(U1)}$--$\mathbf{(U4)}$.

\end{proof}

\begin{lemma}\label{lem:embed:heart2}
Suppose we are in Setting~\ref{commonsetting} and~\ref{settingsplitting}, and that the sets $V_0$ and $V_1$ witness Preconfiguration~$\mathbf{(\heartsuit2)}(h)$. Suppose that $U\subset \colouringp{0}\cup\colouringp{1}$ and $|U|\le k$. Suppose that $\{x_j\}_{j=1}^\ell\subset V_0\cup V_1$ are distinct vertices. Let $\{(T_j,r_j)\}_{j=1}^\ell$ be a family of rooted trees such that each component of $T_j-r_j$ has order at most $\tau k$.

If $\sum_j v(T_j)\le h-\eta^2 k/1000$ and $|U|+\sum_j v(T_j)\le k$ then there exist disjoint $(r_j\hookrightarrow x_j, V(T_j)\setminus\{r_j\}\hookrightarrow V(G)\setminus U)$-embeddings of $T_j$ in $G$.
\end{lemma}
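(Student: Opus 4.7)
The plan is to follow the strategy of Lemma~\ref{lem:embed:heart1}, but in a much simplified form. Two sources of complication in the earlier proof are absent here: first, there is no $\mathcal F$-cover restriction to track (Preconfiguration~$\mathbf{(\heartsuit 2)}$ imposes only~\eqref{COND:P2:4}); second, since \eqref{COND:P2:4} gives the full degree~$h$ to $\Vgood\colouringpI{2}$ from \emph{every} $x_j\in V_0\cup V_1$, we do not need the preliminary balanced-embedding stage (Stage~I) of Lemma~\ref{lem:embed:heart1}, which was only forced by the weaker condition~\eqref{COND:P1:3} for $V_0$. Thus we can treat all~$x_j$ uniformly and simply iterate the analogue of Stage~III.

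Concretely, process the trees in order $j=1,\ldots,\ell$, mapping $r_j\mapsto x_j$ and then embedding the components $K$ of $T_j-r_j$ one by one. At each point, let $U'$ be the union of $U$ with the current image; by hypothesis $|U'|\le k$. The core step is an analogue of Claim~\ref{cl:OneInFour'}: starting from $\deg_{\Gcapt}(x_j,\Vgood\colouringpI{2})\ge h$ and splitting $\Vgood\colouringpI{2}$ into the four pieces $V(\M_A\cup\M_B)\colouringpI{2}\setminus\smallatoms$, $\smallatoms\colouringpI{2}$, $V(\Gexp)\colouringpI{2}$, and $\bigcup\BL\colouringpI{2}\setminus(L_\#\cup V(\Gexp)\cup V(\M_A\cup\M_B))$ (where $\BL:=\{C\in\clusters\::\:C\subset\largevertices{\eta}{k}{G}\}$), and absorbing the usual small error terms coming from $\shadowsplit$ (via Lemma~\ref{lem:RestrictionSemiregularMatching}), $L_\#$, and $(\M_A\cup\M_B)\colouringpI{2}$ vs.\ its restriction, we conclude that at least one of the following four cases holds (with threshold, say, $\eta^2 k/10^4$):
\begin{itemize}
\item[$\mathbf{(V1)}$] $\deg_{\GD}\!\left(x_j,V((\M_A\cup\M_B)\colouringpI{2})\setminus\smallatoms\right)-\left|U'\cap V((\M_A\cup\M_B)\colouringpI{2})\right|\ge\eta^2 k/10^4$,
\item[$\mathbf{(V2)}$] $\deg_{\GD}(x_j,\smallatoms\setminus U')\ge\eta^2 k/10^4$,
\item[$\mathbf{(V3)}$] $\deg_{\Gcapt}(x_j,V(\Gexp)\setminus U')\ge\eta^2 k/10^4$,
\item[$\mathbf{(V4)}$] $\deg_{\GD}\!\left(x_j,\bigcup\BL\setminus(L_\#\cup V(\Gexp)\cup U')\right)\ge\eta^2 k/10^4$.
\end{itemize}
The slack $\eta^2 k/1000$ in the hypothesis $\sum_j v(T_j)\le h-\eta^2 k/1000$ is exactly what buys this dichotomy.

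Given the dichotomy, embed $K$ by the same method as in Cases~$\mathbf{(U1')}$--$\mathbf{(U4')}$ of Lemma~\ref{lem:embed:heart1}: in $\mathbf{(V1)}$ use Fact~\ref{fact:clustersSeenByAvertex}/\eqref{eq:PROCH1}-type bookkeeping to locate a single $(\M_A\cup\M_B)$-edge $(A,B)$ carrying a $\gamma^2/(2(\Omega^*)^2)$-fraction of the excess degree, pass to the restricted pair in $\colouringp{2}$, and apply Lemma~\ref{lem:embed:regular}; in $\mathbf{(V2)}$ apply Lemma~\ref{lem:embed:avoidingFOREST} inside $\smallatoms$; in $\mathbf{(V3)}$ apply Lemma~\ref{lem:embed:greyFOREST} inside $\Gexp$; and in $\mathbf{(V4)}$ proceed as in $\mathbf{(U4)}$, embedding the root of $K$ into a vertex $v$ of a large cluster $A\in\BL$ with $v\notin L_\#\cup\WantiC\cup V(\Gexp)$ and then extending either via Lemma~\ref{lem:embed:avoidingFOREST} (if $v$ has large $\Gcapt$-degree into $\smallatoms\setminus U'$) or via Lemma~\ref{lem:embed:regular} on a pair $(A,B)$ with $B\in\clusters$ witnessed by $\deg_{\Gblack}(v,B\setminus U')\ge\gamma^2\eta|B|/(10(\Omega^*)^2)$. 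All numerical side-conditions in these lemmas are satisfied thanks to~\eqref{eq:KONST} exactly as in the proof of Lemma~\ref{lem:embed:heart1}.

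The only mild obstacle is verifying the dichotomy cleanly: one must check that the various small sets (the loss coming from $\shadowsplit$ in Lemma~\ref{lem:RestrictionSemiregularMatching}, the $\eta k/100$ loss from $L_\#$, and the definition of $\Vgood$ in \eqref{eq:defVgood} which removes $\HugeVertices\cup L_\#$) together consume at most, say, $\eta^2 k/10^5$ of the degree, so that four pieces of size $\ge\eta^2 k/10^4$ remain after subtracting $|U'\cap\Vgood\colouringpI{2}|\le h-\eta^2 k/1000$. This is a routine computation identical to the one already performed in Claim~\ref{cl:OneInFour'}. No genuinely new idea is needed beyond Lemma~\ref{lem:embed:heart1}.
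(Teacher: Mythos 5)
Your route differs from the paper's: the paper's proof simply re-runs Stages~I and~II of the proof of Lemma~\ref{lem:embed:heart1} with $h_\PARAMETERPASSING{L}{lem:embed:heart1}:=2h$, so that \emph{every} $x_j\in V_0\cup V_1$ first goes through the balanced embedding into the matching. You keep only a Stage-III-type argument and drop Stage~I, and this has a genuine gap in your case~$\mathbf{(V1)}$. From the pigeonhole you obtain a pair $(A,B)\in\M_A\cup\M_B$ whose restriction $(A',B')$ to $\colouringp{2}$ satisfies $\deg_{\GD}\bigl(x_j,(A'\cup B')\setminus\smallatoms\bigr)-|U'\cap(A'\cup B')|\ge c|A'|$; this bounds the \emph{sum} $|A'\setminus U'|+|B'\setminus U'|$ from below but says nothing about the split between the two sides. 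If earlier steps (which your scheme makes no attempt to balance) have already filled $B'$ almost entirely, then $|B'\setminus U'|$ can vanish while the excess stays large, and Lemma~\ref{lem:embed:regular} is inapplicable. Cases~$\mathbf{(V2)}$--$\mathbf{(V4)}$ are fine; only~$\mathbf{(V1)}$ breaks.

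In the paper this obstacle is dodged in Stages~II and~III by covers of the matching: the per-vertex covers $\mathcal X_j$ in Stage~II (Claim~\ref{cl:OneSideSee}) and the universal cover $\mathcal F'$ in Stage~III. Because $x_j$ has negligible degree into the side belonging to the cover, the excess is concentrated on a single side $A''$ of each pair, and one reads $|B''\setminus U'|$ off the identity $|A''|=|B''|$. Preconfiguration~$\mathbf{(\heartsuit2)}$ provides no cover, and the covers~$\mathcal X_j$ are precisely a by-product of Stage~I's balanced embedding (Lemma~\ref{lem:embed:BALANCED} together with the invariant~\eqref{eq:Fjbal}), which you have discarded. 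Incidentally, your diagnosis of why Stage~I exists is mistaken: it is not forced by the weaker degree bound~\eqref{COND:P1:3} for $V_0$, but by the fact that the cover property~\eqref{COND:P1:5} is available only for $V_1$ and not for $V_0$. In $\mathbf{(\heartsuit2)}$ it is available for nobody, so Stage~I is needed for every $x_j$ --- which is exactly what the paper's short proof achieves by absorbing $V_0\cup V_1$ into the Stage~I/II machinery.
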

\begin{proof}
The proof is contained in the proof of Lemma~\ref{lem:embed:heart1}. It suffices to repeat the first two stages of the embedding process of that proof. In that setting, we use $h_\PARAMETERPASSING{L}{lem:embed:heart1}=2h$. Note that the condition $\{x_j\}\subset V_0$ in the setting of Lemma~\ref{lem:embed:heart1} gives us the same possibilities for embedding as the condition $\{x_j\}\subset V_0\cup V_1$ in the current setting (cf.~\eqref{COND:P1:3} and~\eqref{COND:P2:4}).
\end{proof}

\begin{lemma}\label{lem:embed:total68}
Suppose we are in Setting~\ref{commonsetting} and~\ref{settingsplitting}, and at least one of the following configurations occurs:
\begin{itemize}
 \item
 Configuration~$\mathbf{(\diamond6)}(\frac{\eta^3\rho^4}{10^{14}(\Omega^*)^4)},4\epsilonD,
 \frac {\gamma^3\rho}{32\Omega^*}, \frac{\eta^2\nu}{2\cdot
 10^4},\frac {3\eta^3}{2\cdot 10^3}, h)$,
 \item
 Configuration~$\mathbf{(\diamond7)}(\frac{\eta^3\gamma^3\rho}{10^{12}(\Omega^*)^4)},
 \frac {\eta\gamma}{400},4\epsilonD, \frac {\gamma^3\rho}{32\Omega^*},
 \frac{\eta^2\nu}{2\cdot 10^4},\frac {3\eta^3}{2\cdot 10^3}, h)$, or
 \item
 Configuration~$\mathbf{(\diamond8)}(\frac{\eta^4\gamma^4\rho}{10^{15}(\Omega^*)^5)},
 \frac {\eta\gamma}{400},\frac {4\epsilon}{\proporce{1}},4\epsilonD ,\frac
 d2,\frac {\gamma^3\rho}{32\Omega^*}
 ,\frac{\proporce{1}\pi\clustersize}{2k},\frac {\eta^2\nu}{2\cdot
 10^4} ,h_1,h)$.
\end{itemize}Suppose
that $(W_A,W_B,\shrubA,\shrubB)$ is a $(\tau k)$-fine partition of a rooted tree
$(T,r)$ of order $k$.
If the total order of the end shrubs is at most $h-2\frac{\eta^2 k}{10^3}$ and
the total order of the internal shrubs is at most $h_1-2\frac {\eta^2k}{10^5}$,
then $T\subset G$.
\end{lemma}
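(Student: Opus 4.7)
The plan is to combine the ``skeleton'' embedding lemmas (Lemmas~\ref{lem:embed:skeleton67} and~\ref{lem:embed:skeleton8}) with the end-shrub embedding lemmas (Lemmas~\ref{lem:embed:heart1} and~\ref{lem:embed:heart2}) in the manner sketched in Sections~\ref{ssec:EmbedOverview67} and~\ref{ssec:EmbedOverview8}. First, I would let $T'$ denote the subtree of $T$ induced by $W_A\cup W_B$ together with all internal shrubs of the $(\tau k)$-fine partition. In Configurations~$\mathbf{(\diamond6)}$ and~$\mathbf{(\diamond7)}$, I would apply Lemma~\ref{lem:embed:skeleton67} to obtain an embedding $\phi$ of $T'$ with $\phi(W_A)\subset V_1$, $\phi(W_B)\subset V_0$, and $\phi(T'-(W_A\cup W_B))\subset \colouringp{1}$. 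In Configuration~$\mathbf{(\diamond8)}$, I would apply Lemma~\ref{lem:embed:skeleton8} analogously. The bound on $v(T')$ needed in Lemma~\ref{lem:embed:skeleton8} follows from the hypothesis on the total order of internal shrubs together with Definition~\ref{ellfine}\eqref{few}, which guarantees $|W_A\cup W_B|\le 672/\tau \le k^{0.1}$.

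Next, I would extend $\phi$ to the end shrubs. Group these end shrubs by their seed: for each $w\in W_A$ let $T^w$ be the rooted tree obtained by taking $w$ as root and attaching all end shrubs in $\shrubA$ whose seed is $w$; analogously, for each $w\in W_B$, let $T^w$ be the rooted tree consisting of $w$ together with the end shrubs of $\shrubB$ attached to $w$ (note $\shrubB$ contains only end shrubs by~\eqref{Bend} of Definition~\ref{ellfine}). Each component of $T^w-w$ has order at most $\tau k$ by~\eqref{small}. I would then apply Lemma~\ref{lem:embed:heart1} (in Preconfiguration~$\mathbf{(\heartsuit1)}$) or Lemma~\ref{lem:embed:heart2} (in Preconfiguration~$\mathbf{(\heartsuit2)}$) to embed all the trees $\{T^w\}_w$ simultaneously, with the root $w$ fixed to $\phi(w)$, taking $U:=\mathrm{Im}(\phi)\setminus\{\phi(w):w\in W_A\cup W_B\}$ as the set of previously used vertices to avoid. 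Here $\{x_j\}:=\phi(W_B)\subset V_0$ and $\{y_j\}:=\phi(W_A)\subset V_1$ play the roles of the prescribed vertices in Lemma~\ref{lem:embed:heart1}.

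The size conditions~\eqref{eq:MALYSTROM1}--\eqref{eq:MALYSTROM3} required by Lemma~\ref{lem:embed:heart1} should be verified as follows. By Definition~\ref{ellfine}\eqref{Bsmall}, the total order of shrubs in $\shrubB$ (all of which are end shrubs rooted at $W_B$) is at most half the total order of end shrubs in $\shrubA$, hence at most half the total order of all end shrubs, which by hypothesis is at most $\tfrac12(h-2\eta^2k/10^3)=h/2-\eta^2k/10^3\le h/2-\eta^2k/1000$, giving~\eqref{eq:MALYSTROM1}. Summing the end-shrub contributions for both $\shrubA$ and $\shrubB$ yields~\eqref{eq:MALYSTROM2} directly from the hypothesis. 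Finally,~\eqref{eq:MALYSTROM3} follows since $|U|+\sum v(T^w)\le v(T)=k$. For Lemma~\ref{lem:embed:heart2}, only the analogue of~\eqref{eq:MALYSTROM2} is required, and it follows identically.

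The main obstacle will be the bookkeeping of constants: one has to verify that the parameters $(\delta_6,\tilde\eps,d',\mu,\gamma',h_2)$, $(\delta_7,\rho',\tilde\eps,d',\mu,\gamma',h_2)$, and $(\delta,\rho',\eps_1,\eps_2,d_1,d_2,\mu_1,\mu_2,h_1,h_2)$ supplied here to the respective configurations satisfy the numerical assumptions of Lemmas~\ref{lem:embed:skeleton67}, \ref{lem:embed:skeleton8}, \ref{lem:embed:heart1} and~\ref{lem:embed:heart2}, using the hierarchy~\eqref{eq:KONST}. In particular, checking that $\gamma'=\frac{3\eta^3}{2\cdot10^3}\le 2\eta^3k/10^3$ as required by Lemma~\ref{lem:embed:heart1}, that the ``end-shrub slack'' $2\eta^2k/10^3$ subsumes the $\eta^2k/1000$ loss of Lemmas~\ref{lem:embed:heart1}--\ref{lem:embed:heart2}, and that the ``internal-shrub slack'' $2\eta^2k/10^5$ subsumes the $\eta^2k/10^5$ loss of Lemma~\ref{lem:embed:skeleton8}, are the most delicate arithmetic verifications. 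Once these checks are in place, the two embedding phases produce the desired embedding $\phi:V(T)\to V(G)$, proving $T\subset G$.
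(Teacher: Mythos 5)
Your proof is correct and follows essentially the same route as the paper's: define $T'$ as the subtree spanned by $W_A\cup W_B$ and the internal shrubs, embed $T'$ via Lemma~\ref{lem:embed:skeleton67} (Configurations~$\mathbf{(\diamond6)}$, $\mathbf{(\diamond7)}$) or Lemma~\ref{lem:embed:skeleton8} (Configuration~$\mathbf{(\diamond8)}$), and then extend to end shrubs via Lemma~\ref{lem:embed:heart1} or~\ref{lem:embed:heart2} using $\phi(W_B)\subset V_0$ and $\phi(W_A)\subset V_1$ as the prescribed attachment vertices, with Definition~\ref{ellfine}\eqref{Bsmall} supplying the $\shrubB$ vs.\ $\shrubA$ balance needed for \eqref{eq:MALYSTROM1}. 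Your additional verification of the size conditions \eqref{eq:MALYSTROM1}--\eqref{eq:MALYSTROM3} is in the spirit of (and slightly more explicit than) the paper's two-sentence justification; note only that comparing $\gamma'=\tfrac{3\eta^3}{2\cdot10^3}$ against the parameter ``$2\eta^3 k/10^3$'' of Lemma~\ref{lem:embed:heart1} mixes a dimensionless constant with a $k$-scaled quantity (this traces back to the notation in Lemma~\ref{lem:embed:heart1} itself), and that the $+\,O(1/\tau)$ contribution of the re-counted cut vertices $W_B$ in $\sum_j v(T_j)$ is not explicitly covered by the slack as written---a harmless constant-level looseness that the paper's own proof also does not dwell on.
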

\begin{proof}
Let $T'$ be the tree induced by all the cut-vertices $W_A\cup W_B$ and all the
internal shrubs. Summing up the order of the internal shrub and the
cut-vertices, we get that $v(T')<h_1-\frac {\eta^2k}{10^5}$.  Fix an embedding
of $T'$ as in Lemma~\ref{lem:embed:skeleton67} (in configurations~$\mathbf{(\diamond6)}$ and $\mathbf{(\diamond7)}$), or as in Lemma~\ref{lem:embed:skeleton8} (in configuration~$\mathbf{(\diamond8)}$). This embedding now extends to external shrubs by Lemma~\ref{lem:embed:heart1} (in Preconfiguration~$\mathbf{(\heartsuit1)}$, which can only occur in Configuration~$\mathbf{(\diamond6)}$ and $\mathbf{(\diamond7)}$), or by Lemma~\ref{lem:embed:heart2} (in Preconfiguration~$\mathbf{(\heartsuit2)}$).  It is important to remember here that by Definition~\ref{ellfine}\eqref{Bsmall}, the total order of end shrubs in $\shrubB$ is at most half the size of the total order of end shrubs.
\end{proof}

The next lemma completely resolves Theorem~\ref{thm:main} in case of Configuration~$\mathbf{(\diamond9)}$.

\begin{lemma}\label{lem:embed9}
Suppose we are in Setting~\ref{commonsetting} and~\ref{settingsplitting}, and assume we
have Configuration~$\mathbf{(\diamond9)}(\delta,
\frac{2\eta^3}{10^3},$ $h_1,h_2,\epsilon_1, d_1, \mu_1,\epsilon_2, d_2, \mu_2)$
with $d_2>10\epsilon_2>0$, $4\cdot 10^3\le d_2\mu_2\tau k$,
$\max\{d_1,\tau/\mu_1\}\le \gamma^2\eta^2/(4\cdot 10^7(\Omega^*)^2)$,
$d_1^2/6>\epsilon_1\ge \tau/\mu_1$ and $\delta k>10^3/\tau$.

Suppose that $(W_A,W_B,\shrubA,\shrubB)$ is a $(\tau k)$-fine partition of a rooted tree $(T,r)$ of order $k$.
If the total order of the internal shrubs of $(W_A,W_B,\shrubA,\shrubB)$ is at most $h_1-\frac{\eta^2 k}{10^5}$, and the total order of the end shrubs is at most $h_2-2\frac{\eta^2 k}{10^3}$
then $T\subset G$.
\end{lemma}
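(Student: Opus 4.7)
The plan is to mirror the structure of Lemma~\ref{lem:embed:total68}: first embed the internal part $T'$ of $T$ (the subtree spanned by $W_A\cup W_B$ together with all internal shrubs), and then extend this partial embedding to the end shrubs by invoking Lemma~\ref{lem:embed:heart1} with Preconfiguration~$\mathbf{(\heartsuit1)}(\frac{2\eta^3}{10^3},h_2)$ witnessed by $V_0,V_1,\mathcal F'$. The new ingredient compared with Configurations~$\mathbf{(\diamond6)}$--$\mathbf{(\diamond8)}$ is that the internal shrubs must now be embedded by the pure regularity method inside edges of the \semiregular matching $\mathcal N$, rather than by exploiting the expander $\Gexp$ or the avoiding set $\smallatoms$.

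To embed $T'$ I would first invoke Lemma~\ref{lem:orderedskeleton} to obtain an ordered skeleton $(X_0,X_1,\ldots,X_m)$ of $T'$, and then process its members one by one. Each \kknnaaggNOSPACE{} is embedded in some super-regular pair $(Q_0^{(j)},Q_1^{(j)})$ supplied by Preconfiguration~$\mathbf{(reg)}(\epsilon_2,d_2,\mu_2)$, via Lemma~\ref{lem:embed:superregular}, mapping the $W_A$-seeds to $V_1\cap Q_1^{(j)}$ and the $W_B$-seeds to $V_0\cap Q_0^{(j)}$. For each internal shrub $\tilde T$, whose predecessor seed $x\in W_A$ has been mapped to some $v=\phi(x)\in V_1$, I would use~\eqref{conf:D9-XtoV} together with Facts~\ref{fact:clustersSeenByAvertex}--\ref{fact:boundMatchingClusters} to locate an edge $(X,Y)\in\mathcal N$ with $X\subseteq V_2$ (which exists because $V_2\subseteq V(\mathcal N)\setminus\bigcup\mathcal F'$, so $V_2$ sits on the side of each $\mathcal N$-edge opposite to $\mathcal F'$) such that $v$ has many $\GD$-neighbours in $X$ outside appropriate shadows of the already-used vertices. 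I would place the root of $\tilde T$ in such a neighbour and embed the remainder of $\tilde T$ inside the regular pair $(X,Y)$ by Lemma~\ref{lem:embed:regular}, applied one by one to the components of $\tilde T$ minus its root. Because the two $W_A$-vertices flanking an internal shrub lie at even distance $\geq 6$ by~\eqref{parity} and~\eqref{short} of Definition~\ref{ellfine}, the fruit of $\tilde T$ lies at even distance from its root and therefore lands again in $X\subseteq V_2$; inequality~\eqref{conf:D9-VtoX} then provides the degree needed to accommodate the next $W_A$-seed in $V_1$, and the loop closes.

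The hardest part will be a bookkeeping issue analogous to that in Lemma~\ref{lem:embed:skeleton8}. I would maintain, at each step $i$, a forbidden set $F_i$ consisting of the previously embedded vertices, their shadows in $\Gcapt-\HugeVertices$ and $\GD$ with appropriate thresholds, and those super-regular sides $Q_1^{(j)}$ that have become more than half occupied; by Facts~\ref{fact:shadowbound} and~\ref{fact:clustersSeenByAvertex} these sets stay of size $O(k)$, so the slack $h_1-\frac{\eta^2 k}{10^5}$ in~\eqref{conf:D9-XtoV} absorbs them without difficulty. The more delicate point is that, since $V_2$ lies on only one side of each $\mathcal N$-edge, naive embedding would fill the two sides at very different rates and exhaust one side prematurely; to avoid this I would copy the two-stage (balanced then unbalanced) filling scheme of Lemma~\ref{lem:embed:skeleton8}, invoking Lemma~\ref{lem:embed:BALANCED} in the balanced stage and Lemma~\ref{lem:embed:regular} in the unbalanced one, and for each internal shrub selecting the $\mathcal N$-edge so as to compensate the current imbalance. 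Once $T'$ is embedded, the hypothesis that the total order of end shrubs is at most $h_2-\frac{2\eta^2 k}{10^3}$ matches the premises of Lemma~\ref{lem:embed:heart1} exactly, completing the embedding of $T$.
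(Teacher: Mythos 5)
Your high-level plan is the same as the paper's: embed the \kknnaaggss{} in the super-regular pairs $(Q_0^{(j)},Q_1^{(j)})$ via Lemma~\ref{lem:embed:superregular}, embed each internal shrub in an edge of $\mathcal N$ by Lemma~\ref{lem:embed:regular}, use~\eqref{conf:D9-VtoX} to get from the fruit back to $V_1$, and finish the end shrubs with Lemma~\ref{lem:embed:heart1}. Where you deviate is precisely in the step you flag as ``the more delicate point'', and the route you sketch there has a genuine gap.

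The balanced-then-unbalanced filling scheme from Lemma~\ref{lem:embed:skeleton8} does not transfer to this situation, because the orientation of each internal shrub inside its host regular pair is forced: the root must be a $\GD$-neighbour of $\phi(u)\in V_1$ lying in $V_2$, and the fruit $w$ must also lie in $V_2$, and by~\eqref{parity},~\eqref{nice},~\eqref{short} of Definition~\ref{ellfine} these are in the same colour class of the shrub; but $V_2\subset V(\mathcal N)\setminus\bigcup\mathcal F'$ meets each $\mathcal N$-edge on only one side. So you cannot flip orientations to balance, and merely ``selecting the $\mathcal N$-edge so as to compensate imbalance'' does not help if all internal shrubs are lopsided in the same direction. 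Moreover, your proposed shadow-based bookkeeping is too lossy here: sets of the form $\shadow_{G-\HugeVertices}(\,\cdot\,,\delta k/4)$ have size of order $(\Omega^*/\delta)k$, which is far larger than the slack $\eta^2 k/10^5$ available in~\eqref{conf:D9-XtoV}, so it cannot be ``absorbed without difficulty''.

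What the paper actually does, and what is missing from your proposal, is a direct averaging argument (Claim~\ref{claim:DTn}). Define
\[
\lambda := \sum_{(A,B)\in\mathcal N}\bigl(|\neighbour_{\GD}(x)\cap V_2\cap A|-|U\cap (A\cup B)|\bigr),
\]
where $x=\phi(u)$ and $U$ is what is used so far. On one hand, $\lambda=\deg_{\GD}(x,V_2)-|U\cap V(\mathcal N)|\ge h_1-(h_1-\eta^2 k/10^5)=\eta^2 k/10^5$ by~\eqref{conf:D9-XtoV} and the hypothesis on the total order of internal shrubs. On the other hand, if for every $(A,B)$ either $|\neighbour_{\GD}(x)\cap V_2\cap(A\setminus U)|$ or $|B\setminus U|$ were below $100d_1|A|+\tau k$, then each summand would be below $100d_1|A|+\tau k$ (the second alternative forces $|U\cap B|$ large because $|A|=|B|$); summing over the at most $|S|/(\mu_1 k)$ pairs seen by $x$ through its dense spots (Facts~\ref{fact:sizedensespot} and~\ref{fact:boundedlymanyspots}, not shadows) gives $\lambda<\eta^2 k/10^5$, a contradiction. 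This is the key two-sided balance control: if one side of a matching edge has been exhausted by previous embeddings, the term $-|U\cap(A\cup B)|$ already discounts it, and the global slack guarantees a surviving pair $(A,B)$ with $\min\{|\neighbour_{\GD}(x)\cap V_2\cap(A\setminus U)|,|B\setminus U|\}\ge 100d_1|A|+\tau k$, which is exactly what Lemma~\ref{lem:embed:regular} needs. I would replace your shadow/balancing machinery with this claim; the rest of your outline then goes through.
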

\begin{proof}
Let $V_0,V_1, V_2,\mathcal N, \{Q_0^{(j)},Q_1^{(j)}\}_{j\in \mathcal Y}$ and
$\mathcal F'$ witness $\mathbf{(\diamond9)}$.
The embedding process has two stages. In the first stage we embed the \kknnaaggss and
the internal shrubs of $T$. In the second stage we embed the end shrubs. The
\kknnaaggss will be embedded in $V_0\cup V_1$, and the internal shrubs will be
embedded in $V(\mathcal N)$. Lemma~\ref{lem:embed:heart1} will be used to embed
the end shrubs.

The \kknnaaggss of $(W_A,W_B,\shrubA,\shrubB)$ are embedded in such a way that $W_A$
is embedded in $V_1$ and $W_B$ is embedded in $V_0$. Since no other part of $T$
is embedded in $V_0\cup V_1$ in the first stage, each \kknnaaggss can be embedded
greedily using the minimum degree condition arising
from the super-regularity of the pairs
$\{(Q_0^{(j)},Q_1^{(j)})\}_{j\in\mathcal Y}$ using the bound on the total order
of \kknnaaggss coming from Definition~\ref{ellfine}\eqref{few}, and using  Lemma~\ref{lem:embed:superregular} with the following input:
$\epsilon_\PARAMETERPASSING{L}{lem:embed:superregular}:= \epsilon_2$,
$d_\PARAMETERPASSING{L}{lem:embed:superregular}:= d_2$,
$\ell_\PARAMETERPASSING{L}{lem:embed:superregular}:= \mu_2k$, $U_A\cup U_B$ is
the image of the seeds $W_A\cup W_B$ embedded so far and 
$\{A_\PARAMETERPASSING{L}{lem:embed:superregular},B_\PARAMETERPASSING{L}{lem:embed:superregular}\}:=
\{Q_0^{(j)},Q_1^{(j)}\}$, where $j\in \mathcal Y$ is arbitrary for the first
\kknnaaggNOSPACE, and for all other \kknnaaggss $P$ has the property that  
\[
\neighbour_{\GD}(\phi(\parent(P)))\cap Q_1^{(j)}\setminus
U_A\neq \emptyset.
\]
The existence of such an index $j$ follows from the fact that 
\begin{equation}\label{wiesoweshalbwarum}
\phi(\parent(P))\in V_2,
\end{equation}
together with condition~\eqref{conf:D9-VtoX}. We shall
 ensure~\eqref{wiesoweshalbwarum} during our embedding of the internal shrubs, see below.

We now describe how to embed an internal shrub $T^*\in\shrubA$ whose parent
$u\in W_A$ is embedded in a vertex $x\in V_1$. Let $w\in V(T^*)$ be the unique
neighbour of a vertex from $W_A\setminus\{u\}$ (cf.\
Definition~\ref{ellfine}\eqref{2seeds}). Let $U$ be the image of the part of $T$
embedded so far. The next claim will be useful for finding a suitable $\mathcal N$-edge for
accommodating $T^*$.
\begin{claim}\label{claim:DTn}
There exists an $\mathcal N$-edge $(A,B)$, or an $\mathcal N$-edge $(B,A)$ such
that $$\min\big\{|\neighbour_{\GD}(x)\cap V_2\cap (A\setminus U)|, |B\setminus
U|\big\} \ge 100 d_1|A|+\tau k\;.$$
\end{claim}
\begin{proof}[Proof of Claim~\ref{claim:DTn}]
For the purpose of this claim we reorient $\mathcal N$ so that $V_2(\mathcal
N)\subset \bigcup \mathcal F'$.

Suppose the claim fails to be true. Then for each $(A,B)\in \mathcal N$ we have
$|\neighbour_{\GD}(x)\cap V_2\cap (A\setminus U)|<100d_1|A|+\tau k$ or $|B\setminus U|<100d_1|A|+\tau k$. In
either case we get
\begin{equation}\label{eq:HvHv}
|\neighbour_{\GD}(x)\cap V_2\cap A|-|U\cap (A\cup B)| <
100d_1|A|+\tau k\;.
\end{equation}
We write $S:=\bigcup \{V(D):D\in\DenseSpots, x\in V(D)\}$. Combining Fact~\ref{fact:sizedensespot} and Fact~\ref{fact:boundedlymanyspots} we get that 
\begin{equation}\label{eq:girls}
|S|\le \frac{2(\Omega^*)^2k}{\gamma^2}\;.
\end{equation}
Let us look at the number
\begin{equation}\label{eq:dlambda}
\lambda:= \sum_{(A,B)\in\mathcal N} \big(|\neighbour_{\GD}(x)\cap V_2\cap A|-|U\cap (A\cup B)|\big)\;.
\end{equation}
For a lower bound on $\lambda$, we write $\lambda=|\neighbour_{\GD}(x)\cap V_2|-|U\cap V(\mathcal N)|$. 
(Note that $V_2\subseteq V(\mathcal N)$ as we are in Configuration~$\mathbf{(\diamond9)}$.)
The first term is at least $h_1$ by~\eqref{conf:D9-XtoV}, while the second term is at most $h_1-\frac{\eta^2 k}{10^5}$ by the assumptions of the lemma. Thus $\lambda\ge \frac{\eta^2 k}{10^5}$.

For an upper bound on $\lambda$ we only consider those $\mathcal N$-edges
$(A,B)$ for which $\neighbour_{\GD}(x)\cap A\neq\emptyset$. In that case
$A\subset S$ (cf. \ref{commonsetting2} of Setting~\ref{commonsetting}). Thus,
since $\mathcal N$ is
$(\epsilon_1,d_1,\mu_1 k)$-\semiregular we get that
\begin{equation}\label{eq:girls2}
\left|\{(A,B)\in\mathcal N\::\: \neighbour_{\GD}\cap A\neq \emptyset\}\right|\le
\frac{|S|}{\mu_1k}\;.
\end{equation}
Thus, 
 \begin{align*}
 \lambda&\le  \sum_{(A,B)\in\mathcal N, \neighbour_{\GD}(x)\cap A\neq\emptyset} \big(|\neighbour_{\GD}(x)\cap V_2\cap A|
 -|U\cap (A\cup B) |\big)\\
\JUSTIFY{by~\eqref{eq:HvHv},~\eqref{eq:girls2}}&\le
100d_1|S|+
\frac{|S|}{\mu_1k}\tau k\\
\JUSTIFY{by~\eqref{eq:girls}}&<\frac{\eta^2 k}{10^5}\;,
\end{align*}
a contradiction. This finishes the proof of the claim.
\end{proof}
By symmetry we suppose that Claim~\ref{claim:DTn} gives an $\mathcal N$-edge
$(A,B)$ such that $\min\big\{|\neighbour_{\GD}(x)\cap V_2\cap (A\setminus U)|,
|B\setminus U|\big\} \ge 100d_1|A|+\tau k$. We apply
Lemma~\ref{lem:embed:regular} with input
$C_\PARAMETERPASSING{L}{lem:embed:regular}:= A$,
$D_\PARAMETERPASSING{L}{lem:embed:regular}:= B$
$X_\PARAMETERPASSING{L}{lem:embed:regular}=X^*_\PARAMETERPASSING{L}{lem:embed:regular}:=\neighbour_{\GD}(x)\cap
V_2\cap (A\setminus U)$, $Y_\PARAMETERPASSING{L}{lem:embed:regular}:=B\setminus
U$ , $\epsilon_\PARAMETERPASSING{L}{lem:embed:regular}:= \epsilon_1$,
$\beta_\PARAMETERPASSING{L}{lem:embed:regular}:= d_1/3$. Then  there exists an
embedding of $T^*$ in $V(\mathcal N)\setminus U$ such that $w$ is embedded in $V_2$. This ensures~\eqref{wiesoweshalbwarum}.

We remark that there may be several internal shrubs extending from $u\in W_A$. However Claim~\ref{claim:DTn} and the subsequent application of Lemma~\ref{lem:embed:regular} allows a sequential embedding of these shrubs.
This finishes the first stage of the embedding process. 

For the second stage, i.e., the embedding of the end shrubs of
$(W_A,W_B,\shrubA,\shrubB)$, we first recall that the total order of end shrubs
in $\shrubA$ is at most $h_2-2\frac{\eta^2 k}{10^3}$, and the total order of end
shrubs in $\shrubB$ is at most $\frac12\big(h_2-2\frac{\eta^2 k}{10^3}\big)$ by
Definition~\ref{ellfine}\eqref{Bsmall}. 
The embedding is a straightforward
application of Lemma~\ref{lem:embed:heart1}.
\end{proof}

The next lemma resolves Theorem~\ref{thm:main} in the presence of Configuration~$\mathbf{(\diamond10)}$.
\begin{lemma}\label{lem:embed10}
Suppose we are in Setting~\ref{commonsetting}.
For every $\eta',d',\Omega>0$ there exists a number $\tilde\epsilon>0$ such that for
every $\nu'>0$ satisfying
\begin{equation}\label{eq:plm}
\frac{\eta'\nu'}{200 \Omega}>\tau
\end{equation} there exists a number $k_0$ such that the following holds for each $k>k_0$.

If $G$ is a graph with Configuration~$\mathbf{(\diamond10)}(\tilde\epsilon,d',\nu'k, \Omega k,\eta')$
then each tree of order $k$ is contained in $G$.
\end{lemma}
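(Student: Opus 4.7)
The plan is to adapt the dense-case embedding of Piguet and Stein~\cite{PS07+} to the regularized-graph structure of Configuration~$\mathbf{(\diamond10)}$; the close similarity between the two structures was already discussed in Section~\ref{ssec:EmbedOverview10}. First, apply Lemma~\ref{lem:TreePartition} with $\ell=\tau k$ to obtain a $(\tau k)$-fine partition $(W_A,W_B,\shrubA,\shrubB)$ of~$T$. By Definition~\ref{ellfine}\eqref{few} there are only $O(1/\tau)$ cut-vertices, and each shrub has order at most $\tau k$, which by~\eqref{eq:plm} is much smaller than $\eta'\nu'k/(200\Omega)$; consequently, each shrub fits comfortably inside a single regular pair of $\M$ or inside a single cluster of $\LargeTen$ together with an adjacent cluster of~$\V$. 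Since $E(\tilde G[A,B])\neq\emptyset$ and $(\tilde G,\V)$ is a regularized graph, the pair $(A,B)$ is $\tilde\epsilon$-regular of density at least $d'$; with $\tilde\epsilon$ chosen much smaller than $d'$ and $\eta'$, Fact~\ref{fact:manyTypicalVertices} produces vertices $x_0\in A$ and $y_0\in B$ that are adjacent in $\tilde G$ and both satisfy~\eqref{diamond10cond2}. The root $r$ of $T$ is then embedded at $x_0$ (without loss of generality $r$ lies in the color class of $T$ whose cut-vertices are to be placed in $A$).

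The remainder of $T$ is embedded in a BFS fashion. Cut-vertices in $W_A$ are always placed in $A$ and cut-vertices in $W_B$ in $B$; this is consistent with the parity property~\eqref{parity} of the fine partition. For each shrub $T^*\in\shrubA\cup\shrubB$ whose seed has been placed at some $y\in A\cup B$, condition~\eqref{diamond10cond2} furnishes $y$ with at least $(1+\eta')k$ neighbors in $V(\M)\cup\bigcup\LargeTen$; after discounting the images of all shrubs embedded so far (at most $v(T)\le k$ vertices in total), at least $\eta'k/2$ of these neighbors remain unused. From them we select either a matching edge $(C,D)\in\M$ or a cluster $X\in\LargeTen$ that retains a large subpair of $\neighbour_{\tilde G}(y)$, and embed $T^*$ there: in the matching-edge case by iterated application of Lemma~\ref{lem:embed:regular} (combined with Lemma~\ref{lem:embed:BALANCED} and Lemma~\ref{lem:fillingCD} to enforce balance), and in the $\LargeTen$-case by routing out of $X$ into further clusters of $\V$ via $\tilde G$-regular pairs, which is possible because~\eqref{diamond10cond3} guarantees that a typical vertex of $X$ has full degree $(1+\eta')k$ in $\tilde G$. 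When a shrub contains an internal cut-vertex, we must eventually route back to $A$ or $B$; this is guaranteed by the same high-degree properties applied at the shrub-vertex immediately preceding the cut-vertex, together with regularity of the relevant pair in $(\tilde G,\V)$.

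The main obstacle is balance. Since $v(T)=k$ while the effective reservoir at each cut-vertex is only $(1+\eta')k$, the slack $\eta'k$ must simultaneously absorb (i) the $\tilde\epsilon$-exceptional vertices of $A$, $B$, and of clusters in $\LargeTen$; (ii) the $O(\tilde\epsilon/d')$-size slack in each application of Lemma~\ref{lem:embed:regular}; and (iii) the discrepancy build-up inside matching edges of $\M$ that is resolved by switching from a balanced to an unbalanced filling once a matching edge approaches capacity, as encapsulated by Lemma~\ref{lem:embed:BALANCED}, Lemma~\ref{lem:embed:ONESIDE}, and the two-stage scheme of Lemma~\ref{lem:fillingCD}. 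Choosing $\tilde\epsilon$ sufficiently small in terms of $\eta',d',\Omega$, and invoking~\eqref{eq:plm} to bound shrub orders against cluster sizes, makes all three error terms at most a negligible fraction of $\eta'k$, so the sequential embedding never gets stuck and produces the desired copy of $T$ in $G$ for every $k\ge k_0$.
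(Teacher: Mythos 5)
The high-level plan you describe (take a $(\tau k)$-fine partition, map the seeds to $A$ and $B$, accommodate shrubs in $V(\M)\cup\bigcup\LargeTen$, follow Piguet--Stein) agrees with the paper, but the core mechanics that make the Piguet--Stein scheme work are missing, and your greedy BFS embedding would get stuck.

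The first gap is that you never split the reservoir. The paper's proof begins by partitioning $\M$ and $\LargeTen$ into an $A$-side $\M_A\cup\LargeTen_A$ and a $B$-side $\M_B\cup\LargeTen_B$ so that~\eqref{eq:splitSA} and~\eqref{eq:splitSB} hold --- a ratio split in the spirit of~\cite[Lemma~9]{PS07+} that hands the $\shrubA$-shrubs and the $\shrubB$-shrubs disjoint pieces of the reservoir proportional to $v(\shrubA)$ and $v(\shrubB)$. Without this split, the two shrub families compete for the same matching edges and $\LargeTen$-clusters, and the local degree bound~\eqref{diamond10cond2} at a single seed no longer controls the aggregate waste.

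The second, and more serious, gap is that your proposal embeds each shrub eagerly, the moment its seed is placed, whereas the paper runs a two-phase scheme. In the first phase (the $\M$-mode) it fills the $\M$-edges in a balanced way, which guarantees by Lemma~\ref{lem:embed:BALANCED} that neither side of a matching edge runs dry before its partner; once $\M_A$ (say) approaches capacity, it switches $A$ to the $\LargeTen$-mode, in which arriving $\shrubA$-shrubs $F$ are \emph{not} embedded but only anchored: a set $U_F\subset X\in\LargeTen$ is reserved, $\phi(x_F)$ (and $\phi(y_F)$ for internal shrubs) is fixed to a typical vertex of $X$, and the interior of $F$ is left pending. Only after every shrub is processed and the demand on each $\LargeTen$-cluster is known does the proof, using~\eqref{diamond10cond3}, pick for each $X$ a partner cluster $Y$ whose residual density exceeds the total demand placed on it, and fills in the deferred shrubs. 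Your eager routing "out of $X$ into further clusters" chooses partner clusters on the fly with no global bookkeeping; nothing prevents an early shrub from monopolizing the free space of a cluster $Y$ that several later shrubs also need, and the slack $\eta'k$ does not by itself repair this --- the overrun caused by unbalanced shrubs filling one side of matching edges, or by fragmented partner clusters in $\LargeTen$, can be of order $k$, not $O(\tilde\epsilon k)$. The balance lemmas you cite (Lemmas~\ref{lem:embed:BALANCED}, \ref{lem:embed:ONESIDE}, \ref{lem:fillingCD}) are the right tools for the \emph{within-edge} discrepancy, but they do not substitute for the global partition and the deferral of $\LargeTen$-mode shrubs. Finally, the deferred scheme is also what makes internal shrubs feasible: the path from $\phi(x_F)$ to $\phi(y_F)$ inside the pair $(X,Y)$ is found at the end using $\dist_F(x_F,y_F)\ge 4$ from Definition~\ref{ellfine}\eqref{short}, a step that has no analogue in your greedy version.
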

\begin{proof}
We give a sketch of a proof, following~\cite{PS07+}. The main difference was indicated in Section~\ref{ssec:EmbedOverview10}.

Suppose we have Configuration~$\mathbf{(\diamond10)}(\tilde\epsilon,d',\nu'k, \Omega k,\eta')$, and are given a rooted tree $(T,r)$ of order $k$ with a $(\tau k)$-fine partition $(W_A,W_B,\shrubA,\shrubB)$ given by Lemma~\ref{lem:TreePartition}.  
By replacing $\mathcal L^*$ by $\mathcal L^*\setminus \V(\M)$,\footnote{This does not change validity of the conditions in Definition~\ref{def:CONF10}.} we can assume that $\mathcal L^*$ and $\V(\M)$ are disjoint.

For each shrub $F\in\mathcal S_A\cup\mathcal S_B$, let $x_F\in V(F)$ be its root, i.e., its minimal element in the topological order. If $F$ is internal then we also define $y_F$ as its (unique) maximal element that neighbours $W_A$.
We can partition the \semiregular matching $\mathcal M$ and the set $\mathcal L^*$ into two parts: $\mathcal M_A\cup \mathcal L^*_A$ and $\mathcal M_B\cup \mathcal L^*_B$ so that the partition satisfies
\begin{align}
\label{eq:splitSA}
\deg_{\tilde G}\big(v,V(\M_A)\cup \bigcup \LargeTen_A\big)&\ge v(\mathcal S_A)+\frac{\eta' k}4\quad\mbox{and}\\
\label{eq:splitSB}
\deg_{\tilde G}\big(w,V(\M_B)\cup \bigcup \LargeTen_B\big)&\ge v(\mathcal S_B)+\frac{\eta' k}4\;,
\end{align}
for all but at most $2\tilde\epsilon |A|$ vertices $v\in A$ and for all but at most $2\tilde\epsilon|B|$ vertices $w\in B$. To see this, observe that the nature of the regularized graph allows us to
treat\footnote{up to a small error} 
conditions~\eqref{eq:splitSA}, \eqref{eq:splitSB}, or that of Definition~\ref{def:CONF10}\eqref{diamond10cond2} in terms of average degrees of vertices in $A$ and $B$, rather than in terms of individual degrees.\footnote{This is also a key property in the classical dense setting of the regularity lemma.} If $A$ and $B$ were connected to each cluster $X\in \mathcal L^*\cup \mathcal V(\mathcal M)$ by regular pairs of the same density, say $d_X$, it would suffice to split $\mathcal L^*$ and $\mathcal M$ in the ratio $v(\mathcal S_A):v(\mathcal S_B)$. In the general setting, this can also be achieved, as was done in~\cite[Lemma~9]{PS07+}. 
Let $h_{A,\mathcal L^*}$, $h_{B,\mathcal L^*}$, $h_{A,\M}$,  and $h_{B,\M}$ be the average degrees of vertices of $A$ and $B$ into $\mathcal L^*_A,\mathcal L^*_B,\M_A$,$\M_B$.

We will now use the regularity to embed the shrubs and the seeds in $\tilde G$. We start with mapping~$r$ to $A$ or $B$ (depending on whether $r\in W_A$ or $r\in W_B$), and proceed along a topological order on~$T$. We denote the partial embedding of $T$ at any particular stage as $\phi$. The vertices of $W_A$ are mapped to $A$, the vertices of $W_B$ are mapped to $B$. As for embedding the shrubs, initially we start with embedding the shrubs of $\mathcal S_A$ to $\M_A$ (we say that \emph{$A$ is in  the $\M$-mode}), and embedding the shrubs of $\mathcal S_B$ to $M_B$ (\emph{$B$ is in  the $\M$-mode}). 
By filling up the $\M$-edges with the shrubs as balanced as possible we can guarantee that we do not run out of space in $\M_A$ before embedding $\mathcal S_A$-shrubs of total order at least $h_{A,\M}-\eta'k/100$. An analogous property holds for embedding $\mathcal S_B$-shrubs. We omit details and instead refer to a very similar procedure in Lemma~\ref{lem:embed9}.\footnote{In Lemma~\ref{lem:embed9} it was shown how to utilize~\eqref{conf:D9-XtoV} for embedding shrubs of order up to $\approx h_1$ in regular pairs.} 

At some moment we may run out of space in $\M_A$, or in $\M_B$. Say that this happens first with the matching $\M_A$. Let $\mathcal S_A^*\subset \mathcal S_A$ be the set of shrubs not embedded so far. We now describe how to proceed when \emph{$A$ is in the $\mathcal L^*$-mode}. In this mode, we will not embed an upcoming shrub $F\in\mathcal S^*_A$, but only reserve a set $U_F$, with $|U_F|\le v(F)$ which serves as a reminder that we want to accommodate $F$ later on. Suppose that the parent $\parent(F)\in W_A$ of $F$ has been mapped to a typical\footnote{in the sense of Definition~\ref{def:CONF10}\eqref{diamond10cond2}} vertex $z\in A$ already. We have 
\begin{equation*}
\deg_{\tilde G}(z,\bigcup \mathcal L^*_A)\ge v(\mathcal S_A^*)+\frac{\eta'k}{100}\ge \sum_{F'} |U_F'|+\frac{\eta'k}{100}\;,
\end{equation*}
where the sum ranges over the already processed $\mathcal S_A^*$-shrubs $F'$. Consequently, there is a cluster $X\in \V$ such that 
\begin{equation}\label{eq:thresholdFilling}
\deg_{\tilde G}\Big(z,X\setminus \bigcup_{F'}U_{F'}\Big)>\frac{\eta'|X|}{100\Omega}\;.
\end{equation} Let us view $F$ as a bipartite graph, and let $a_F$ be the size of its colour class that contains $x_F$. Let $U_F$ be an arbitrary set of $(\neighbour_{\tilde G}(z)\cap X)\setminus \bigcup_{F'}U_{F'}$ of size $a_F$, and also let us fix an image $\phi(x_F)\in U_F$ arbitrarily. If $F$ is an internal shrub, we further define $\phi(y_F)\in U_F\setminus\{\phi(x_B)\}$ arbitrarily. At this stage we consider $F$ as processed. 

Later, of course, also $B$ can switch to the $\mathcal L^*$-mode as well. At that moment, we define $\mathcal S^*_B$, and start to only make reservations $U_K$ in clusters of $\mathcal L^*_B$ instead of embedding shrubs $K\in\mathcal S^*_B$.

After all shrubs of $\mathcal S^*_A\cup\mathcal S^*_B$ have been processed we finalize the embedding. Consider a shrub $F\in\mathcal S^*_A\cup\mathcal S^*_B$. Suppose that $U_F\subset X$ for some $X\in\V$. We use Definition~\ref{def:CONF10}\eqref{diamond10cond3} to find a cluster $Y$ such that $$\density(X,Y)\ge \frac{\left|Y\cap\big(\text{im}(\phi)\cup\bigcup_{F'\text{ yet unembedded}}U_{F'}\big)\right|}{|Y|}+\frac{\eta'}{100\Omega}\;.$$ 
As $\phi(x_F)$ and $\phi(y_F)$ are typical\footnote{in the sense of Definition~\ref{def:CONF10}\eqref{diamond10cond3}}, we can additionally require that 
$$\deg_{\tilde G}(\phi(x_F),Y),\deg_{\tilde G}(\phi(y_F),Y)\ge (\density(X,Y)-\sqrt{\tilde\epsilon})|Y|\;.$$
Therefore, the regularity method allows us to embed $F$ in the pair $(X,Y)$ avoiding the already defined image of $\phi$, and the sets $U_{F'}$ corresponding to yet unembedded shrubs $F'$. The fact that the threshold in~\eqref{eq:thresholdFilling} was taken quite high (compared to the size of the shrubs, see~\eqref{eq:plm}) allows us to avoid atypical vertices. We also need this embedding to be compatible with the existing placements $\phi(x_F)$ and $\phi(y_F)$. In particular, we need to find a path of length $\dist_F(x_F,y_F)$ from $\phi(x_F)$ to $\phi(y_F)$. Here, it is crucial that $\dist_F(x_F,y_F)\ge 4$ (cf.~Definition~\ref{ellfine}\eqref{short}).\footnote{Indeed, it could be that $\neighbour(\phi(x_F))\cap\neighbour(\phi(y_F))=\emptyset$, which would make it impossible to find a path of length~2 from $\phi(x_F)$ to $\phi(y_F)$. If, on the other hand $\dist_F(x_F,y_F)\ge 4$, then we can always find such a path using a look-ahead embedding in the regular pair $(X,Y)$.}
We remark, that in general we cannot guarantee that $X\cap \phi(F)=U_F$. So the set $U_F$ should be regarded merely as a measure of future occupation of $X$, rather than an indication of exact future placement.
\end{proof}

\section{Proof of Theorem~\ref{thm:main}}\label{sec:proof}

The proof builds on the main results from~\cite{cite:LKS-cut0,cite:LKS-cut1,cite:LKS-cut2}. We extend our subscript notation to allow referencing to parameters from~\cite{cite:LKS-cut0,cite:LKS-cut1,cite:LKS-cut2}. For example, $\eta_\EXTERNALPARAMETERPASSING{I}{L}{p0.lem:LKSsparseClass}$ refers to the parameter $\eta$ from Lemma~\ref{p0.lem:LKSsparseClass} from the I$^\text{st}$ paper, that is, from~\cite{cite:LKS-cut0}.

Let $\alpha>0$ be given. We set $$\eta:=\min\{\frac1{30},\frac{\alpha}2\}.$$ We wish to fix further constants satisfying~\eqref{eq:KONST}. A~trouble is that we do not know the right choice of $\Omega^*$ and $\Omega^{**}$ yet. Therefore we take $g:=\lfloor\frac{100}{\eta^2}\rfloor+1$ and fix suitable constants
\begin{align*}
\eta\gg\frac1{\Omega_1}&\gg \frac1{\Omega_2}\gg \ldots\gg \frac1{\Omega_{g+1}}
\gg\rho\gg\gamma\gg d\ge \frac1{\Lambda}\ge \epsilon\ge
\pi\ge  \alphaD \ge
\epsilon'\ge
\nu\gg \tau \gg \frac{1}{k_0}>0\;.
\end{align*}
where the relations between the parameters are more exactly as follows:
\begin{align*}
\frac 1{\Omega_1}&\le \BUG{\frac {\eta^{13}}{10^{33}}}\;,\\
\frac 1{\Omega_{j+1}}&\le \frac
{\eta^{27}}{10^{67}\Omega_j^{36}}\quad\mbox{for each $j=1,\ldots,g$\;,}\\
\rho&\le \BUG{\frac {\eta^{13}}{10^{33}\Omega_{g+1}^5}}\;,\\
\gamma&\le \frac {\eta^{18}\rho^{24}}{10^{90}\Omega_{g+1}^{28}}\;,\\
d&\le \min\left\{\frac {\gamma^2\eta^2}{10^8\Omega_{g+1}^2},
\beta_\EXTERNALPARAMETERPASSING{II}{L}{p1.prop:LKSstruct}(\eta_\EXTERNALPARAMETERPASSING{II}{L}{p1.prop:LKSstruct}:=
\eta, \Omega_\EXTERNALPARAMETERPASSING{II}{L}{p1.prop:LKSstruct}:= \Omega_{g+1},
\gamma_\EXTERNALPARAMETERPASSING{II}{L}{p1.prop:LKSstruct}:= \gamma)\right\}\;,\\
\frac 1\Lambda&\le \min\left\{d,\frac
{\eta^{24}\gamma^{24}\rho}{10^{96}\Omega_{g+1}^{36}}\right\}
\;,
\\
\epsilon&\le \min\left\{\frac 1\Lambda, \frac
{\gamma^2\eta^3d\rho}{10^{13}\Omega_{g+1}^4},
\tilde\eps_\PARAMETERPASSING{L}{lem:embed10}(\eta'_\PARAMETERPASSING{L}{lem:embed10}:=\eta/40,
d'_\PARAMETERPASSING{L}{lem:embed10}:=\gamma^2d/2,
\Omega_\PARAMETERPASSING{L}{lem:embed10}:= \frac {(\Omega_{g+1})^2}{\gamma^2})
\right\}\;,
\\
\pi&\le \min\left\{\epsilon,
\pi_\EXTERNALPARAMETERPASSING{II}{L}{p1.prop:LKSstruct}(\eta_\EXTERNALPARAMETERPASSING{II}{L}{p1.prop:LKSstruct}:=
\eta, \Omega_\EXTERNALPARAMETERPASSING{II}{L}{p1.prop:LKSstruct}:= \Omega_{g+1},
\gamma_\EXTERNALPARAMETERPASSING{II}{L}{p1.prop:LKSstruct}:= \gamma,
\epsilon_\EXTERNALPARAMETERPASSING{II}{L}{p1.prop:LKSstruct}:= \epsilon)\right\}\;,\\
\alphaD&\le \min\left\{\epsilonD,
\alpha_\EXTERNALPARAMETERPASSING{II}{L}{p1.lem:edgesEmanatingFromDensePairsIII}\left(
\Omega_\EXTERNALPARAMETERPASSING{II}{L}{p1.lem:edgesEmanatingFromDensePairsIII}:=
\Omega_{g+1},
\rho_\EXTERNALPARAMETERPASSING{II}{L}{p1.lem:edgesEmanatingFromDensePairsIII}:= \frac
{\gamma^2}4,
\epsilon_\EXTERNALPARAMETERPASSING{II}{L}{p1.lem:edgesEmanatingFromDensePairsIII}:=
\epsilonD,
\tau_\EXTERNALPARAMETERPASSING{II}{L}{p1.lem:edgesEmanatingFromDensePairsIII}:=
2\rho\right)\right\}\;,\\
\epsilon'&\le \min\left\{\frac
{\alphaD^2\gamma^6\rho^2}{10^3\Omega_{g+1}^4},
\epsilon'_\EXTERNALPARAMETERPASSING{II}{L}{p1.prop:LKSstruct}(\eta_\EXTERNALPARAMETERPASSING{II}{L}{p1.prop:LKSstruct}:=
\eta, \Omega_\EXTERNALPARAMETERPASSING{II}{L}{p1.prop:LKSstruct}:= \Omega_{g+1}, \gamma_\EXTERNALPARAMETERPASSING{II}{L}{p1.prop:LKSstruct}:= \gamma,
\epsilon_\EXTERNALPARAMETERPASSING{II}{L}{p1.prop:LKSstruct}:= \epsilon)\right\}\;,\\
\nu&\le \min\left\{\frac {\alphaD\rho}{\Omega_{g+1}}, \epsilon',
\nu_\EXTERNALPARAMETERPASSING{I}{L}{p0.lem:LKSsparseClass}(\eta_\EXTERNALPARAMETERPASSING{I}{L}{p0.lem:LKSsparseClass}:=\alpha,
\Lambda_\EXTERNALPARAMETERPASSING{I}{L}{p0.lem:LKSsparseClass}:=\Lambda,
\gamma_\EXTERNALPARAMETERPASSING{I}{L}{p0.lem:LKSsparseClass}:= \gamma,
\epsilon_\EXTERNALPARAMETERPASSING{I}{L}{p0.lem:LKSsparseClass}:= \epsilon',
\rho_\EXTERNALPARAMETERPASSING{I}{L}{p0.lem:LKSsparseClass}:= \rho)\right\}\;,\\
\tau&\le 2\epsilon\pi\nu\;,\\
\frac 1{k_0}&\le \min\left\{\frac
{\gamma^3\rho\eta^8\tau\nu}{10^3\Omega_{g+1}^3}, \frac 1{k_0^*}\right\}\;,
\end{align*}
with $k_0^*$ set as the maximum of the numbers
 \label{pageref:PAR}
 \begin{align*}
&k_{0,\EXTERNALPARAMETERPASSING{I}{L}{p0.lem:LKSsparseClass}}\left(\eta_\EXTERNALPARAMETERPASSING{I}{L}{p0.lem:LKSsparseClass}:=\alpha,
\Lambda_\EXTERNALPARAMETERPASSING{I}{L}{p0.lem:LKSsparseClass}:=\Lambda, \gamma_\EXTERNALPARAMETERPASSING{I}{L}{p0.lem:LKSsparseClass}:= \gamma,
\epsilon_\EXTERNALPARAMETERPASSING{I}{L}{p0.lem:LKSsparseClass}:= \epsilon',
\rho_\EXTERNALPARAMETERPASSING{I}{L}{p0.lem:LKSsparseClass}:= \rho\right),\\
 & k_{0,\EXTERNALPARAMETERPASSING{II}{L}{p1.lem:edgesEmanatingFromDensePairsIII}}\left(
\Omega_\EXTERNALPARAMETERPASSING{II}{L}{p1.lem:edgesEmanatingFromDensePairsIII}:=
\Omega_{g+1},
\rho_\EXTERNALPARAMETERPASSING{II}{L}{p1.lem:edgesEmanatingFromDensePairsIII}:= \frac
{\gamma^2}4,
\epsilon_\EXTERNALPARAMETERPASSING{II}{L}{p1.lem:edgesEmanatingFromDensePairsIII}:=
\epsilonD,
\tau_\EXTERNALPARAMETERPASSING{II}{L}{p1.lem:edgesEmanatingFromDensePairsIII}:=
2\rho,
\alpha_\EXTERNALPARAMETERPASSING{II}{L}{p1.lem:edgesEmanatingFromDensePairsIII}:=
\alphaD,
\nu_\EXTERNALPARAMETERPASSING{II}{L}{p1.lem:edgesEmanatingFromDensePairsIII}:=
\frac{2\rho}{\Omega_{g+1}}
\right),
\\
 & k_{0,\EXTERNALPARAMETERPASSING{II}{L}{p1.prop:LKSstruct}}\left(\eta_\EXTERNALPARAMETERPASSING{II}{L}{p1.prop:LKSstruct}:=
\eta, \Omega_\EXTERNALPARAMETERPASSING{II}{L}{p1.prop:LKSstruct}:= \Omega_{g+1},
\gamma_\EXTERNALPARAMETERPASSING{II}{L}{p1.prop:LKSstruct}:= \gamma,
\epsilon_\EXTERNALPARAMETERPASSING{II}{L}{p1.prop:LKSstruct}:= \epsilon,
\nu_\EXTERNALPARAMETERPASSING{II}{L}{p1.prop:LKSstruct}:= \nu\right),
\\
 & k_{0,\PARAMETERPASSING{L}{lem:randomSplit}}\left(p_\PARAMETERPASSING{L}{lem:randomSplit}:=
10, \alpha_\PARAMETERPASSING{L}{lem:randomSplit}:= \eta/100\right),
\\
 & k_{0,\PARAMETERPASSING{L}{lem:embed10}}\left(\eta'_\PARAMETERPASSING{L}{lem:embed10}:=\eta/40,
d'_\PARAMETERPASSING{L}{lem:embed10}:=\gamma^2d/2,\tilde
\epsilon_\PARAMETERPASSING{L}{lem:embed10}:= \epsilon,
\Omega_\PARAMETERPASSING{L}{lem:embed10}:= \frac {(\Omega_{g+1})^2}{\gamma^2},
\nu'_\PARAMETERPASSING{L}{lem:embed10}:=\pi\nu\sqrt{\epsilon'}\right).
\end{align*}
In particular, this gives a relation between between $\alpha$ and $k_0$. 
\medskip

Suppose now that $k>k_0$, and $G\in\LKSgraphs{n}{k}{\alpha}$ is a graph, and $T\in\treeclass{k}$ is a tree of order~$k$. Our goal is to show that $T\subset G$.
\medskip

Let us now turn to the proof. First, we
preprocess the tree $T$ by considering any $(\tau k)$-fine partition
$(W_A,W_B,\shrubA,\shrubB)$ of $T$ rooted at an arbitrary root $r$. Such a
partition exists by Lemma~\ref{lem:TreePartition}. Let $m_1$ and $m_2$ be the
total order of the internal shrubs and the end shrubs, respectively. Set $$\proporce{0}:=\frac {\eta}{100}\quad\mbox{and}\quad
\proporce{i}:=\frac{\eta}{100}+\frac{m_i}{(1+\frac{\eta}{30})k}\text{, for $i=1,2$}\;.$$

In particular we have
$\proporce{i}\in[\frac{\eta}{100},1]$ for $i=0,1,2$.

To find a suitable structure in the graph $G$ we proceed as follows. We apply
\cite[Lemma~\ref{p0.lem:LKSsparseClass}]{cite:LKS-cut0} with input graph
$G_\EXTERNALPARAMETERPASSING{I}{L}{p0.lem:LKSsparseClass}:=G$ and parameters
$\eta_\EXTERNALPARAMETERPASSING{I}{L}{p0.lem:LKSsparseClass}:=\alpha$,
$\Lambda_\EXTERNALPARAMETERPASSING{I}{L}{p0.lem:LKSsparseClass}:=\Lambda$,
$\gamma_\EXTERNALPARAMETERPASSING{I}{L}{p0.lem:LKSsparseClass}:=\gamma$,
$\epsilon_\EXTERNALPARAMETERPASSING{I}{L}{p0.lem:LKSsparseClass}:=\epsilon'$,
$\rho_\EXTERNALPARAMETERPASSING{I}{L}{p0.lem:LKSsparseClass}:=\rho$,  the sequence
$(\Omega_j)_{j=1}^{g+1}$, $k_\EXTERNALPARAMETERPASSING{I}{L}{p0.lem:LKSsparseClass}:=k$ and  $b_\EXTERNALPARAMETERPASSING{I}{L}{p0.lem:LKSsparseClass}:=\frac{\rho k}{100\Omega^*}$. The~lemma gives a graph
$G'_\EXTERNALPARAMETERPASSING{I}{L}{p0.lem:LKSsparseClass}\in\LKSsmallgraphs{n}{k}{\eta}$, and an index $i\in[g]$.  Slightly abusing  notation, we call this graph still~$G$. Set
 $\Omega^*:=\Omega_i$ and $\Omega^{**}:=\Omega_{i+1}$.
Now, \cite[Lemma~\ref{p0.lem:LKSsparseClass}\eqref{p0.LKSclassif:prepart}]{cite:LKS-cut0}
yields a $(k,\Omega^{**},\Omega^*,\Lambda,\gamma,\epsilon',\nu,\rho)$-sparse
decomposition $\class=(\HugeVertices, \clusters, \DenseSpots, \Gblack,
\Gexp,\smallatoms)$. Let $\clustersize$ be the size of any cluster in
$\clusters$.
 
We now apply \cite[Lemma~\ref{p1.prop:LKSstruct}]{cite:LKS-cut1} with parameters
$\eta_\EXTERNALPARAMETERPASSING{II}{L}{p1.prop:LKSstruct}:=\eta$,
$\Omega_\EXTERNALPARAMETERPASSING{II}{L}{p1.prop:LKSstruct}:=\Omega_{g+1}$,
$\gamma_\EXTERNALPARAMETERPASSING{II}{L}{p1.prop:LKSstruct}:=\gamma$,
$\epsilon_\EXTERNALPARAMETERPASSING{II}{L}{p1.prop:LKSstruct}:=\epsilon$, 
$k_\EXTERNALPARAMETERPASSING{II}{L}{p1.prop:LKSstruct}:=k$,  and
$\Omega^*_\EXTERNALPARAMETERPASSING{II}{L}{p1.prop:LKSstruct}:=\Omega^*$. Given the graph $G$ with its sparse decomposition $\class$ the lemma gives three
$(\epsilon,d,\pi\clustersize)$-\semiregular matchings $\M_A$, $\M_B$, and
$\Mgood\subset \M_A$ which fulfill the assertion either of case {\bf(K1)}, or
of {\bf(K2)}. The matchings $\M_A$ and $\M_B$ also define the sets $\XA$ and $\XB$.

The additional features provided by \cite[Lemma~\ref{p0.lem:LKSsparseClass}]{cite:LKS-cut0} and \cite[Lemma~\ref{p1.prop:LKSstruct}]{cite:LKS-cut1} guarantee that we are in the situation described in Setting~\ref{commonsetting}. We apply Lemma~\ref{lem:randomSplit} as described in Definition~\ref{def:proportionalsplitting}; the numbers $\proporce{0},\proporce{1},\proporce{2}$ are as defined above. This puts us in the setting described in Setting~\ref{settingsplitting}. We now use~\cite[Lemma~\ref{p2.outerlemma}]{cite:LKS-cut2} to obtain one of the following configurations.
\begin{itemize}
\item$\mathbf{(\diamond1)}$,
\item$\mathbf{(\diamond2)}\BUG{\left( \frac{\eta^{39}\Omega^{**}}{4\cdot
	10^{90}(\Omega^*)^{11}},\frac{\sqrt[4]{\Omega^{**}}}2,\frac{\eta^{13}\rho^2}{128\cdot
	10^{30}\cdot (\Omega^*)^5}\right)}$,
\item$\mathbf{(\diamond3)}\BUG{\left(\frac{\eta^{39}\Omega^{**}}{4\cdot
	10^{90}(\Omega^*)^{11}},\frac{\sqrt[4]{\Omega^{**}}}2,\frac\gamma2,\frac{\eta^{13}\gamma^2}{128\cdot
	10^{30}\cdot(\Omega^*)^5}\right)}$,
\item$\mathbf{(\diamond4)}\BUG{\left(\frac{\eta^{39}\Omega^{**}}{4\cdot
	10^{90}(\Omega^*)^{11}},\frac{\sqrt[4]{\Omega^{**}}}2,\frac\gamma2,\frac{\eta^{13}\gamma^3}{384\cdot
	10^{30}(\Omega^*)^6}\right)}$,
\item$\mathbf{(\diamond5)}\BUG{\left(\frac{\eta^{39}\Omega^{**}}{4\cdot
	10^{90}(\Omega^*)^{11}},
\frac{\sqrt[4]{\Omega^{**}}}2,\frac{\eta^{13}}{128\cdot
	10^{30}\cdot
	(\Omega^*)^3},\frac{\eta}2,\frac{\eta^{13}}{128\cdot
	10^{30}\cdot (\Omega^*)^4}\right)}$,
\item
$\mathbf{(\diamond6)}\big(\frac{\eta^3\rho^4}{10^{14}(\Omega^*)^4},4\epsilonD,\frac{\gamma^3\rho}{32\Omega^*},\frac{\eta^2\nu}{2\cdot10^4
},\frac{3\eta^3}{2000},\proporce{2}(1+\frac\eta{20})k\big)$,
\item $\mathbf{(\diamond7)}\big(\frac
{\eta^3\gamma^3\rho}{10^{12}(\Omega^*)^4},\frac
{\eta\gamma}{400},4\epsilonD,\frac{\gamma^3\rho}{32\Omega^*},\frac{\eta^2\nu}{2\cdot10^4
}, \frac{3\eta^3}{2\cdot 10^3}, \proporce{2}(1+\frac\eta{20})k\big)$,
\item
$\mathbf{(\diamond8)}\big(\frac{\eta^4\gamma^4\rho}{10^{15}
	(\Omega^*)^5},\frac{\eta\gamma}{400},\frac{400\epsilon}{\eta},4\epsilonD,\frac
d2,\frac{\gamma^3\rho}{32\Omega^*},\frac{\eta\pi\clustersize}{200k},\frac{\eta^2\nu}{2\cdot10^4
}, \proporce{1}(1+\frac\eta{20})k,\proporce{2}(1+\frac\eta{20})k\big)$,
\item 
$\mathbf{(\diamond9)}\big(\frac{\rho
	\eta^8}{10^{27}(\Omega^*)^3},\frac
{2\eta^3}{10^3}, \proporce{1}(1+\frac{\eta}{40})k,
\proporce{2}(1+\frac{\eta}{20})k, \frac{400\varepsilon}{\eta},
\frac{d}2,
\frac{\eta\pi\clustersize}{200k},4\epsilonD,\frac{\gamma^3\rho}{32\Omega^*},
\frac{\eta^2\nu}{2\cdot10^4 }\big)$,
\item $\mathbf{(\diamond10)}\big( \epsilon, \frac{\gamma^2
	d}2,\pi\sqrt{\epsilon'}\nu k, \frac
{(\Omega^*)^2k}{\gamma^2},\frac\eta{40} \big)$.
\end{itemize}

Depending on the actual configuration Lemma~\ref{lem:embed:greedy}, Lemma~\ref{lem:conf2-5}, Lemma~\ref{lem:embed:total68}, Lemma~\ref{lem:embed9}, or Lemma~\ref{lem:embed10} guarantees that $T\subset G$. This finishes the proof of the theorem.

\section{Theorem~\ref{thm:main} algorithmically}\label{ssec:algorithmic}\label{sec:conclremarks}
We now discuss the algorithmic aspects of our proof of Theorem~\ref{thm:main}. This discussion also covers the parts developed in the preceding papers of the series~\cite{cite:LKS-cut0,cite:LKS-cut1,cite:LKS-cut2} (although we do refer at one point to a discussion from~\cite{cite:LKS-cut0}).
\begin{figure}[t!]
\begin{center}
\includegraphics{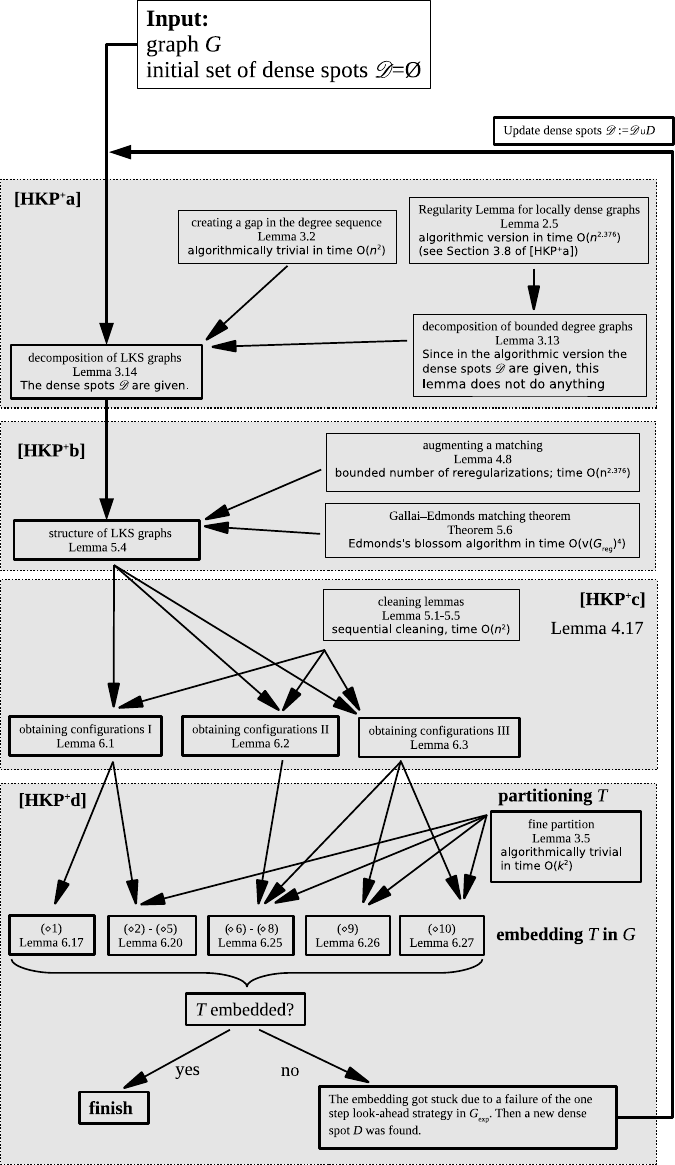}
\caption{A version of \cite[Figure~\ref{p0.fig:proofstructure}]{cite:LKS-cut0} showing the iterative algorithm for finding a copy of $T$ in $G$.}
\label{fig:ProofStructureAlgoritmic}
\end{center}
\end{figure}

The interesting question is if we can provide a fast algorithm which finds a copy of a given tree
$T\in\treeclass{k}$ in any given graph $G\in\LKSgraphs{n}{k}{\alpha}$. We will sketch below that our proof  gives such an algorithm, with running time $O(n^6)$; here the hidden constant in the $O(\cdot)$-notation depends on~$\alpha$ but not on~$k$. 
A picture accompanying the discussion  is given in Figure~\ref{fig:ProofStructureAlgoritmic}.

It can be verified that each of
the steps of our proof --- except the extraction of dense spots in~\cite{cite:LKS-cut0}--- can be turned into a polynomial time algorithm. We return to the extraction of dense spots later, after discussing the other parts of the proof.
\begin{itemize}
\item In \cite[Section~\ref{p0.sssec:DecomposeAlgorithmically}]{cite:LKS-cut0} we discussed the algorithmic aspects of obtaining a sparse decomposition of $G$, which is the main result (\cite[Lemma~\ref{p0.lem:LKSsparseClass}]{cite:LKS-cut0}) of~\cite{cite:LKS-cut0}. This part contains the bottleneck step of the extraction of dense spots (in~\cite[Lemma~\ref{p0.lem:decompositionIntoBlackandExpanding}]{cite:LKS-cut0}).
\item In~\cite{cite:LKS-cut1} we find a ``rough structure'' in $G$. Here, we need to find a matching in $\BGblack$ that is maximal in a certain way, and we also need to ``augment a regularized matching''. The former step can be done using Edmonds's blossom algorithm, and the latter by applying the algorithmic version of the regularity lemma~\cite{Alon94thealgorithmic}. (We used~\cite{Alon94thealgorithmic} already in obtaining a sparse decomposition in~\cite{cite:LKS-cut0}.)
\item In~\cite{cite:LKS-cut2} we apply ``cleaning lemmas'' to refine the rough structure. The cleaning lemmas proceed by sequentially removing of ``bad'' vertices, and the respective badness conditions can be efficiently tested. The cleaning procedure are then put together in~\cite[Lemma~\ref{p2.lem:ConfWhenCXAXB}--\ref{p2.lem:ConfWhenMatching}]{cite:LKS-cut2}. These lemmas are easily turned into algorithms.
\item In the present paper we embed $T$ in $G$ using one of the configurations obtained in~\cite{cite:LKS-cut2}. The basic ingredients of the embedding are the following:
\begin{itemize}
\item \emph{Embedding in huge degree vertices (in $\mathbf{(\diamond2)}$--$\mathbf{(\diamond5)}$)}\\
The two main technical lemmas used are Lemmas~\ref{lem:embedC'endshrub} and~\ref{lem:blueShrubSuspend}. In these lemmas, in each step of the embedding we find a vertex having a substantial degree into one of the specified sets. So, the non-trivial assertions of these lemmas is that these good vertices exist. On the other hand, testing whether a given vertex is good can be done algorithmically (in time $O(n^2$)).
\item \emph{Embedding into regular pairs}\\
The exact setting is described in Lemmas~\ref{lem:embed:regular}--\ref{lem:embed:superregular}, but the way we proceed with embedding of small trees is standard. That is, when we extend an embedding of a small tree or forest in a regular pair $(X,Y)$, we find a vertex of one cluster that has a substantial degree to the unused part of the partner cluster (the existence of which is guaranteed by the regularity). This can be implemented in time $O(|X||Y|)$.
\item \emph{Embedding into $\Gexp$}\\
The embedding procedure for embedding into $\Gexp$ was informally described in~\cite[Section~\ref{p0.sssec:whyGexp}]{cite:LKS-cut0} and the actual setting we use is given in Lemma~\ref{lem:embed:greyFOREST}. The procedure is algorithmic. Indeed, when,  in the proof of Lemma~\ref{lem:embed:greyFOREST}, we extend a partial embedding of a forest, it is enough to avoid the set called $\shadow_{H_\PARAMETERPASSING{L}{lem:embed:greyFOREST}}(U_\PARAMETERPASSING{L}{lem:embed:greyFOREST},\zeta_\PARAMETERPASSING{L}{lem:embed:greyFOREST} k/2)$. This set can be easily determined algorithmically.
\item \emph{Embedding using $\smallatoms$}\\
Let us recall the elementary embedding procedure for $\smallatoms$ as described in Lemma~\ref{lem:embed:avoidingFOREST}. We have a small rooted tree $(T,r)$ (or several small rooted trees), a forbidden set $U$, and a set $U^*\subset\smallatoms$. It is our task to embed $T$, avoiding the set $U$, so that $r$ is placed in $U^*$. For the proof of Lemma~\ref{lem:embed:avoidingFOREST} we only use the ``avoiding'' feature of the avoiding set  given by Definition~\ref{def:avoiding}. That is, for each $y\in U^*$ we test whether there is a dense spot $D\in\DenseSpots$ with $|U\cap V(D)|\le \gamma^2k$ such that $y$ sends a substantial degree to $D$, or whether $y$ belongs to the bad set $Y_\PARAMETERPASSING{ProofL}{lem:embed:avoidingFOREST}$. This test can be made algorithmic  by simply ranging over the at most $O(n/k)$ dense spots in~$\DenseSpots$.
\end{itemize}

The two randomized steps --- random splitting in~\cite[Section~\ref{p2.ssec:RandomSplittins}]{cite:LKS-cut2}, and the use of the stochastic process $\Duplicate$ in Section~\ref{sec:embed} --- can be also efficiently derandomized using a standard technique for derandomizing the Chernoff bound.\footnote{It further seems that the use of randomization in \cite[Section~\ref{p2.ssec:RandomSplittins}]{cite:LKS-cut2} can be eliminated entirely. To this end we would have employ different arguments to obtain the fine structure. We have not worked out details.}
\end{itemize}

\medskip
 Let us now sketch how to deal with extracting dense spots.
The idea is as follows. Initially, we pretend that $\Gexp$ consists of the entire bounded-degree part $G-\HugeVertices$ (cleaned for minimum degree~$\rho k$ as in~\cite[\eqref{p0.eq:almostalldashed}]{cite:LKS-cut0}). With such a supposed sparse decomposition $\class_1$ we go through~\cite[Lemma~\ref{p1.prop:LKSstruct}]{cite:LKS-cut1} and~\cite[Lemma~\ref{p2.outerlemma}]{cite:LKS-cut2} to obtain a configuration. We now start embedding $T$ as in Section~\ref{sec:embed}. (Note that at this moment $\Gblack$ and $\smallatoms$ are absent, and so, the only embedding techniques are those involving $\HugeVertices$ and $\Gexp$.) Now, either we embed $T$, or we fail. The only possible reason for failure is that we were unable to perform the one-step look-ahead strategy described in \cite[Section~\ref{p0.sssec:whyGexp}]{cite:LKS-cut0}, because $\Gexp$ was not really nowhere-dense. (In order to understand fully that this is indeed the only possible reason, the reader is advised to read the explanatory, two-pages-long Section~\ref{p0.sssec:whyGexp} of \cite{cite:LKS-cut0}.) But then we actually localized a dense spot $D_1$. We get an updated supposed sparse decomposition $\class_2$ in which $D_1$ is removed from $\Gexp$ and added to $\DenseSpots$ (and $\Gblack$ and/or $\smallatoms$ are  modified accordingly). We keep iterating. Since in each step we extract at least $O(k^2)$ edges we iterate the above at most $e(G)/\Theta(k^2)=O(\frac nk)$ times. We certainly succeed eventually, since after $\Theta(\frac nk)$ iterations we get an honest sparse decomposition (i.e., a decomposition that would be a valid outcome of~\cite[Lemma~\ref{p0.lem:LKSsparseClass}]{cite:LKS-cut0}, with  $\Gexp$ nowhere-dense).

\medskip
It seems that this iterative method is generally applicable for problems which employ a sparse decomposition.

\section{Acknowledgments} 
The work on this project lasted from the beginning of 2008 until 2014
and we are very grateful to the following institutions and funding bodies for
their support. 

\smallskip

During the work on this paper Hladk\'y was also affiliated with Zentrum
Mathematik, TU Munich and Department of Computer Science, University of Warwick. Hladk\'y was funded by a BAYHOST fellowship, a DAAD fellowship, 
Charles University grant GAUK~202-10/258009, EPSRC award EP/D063191/1, and by an EPSRC Postdoctoral Fellowship while working on the project.

Koml\'os and Szemer\'edi acknowledge the support of NSF grant
DMS-0902241.

Piguet was also affiliated with the Institute of Theoretical Computer Science, Charles University in Prague, Zentrum
Mathematik, TU Munich, the Department of Computer Science and DIMAP,
University of Warwick, and the School of Mathematics, University of Birmingham. The work leading
to this invention was supported by the European Regional Development Fund (ERDF), project ``NTIS --- New Technologies for Information Society'', European Centre of Excellence, CZ.1.05/1.1.00/02.0090.
The research leading to these results has received funding from the European Union Seventh
Framework Programme (FP7/2007-2013) under grant agreement no. PIEF-GA-2009-253925.
Piguet acknowledges the support of the Marie Curie fellowship FIST,
DFG grant TA 309/2-1, a DAAD fellowship,
Czech Ministry of
Education project 1M0545,  EPSRC award EP/D063191/1,
and  the support of the EPSRC
Additional Sponsorship, with a grant reference of EP/J501414/1 which facilitated her to
travel with her young child and so she could continue to collaborate closely
with her coauthors on this project. This grant was also used to host Stein in
Birmingham.

Stein was affiliated with the Institute of Mathematics and Statistics, University of S\~ao Paulo, and the Center for Mathematical Modeling, University of Chile. She was
supported by a FAPESP fellowship, and by FAPESP travel grant  PQ-EX 2008/50338-0, also
CMM-Basal,  FONDECYT grants 11090141 and  1140766. She also received funding by EPSRC Additional Sponsorship EP/J501414/1.

We enjoyed the hospitality of the School of Mathematics of University of Birmingham, Center for Mathematical Modeling, University of Chile, Alfr\'ed R\'enyi Institute of Mathematics of the Hungarian Academy of Sciences and Charles University, Prague, during our long term visits.

The yet unpublished work of Ajtai, Koml\'os, Simonovits, and Szemer\'edi on the Erd\H{o}s--S\'os Conjecture was the starting point for our project, and our solution crucially relies on the methods developed for the Erd\H{o}s-S\'os Conjecture. Hladk\'y, Piguet, and Stein are very grateful to the former group for explaining them those techniques.

\medskip
A doctoral thesis entitled \emph{Structural graph theory} submitted by Hladk\'y in September 2012 under the supervision of Daniel Kr\'al at~Charles University in~Prague is based on the series of the papers~\cite{cite:LKS-cut0,cite:LKS-cut1, cite:LKS-cut2} and the present paper. The texts of the two works overlap greatly. We are grateful to PhD committee members Peter Keevash and Michael Krivelevich. Their valuable comments are reflected in the series. 

\bigskip
We thank the referees for their very helpful remarks.

\bigskip
The contents of this publication reflects only the authors' views and not necessarily the views of the European Commission of the European Union.

\newpage
\printindex{mathsymbols}{Symbol index}
\printindex{general}{General index}

\newpage
\addcontentsline{toc}{section}{Bibliography}
\bibliographystyle{alpha}
\bibliography{bibl}
\end{document}